\let\mathcal\mathscr
\numberwithin{equation}{section}
\theoremstyle{plain}
\newtheorem{theorem}[equation]{Theorem}
\newtheorem{proposition}[equation]{Proposition}
\newtheorem{lemma}[equation]{Lemma}
\newtheorem{corollary}[equation]{Corollary}
\newtheorem{theoremintro}{Theorem}
\newtheorem*{claim*}{Claim}
\theoremstyle{definition}
\newtheorem{definition}[equation]{Definition}
\newtheorem{notation}[equation]{Notation}
\newtheorem{setting}[equation]{Setting}
\newtheorem{example}[equation]{Example}
\newtheorem{remark}[equation]{Remark}
\newenvironment{enumeratea}
{\bgroup\def\theenumi{\alph{enumi}}\begin{enumerate}}
{\end{enumerate}\egroup}%
\let\oldtheequation\theequation
\newcommand\numstareq{\let\oldtheequation\theequation\renewcommand{\theequation}{\oldtheequation$\,*$}}
\newcommand\numstarstareq{\let\oldtheequation\theequation\renewcommand{\theequation}{\oldtheequation$\,**$}}
\newenvironment{starequation}{\numstareq\begin{equation*}\tag{\theequation}}{\end{equation*}\let\theequation\oldtheequation\ignorespaces}
\newenvironment{starstarequation}{\numstarstareq\begin{equation*}\tag{\theequation}}{\end{equation*}\let\theequation\oldtheequation\ignorespaces}
\newcommand{\step}{\itemindent9pt\refstepcounter{equation}\smallskip\item[\textup{(\theequation)}]\itshape}
\newcommand{\RedefinitSymbole}[1]{
\expandafter\let\csname old\string#1\endcsname=#1
\let#1=\relax
\newcommand{#1}{\csname old\string#1\endcsname\,}
}
\newcommand{\cbbullet}{{\raisebox{1pt}{$\sbullet$}}}
\newcommand{\sbullet}{{\scriptscriptstyle\bullet}}
\newcommand{\pOS}{p^{-1}\sho_S}
\newcommand{\pXOS}{p_X^{-1}\sho_S}
\newcommand{\pYOS}{p_Y^{-1}\sho_S}
\newcommand{\pZOS}{p_Z^{-1}\sho_S}
\newcommand{\pOSp}{p^{\prime-1}\sho_{S'}}
\newcommand{\wpOS}{\wt{p}^{-1}\sho_S}
\newcommand{\wpOSp}{\wt{p}^{\prime-1}\sho_{S'}}
\newcommand\wtj{\wt\jmath}
\newcommand{\Df}{{}_{\scriptscriptstyle\mathrm{D}}f}
\newcommand{\Dg}{{}_{\scriptscriptstyle\mathrm{D}}g}
\newcommand{\Dh}{{}_{\scriptscriptstyle\mathrm{D}}h}
\newcommand{\Di}{{}_{\scriptscriptstyle\mathrm{D}}i}
\newcommand{\Ddelta}{{}_{\scriptscriptstyle\mathrm{D}}\delta}
\newcommand{\Dp}{{}_{\scriptscriptstyle\mathrm{D}}p}
\newcommand{\tint}{{\textstyle\int\!}}
\newcommand{\Dint}{{}_{\scriptscriptstyle\mathrm{D}}\tint}
\newcommand{\quand}{\quad\text{and}\quad}
\newcommand\sha{\mathcal{A}}
\newcommand\shc{\mathcal{C}}
\newcommand\shd{\mathcal{D}}\let\cD\shd
\newcommand\shf{\mathcal{F}}
\newcommand\shi{\mathcal{I}}\let\cI\shi
\newcommand\shj{\mathcal{J}}
\newcommand\shk{\mathcal{K}}
\newcommand\shh{\mathcal{H}}
\newcommand\shl{\mathcal{L}}
\newcommand\shm{\mathcal{M}}
\newcommand\shn{\mathcal{N}}
\newcommand\sho{\mathcal{O}}
\newcommand\shp{\mathcal{P}}
\newcommand\shq{\mathcal{Q}}
\newcommand\shs{\mathcal{S}}
\newcommand\shscr{\mathscr{S}}
\newcommand{\C}{\mathbb{C}}\let\CC\C
\newcommand{\N}{\mathbb{N}}\let\NN\N
\newcommand{\R}{\mathbb{R}}\let\RR\R
\newcommand{\Z}{\mathbb{Z}}\let\ZZ\Z
\newcommand{\PP}{\mathbb{P}}
\newcommand{\wrc}{{\textup{w-}\R\textup{-c}}}
\newcommand{\sfA}{\mathsf{A}}
\newcommand{\fm}{\mathfrak{m}}
\newcommand{\bms}{\boldsymbol{s}}
\newcommand{\bD}{\boldsymbol{D}}
\newcommand{\rhog}{\boldsymbol{\rho}}
\renewcommand{\thetag}{\boldsymbol{\theta}}
\newcommand{\rb}{\mathrm{b}}
\newcommand{\rmc}{\mathrm{c}}
\newcommand{\rd}{\mathrm{d}}
\newcommand{\gr}{\mathrm{gr}}
\newcommand{\coh}{\mathrm{coh}}
\newcommand{\Oqcoh}{\sho\textup{-}\mathrm{q}\textup{-}\mathrm{coh}}
\newcommand{\good}{\mathrm{good}}
\newcommand{\hol}{\mathrm{hol}}
\newcommand{\rhol}{\mathrm{rhol}}
\newcommand{\rmod}{\mathrm{mod}}
\newcommand{\Mod}{\mathrm{Mod}}
\newcommand{\nb}{\mathrm{nb}}
\newcommand{\op}{\mathrm{op}}
\newcommand{\rt}{\mathrm{t}}
\newcommand{\sa}{\mathrm{sa}}
\newcommand{\tors}{\mathrm{tors}}
\newcommand{\tf}{\mathrm{tf}}
\newcommand{\cc}{{\C\textup{-c}}}
\newcommand{\rc}{{\R\textup{-c}}}
\newcommand{\XH}{X\times\nobreak H}
\newcommand{\XS}{X\times\nobreak S}
\newcommand{\wXS}{\wt X\times\nobreak S}
\newcommand{\XpS}{X'\times\nobreak S}
\newcommand{\XsS}{X^*\times\nobreak S}
\newcommand{\XT}{X\times\nobreak T}
\newcommand{\YS}{Y\times\nobreak S}
\newcommand{\ZS}{Z\times\nobreak S}
\newcommand{\DX}{\shd_{X}}
\newcommand{\DXH}{\shd_{X\times H/H}}
\newcommand{\DXS}{\shd_{\XS/S}}
\newcommand{\DXsS}{\shd_{\XsS/S}}
\newcommand{\DXT}{\shd_{\XT/T}}
\newcommand{\DXpS}{\shd_{\XpS/S}}
\newcommand{\DXSp}{\shd_{\XS'/S'}}
\newcommand{\DYS}{\shd_{\YS/S}}
\newcommand{\DUS}{\shd_{U\times S/S}}
\newcommand{\DXSR}{\shd_{\XS_\RR/S}}
\newcommand{\DYSR}{\shd_{\YS_\RR/S}}
\newcommand{\DZSR}{\shd_{\ZS_\RR/S}}
\DeclareMathOperator{\Aut}{Aut}
\DeclareMathOperator{\Char}{Char}
\DeclareMathOperator{\coker}{coker}
\DeclareMathOperator{\rC}{\mathsf{C}}
\DeclareMathOperator{\rD}{\mathsf{D}}
\DeclareMathOperator{\End}{End}
\DeclareMathOperator{\rK}{\mathsf{K}}
\DeclareMathOperator{\DR}{DR}
\DeclareMathOperator{\Db}{\mathfrak{Db}}
\DeclareMathOperator{\pDR}{{}^\mathrm{p}DR}
\DeclareMathOperator{\Hom}{Hom}
\newcommand{\Rhom}{R\shhom}
\newcommand{\shhom}{\mathcal{H}\!\mathit{om}}
\DeclareMathOperator{\rh}{\mathit{R}\shhom}
\DeclareMathOperator{\tho}{\mathit{T}\shhom}
\DeclareMathOperator{\id}{Id}\let\Id\id
\DeclareMathOperator{\Op}{Op}
\DeclareMathOperator{\RH}{RH}
\DeclareMathOperator{\rk}{rk}
\DeclareMathOperator{\Sp}{Sp}
\DeclareMathOperator{\pSol}{{}^\mathrm{p}Sol}
\DeclareMathOperator{\Supp}{Supp}\let\supp\Supp
\let\TH\THH
\DeclareMathOperator{\Ima}{im}
\newcommand{\fV}{\mathfrak{V}}
\newcommand{\fW}{\mathfrak{W}}
\newcommand{\bA}{{\mathbf{A}}}
\newcommand{\Ab}{{\mathbf{Ab}}}
\newcommand{\bP}{{\mathbf{P}}}
\newcommand{\bT}{{\mathbf{T}}}
\let\tilde\widetilde
\let\wt\widetilde
\let\wh\widehat
\let\ov\overline
\let\epsilon\varepsilon
\let\emptyset\varnothing
\let\setminus\smallsetminus
\let\leq\leqslant
\let\geq\geqslant
\let\moins\smallsetminus
\newcommand{\lcr}{[\![}
\newcommand{\rcr}{]\!]}
\newcommand{\lpr}{(\!(}
\newcommand{\rpr}{)\!)}
\newcommand\loccit{loc.\kern3pt cit.{}\xspace}
\newcommand\cf{cf.\kern.3em}
\newcommand\eg{e.g.\kern.3em}
\newcommand\ie{i.e.,\ }
\newcommand\resp{\text{resp.}\kern.3em}
\renewcommand\to{\mathchoice{\longrightarrow}{\rightarrow}{\rightarrow}{\rightarrow}}
\newcommand\mto{\mathchoice{\longmapsto}{\mapsto}{\mapsto}{\mapsto}}
\newcommand\hto{\mathrel{\lhook\joinrel\to}}
\newcommand\hfrom{\mathrel{\from\joinrel\rhook}}
\newcommand\from{\mathchoice{\longleftarrow}{\leftarrow}{\leftarrow}{\leftarrow}}
\newcommand\implique{\mathchoice{\Longrightarrow}{\Rightarrow}{\Rightarrow}{\Rightarrow}}
\newcommand\isom{\stackrel{\sim}{\longrightarrow}}
\newcommand\isofrom{\stackrel{\sim}{\longleftarrow}}
\newcommand\To[1]{\mathchoice{\xrightarrow{\textstyle\kern4pt#1\kern3pt}}{\stackrel{#1}{\longrightarrow}}{}{}}
\let\oldbigoplus\bigoplus
\renewcommand{\bigoplus}{\mathop{\textstyle\oldbigoplus}\displaylimits}
\let\oldbigcap\bigcap
\renewcommand{\bigcap}{\mathop{\textstyle\oldbigcap}\displaylimits}
\let\oldbigcup\bigcup
\renewcommand{\bigcup}{\mathop{\textstyle\oldbigcup}\displaylimits}
\let\oldprod\prod
\renewcommand{\prod}{\mathop{\textstyle\oldprod}\displaylimits}
\let\dpl\displaystyle
\begin{document}
\title{Relative Regular Riemann-Hilbert correspondence II}

\author[L. Fiorot]{Luisa Fiorot}
\email{luisa.fiorot@unipd.it}
\address[L. Fiorot]{Dipartimento di Matematica ``Tullio Levi-Civita'' Universit\`a degli Studi di Padova\\
Via Trieste, 63\\
35121 Padova Italy}

\author[T. Monteiro Fernandes]{Teresa Monteiro Fernandes}
\email{mtfernandes@fc.ul.pt}
\address[T. Monteiro Fernandes]{Centro de Matemática e Aplicações Fundamentais -- Centro de investigação Operacional e Departamento de Matemática da FCUL, Edifício C 6, Piso 2, Campo Grande, 1700, Lisboa, Portugal}

\author[C.~Sabbah]{Claude Sabbah}
\email{Claude.Sabbah@polytechnique.edu}
\address[C.~Sabbah]{CMLS, CNRS, École polytechnique, Institut Polytechnique de Paris, 91128 Palaiseau cedex, France}

\thanks{The research of L.\,Fiorot was supported by project PRIN 2017 Categories, Algebras: Ring-Theo\-ret\-ical and Homological Approaches (CARTHA). The research of T.\,Monteiro Fernandes was supported by
Fundação para a Ciência e Tecnologia, under the project: UIDB/04561/2020.}

\classification{14F10, 32C38, 32S40, 32S60, 35Nxx, 58J10}

\keywords{Holonomic relative $D$-module, regularity, relative constructible sheaf, relative perverse sheaf}

\begin{abstract}
We develop the theory of relative regular holonomic $\mathcal D$-modules with a smooth complex manifold~$S$ of arbitrary dimension as parameter space, together with their main functorial properties. In particular, we establish in this general setting the relative Riemann-Hilbert correspondence proved in a previous work in the one-dimensional case.
\end{abstract}
\maketitle

\vspace*{\baselineskip}
\tableofcontents

\addtocontents{toc}{\protect\bgroup\protect\small}
\section{Introduction}
In this article, we are concerned with holomorphic families of coherent $\shd$-modules on a complex manifold $X$ of dimension $d_X$, parametrized by a complex manifold $S$, that~is, coherent modules over the ring $\DXS$ of linear relative differential operators with respect to the projection $p_X:\XS\to S$ (simply denoted by $p$ when no confusion is possible). More specifically, we consider families for which the characteristic variety in the relative cotangent space $(T^*X)\times S$ is contained in the product by $S$ of a fixed closed conic Lagrangian analytic subset $\Lambda\subset T^*X$. Following \cite{FMFS1,MFCS1,MFCS2}, we~call these systems \emph{relative holonomic $\DXS$-modules}. Here are some examples.
\begin{enumerate}
\item\label{enum:1}
Deligne considered in \cite{De} the case of vector bundles $E$ on $\XS$ with a flat relative connection $\nabla$, and established an equivalence with the category of locally constant sheaves of coherent $\pXOS$-modules on $\XS$. In this case, the relative characteristic variety is the product of the zero section $T^*_XX$ by $S$.
\item\label{enum:2}
For any holonomic $\DX$-module $\shm$ on $X$ with characteristic variety $\Lambda$, the pullback $q^*\shm=\sho_{\XS}\otimes_{q^{-1}\sho_X}q^{-1}\shm$ by the projection $q:\XS\to X$ is naturally endowed with a $\DXS$-module structure, and the relative characteristic variety of~$\shm$ is equal to $\Lambda\times S$. For $(E,\nabla)$ as in \eqref{enum:1}, the characteristic variety of $q^*\shm\otimes_{\sho_{\XS}}E$ (equipped with its natural $\DXS$-module structure) is contained in $\Lambda\times S$.
\item\label{enum:3}
Some integral transformations from objects on $X$ to objects on $S$ have kernels which are such flat bundles $(E,\nabla)$. One of them is the Fourier-Mukai transformation $\mathrm{FM}$ introduced by Laumon \cite{Laumon96} and Rothstein \cite{Rothstein96}, which attaches to any bounded complex of $\shd$-modules with coherent cohomology on an abelian variety $A$ a bounded complex of $\sho$-modules with coherent cohomology on the moduli space $A^\sharp$ of line bundles with integrable connection on $A$ (\cf \cite{Schnell15} and the references therein). It is obtained as the integral transform with kernel $\shp$ on $A\times A^\sharp$ associated to the Poincaré bundle on the product $A\times\mathrm{Pic}^0(A)$. By construction, $\shp$ is equipped with a flat relative connection, \ie is a $\shd_{A\times A^\sharp/A^\sharp}$-module. Then $q^*\shm\otimes_{\sho_{A\times A^\sharp}}\shp$ is an instance of \eqref{enum:2}, and $\mathrm{FM}(\shm)$ is the pushforward $\Dp_*(q^*\shm\otimes_{\sho_{A\times A^\sharp}}\shp)$. It~is an object of $\rD^\rb_\coh(\sho_{A^\sharp})$.
\item\label{enum:4}
Given a holonomic $\DX$-module $\shm$ and holomorphic functions $f_1,\dots,f_p$ on~$X$ defining a divisor \hbox{$Y\!=\!\{\prod_if_i\!=\!0\}$}, and setting $S=\C^p$ with coordinates $s_1,\cdots, s_p$, the $S$-analytic counterpart of \cite[Prop.\,13]{Maisonobe16} asserts that the $\DXS$-submodule generated by $q^*\shm\cdot(\prod_if_i^{s_i})$ in the twisted coherent $\DXS(*(\YS))$-module $q^*\shm(*(\YS))\cdot(\prod_if_i^{s_i})$ is relative holonomic.
\item
In his construction of moduli spaces for regular holonomic $\shd$-modules, Nitsure \cite{Nitsure99} fixes a divisor with normal crossings in $X$ and deforms pre-$\shd$-modules (extending the notion of vector bundle with flat logarithmic connection) relative to this divisor and its canonical stratification. The corresponding holomorphic family of regular holonomic $\shd$-modules has its characteristic variety adapted to this stratification, hence of the form $\Lambda\times S$.
\item
Mixed twistor $\shd$-modules (\cf \cite{Mochizuki11}) are compound objects defined on the product of $X$ by the complex line $\CC$, whose module components are holomorphic families of holonomic $\shd$-modules parametrized by $S=\CC^*$ degenerating at $0\in\CC$ to a coherent module on the cotangent space $T^*X$. On $S$, the characteristic variety of each holonomic $\shd$-module is, by definition, contained in a fixed Lagrangian variety $\Lambda$. Of~particular interest are the regular mixed twistor $\shd$-modules, which have furnished the first example of families we are dealing with (\cf\cite{MFCS1}).
\end{enumerate}

Our definition of relative holonomicity imposes the following:
the only possible changes in the characteristic variety of the restricted $\DX$-module to a fixed parameter, when the parameter varies, is a change of multiplicities on each irreducible component of $\Lambda$. This condition is reasonable, as shown by the previous examples.

Besides, not any relative holonomic $\DXS$-module $\shk$ can serve as the kernel of an integral transformation as in \eqref{enum:3}, because it cannot be ensured that, for a holonomic $\DX$-module $\shm$, the tensor product $q^*\shm\otimes_{\sho_{\XS}}\shk$ is $\DXS$-coherent, in contrast with the classical result when $S$ is reduced to a point. Indeed, in general, Bernstein-Sato theory only applies on some open subset of $S$ depending on $\shm$, as exemplified in \cite[Ex.\,2.4]{MFCS3}. Fortunately, adding the regularity property---as defined in \cite{MFCS2}---of the kernel $\shk$ and of $\shm$ overcomes this difficulty, as already shown in \cite{FMFS1} when $\dim S=1$.

In order to replace Bernstein-Sato theory, the main tool is the stability of regular holonomicity by $\shd$-module pullback \cite[Th.\,2]{FMFS1}, that we generalize to the case where $\dim S$ can be arbitrary.

\begin{theoremintro}[Stability by pullback]\label{th:inverseimage}
Let $\shm\in\rD^\rb_{\rhol}(\DXS)$ and let $f:Y\to X$ be a morphism of complex manifolds and let us still denote by $f$ the morphism $f\times\Id$. Then $\Df^*\shm\in\rD^\rb_{\rhol}(\DYS)$.
\end{theoremintro}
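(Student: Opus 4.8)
The plan is to reduce the statement to the case $\dim S=1$ already treated in \cite[Th.\,2]{FMFS1}, by testing relative regular holonomicity after base change to discs in the parameter space~$S$. Since the assertion is local on $Y$ and on $X$, we may assume that both are polydiscs; recall that $\rD^\rb_{\rhol}(\DXS)$ is the full subcategory of $\rD^\rb(\DXS)$ whose objects have $\shd$-coherent, relative holonomic and regular cohomology. Writing $f$ as the composition of its graph $i_f\colon Y\hto Y\times X$ with the projection $p\colon Y\times X\to X$, so that $\Df^*\simeq\Di_f^*\circ\Dp^*$, one sees that $\Df^*$ has finite cohomological amplitude; hence $\Df^*\shm$ is bounded, while $\shd$-coherence of its cohomology follows from the coherence theorem for the $\shd$-module inverse image. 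Everything thus comes down to proving that $\Df^*\shm$ has relative holonomic and regular cohomology.

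For this I would argue after pullback along holomorphic discs. Let $\varphi\colon\Delta\to S$ be holomorphic with $\Delta$ a disc, and write $i_\varphi\colon X\times\Delta\to\XS$ and $j_\varphi\colon Y\times\Delta\to\YS$ for the induced maps and $f_\Delta=f\times\mathrm{id}_\Delta\colon Y\times\Delta\to X\times\Delta$. Functoriality of the $\shd$-module inverse image, applied to the identity $i_\varphi\circ f_\Delta=(f\times\mathrm{id}_S)\circ j_\varphi$ of morphisms of total spaces, yields a base-change isomorphism $Lj_\varphi^*\,\Df^*\shm\simeq\Df_\Delta^*\,Li_\varphi^*\shm$. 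By hypothesis $Li_\varphi^*\shm\in\rD^\rb_{\rhol}(\shd_{X\times\Delta/\Delta})$, so \cite[Th.\,2]{FMFS1} applies to this one-parameter situation and gives $\Df_\Delta^*\,Li_\varphi^*\shm\in\rD^\rb_{\rhol}(\shd_{Y\times\Delta/\Delta})$. Hence $Lj_\varphi^*\,\Df^*\shm$ is relative regular holonomic for every~$\varphi$, and one concludes that $\Df^*\shm\in\rD^\rb_{\rhol}(\DYS)$.

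The crux, and the point where the one-dimensionality of the parameter in \cite{FMFS1} is genuinely used, is to legitimate this reduction over a base of arbitrary dimension: one must show that relative regular holonomicity over~$S$ is stable under, and is detected by, the restrictions $Li_\varphi^*$ along all holomorphic discs $\varphi\colon\Delta\to S$---this is exactly where the machinery of \cite{MFCS2,MFCS3} is needed---and I expect the regularity half of this criterion to be the main obstacle, the holonomicity half being more robust. For holonomicity one may indeed argue directly: the relative characteristic variety of $\Df^*\shm$ is controlled by the usual estimate under the cotangent correspondence $T^*X\times S\leftarrow(Y\times_X T^*X)\times S\to T^*Y\times S$, and one checks, as in the relative counterpart of Kashiwara's inverse-image theorem, that it is contained in $\Lambda'\times S$ for a fixed closed conic Lagrangian $\Lambda'\subset T^*Y$ (\cf \cite{MFCS1}). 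The remaining points are formal: the base-change isomorphism is a consequence of the functoriality of $\shd$-module inverse image once that functor is set up for a change of parameter space (\cf \cite{MFCS2,MFCS3}), and the finite amplitude of $\Df^*$ follows from the factorization $f=p\circ i_f$, since $\Dp^*$ is exact up to a shift and $\Di_f^*$ has cohomological amplitude bounded by $\dim X$.
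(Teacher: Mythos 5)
There is a genuine gap, and it sits exactly at the point your argument treats as routine: the $\DYS$-coherence of the cohomology of $\Df^*\shm$. You assert that coherence ``follows from the coherence theorem for the $\shd$-module inverse image,'' but no such theorem exists in the relative setting: for $\dim S\geq1$ the pullback of a coherent, even holonomic, $\DXS$-module by a closed embedding need not be coherent (this failure, due to the breakdown of Bernstein--Sato theory over all of $S$, is recalled in the introduction with the reference to \cite[Ex.\,2.4]{MFCS3}); Proposition \ref{prop:qcoh}\eqref{prop:qcohf1} only yields $f$-$\sho$-goodness. Establishing coherence of the pullback under the \emph{regularity} hypothesis is the actual content of Theorem \ref{th:inverseimage}, and the paper obtains it by a quite different route: factor $f$ into a projection (easy, since $\Dp^*$ is exact up to shift and preserves everything) and a closed embedding $i:Y\hto X$, prove that $R\Gamma_{[Y]}\shm\in\rD^\rb_\rhol(\DXS)$ (Corollary \ref{cor:gammaY}, resting on the localization result Corollary \ref{cor:localization} and Proposition \ref{prop:localization}, which in turn require the whole Section \ref{sec:4} machinery of D-type modules, Deligne extensions (Theorem \ref{th:Deligneext}), Rossi's flattening (Proposition \ref{prop:Rossi}) and base pushforward stability (Theorem \ref{th:Regdirect})), and then conclude by Kashiwara's equivalence. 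None of this is replaced by your disc argument.

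The second, related, problem is the detection criterion you rely on: that relative regular holonomicity over $S$ is \emph{detected} by the restrictions $Li_\varphi^*$ along all holomorphic discs $\varphi:\Delta\to S$. You correctly flag this as the crux, but it is not established in the paper and cannot do the work you need. Conditions $(\mathrm{Reg}\,1)$/$(\mathrm{Reg}\,2)$ (and their equivalence, Proposition \ref{Prop:2}\eqref{Prop:21}) are criteria for \emph{regularity of objects already known to lie in} $\rD^\rb_{\hol}(\DXS)$, i.e.\ with coherent holonomic cohomology; there is no statement, here or in \cite{MFCS2,FMFS1}, allowing one to infer coherence---or even to speak of the characteristic variety---of $\Df^*\shm$ from the coherence of its disc restrictions $Lj_\varphi^*\Df^*\shm$. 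So even granting the base-change isomorphism and the $\dim S=1$ case of \cite{FMFS1}, the final step ``hence $\Df^*\shm\in\rD^\rb_{\rhol}(\DYS)$'' is unjustified, and the characteristic-variety estimate you invoke for the ``holonomicity half'' presupposes the coherence that is missing.
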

A possible application of our approach is that, for an integral transformation as in \eqref{enum:3} with regular holonomic kernel $\shk$, the analysis of local properties of $q^*\shm\otimes^L_{\sho_{\XS}}\shk$
leads to a better understanding of the $\shk$-transform \hbox{$\Dp_*(q^*\shm\otimes^L_{\sho_{\XS}}\shk)$} of $\shm$.

Here are natural examples of regular holonomic $\DXS$-modules:
\begin{enumeratea}
\item
Coherent $\DXS$-submodules of regular holonomic $\shd_{\XS}$-modules, provided that the irreducible components of the characteristic variety of the latter decompose as products with respect to the product decomposition $T^*X\times T^*S$ (\cf Section \ref{subsec:3b}); they can be called \emph{integrable} regular holonomic $\DXS$-modules.
\item
In the setting of \eqref{enum:4} above, let us assume that $\shm$ is regular holonomic. Then the coherent $\DXS(*(\YS))$-module $q^*\shm(*\YS)\cdot\prod_if_i^{s_i}$ is $\DXS$-regular holonomic, hence $\DXS$-coherent (\cf Example \ref{ex:maisonobe}) and this enables us to recover the $S$\nobreakdash-analytic version of the result of Maisonobe in \eqref{enum:4} when $\shm$ is regular holonomic.
\end{enumeratea}

\smallskip
We have developed in \cite{MFCS1} the notion of relative $\C$-constructible complex and relative perverse complex on $\XS$, generalizing the notion of relative local system of coherent $\pXOS$-modules considered by Deligne. For each stratum $X_\alpha$ of a suitable complex stratification $(X_\alpha)$ of $X$, each cohomology of such a complex is locally isomorphic to $p_{X_\alpha}^{-1}G_\alpha$ for some \emph{coherent} $\sho_S$-module $G_\alpha$. We prove
here a Riemann-Hilbert correspondence for any $\dim S$ (the
$\dim S=1$ case was proved in~\cite{FMFS1}).

\begin{theoremintro}[Relative Riemann-Hilbert correspondence] \label{RHH}
The functors
\begin{align*}
\pSol_X: \rD^{\rb}_{\rhol}(\DXS)&\to \rD^\rb_{\cc}(\pXOS) \\
\RH^S_X:\rD^\rb_{\cc}(\pXOS)&\to \rD^{\rb}_{\rhol}(\DXS)
\end{align*}
are quasi-inverse equivalences of categories.
\end{theoremintro}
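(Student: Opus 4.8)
The plan is to show that the two canonical natural transformations linking $\pSol_X$ and $\RH^S_X$ are isomorphisms, after first reducing the assertion to the abelian hearts by a t-exactness argument and then carrying out a dévissage on~$X$ whose base case is Deligne's relative Riemann--Hilbert correspondence for flat relative connections.

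We may take for granted, as the content of the preceding sections, that both functors are well defined on the stated categories; the nontrivial half is that $\RH^S_X(F)$ has regular holonomic cohomology, which is where the regularity of $F$, combined with the stability properties proved in the paper --- notably the stability of regular holonomicity under $\shd$-module pullback, Theorem~\ref{th:inverseimage} --- substitutes for the Bernstein--Sato theory unavailable in the relative situation. The defining formulas of the two functors supply canonical morphisms
\[
\RH^S_X\pSol_X(\shm)\to\shm \quand F\to\pSol_X\RH^S_X(F),
\]
natural in $\shm\in\rD^{\rb}_{\rhol}(\DXS)$ and $F\in\rD^{\rb}_{\cc}(\pOS)$ and built from evaluation and biduality; it is enough to prove that these are isomorphisms. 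The question is local on $\XS$. Since $\pSol_X$ and $\RH^S_X$ are exact for the natural t-structure on $\rD^{\rb}_{\rhol}(\DXS)$, whose heart is the abelian category of relative regular holonomic $\DXS$-modules, and for the relative perverse t-structure on $\rD^{\rb}_{\cc}(\pOS)$, we are reduced to the case where $\shm$ is a single relative regular holonomic $\DXS$-module and $F$ a single relative perverse complex.

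We then carry out a dévissage on~$X$, by induction on the dimension of the support. Fix a complex Whitney stratification $X=\bigsqcup_\alpha X_\alpha$ adapted to $\shm$ (resp.\ $F$), let $j\colon U\hto X$ be the inclusion of the open dense stratum and $i\colon Z=X\moins U\hto X$ the complementary closed embedding. Over $U$ the module $\shm|_U$ is a flat relative bundle and, up to shift, $F|_U$ is a relative local system of coherent $\pOS$-modules, so over $U$ the assertion is exactly Deligne's equivalence recalled in~\eqref{enum:1}; by the inductive hypothesis it also holds over $Z$, which has smaller dimension. The standard distinguished triangles expressing $\id$ through the pairs $j_!j^{-1}$, $Rj_*j^{-1}$, $i_*i^{-1}$, $i_*i^!$ and their $\shd$-module counterparts then reduce the assertion over $X$ to the compatibility of $\pSol_X$ and $\RH^S_X$ with these direct and inverse images on the relevant subcategories. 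Compatibility with the closed direct image $\Di_*$ is formal (proper pushforward commutes with $\pSol_X$, and regular holonomicity is stable under it); the serious point is the compatibility of $\RH^S_X$ with $Rj_*$ and $j_!$ along the open immersion~$j$. After shrinking and applying an embedded resolution of~$Z$ --- admissible because regular holonomicity is preserved under projective $\shd$-module pushforward and under pullback (Theorem~\ref{th:inverseimage}) --- one reduces to $Z$ a normal crossing divisor, where the statement is the relative analogue of the comparison theorem of Kashiwara and Mebkhout identifying the moderate-growth (tempered) solution complex with the formal one; this is exactly where regularity enters.

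This comparison step is, as one expects, the main obstacle: in the relative context there is no $V$-filtration along $Z$ coming from a relative Bernstein--Sato polynomial valid over all of~$S$, so one must argue locally on $S$ and keep coherence under control by means of the stability theorems. A convenient way to organize the argument is a further induction on $\dim S$. One shows that a morphism in $\rD^{\rb}_{\rhol}(\DXS)$, resp.\ in $\rD^{\rb}_{\cc}(\pOS)$, is an isomorphism as soon as its base change to every smooth curve $C\subset S$ is, and that $\pSol_X$ and $\RH^S_X$ commute with such base changes --- invoking Theorem~\ref{th:inverseimage} and the base-change results of the earlier sections to stay within the regular holonomic world and to rule out jumps of the characteristic multiplicities. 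This reduces everything to the case $\dim S=1$ settled in~\cite{FMFS1}, whose own base case $\dim S=0$ is the classical Riemann--Hilbert correspondence of Kashiwara and Mebkhout.
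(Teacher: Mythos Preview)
Your sketch has the right shape in its first half --- the dévissage on $\dim\Supp_X$ using resolution to a normal-crossing divisor and the Deligne-extension case as anchor is exactly how the paper proves that $\RH^S_X(F)$ is regular holonomic and that $F\simeq\pSol_X\RH^S_X(F)$ (Section~\ref{sec:synopsis}, first part). The gap is in the second half, where you attempt to short-circuit the hard step.

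The paper's main technical burden is \emph{not} the dévissage on~$X$; it is proving that $\Rhom_{\DXS}(\shm,\shn)$ lies in $\rD^\rb_{\cc}(\pOS)$ for all regular holonomic $\shm,\shn$ (statement~\eqref{Rhom}). This constructibility is what licenses the Nakayama-type argument \cite[Prop.\,2.2]{MFCS1} that reduces the isomorphism $\beta_\shm$ to the absolute case pointwise. You try to bypass this by reducing to $\dim S=1$ via base change to curves, asserting that ``$\pSol_X$ and $\RH^S_X$ commute with such base changes --- invoking Theorem~\ref{th:inverseimage} and the base-change results of the earlier sections.'' But those results do not say this: Theorem~\ref{th:inverseimage} concerns $\Df^*$ for $f:Y\to X$, and Proposition~\ref{LReginverse} and Theorem~\ref{th:Regdirect} assert that regular holonomicity is \emph{preserved} under $L\pi^*$ and $R\pi_*$, not that $\RH^S_X$ \emph{commutes} with $L\pi^*$. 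A base-change isomorphism $L\pi^*\RH^S_X(F)\simeq\RH^{S'}_X(L\pi^*F)$ would require controlling the tempered sheaf $\sho^{t,S}_{\XS}$ under pullback in~$S$; the paper only proves the much weaker Lemma~\ref{Lr} (commutation of $\rho_{S*}$ with $L\pi^*$ on $\rc$-sheaves), and the full statement is neither established nor obviously true. Without it, your reduction to \cite{FMFS1} does not go through.

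A secondary issue: your t-exactness claim for $\RH^S_X$ is not available \emph{a priori} --- it is a consequence of Theorem~\ref{RHH}, not an input to it. The paper works with complexes throughout and does not invoke t-exactness. The actual proof of~\eqref{Rhom} is a rather intricate triple induction on $(\dim S,\dim\Supp_X\shm,\text{generic rank of }G)$ using flattening (Proposition~\ref{prop:Rossi}), the adjunction of Lemma~\ref{lem:adjunctionpi}, and the explicit rank-one D-type computation of Lemma~\ref{lem:D-type}; this is the substance your sketch is missing.
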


The proof of the correspondence is now made possible owing to \cite{MFP2}, as we construct the relative Riemann-Hilbert functor $\RH^S_X$ by means of the site $X_{\sa}\times S$ introduced in~\cite{MFP2} instead of the subanalytic site $X_{\sa}\times S_{\sa}$ considered in \cite{MFP1} and used in \hbox{\cite{MFCS2,FMFS1}}. The main results are the exact analogues of Theorems 2 and 1 in \cite{FMFS1} for $S$ of arbitrary dimension, with the same notation for relative regular holonomic complexes and $S$\nobreakdash-$\CC$\nobreakdash-constructible complexes, and the same meaning for the perverse solution functor $\pSol_X$. We change the order of the theorems with respect to \loccit\ since their proofs are done the other way round in the present paper.

The strategy for the proof of these theorems is similar to that of \cite{FMFS1}: it is made precise in Sections \ref{subsec:pfinv} and \ref{sec:synopsis}. However, when $\dim S\geq2$, we have to distinguish between $S$\nobreakdash-torsion-freeness and $S$-flatness (also called strictness in \loccit). While this is not much trouble for some of the results, owing to the analysis of t-structures made in~\cite{FMF1}, we are led to using Rossi's flattening theorem of \cite{Rossi68} at some point.

Along the way, we complement the results of \cite{MFCS2,FMFS1} by giving further characterizations and properties of regular holonomic $\DXS$-modules or objects of the bounded derived category $\rD^{\rb}_{\rhol}(\DXS)$, which are needed for the proof.\enlargethispage{\baselineskip}%

\subsubsection*{Comparison with other works on regular holonomic $\shd$-modules with parameters}
\begin{enumerate}
\item
In \cite{D-G-S11}, the authors introduce a formal parameter $\hbar$ and the corresponding rings $\CC\lcr\hbar\rcr$, $\sho_X\lcr\hbar\rcr$, $\DX\lcr\hbar\rcr$ denoted by $\DX^\hbar$. They define the notion of regular holonomic $\DX^\hbar$\nobreakdash-module by asking that the restriction to the closed point $\hbar=0$ is a regular holonomic $\DX$-module which is different from our point view since we ask regular holonomicity at any value of the parameter. They prove a Riemann-Hilbert correspondence with the category of $\CC$-constructible complexes of $\CC_X\lcr\hbar\rcr$-modules by means of a functor they denote by $\TH_\hbar$, which would correspond to the notation $\RH_X^S$ in the present paper. The main difference with the notions introduced in the present paper (or more precisely with those introduced in \cite{FMFS1}, which only considered the case where $\dim S=1$) is that the sheaves considered in \cite{D-G-S11} are sheaves of $\CC\lcr\hbar\rcr$-modules on $X$, while we consider sheaves of $\pXOS$-modules
on the product space $\XS$. The authors also consider the restriction at the generic point of $\CC\lcr\hbar\rcr$, that~is, modules over the ring $\DX\lpr\hbar\rpr$ which has no counterpart in our setting.

\item
If instead of considering $\DX$-modules one considers flat meromorphic bundles on $X$ (\ie meromorphic connections) with fixed pole divisor, there are many works in the algebraic or formal setting, with base fields or rings that can be different from~$\CC$. Since the literature is vast, let us only mention \cite{A-B01b} and the more recent preprint~\cite{H-DS-T21}. In the latter preprint, the authors extend Deligne's equivalence result~\cite[Th.\,II.5.9]{De} to an equivalence with a formal parameter. The corresponding equivalence in the present article would be that for $\DXS$-modules of D-type, as defined in Section \ref{sec:4}.
\item
Wu \cite{Wu21} has established a Riemann-Hilbert correspondence similar to that of Theorem \ref{RHH} in the case of Alexander complexes that occur in Example \eqref{enum:4} above.
\end{enumerate}

\subsubsection*{Organization of the paper}
In Section~\ref{sec:2}, we review and complete various results on coherent $\DXS$-modules obtained in our previous works \cite{MFCS1,MFCS2, FMF1,FMFS1}. We emphasize the behavior of holonomicity with respect to pullback and proper pushforward both with respect to $X$ and to the parameter space $S$. Note that the parameter space~$S$ is always assumed to be a complex manifold, while one should be able to generalize various statements to any complex analytic space.
For example, the sheaf $\DXT$ is well-defined if $T$ is a possibly singular and non reduced complex analytic \emph{space} with sheaf of functions~$\sho_T$. In this work, we restrict the setting to those complex spaces~$T$ that are embeddable in a smooth complex manifold $S$, with ideal $\shi_T\subset\sho_S$, and we regard $\DXT$-modules as $\DXS$-modules annihilated by~$\shi_T$.

In Section \ref{sec:3}, we complement various results of \cite{FMFS1} on the regularity property. In~particular, we give details on \cite[Rem.\,1.11]{FMFS1}. Furthermore, the relation with the usual notion of regularity of holonomic $\shd_{\XS}$-modules is made precise in Section~\ref{subsec:3b}. Stability under base pullback and proper base pushforward is established in Section~\ref{subsec:3c} (as usual, under a goodness assumption for proper pushforward). The case of pushforward with respect to a proper morphism $f:X\to Y$ (with a goodness assumption) has already been treated in \cite{MFCS2}, and stability by pullback, which is the content of Theorem \ref{th:inverseimage} needs first a detailed analysis of holonomic $\DXS$-modules of D-type.

This analysis is performed in Section \ref{sec:4}. The reasoning made in \cite{MFCS2} when \hbox{$\dim S=1$}, relying on the property that the torsion-free quotient of a coherent $\sho_S$-module is locally free, has to be adapted by using base changes with respect to~$S$, so that the base functoriality properties considered in Sections \ref{sec:2} and \ref{sec:3} are most useful.
Theorem~\ref{th:inverseimage} is proved in Section \ref{subsec:pfinv} in a way similar to that done in \cite{FMFS1}. In~Section \ref{subsec:5b}, we give a characterization of relative regular holonomicity in terms of formal solutions.

Section \ref{sec:synopsis} gives details on the main steps of the proof of Theorem \ref{RHH}. The strategy is similar to the one in \cite{MFCS2}, although we need various new technical details contained in Section \ref{S:RHS}. On~the one hand, the construction of the Riemann-Hilbert functor $\RH_X^S$ is now performed by using the partial subanalytic site $X_{\sa}\times S$ introduced in \cite{MFP2}. On the other hand, the comparison between two definitions of Deligne's extension, one using the standard notion of moderate growth and the other one obtained via the complex of tempered holomorphic functions on the partial subanalytic site $X_{\sa}\times S$, is also done in Section~\ref{S:RHS}, relying once more on results of \cite{MFP2}. We~emphasize that the proof given here is simpler, when $\dim S=1$, than that given in \cite{MFCS2}.

\subsubsection*{Acknowledgements}
We thank Luca Prelli and Pierre Schapira for useful discussions and suggestions. We thank Hélène Esnault for pointing out to us the preprint \cite{H-DS-T21}, and its authors for interesting discussions. We are deeply thankful to the referee whose comments helped us to rethink and thus to improve our work.

\section{A review on relative coherent and holonomic \texorpdfstring{$\shd$}{D}-modules}\label{sec:2}

For complex analytic manifolds $X$ and $S$, we~denote~by
\[
p_X:\XS\to S\quad\text{and}\quad q_S:\XS\to X
\]
the projections, and we use the notation $p,q$ when there is no confusion possible. The sheaf of relative differential operators $\DXS$ is naturally defined as
\begin{equation}\label{eq:DXS}
\DXS=q^{-1}\DX\otimes_{q^{-1}\sho_X}\sho_{\XS}=\sho_{\XS}\otimes_{q^{-1}\sho_X}q^{-1}\DX.
\end{equation}

\subsection{Coherence, goodness and holonomicity}
We adapt the definitions of \cite[\S4.7]{Ka2}, which we refer to for properties and proofs.

\begin{definition}\label{def:qcoh}
We say that an $\sho_{\XS}$-module $\shf$ is
\begin{enumerate}
\item\label{def:qcoh1}
\emph{$\sho$-good on $\XS$} if, on any relatively compact open set $U$ of $\XS$, $\shf_{|U}$ is the direct limit of an increasing sequence of $\sho_U$-coherent submodules (equivalently, the direct limit of an inductive system of $\sho_U$-coherent modules).

\item\label{def:qcoh2}
\emph{$\sho$-quasi-coherent} if each point of $\XS$ has an open neighborhood $U$ on which~$\shf$ is $\sho$-good.
\end{enumerate}
We say that a $\DXS$-module $\shm$ is
\begin{enumerate}\setcounter{enumi}{2}
\item\label{def:qcoh3}
\emph{good on $\XS$} if it is $\sho$-good on $\XS$ and $\DXS$-coherent.
\end{enumerate}
\end{definition}

The category $\Mod_{\Oqcoh}(\sho_{\XS})$ of $\sho$-quasi-coherent $\sho_{\XS}$-modules is an abelian full subcategory of $\Mod(\sho_{\XS})$ closed under extensions. Let $Y$ be a hypersurface of~$X$. We denote by $\cbbullet(*Y)$, instead of $\cbbullet(*(\YS))$, the localization functor along the hypersurface $\YS$ of $\XS$. We will use the notation $X^*=X\moins Y$.

\pagebreak[2]
\begin{lemma}\label{lem:localization}\mbox{}
\begin{enumerate}
\item\label{lem:localization1}
Let $\shl$ be an $\sho$-quasi-coherent module. Then $\shl(*Y)$ is $\sho$-quasi-coherent.
\item\label{lem:localization2}
Let $\shl$ be an $\sho$-quasi-coherent module supported on $\YS$ and localized along $\YS$ (i.e., $\shl\simeq \shl(*Y)$). Then $\shl=0$.
\item\label{lem:localization3}
Let $\shm\To{\phi} \shn$ be a morphism of $\sho$-quasi-coherent modules which are localized along $\YS$. If the restriction of $\phi$ at $\XsS$ is an isomorphism (\resp zero) then~$\phi$ is an isomorphism (\resp zero).
\end{enumerate}
\end{lemma}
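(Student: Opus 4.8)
The plan is to prove the three assertions of Lemma~\ref{lem:localization} by reducing everything to the already-established behaviour of $\sho$-quasi-coherence under localization, together with the faithfulness of restriction to the complement $\XsS$ of $\YS$. Since all statements are local on $X$ (and trivially compatible with the parameter $S$), I would fix a point and choose local coordinates so that $Y=\{t=0\}$ for a single holomorphic function $t$ on $X$; then $\cbbullet(*Y)$ is localization with respect to the multiplicatively closed set $\{t^k\}_{k\geq0}$, and $\XsS=\{t\neq0\}\times S$ is an open subset whose complement is $\YS$.

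For part~\eqref{lem:localization1}, the point is that localization along a hypersurface preserves $\sho$-quasi-coherence. First I would recall that $\shl(*Y)=\varinjlim_k \shhom_{\sho_{\XS}}(\shi_{\YS}^k,\shl)$, or equivalently, in the local coordinate $t$, the sheaf $\sho_{\XS}[t^{-1}]\otimes_{\sho_{\XS}}\shl$; the latter description makes clear that on a relatively compact open $U$ it is the direct limit of the system $t^{-k}\shl_{|U}$, each term of which is $\sho$-good whenever $\shl$ is. Since a direct limit (over a directed set) of $\sho$-good modules is $\sho$-good by Definition~\ref{def:qcoh}\eqref{def:qcoh1}, one concludes; for $\sho$-quasi-coherence one simply works on the neighbourhoods on which $\shl$ is $\sho$-good. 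The only mild subtlety is checking that the two descriptions of $\shl(*Y)$ agree and that $\shhom_{\sho_{\XS}}(\shi_{\YS}^k,\shl)$ stays $\sho$-quasi-coherent --- this follows because $\shi_{\YS}^k$ is locally free of rank one, so locally $\shhom_{\sho_{\XS}}(\shi_{\YS}^k,\shl)\simeq\shl$ up to the twist by $t^{-k}$.

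For part~\eqref{lem:localization2}, suppose $\shl$ is $\sho$-quasi-coherent, supported on $\YS$, and satisfies $\shl\simeq\shl(*Y)$. On the one hand, $\shl(*Y)$ is obtained from $\shl$ by inverting $t$, so every local section is annihilated by no power of $t$ once we pass to $\shl(*Y)$; on the other hand, $\Supp\shl\subseteq\YS=\{t=0\}$ means that $\sho$-locally every coherent submodule of $\shl$ is annihilated by some power of $t$ (by the analytic Nullstellensatz applied to the coherent annihilator ideal). Writing $\shl_{|U}=\varinjlim_i\shg_i$ with $\shg_i$ coherent and $t^{N_i}\shg_i=0$, the isomorphism $\shl\simeq\shl[t^{-1}]$ forces each $\shg_i$ to map to zero in the limit, hence $\shl_{|U}=0$; since $U$ ranges over a covering, $\shl=0$.

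For part~\eqref{lem:localization3}, let $\phi\colon\shm\to\shn$ be a morphism of $\sho$-quasi-coherent modules with $\shm\simeq\shm(*Y)$ and $\shn\simeq\shn(*Y)$. Form $\ker\phi$, $\operatorname{coker}\phi$ (and, for the ``zero'' variant, $\operatorname{im}\phi$); these are $\sho$-quasi-coherent because $\Mod_{\Oqcoh}(\sho_{\XS})$ is abelian. Localizing along $\YS$ is an exact functor (it is flat base change along $\sho_{\XS}\to\sho_{\XS}[t^{-1}]$), and since $\shm,\shn$ are already localized, so are $\ker\phi$, $\operatorname{coker}\phi$, $\operatorname{im}\phi$. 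The hypothesis says that the restriction of $\phi$ to $\XsS$ is an isomorphism (resp.\ zero), i.e.\ $\ker\phi$ and $\operatorname{coker}\phi$ (resp.\ $\operatorname{im}\phi$) vanish on $\XsS=\XS\setminus\YS$; being $\sho$-quasi-coherent and localized along $\YS$, they are supported on $\YS$, so part~\eqref{lem:localization2} applies and they vanish identically. Hence $\phi$ is an isomorphism (resp.\ zero). I expect the only real point requiring care is the exactness and ``no new torsion'' property of the localization functor on $\sho$-quasi-coherent modules --- this is where one genuinely uses that $\shi_{\YS}$ is locally principal and that the category is abelian and closed under extensions --- while the rest is a formal application of parts~\eqref{lem:localization1} and~\eqref{lem:localization2}.
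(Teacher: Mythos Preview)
Your proof is correct and follows essentially the same strategy as the paper's: locally write the $\sho$-quasi-coherent module as a filtered union of coherent submodules, use that coherent sheaves supported on $\YS$ are killed by $(*Y)$, and deduce part~\eqref{lem:localization3} from parts~\eqref{lem:localization1} and~\eqref{lem:localization2} applied to the kernel, cokernel, and image of $\phi$. The only cosmetic difference is that for the ``zero'' case in~\eqref{lem:localization3} the paper applies the ``isomorphism'' case to $\ker\phi\hookrightarrow\shm$ rather than working directly with $\operatorname{im}\phi$.
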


\begin{proof}\mbox{}
\begin{enumerate}
\item
By definition any point of $\XS$ has an open neighborhood $U$, which we can suppose
to be a relatively compact open set,
on which
$\shl_{|U}\!=\!\bigcup_i\shl_i$ is the direct limit of an increasing sequence of $\sho$-coherent submodules $\shl_i$. Since $\shl_i(*Y)$, being equal to $\bigcup_n\shi_{\YS}^{-n}\shl_i$, is $\sho$-quasi-coherent for every $i$, so is $\shl_{|U}(*Y)=\bigcup_i\shl_i(*Y)$.
\item
The question is local. Since $\shl$ is an $\sho$-quasi-coherent module supported on $\YS$, we can suppose (up to shrinking the neighborhood) that it is a direct limit $\shl\!=\!\bigcup_i\shl_i$ of an increasing sequence of $\sho$-coherent submodules $\shl_i$ which are supported on $\YS$. Therefore $\shl_i(*Y)=0$ for every $i$ and thus $\shl(*Y)=\bigcup_i\shl_i(*Y)=0$.
\item
Let us denote by $\shl$ and $\shl'$ respectively the kernel and the cokernel of $\phi$. They are $\sho$-quasi-coherent modules. If the restriction of $\phi$ to $\XsS$ is an isomorphism, they are supported on $\YS$. By applying the localization functor along $Y$, we get the following exact sequence of $\sho$-quasi-coherent modules (by the first point)
\[
0\to \shl(*Y)\to \shm(*Y)\to \shn(*Y)\to \shl'(*Y)\to 0.
\]
By hypothesis, $\shm(*Y)\simeq \shm$ and $\shn(*Y)\simeq \shn$, thus
$\shl(*Y)\simeq \shl$, $\shl'(*Y)\simeq \shl'$ and by the previous point $\shl$ and $\shl'$ are zero.
It $\phi_{|\XsS}$ is zero, $\shl\to \shm$ is an isomorphism on $\XsS$, hence it is an isomorphism and so $\phi$ is
zero.\qedhere
\end{enumerate}
\end{proof}

Let $\DXS(*Y)$ be the coherent sheaf of rings $\sho_{\XS}(*Y)\otimes_{\sho_{\XS}}\DXS$. Then any coherent $\DXS$- or $\DXS(*Y)$-module is $\sho$\nobreakdash-quasi-coherent. On the other hand, any $\sho$-quasi-coherent $\DXS$-submodule $\shn$ of a coherent $\DXS$-module~$\shm$ is $\DXS$-coherent. The next lemma follows by an easy adaptation of \cite[Prop.\,4.23]{Ka2}.
\begin{lemma}\label{lem:cplxqgood}\mbox{}
\begin{enumerate}
\item\label{lem:cplxqgood1}
The category of coherent, \resp $\sho$-good, $\DXS$-modules, is abelian and stable by extensions in $\Mod(\DXS)$, and $\rD^\rb_\coh(\DXS)$, \resp $\rD^\rb_\good(\DXS)$, is a triangulated full subcategory of $\rD^\rb(\DXS)$.
\item\label{lem:cplxqgood2}
Let $\shm^\cbbullet$ be a bounded complex of $\sho$-good $\DXS$-modules. Then each cohomology $\shh^\ell(\shm^\cbbullet)$ is $\sho$-good on $\XS$.
\item\label{lem:cplxqgood3}
On any relatively compact open subset $U\!\Subset\!\XS$, each good $\DXS$\nobreakdash-module~$\shn$ comes in an exact sequence
\[
0\to\shn'\to\DXS\otimes_{\sho_{\XS}}\shl\to\shn_{|U}\to0
\]
with $\shl$ being $\sho_U$-coherent and $\shn'$ being good on $U$. In particular, $\shn_{|U}$ has a coherent $F_\sbullet\DXS$-filtration.
\end{enumerate}
\end{lemma}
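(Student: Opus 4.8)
The plan is to reduce the three assertions to a single elementary remark: a finite-type $\sho_{\XS}$-submodule $\shg$ of an $\sho$-quasi-coherent module $\shm$ is $\sho$-coherent. Indeed, working locally, $\shg$ is generated by finitely many sections whose germs all lie in one term of an increasing sequence of $\sho$-coherent submodules exhausting $\shm$, so $\shg$ is locally a finite-type subsheaf of an $\sho$-coherent sheaf, hence $\sho$-coherent; and $\DXS$-coherent modules are $\sho$-quasi-coherent by the excerpt. Two consequences: first, since the intersection of two $\sho$-coherent subsheaves is $\sho$-coherent, a sub-object or a quotient of an $\sho$-good module is again $\sho$-good (for a quotient by $\shk$, take the images of an exhausting coherent system of $\shm$, which are of the form $\shm_i/(\shm_i\cap\shk)$), so that, combined with $\Mod_{\Oqcoh}(\sho_{\XS})$ being abelian and closed under extensions (excerpt), the $\sho$-good $\sho_{\XS}$-modules form an abelian subcategory of $\Mod(\sho_{\XS})$ stable by extensions; secondly --- the only point needing a little care --- a $\DXS$-coherent module $\shn$ is $\sho$-good, for if $U\Subset\XS$ one covers the compact $\overline U$ by finitely many opens on which $\shn$ is $\DXS$-generated by finitely many sections, lets $\shl\subseteq\shn_{|U}$ be the finite-type (hence $\sho_U$-coherent) $\sho_{\XS}$-submodule they generate, so $\DXS\cdot\shl=\shn_{|U}$, and then $F_k\DXS\cdot\shl$ (with $F_\sbullet\DXS$ the order filtration) is an increasing sequence of $\sho_U$-coherent submodules exhausting $\shn_{|U}$. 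In particular good $\DXS$-module $=$ $\DXS$-coherent $\DXS$-module.

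From here (1) follows quickly: the coherent case is the standard theory of coherent modules over the coherent sheaf of rings $\DXS$ --- in particular kernels, cokernels and extensions of coherent modules are coherent, so $\rD^\rb_\coh(\DXS)$ is a triangulated full subcategory of $\rD^\rb(\DXS)$ --- and the $\sho$-good (hence also the good) case follows from the corresponding property of $\sho$-good $\sho_{\XS}$-modules obtained above. For (2), $\shh^\ell(\shm^\cbbullet)$ is a subquotient of $\shm^\ell$ in $\Mod(\DXS)$ built from kernels, images and cokernels of the differentials, hence $\sho$-good by (1). For (3), with $\shl$ as above the evaluation map $\DXS\otimes_{\sho_{\XS}}\shl\twoheadrightarrow\shn_{|U}$ is surjective with kernel $\shn'$ which is $\DXS$-coherent (a kernel of a morphism of coherent modules over the coherent ring $\DXS$), hence good on $U$, and $F_k\shn:=F_k\DXS\cdot\shl$ satisfies $F_j\DXS\cdot F_k\shn=F_{j+k}\shn$, the asserted coherent $F_\sbullet\DXS$-filtration.

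The main --- essentially the only --- obstacle is the passage from $\DXS$-coherence (which gives only that $\shn$ is \emph{locally} $\sho$-good) to $\sho$-goodness on $\XS$ (on every relatively compact open), \ie the construction of one $\sho_U$-coherent submodule of $\shn_{|U}$ generating it over $\DXS$; this rests on compactness of $\overline U$ and on the finite-type/quasi-coherence remark, and is exactly the ``easy adaptation of \cite[Prop.\,4.23]{Ka2}'' the paper alludes to.
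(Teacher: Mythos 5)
There is a genuine gap, and it sits exactly at the step you yourself single out: the passage from $\DXS$-coherence to $\sho$-goodness, culminating in the claim that ``good $\DXS$-module $=$ $\DXS$-coherent $\DXS$-module''. Your construction of $\shl$ is not justified: if you cover $\overline U$ by finitely many opens $V_\alpha$ and pick finitely many local $\DXS$-generators on each, the $\sho_{\XS}$-subsheaf of $\shn_{|U}$ they generate need not be of finite type, so the parenthetical ``(hence $\sho_U$-coherent)'' does not follow from your (correct) opening remark. The trouble is at boundary points of the $V_\alpha$: the stalks of the generated subsheaf jump when one crosses $\partial V_\beta$, while its sections on a connected neighbourhood $W$ of a point of $\partial V_\beta$ are constrained on $W\moins V_\beta$ and, by analytic continuation, cannot reach the larger stalks inside $W\cap V_\beta$; already for the subsheaf of $\sho_{\XS}^{\oplus 2}$ generated by $(1,0)$ on $\XS$ and $(0,1)$ on some $V_2\Subset\XS$ one sees that no finite set of sections near a point of $\partial V_2$ generates it. Whether a clever choice of cover and generators can always avoid this is precisely the (nontrivial) question of whether every coherent $\DXS$-module is good, and it is not obtainable this cheaply: if your reduction were correct, the goodness hypotheses carried throughout the paper and its sources (the $f$-goodness in Proposition \ref{prop:qcoh}\eqref{prop:qcohf2}, the $\pi$-goodness in Proposition \ref{prop:basepushhol} and Theorem \ref{th:Regdirect}) would be vacuous for proper maps, and item \eqref{lem:cplxqgood3} of Lemma \ref{lem:cplxqgood} would not be restricted to \emph{good} modules. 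The intended adaptation of \cite[Prop.\,4.23]{Ka2} goes the other way: it \emph{uses} goodness. Choose $V$ with $U\Subset V\Subset\XS$ and an exhaustion $\shn_{|V}=\bigcup_i\shl_i$ by $\sho_V$-coherent submodules; then $\shn_{|V}=\bigcup_i\DXS\!\cdot\shl_i$, and since $\shn$ is locally finitely $\DXS$-generated, the open sets where $\DXS\!\cdot\shl_i=\shn$ increase and cover $V\supseteq\overline U$, so by compactness a single $\shl:=\shl_{i|U}$ generates $\shn_{|U}$; the kernel $\shn'$ of $\DXS\otimes_{\sho_{\XS}}\shl\to\shn_{|U}$ is then $\DXS$-coherent and $\sho$-good because each $\shn'\cap(F_k\DXS\otimes_{\sho_{\XS}}\shl)$ is coherent, and $F_k\DXS\!\cdot\shl$ is the asserted coherent filtration.

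A secondary inaccuracy concerns part \eqref{lem:cplxqgood1}: the blanket statement that ``a sub-object or a quotient of an $\sho$-good module is again $\sho$-good'' is false for arbitrary $\DXS$-subobjects (the subsheaf of $\sho_{\XS}$ of sections supported in a fixed open subset is a $\DXS$-submodule which is not even $\sho$-quasi-coherent), and the invoked fact that the intersection of two coherent subsheaves is coherent does not apply, since the submodule $\shk$ you intersect with is not coherent. What is true, and is all that abelianness and stability by extensions require, is closure under kernels, images and cokernels of morphisms \emph{between} $\sho$-good modules, and this does follow from your opening remark: for $\phi\colon\shm\to\shn$ with exhaustions $\shm_{|U}=\bigcup_i\shm_i$ and $\shn_{|U}=\bigcup_j\shn_j$, the image is $\bigcup_i\phi(\shm_i)$ with each $\phi(\shm_i)$ a finite-type submodule of the $\sho$-quasi-coherent $\shn$, hence coherent; the kernel is $\bigcup_i\ker(\shm_i\to\phi(\shm_i))$; and the cokernel, which is $\sho$-quasi-coherent because $\Mod_{\Oqcoh}(\sho_{\XS})$ is abelian, is exhausted by the images of the $\shn_j$, again coherent by the same remark. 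So parts \eqref{lem:cplxqgood1} and \eqref{lem:cplxqgood2} are reparable along your lines, but part \eqref{lem:cplxqgood3} as you prove it (for all coherent modules, with goodness as a consequence rather than a hypothesis) is not.
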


\begin{remark}\label{rem:localiz}\mbox{}
\begin{enumerate}
\item\label{rem:localiz1}
With a slight adaptation of the proof of Lemma \ref{lem:localization}\eqref{lem:localization1} we conclude that if $\shm$ is good (\resp $\DXS$-coherent) then $\shm(*Y)$ is $\sho$-good (\resp $\sho$-quasi-coherent).
\item\label{rem:localiz2}
If a local section $m$ of an $\sho$-quasi-coherent module satisfying $\shm=\shm(*Y)$ is zero when restricted to $\XsS$, then it is zero. Indeed if $m$ is defined on $U\times U_S$, $m$ generates an $\sho$-coherent submodule of $\shm(*Y)_{|U\times U_S}$ which is supported on $\YS$; given a local defining equation $f=0$ for $Y$, we have $f^km=0$ for some $k$, thus $m=0$.
\end{enumerate}
\end{remark}

\subsubsection*{Characteristic varieties}
To any object $\shm$ of $\Mod_\coh(\DXS)$ is associated, by means of local coherent $\sho_{\XS}$-filtrations, its characteristic variety $\Char(\shm)$, which is contained in $T^*X\times S$. A~coherent $\DXS$-module is \emph{holonomic} if its characteristic variety is contained in $\Lambda\times S$, with~$\Lambda$ closed complex analytic Lagrangian $\C^*$-homogeneous in $T^*X$. Correspondingly are defined the derived categories $\rD^\rb_\coh(\DXS)$ and $\rD^\rb_\hol(\DXS)$ and the characteristic variety $\Char(\shm)$ for such objects $\shm$.

The structure of the characteristic variety of a holonomic $\DXS$-module~$\shm$ is described in \cite[Lem.\,2.10]{FMF1}: for each irreducible component $\Lambda_i$ of $\Lambda$ ($i\in I$) there exists a locally finite family $(T_{ij})_{j\in J_i}$ of closed analytic subsets of $S$ such that
\begin{equation}\label{eq:chardec}
\Char(\shm)=\bigcup_{i\in I}\bigcup_{j\in J_i}\Lambda_i\times T_{ij}=\bigcup_{i\in I}\Lambda_i\times T_i
\qquad \hbox{with } T_i=\bigcup_jT_{ij}.
\end{equation}
The projection to $X$ of $\Lambda_i$ is an irreducible closed analytic subset of $X$ that we denote by $Z_i$. These subsets form a locally finite family of closed analytic subsets of $X$. We~have
\begin{equation}\label{eq:strictperv2}
\Supp(\shm)=\bigcup_{i\in I}Z_i\times T_i,
\end{equation}
and we set
\[
\Supp_X(\shm)= \bigcup_{i\in I}Z_i,\quad \Supp_S(\shm)= \bigcup_{i\in I}T_i,
\]
that we call respectively the $X$-support (which is a closed analytic subset of $X$) and the $S$-support of $\shm$ (which may be not closed analytic if $I$ is infinite). Anyway, we~set $\dim\Supp_S(\shm)=\max_{i,j}\dim T_{ij}\leq\dim S$.

Any $\DXS$-coherent submodule or quotient module of $\shm$ is holonomic and its characteristic variety is the union of some irreducible components of $\Char(\shm)$.

\begin{lemma}\label{Lem:1}
The category $\Mod_{\hol}(\DXS)$ of holonomic $\DXS$-modules is closed under taking extensions in the category $\Mod(\DXS)$, and under taking sub-quotients in the category $\Mod_{\coh}(\DXS)$.
\end{lemma}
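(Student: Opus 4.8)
The plan is to deduce both closure properties from two ingredients already available: the stability of $\DXS$-coherence under extensions in $\Mod(\DXS)$, resp.\ under sub-quotients in $\Mod_\coh(\DXS)$, recorded in Lemma~\ref{lem:cplxqgood}\eqref{lem:cplxqgood1}, together with the behaviour of the characteristic variety in a short exact sequence. For the latter I would first recall the additivity property: for a short exact sequence $0\to\shm'\to\shm\to\shm''\to0$ in $\Mod_\coh(\DXS)$ one has $\Char(\shm)=\Char(\shm')\cup\Char(\shm'')$. This is the usual consequence of the local-filtration definition of $\Char$ adapted from \cite[\S4.7]{Ka2}: over a relatively compact open subset of $\XS$ one picks a good $\sho_{\XS}$-filtration of $\shm$; the filtrations it induces by intersection on $\shm'$ and by image on $\shm''$ are again good, the associated sequence $0\to\gr^F\shm'\to\gr^F\shm\to\gr^F\shm''\to0$ of $\gr^F\DXS$-modules is exact, and taking supports in $T^*X\times S$ yields the claimed equality.

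Granting this, the extension statement is immediate. If $0\to\shm'\to\shm\to\shm''\to0$ is exact in $\Mod(\DXS)$ with $\shm'$ and $\shm''$ holonomic, then $\shm$ is $\DXS$-coherent by Lemma~\ref{lem:cplxqgood}\eqref{lem:cplxqgood1}. Choosing $\C^*$-homogeneous closed complex analytic Lagrangian subsets $\Lambda',\Lambda''\subset T^*X$ with $\Char(\shm')\subset\Lambda'\times S$ and $\Char(\shm'')\subset\Lambda''\times S$, the union $\Lambda:=\Lambda'\cup\Lambda''$ is again a $\C^*$-homogeneous closed complex analytic Lagrangian subset of $T^*X$, and by additivity $\Char(\shm)=\Char(\shm')\cup\Char(\shm'')\subset\Lambda\times S$, so $\shm$ is holonomic.

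For sub-quotients, let $\shm\in\Mod_\hol(\DXS)$ and let $\shn',\shn\in\Mod_\coh(\DXS)$ with $\shn'\subset\shn\subset\shm$; I want $\shn/\shn'$ holonomic. Here I would simply invoke the fact noted just before the statement, that a $\DXS$-coherent submodule or quotient module of a holonomic module is holonomic (with characteristic variety a union of irreducible components of $\Char(\shm)$): applying it to the submodule $\shn\subset\shm$ and then to the quotient map $\shn\to\shn/\shn'$ shows that $\shn/\shn'$ is holonomic, and the case of a submodule of a quotient module is handled by performing these two steps in the reverse order.

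I do not expect a real obstacle here: everything reduces to the additivity of characteristic varieties in short exact sequences, whose only nontrivial input is the exactness of the associated graded sequence for a suitably chosen local good filtration on a relatively compact open set — a standard consequence of the coherent-filtration formalism underlying the definition of $\Char$ recalled above.
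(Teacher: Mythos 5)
Your proof is correct and follows essentially the same route the paper intends: the lemma is stated there without a separate proof, being an immediate consequence of the stability of coherence under extensions and sub-quotients (Lemma~\ref{lem:cplxqgood}\eqref{lem:cplxqgood1}) together with the behaviour of characteristic varieties in short exact sequences and the remark, made just before the statement, that coherent submodules and quotients of a holonomic module are holonomic. Your write-up merely supplies the standard good-filtration argument for the additivity $\Char(\shm)=\Char(\shm')\cup\Char(\shm'')$, which is exactly the implicit ingredient.
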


We say that a local section $m$ of a $\DXS$-module $\shm$ is an \emph{$S$\nobreakdash-tor\-sion section} if it is annihilated by some holomorphic function on $S$. The \emph{$S$-torsion submodule} $\shm_\rt$ of $\shm$ is the submodule consisting of $S$-torsion local sections. Note that if $\shm$ is a holonomic $\DXS$-module then
the $\DXS$-submodule $\shm_\rt$ is holonomic since it is an $\sho$-quasi coherent submodule of $\shm$. We~say that $\shm$ is \emph{$S$\nobreakdash-torsion-free} if $\shm_\rt=0$. We denote by~$\shm_{\tf}:\shm/\shm_\rt$ the torsion-free quotient.

We recall that the duality functor $\bD$ for $\DXS$-modules was considered in \cite[Def.\,3.4]{MFCS1} and that $\rD^\rb_{\hol}(\DXS)$ is stable under duality which is an involution.

\subsection{Behaviour with respect to pullback, pushforward and external product}

\begin{notation}\label{nota:fpi}\mbox{}
\begin{enumerate}
\item
For a holomorphic map $f:X\to X'$, we also denote by $f$ the morphism of complex manifolds $f\times\id:\XS\to \XpS$ and by $\Df^*$ and $\Df_*$ the pullback and pushforward functors in the derived category of relative $\shd$-modules.
\item
For a morphism $\pi:S\to S'$ between analytic spaces, we denote by $\pi^*$ and $R\pi_*$ the natural extension to the category of relative $\shd$-modules of the similar functors defined on the categories of $\sho$-modules
\end{enumerate}
\end{notation}

\begin{definition}\label{def:fpigood}
We say that a $\DXS$-module $\shm$ is \emph{$f$-($\sho$-)good} (\resp \emph{$\pi$-($\sho$-)good}) if it is ($\sho$-)good in some neighborhood of each fiber of $f$ (\resp $\pi$).
\end{definition}

We recall results concerning the behaviour with respect to a morphism of complex manifolds.

\begin{proposition}\label{prop:qcoh}
Let $\shm$ be a coherent $\DXS$-module.
\begin{enumerate}
\item\label{prop:qcohf1}
If $f:X'\to X$ is a holomorphic map of complex manifolds then, for each~$\ell\in\ZZ$, $\shh^\ell\Df^*\shm$ is an $f$-$\sho$-good $\DXpS$-module.
\item\label{prop:qcohf2}
Let $f:X\to X'$ be a proper holomorphic map. If $\shm$ is $f$-good (\resp $f$-good and holonomic), then for each~$\ell$, $\shh^\ell\Df_*\shm$ is $\DXpS$-coherent (\resp holonomic).
\end{enumerate}
\end{proposition}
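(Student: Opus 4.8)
The plan is to obtain both assertions by reductions parallel to \cite[\S4.7]{Ka2} in the relative setting, using Lemma~\ref{lem:cplxqgood} throughout and, for holonomicity, the structure~\eqref{eq:chardec}. For~\eqref{prop:qcohf1} the claim is local near a fibre of $f$, so I would fix a point of $X'$, choose a relatively compact Stein polydisc $U'$ around it mapping into a relatively compact Stein polydisc $U\subset X$, and shrink $S$ to a relatively compact Stein open; on such a product $\shm$ has a \emph{finite} free resolution $P_\bullet\to\shm$ with $P_j\cong\DXS^{r_j}$ (finite homological dimension of $\DXS$ together with Cartan~B). Applying $\Df^*$ and using $\Df^*\DXS\cong\shd_{\XpS\to\XS}$, where $\shd_{\XpS\to\XS}=\sho_{\XpS}\otimes_{f^{-1}\sho_{\XS}}f^{-1}\DXS$ is the transfer bimodule, one represents $\Df^*\shm$ by the bounded complex with terms $\bigl(\shd_{\XpS\to\XS}\bigr)^{r_j}$, a complex of $\sho$-good $\DXpS$-modules (since $F_j\bigl(\shd_{\XpS\to\XS}\bigr)=\sho_{\XpS}\otimes_{f^{-1}\sho_{\XS}}f^{-1}F_j\DXS$ is $\sho_{\XpS}$-coherent, $\shd_{\XpS\to\XS}$ is $\sho$-good). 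Lemma~\ref{lem:cplxqgood}\eqref{lem:cplxqgood2} then gives that each $\shh^\ell\Df^*\shm$ is $\sho$-good there; as this holds near every fibre, $\shh^\ell\Df^*\shm$ is $f$-$\sho$-good. This part is routine.

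For \eqref{prop:qcohf2} I would work locally on $X'$ and use properness to factor, over a relatively compact open $U'\subset X'$, the restriction $f^{-1}(U')\to U'$ as a closed embedding $i\colon f^{-1}(U')\hookrightarrow U'\times\PP^m$ followed by the projection $\mathrm{pr}\colon U'\times\PP^m\to U'$ --- finitely many holomorphic functions separating points and tangent vectors on the compact set $\ov{f^{-1}(U')}$, combined with $f$, give a proper injective immersion, hence a closed embedding onto a closed submanifold --- so that $\Df_*=\mathrm{pr}_*\circ\Di_*$ as $\shd$-module pushforwards. For the embedding, the relative version of Kashiwara's equivalence (\cf\cite{MFCS1}) shows that $\Di_*$ is exact and sends $f$-good (\resp $f$-good and holonomic) $\DXS$-modules to $\mathrm{pr}$-good (\resp $\mathrm{pr}$-good and holonomic) $\shd_{(U'\times\PP^m)\times S/S}$-modules supported on $i(f^{-1}(U'))\times S$; write $\shn$ for $\Di_*\shm$. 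For the projection I would compute $\mathrm{pr}_*\shn$ by the relative de Rham complex along the $\PP^m$-direction, put on $\shn$ a good filtration as furnished by Lemma~\ref{lem:cplxqgood}\eqref{lem:cplxqgood3}, and pass to the associated graded: the graded complex is $R\mathrm{pr}_*$ of a Koszul-type complex of $\sho$-coherent modules on $T^*\PP^m\times(U'\times S)$ whose support --- a piece of $\Char(\shn)$ --- is proper over $T^*U'\times S$ because $\PP^m$ is compact. Grauert's coherence theorem (which applies since $\PP^m$ is compact, with no hypothesis on the base $T^*U'\times S$) then yields that the graded pieces of $\shh^\ell\mathrm{pr}_*\shn$ are $\gr^F\DXpS$-coherent, hence $\shh^\ell\mathrm{pr}_*\shn$ is $\DXpS$-coherent; composing with the embedding step gives coherence of $\shh^\ell\Df_*\shm$.

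For the holonomic refinement the same filtered computation applies: if $\Char(\shm)\subset\Lambda\times S$, then $\Char(\shh^\ell\Df_*\shm)$ lies in the image of $\Lambda\times S$ under the microlocal correspondence attached to $f$, which --- by properness of $f$, \ie compactness of $\PP^m$ --- has the form $\Lambda'\times S$ with $\Lambda'\subset T^*X'$ a closed conic Lagrangian subset (Kashiwara's estimate for the characteristic variety of a proper direct image); by~\eqref{eq:chardec}, $\shh^\ell\Df_*\shm$ is then holonomic. I expect the main obstacle to be precisely the projection case of \eqref{prop:qcohf2}: it is the relative incarnation of the Grauert--Kashiwara coherence theorem for proper pushforward, and the delicate point is to organise the filtered de Rham complex and its associated-graded spectral sequence so that Grauert's theorem for $\sho$-modules can be applied over $T^*U'\times S$ --- in particular, to verify that the hypothesis ``$f$-good'' (the existence, near each fibre, of a good filtration) is exactly what makes the relevant supports proper over $T^*U'\times S$ and the filtered pushforward degenerate appropriately. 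The remaining ingredients --- the dévissage for $\Df^*$, the factorisation of a proper map, and the relative Kashiwara equivalence --- are either elementary or already available in \cite{MFCS1,Ka2}.
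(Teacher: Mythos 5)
Your treatment of \eqref{prop:qcohf1} is essentially the argument the paper has in mind (it is the adaptation of \cite[Prop.\,2.1]{L-S87} that the proof in the text invokes), but the localization is off. You fix a relatively compact $U'\subset X'$ around a point, whereas $f$-$\sho$-goodness requires $\sho$-goodness on a neighbourhood of a whole fibre $f^{-1}(x)\times\{s\}$, which is in general non-compact and not contained in any relatively compact open set; what your choice of $U'$ yields is only $\sho$-quasi-coherence, so the closing sentence ``as this holds near every fibre'' is not justified by what precedes it. The fix is immediate: localize on the target, i.e.\ choose $U\Subset X$ and $V\Subset S$ on which $\shm$ admits a finite free resolution, and observe that $\Df^*\shm$ is then represented on all of $f^{-1}(U)\times V$ --- which \emph{is} a neighbourhood of each fibre over $U\times V$ --- by a bounded complex whose terms are finite sums of copies of the transfer module $\shd_{X'\to X/S}$; the latter is $\sho$-good (exhausted by the coherent $F_j$), so Lemma \ref{lem:cplxqgood}\eqref{lem:cplxqgood2} applies on $f^{-1}(U)\times V$ and gives exactly $f$-$\sho$-goodness.

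For \eqref{prop:qcohf2} there is a genuine gap: a proper holomorphic map of complex manifolds cannot in general be factored, even locally on the target, as a closed embedding into $U'\times\PP^m$ followed by the projection. Your construction fails at the first step: the fibres of $f$ are compact analytic subsets of $X$, in general of positive dimension, and holomorphic functions on any neighbourhood of such a fibre are constant along it, so no finite family of functions can separate points or be immersive in the fibre directions; an embedding into $U'\times\PP^m$ would require sections of a relatively ample line bundle, i.e.\ that $f$ be locally projective, which is not assumed. Already $X$ a compact non-projective manifold (a Hopf surface, say) and $X'$ a point --- or $X=Y\times X'$ with $Y$ Hopf and $f$ the projection --- rules out the factorization. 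This is precisely why the analytic proper direct image theorem (Grauert for $\sho$-modules and its $\shd$-module counterpart) is not a corollary of the projective case, and the paper does not reprove it: it quotes the relative finiteness theorem of \cite[Th.\,4.2 \& Cor.\,4.3]{SS1} (alternatively the adaptation of \cite[Th.\,4.25 \& 4.27]{Ka2}), whose proof rests on functional-analytic finiteness criteria applied to a good filtration rather than on a projective factorization; the $f$-goodness hypothesis enters there, as you correctly anticipate, through the existence of good filtrations near the fibres. Your filtered de Rham argument for the projection $U'\times\PP^m\to U'$, and the characteristic-variety estimate you use for the holonomicity statement, are the standard projective-case arguments, but they cannot be bootstrapped to a general proper $f$ along the route you propose.
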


\begin{proof}
We refer to \cite[Th.\,4.2 \& Cor.\,4.3]{SS1} for the proof of \eqref{prop:qcohf2} (or one can adapt the proof of \cite[Th.\,4.25 \& 4.27]{Ka2}), while for the proof of \eqref{prop:qcohf1} we can easily adapt the argument of \cite[Prop.\,2.1]{L-S87} which gives the absolute case.
\end{proof}

\subsubsection*{Base pullback}
Let $\pi:S'\to S$ be a morphism of complex analytic manifolds. We also denote by $\pi$ the induced map $\id\times\pi:\XS'\to\XS$. We have
\begin{equation}\label{eq:DXSp}
\DXSp=\sho_{\XS'}\otimes_{\pi^{-1}\sho_{\XS}}\pi^{-1}\DXS=:\pi^*\DXS,
\end{equation}
hence $\DXSp$ is a right $\pi^{-1}\DXS$-module. One then defines in a natural way the functor $L\pi^*:\rD(\DXS)\to\rD(\DXSp)$ as $L\pi^*(\cbbullet)=\DXSp\otimes^L_{\pi^{-1}\DXS}(\cbbullet)$.

The following is straightforward.

\begin{lemma}\label{lem:qcoh}
Let $\shl$ be an $\sho$-quasi-coherent $\sho_{\XS}$-module. Then each cohomology sheaf $L^j\pi^*\shl$ is $\pi$-$\sho$-good.
\end{lemma}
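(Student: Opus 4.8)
The plan is to reduce the statement to the well-known absolute case of the analogous fact for $\sho$-modules, by localizing both on the target $\XS'$ and on the source $\XS$ and exploiting the fact that $\sho$-quasi-coherence is a local property that propagates through increasing unions of coherent submodules. First I would recall that the assertion is local on $\XS'$, hence also local on $\XS$ via the induced map $\pi:\XS'\to\XS$; so I may shrink to a relatively compact open $U\Subset\XS$ on which $\shl_{|U}=\bigcup_i\shl_i$ is the direct limit of an increasing sequence of $\sho_U$-coherent submodules $\shl_i$. Since $L^j\pi^*$ commutes with the filtered colimit $\varinjlim_i\shl_i$ (the ring map $\pi^{-1}\sho_{\XS}\to\sho_{\XS'}$ is fixed and tensor products commute with filtered colimits, as do the derived functors $L^j\pi^*=\mathrm{Tor}_j^{\pi^{-1}\sho_{\XS}}(\sho_{\XS'},-)$), we get $L^j\pi^*\shl_{|U}=\varinjlim_i L^j\pi^*\shl_i$. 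Because $\pi$-$\sho$-goodness, i.e.\ $\sho$-goodness in a neighborhood of each fiber, is preserved by filtered colimits and the category of $\sho$-good modules is stable under passing to such limits on relatively compact opens, it suffices to treat the case where $\shl$ itself is $\sho_U$-coherent.

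Next, for $\shl$ coherent, I would invoke the standard fact that the derived pullback of a coherent $\sho$-module along a morphism of complex manifolds (or, more generally, along a morphism from a manifold to a manifold) has coherent cohomology sheaves: $L^j\pi^*\shl$ is $\sho_{\XS'}$-coherent. This is the relative-over-$S$ analogue of the absolute statement, and concretely it follows from writing $\shl$ locally as the cohomology of a finite complex of free $\sho_{\XS}$-modules (using that $\sho_{\XS}$ is coherent and, on a relatively compact open, admits finite free resolutions by the syzygy theorem), applying $\pi^*$ termwise, and taking cohomology; coherence of $\sho_{\XS'}$ then gives coherence of each $L^j\pi^*\shl$. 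A coherent $\sho_{\XS'}$-module is in particular $\sho$-good, hence $\pi$-$\sho$-good, on any relatively compact open, which completes the coherent case.

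Finally I would assemble the two reductions: on the chosen $U$ we have exhibited $L^j\pi^*\shl_{|U}$ as a filtered colimit of $\sho$-coherent (hence $\sho$-good) modules over $\pi^{-1}(U)$, which is $\sho$-good there, and since this holds in a neighborhood of each fiber of $\pi$ we conclude $L^j\pi^*\shl$ is $\pi$-$\sho$-good. I expect the only genuinely delicate point to be the bookkeeping around relative compactness — one must choose the open $U$ small enough that the coherent filtration of $\shl$ exists and that finite free resolutions of the $\shl_i$ are available, while still covering a neighborhood of each fiber of $\pi$; but since the conclusion only asserts goodness near fibers, these local choices glue without difficulty, and this is why the lemma is labeled \enquote{straightforward}. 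The colimit-commutation with $L^j\pi^*$ is routine once one identifies $L^j\pi^*$ with the $\mathrm{Tor}$ sheaves, so no serious obstacle remains.
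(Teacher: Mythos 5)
Your proof is correct and is essentially the argument the paper has in mind (the lemma is stated there without proof, as "straightforward"): fix a fiber of $\pi:\XS'\to\XS$, localize at its image point in $\XS$, use quasi-coherence to write $\shl$ on a relatively compact neighbourhood $U$ as a filtered colimit of coherent modules, commute $L^j\pi^*=\mathcal{T}\!or_j^{\pi^{-1}\sho_{\XS}}(\sho_{\XS'},\pi^{-1}(\cbbullet))$ with the colimit, and use local finite free resolutions to see each $L^j\pi^*\shl_i$ is $\sho_{\XS'}$-coherent, so that $L^j\pi^*\shl$ is $\sho$-good on $\pi^{-1}(U)$, which contains the fiber. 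Two phrasing slips are harmless but worth noting: goodness near a (possibly non-compact) fiber is not a condition that is "local on $\XS'$" (quasi-coherence, not goodness, is the local notion), and finite free resolutions of the $\shl_i$ exist only locally rather than on a whole relatively compact open -- neither affects your argument, since you in fact localize on $\XS$ and prove goodness on all of $\pi^{-1}(U)$, and coherence of $L^j\pi^*\shl_i$ is a local property.
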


\begin{lemma}\label{lem:basepullback}
The functor $L\pi^*$ induces a functor $\rD^\rb_\hol(\DXS)\to\rD^\rb_\hol(\DXSp)$. For a holonomic $\DXS$-module $\shm$ and for each $j$, the characteristic variety $\Char L^j\pi^*\shm$ is contained in the pullback $\pi^{-1}\Char\shm$ (here, $\pi$ denotes the map $T^*X\times S'\to T^*X\times S$).
\end{lemma}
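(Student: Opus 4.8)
The plan is to reduce the statement to a local and ``absolute'' computation. First I would work locally on $\XS'$, so that $\shm$ admits a coherent $F_\sbullet\DXS$-filtration $F_\sbullet\shm$ with $\gr^F\shm$ coherent over $\gr^F\DXS=\sho_{\XS}\otimes_{\sho_X}\gr^F\DX$, the relative symbol algebra; this is possible by Lemma \ref{lem:cplxqgood}\eqref{lem:cplxqgood3} since a holonomic module is in particular good, after shrinking to a relatively compact open set. Then $L\pi^*\shm$ is computed by the Koszul-type complex $\DXSp\otimes^L_{\pi^{-1}\DXS}\shm$, and I would filter this complex by $\pi^*F_\sbullet$; passing to the associated graded, the computation of $\gr L\pi^*\shm$ is governed by $\gr^F\DXSp\otimes^L_{\pi^{-1}\gr^F\DXS}\gr^F\shm$, whose cohomology sheaves are supported on $\pi^{-1}(\Char\shm)$ because tensoring by $\pi$ over $\sho_S$ only enlarges the support along the $S$-direction according to~$\pi$ and leaves the $T^*X$-direction untouched. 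This yields the inclusion $\Char L^j\pi^*\shm\subset\pi^{-1}\Char\shm$ as soon as one knows that $L^j\pi^*\shm$ is $\DXSp$-coherent with a good filtration whose graded is a subquotient of $\gr L^j$ of the graded complex.

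Concretely, the key steps in order would be: (1) reduce to $\shm$ a single holonomic $\DXS$-module (the general statement for $\rD^\rb_\hol$ follows by the spectral sequence / way-out argument, using that $\rD^\rb_\hol(\DXS)$ is triangulated by Lemma \ref{lem:cplxqgood}); (2) by Lemma \ref{lem:qcoh} each $L^j\pi^*\shm$ is $\pi$-$\sho$-good, and since it is also the cohomology of a complex of $\DXSp$-modules, combine this with the fact that $\shm$ has locally a coherent $F_\sbullet\DXS$-filtration to produce a coherent $F_\sbullet\DXSp$-filtration on $L^j\pi^*\shm$ — here one resolves $\shm$ locally by free $\DXS$-modules compatible with the filtration, applies $\pi^*$, and takes cohomology, noting $\DXSp$ is flat over $\pi^{-1}\DXS$ on the level of symbols in the relevant sense; (3) identify $\Char L^j\pi^*\shm$ as the support of $\gr$ of this filtration, which by construction is contained in the support of $\mathcal{T}or$ against $\pi$, namely $\pi^{-1}\Char\shm\subset T^*X\times S'$; (4) since $\Char\shm\subset\Lambda\times S$, we get $\pi^{-1}\Char\shm\subset\Lambda\times S'$, so $L^j\pi^*\shm$ is holonomic, and $\rD^\rb_\hol(\DXS)\to\rD^\rb_\hol(\DXSp)$ follows.

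An alternative, perhaps cleaner, route avoids filtrations: use that holonomicity of a coherent $\DXS$-module is detected fiberwise by the classical (absolute) $\shd_X$-module theory after a base point restriction, together with the structure \eqref{eq:chardec} of $\Char\shm$ as $\bigcup_i\Lambda_i\times T_i$. Then $L\pi^*\shm$ restricted over a point $s'\in S'$ computes (by base change, which is straightforward for the plain functor $\pi^*$ on $\sho$-modules composed with the $\shd$-module structure) something whose characteristic variety over $s'$ is governed by $\Char\shm$ over $\pi(s')$, i.e.\ contained in $\bigcup\{\Lambda_i : \pi(s')\in T_i\}$; assembling over all $s'$ gives exactly $\pi^{-1}\Char\shm$. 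I would likely present the filtration argument as the main proof since it directly produces a good filtration and hence coherence and holonomicity in one stroke, and mention the fiberwise picture as motivation.

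The main obstacle I expect is step (2): showing that the cohomology sheaves $L^j\pi^*\shm$ carry a \emph{coherent} $F_\sbullet\DXSp$-filtration with the expected graded. The subtlety is that $\pi$ may be far from flat (e.g.\ a closed embedding of a singular subspace, as allowed here), so $L\pi^*$ genuinely has higher cohomology, and one must check that applying $\pi^*$ to a local filtered free resolution of $\shm$ and taking cohomology does not destroy the $F$-coherence — this is where one invokes that $\gr^F\DXS$ is commutative and that $\pi^*$ on $\sho_S$-modules of the form $\sho_{\XS}\otimes_{\sho_X}(\text{coherent})$ is controlled, so that the graded complex $\gr^F(\pi^*\text{resolution})$ is a complex of $\gr^F\DXSp$-modules whose cohomology supports lie in $\pi^{-1}\Char\shm$. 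Everything else — the way-out reduction, the inclusion $\pi^{-1}(\Lambda\times S)\subset\Lambda\times S'$, and the bookkeeping of supports — is routine once this filtered statement is in hand.
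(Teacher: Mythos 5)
Your filtration argument is essentially the paper's own proof: the paper also works locally, takes a free resolution of length $2\dim X$ strictly compatible with coherent $F_\sbullet\DXS$-filtrations, and bounds $\Char L^j\pi^*\shm$ by the support of $L\pi^*\gr^F\shm$, which lies in $\pi^{-1}\Char\shm$. The strict compatibility of the resolution with the filtrations is exactly what resolves the obstacle you flag in your step (2), so your proposal is correct and follows the same route.
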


\begin{proof}
By using that, locally, a coherent $\DXS$-module has a resolution of length $2\dim X$ by free $\DXS$-modules of finite rank, one gets the first point for $\rD^\rb_\coh(\DXS)$. It suffices thus to prove the second point, which is a local question. One can find such a resolution which is strictly compatible with coherent filtrations relative to $F_\sbullet\DXS$. In such a way, one finds that for each~$j$, $\Char L^j\pi^*\shm$ is contained in the support of $L\pi^*\gr^F\shm$, hence the assertion.
\end{proof}

\subsubsection*{Base pushforward}
We now consider pushforward by a proper morphism \hbox{$\pi:S'\to S$}. Owing to \eqref{eq:DXSp}, we have a natural morphism $\pi^{-1}\DXS\to \DXSp$ which, by~ad\-junc\-tion, entails a natural morphism $\DXS\to\pi_*\DXSp$.

We recall \cite[Prop.\,1.6]{MFCS2}:

\begin{proposition}\label{prop:basepushhol}
Assume that $\pi$ is proper and that $\shm$ is $\DXSp$-holonomic and $\pi$\nobreakdash-good. Then, for each $j$, $R^j\pi_*\shm$ is $\DXS$-holonomic with characteristic variety contained in $\pi(\Char\shm)$.
\end{proposition}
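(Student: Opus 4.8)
The plan is to reduce the proper base pushforward statement (Proposition~\ref{prop:basepushhol}) to the already-established properties of coherent and holonomic $\DXS$-modules under proper pushforward along $X$ (Proposition~\ref{prop:qcohf2}), via the usual trick of replacing $\pi$ by a projection. Concretely, by the very definition of $\pi$-goodness and since the question is local on $S'$, I would first cover $S'$ by relatively compact opens and, after shrinking, factor $\pi$ (locally near a fiber) through a closed embedding $\iota\colon S\hto S'\times\PP^N$ followed by the projection $\mathrm{pr}\colon S'\times\PP^N\to S'$; the embedding part contributes nothing new since pushforward along a closed embedding of parameter spaces is exact and visibly preserves $\sho$-quasi-coherence, holonomicity (the characteristic variety just sits over the image), and the $T_{ij}$-decomposition \eqref{eq:chardec}. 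The genuine content is therefore the case where $\pi$ is the projection $S'\times\PP^N\to S'$, i.e. a proper morphism of smooth manifolds which we may treat as a composition of projections $S'\times\PP^1\to S'$.

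For the projection case, the key observation is that a $\DXS$-module is the same thing as a $\shd$-module that happens to be constant in the $\PP^1$-direction of the parameter, so $R\pi_*$ on the relative side is computed by the same complex as $\Df_*$ for the ambient morphism $f = \id_X \times \pi \colon X\times(S'\times\PP^1)\to X\times S'$ — more precisely, $\DXS$ and $\DXSp$ fit into the ring morphism $\DXSp\to\pi_*\DXS$ displayed just before the statement, and $R\pi_*\shm$ as a $\DXSp$-module is obtained from $R\pi_*\shm$ as a $\sho$-module together with this compatibility. Thus $R\pi_*\shm$ is a direct summand (or at least a subquotient, via a spectral sequence whose $E_2$ terms are cohomologies of the ambient $\shd$-module pushforward) of the corresponding absolute pushforward, and $\DXSp$-coherence of each $R^j\pi_*\shm$ follows from Proposition~\ref{prop:qcohf2}\eqref{prop:qcohf2} applied to $f$ once I check that $\shm$, being $\pi$-good, is $f$-good: goodness in a neighbourhood of each fiber of $\pi$ is exactly goodness in a neighbourhood of each fiber of $f$ since $f$ and $\pi$ have the same fibers up to the inert factor $X$.

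Granting $\DXSp$-coherence, holonomicity is a statement about the characteristic variety, and here I would argue directly with filtrations. Choose locally a coherent $F_\sbullet\DXS$-filtration of $\shm$ (available by goodness, Lemma~\ref{lem:cplxqgood}\eqref{lem:cplxqgood3}); pushing forward the filtered complex and using that $\pi$ is proper, one gets that $\gr$ of $R^j\pi_*\shm$ is a subquotient of $R^j\pi_*\gr^F\shm$, an $\sho_{T^*X\times S'}$-module supported on $(\mathrm{id}\times\pi)(\Supp\gr^F\shm) = (\mathrm{id}\times\pi)(\Char\shm)$. Since $\Char\shm\subset\Lambda\times S$ with $\Lambda$ Lagrangian and $\pi$ proper, the image $(\mathrm{id}\times\pi)(\Char\shm)$ is a closed analytic subset contained in $\Lambda\times\pi(S)$, hence of the form $\bigcup_i\Lambda_i\times\pi(T_i)$ after decomposing as in \eqref{eq:chardec}; this forces $\Char R^j\pi_*\shm\subset\Lambda\times S'$, i.e. $R^j\pi_*\shm$ is holonomic, with characteristic variety contained in $\pi(\Char\shm)$ as asserted.

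The main obstacle I anticipate is the bookkeeping at the ring level: making rigorous the passage from "$R\pi_*\shm$ as an $\sho_{\XS'}$-complex" to "$R\pi_*\shm$ as a $\DXSp$-complex" and checking that the $\DXSp$-module structure coming from $\DXSp\to\pi_*\DXS$ is compatible with the filtration argument, so that the absolute result of Proposition~\ref{prop:qcohf2} genuinely transfers. The cleanest route is probably to observe that $L\pi^*\DXSp\to\DXS$ realizes $\DXS$ as a direct summand of a $\pi^{-1}\DXSp$-module in good cases (or to invoke the projection formula $R\pi_*(\shm) \simeq R\pi_*(\sho_{\XS}\otimes^L_{\pi^{-1}\sho_{\XS'}}\pi^{-1}(\cdot))$ used just above the statement), reducing everything to the coherence and holonomicity statements for $\sho$- and $\shd$-modules under proper pushforward that are already recorded; this is exactly the argument of \cite[Prop.\,1.6]{MFCS2}, to which one may ultimately defer for the details.
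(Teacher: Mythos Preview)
The paper does not give its own proof here; it simply cites \cite[Prop.\,1.6]{MFCS2}. Your proposal likewise defers to that reference at the end, but the sketch you offer on the way has two genuine gaps.

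First, the factorization $S\hookrightarrow S'\times\PP^N\to S'$ is only available when $\pi$ is \emph{projective}, not merely proper; a proper holomorphic map of complex manifolds need not be locally projective over the base. The proposition is stated for proper $\pi$, so this reduction does not go through in general. (Compare Theorem~\ref{th:Regdirect}, which explicitly assumes projectivity precisely because it uses such a factorization.)

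Second, and more seriously, your identification of $R\pi_*$ with $\Df_*$ for $f=\id_X\times\pi$ reinterpreted as a morphism in the $X$-direction over $S'$ does not work. A $\DXS$-module with $S=S'\times\PP^1$ carries no action of the $\PP^1$-derivatives, so it is \emph{not} a $\shd_{(X\times\PP^1)\times S'/S'}$-module, and the $\shd$-module pushforward $\Df_*$---which involves the relative de~Rham complex in the $\PP^1$-direction---is a different operation from the sheaf-theoretic $R\pi_*$ one actually wants. Your claim that $R\pi_*\shm$ is a summand or subquotient of an ``absolute pushforward'' does not have a clear meaning here.

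Your filtration argument in the third paragraph for the characteristic-variety bound is, by contrast, the correct idea. For coherence, the direct route avoids both issues above: use $\DXS\simeq\pi^*\DXSp$ and $\pi$-goodness to resolve $\shm$ locally near a fiber by induced modules $\DXS\otimes_{\sho_{\XS}}\shl$ with $\shl$ coherent over $\sho_{\XS}$ (Lemma~\ref{lem:cplxqgood}\eqref{lem:cplxqgood3}); then the projection formula gives
\[
R\pi_*(\DXS\otimes_{\sho_{\XS}}\shl)\simeq\DXSp\otimes_{\sho_{\XS'}}R\pi_*\shl,
\]
and Grauert's coherence theorem for the proper map $\pi$ yields $\DXSp$-coherence of each $R^j\pi_*\shm$. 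No projectivity is needed, and this is presumably closer to what \cite[Prop.\,1.6]{MFCS2} actually does.
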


\subsubsection*{An adjunction formula}
Let $\pi:S'\to S$ be a morphism of complex manifolds. We~will make use of the following adjunction formula.

\begin{lemma}\label{lem:adjunctionpi}
Let $\shm,\shn$ be objects of $\rD^\rb_\coh(\DXS)$. Then there is a bi-functorial isomorphism in
$\rD^\rb(\DXS)$:
\begin{starequation}\label{lem:adjunctionpi*}
R\pi_*\Rhom_{\DXSp}(L\pi^*\shm,L\pi^*\shn)\simeq\Rhom_{\DXS}(\shm,R\pi_*L\pi^*\shn).
\end{starequation}
If $\shn$ is a coherent $\DXS$-module, there are functorial morphisms in $\rD^\rb(\DXS)$:
\begin{starstarequation}\label{lem:adjunctionpi**}
\shn\to R\pi_*L\pi^*\shn\to R\pi_*\pi^*\shn,
\end{starstarequation}
and if $\pi$ is proper, $R\pi_*L\pi^*\shn\simeq (R\pi_*\DXSp)\otimes^L_{\DXS}\shn$ belongs to $\rD^\rb_\coh(\DXS)$.
\end{lemma}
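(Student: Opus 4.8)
The plan is to build the isomorphism \eqref{lem:adjunctionpi*} from the standard tensor-hom adjunction together with the base-change/projection identities that hold because $\DXSp=\pi^*\DXS$ is $\sho_{\XS'}$-locally free over $\pi^{-1}\DXS$. First I would observe that, for $\shm,\shn\in\rD^\rb_\coh(\DXS)$, the functor $L\pi^*$ takes them into $\rD^\rb_\coh(\DXSp)$ (this is the coherence part of Lemma \ref{lem:basepullback}, which does not use holonomicity), so both sides of \eqref{lem:adjunctionpi*} make sense. Working locally and using the finite free resolutions of $\shm$ by $\DXS$-modules used in the proof of Lemma \ref{lem:basepullback}, one reduces \eqref{lem:adjunctionpi*} to the case $\shm=\DXS$, where the left-hand side is $R\pi_*(L\pi^*\shn)$ and the right-hand side is $\Rhom_{\DXS}(\DXS,R\pi_*L\pi^*\shn)=R\pi_*L\pi^*\shn$; the adjunction isomorphism is then obtained from the identity $\pi^{-1}\Rhom_{\DXS}(\shm,-)\simeq\Rhom_{\pi^{-1}\DXS}(\pi^{-1}\shm,-)$ applied to a local free resolution, combined with $\Rhom_{\DXSp}(L\pi^*\shm,-)\simeq\Rhom_{\pi^{-1}\DXS}(\pi^{-1}\shm,-)$ coming from extension of scalars along $\pi^{-1}\DXS\to\DXSp$, and then pushing forward by $R\pi_*$. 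Naturality in $\shm$ and $\shn$ is clear from this construction since every step is functorial.

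Next I would produce the morphisms \eqref{lem:adjunctionpi**} for a coherent $\DXS$-module $\shn$. The first arrow $\shn\to R\pi_*L\pi^*\shn$ is the unit of the adjunction $(L\pi^*,R\pi_*)$ on derived categories of sheaves of modules over the respective rings, which exists because $\DXSp$ is a $\pi^{-1}\DXS$-module and hence $L\pi^*$ is left adjoint to $R\pi_*$; concretely it is adjoint to the identity of $L\pi^*\shn$, or can be read off from \eqref{lem:adjunctionpi*} with $\shm=\shn$ by evaluating at $\id$. The second arrow $R\pi_*L\pi^*\shn\to R\pi_*\pi^*\shn$ is $R\pi_*$ applied to the natural truncation map $L\pi^*\shn\to H^0(L\pi^*\shn)=\pi^*\shn$.

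Finally, for the projection-formula statement when $\pi$ is proper: since $\DXSp=\sho_{\XS'}\otimes_{\pi^{-1}\sho_{\XS}}\pi^{-1}\DXS$ is $\sho_{\XS'}$-locally free, we have $L\pi^*\shn\simeq\DXSp\otimes^L_{\pi^{-1}\DXS}\pi^{-1}\shn$, and applying $R\pi_*$ together with the projection formula for the proper map $\pi$ (in the $\sho$-module setting, valid because $\shn$ has $\sho$-quasi-coherent, indeed coherent, cohomology) yields $R\pi_*L\pi^*\shn\simeq(R\pi_*\DXSp)\otimes^L_{\DXS}\shn$. Here one uses that $R\pi_*$ commutes with the relevant derived tensor product over the structure sheaves because $\pi$ is proper and the sheaves involved are $\sho$-good (cf. Lemma \ref{lem:qcoh} and Proposition \ref{prop:basepushhol} for the formalism of proper base pushforward); coherence of the result over $\DXS$ then follows from Proposition \ref{prop:basepushhol} applied componentwise, or directly from the coherence of $R\pi_*\DXSp$ as a $\DXS$-module together with $\DXS$-coherence of $\shn$. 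I expect the only delicate point to be the precise bookkeeping in the reduction of \eqref{lem:adjunctionpi*} to the free case and the verification that the adjunction unit agrees with the map extracted from \eqref{lem:adjunctionpi*}; the projection formula itself is routine given properness and $\sho$-goodness.
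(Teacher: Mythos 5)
Your proposal is correct and follows essentially the same route as the paper: the extension-of-scalars identification $\Rhom_{\DXSp}(L\pi^*\shm,\cbbullet)\simeq\Rhom_{\pi^{-1}\DXS}(\pi^{-1}\shm,\cbbullet)$ combined with the $(\pi^{-1},R\pi_*)$ adjunction gives \eqref{lem:adjunctionpi*}, the unit (image of $\id$ for $\shm=\shn$) and the truncation $L\pi^*\shn\to\pi^*\shn$ give \eqref{lem:adjunctionpi**}, and the projection formula gives the final isomorphism. The detour through local free resolutions and the appeals to $\sho$-quasi-coherence or to Proposition \ref{prop:basepushhol} (which concerns holonomic modules) are unnecessary: since $\id\times\pi$ is proper, the sheaf-theoretic projection formula applies directly, and coherence follows from Grauert's theorem applied to $R\pi_*\sho_{\XS'}$ together with the $\DXS$-coherence of $\shn$.
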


\begin{proof}
We have (\cf \cite[p.\,241]{Ka2})
\[
\Rhom_{\DXSp}(L\pi^*\shm,L\pi^*\shn)\simeq\Rhom_{\pi^{-1}\DXS}(\pi^{-1}\shm,L\pi^*\shn),
\]
hence \eqref{lem:adjunctionpi*} is obtained by adjunction (\cf\cite[(2.6.15)]{KS1}).
By setting $\shm=\shn$ in \eqref{lem:adjunctionpi*} we get $\Hom_{\DXSp}(L\pi^*\shn,L\pi^*\shn)\simeq\Hom_{\DXS}(\shn,R\pi_*L\pi^*\shn)$. The image of~$\id$ by this isomorphism is the first morphism in \eqref{lem:adjunctionpi**} while the natural morphism $L\pi^*\shn\to\pi^*\shn$ in $\rD^\rb(\DXSp)$ provides the desired morphism $R\pi_*L\pi^*\shn\to R\pi_*\pi^*\shn$. The last isomorphism is obtained by the projection formula (\cf \cite[Prop.\,2.6.6]{KS1}).
\end{proof}

\begin{remark}[(External) tensor product]\label{rem:extprod}
If $\shm,\shn\in\rD^-(\DXS)$, the tensor product
\[
\shm\otimes_{\sho_{\XS}}^L\shn\in\rD^-(\DXS)
\]
is isomorphic to the pullback \hbox{$\Ddelta^*(\shm\boxtimes^L_\shd\shn)$} of the $S$\nobreakdash-ex\-ternal tensor product (recall that $\shd_{X\times \XS/S}$ is flat over $\DXS\boxtimes_{\sho_S}\DXS$)
\[
\shm\boxtimes^L_\shd\shn:=\shd_{X\times \XS/S}\otimes _{(\DXS\boxtimes_{\sho_S}\DXS)}(\shm\boxtimes^L_{\sho_S}\shn)
\]
by the diagonal embedding $\delta:\XS\hto(X\times X)\times S$ over~$S$.
If $\shm,\shn\in\rD^\rb_\coh(\DXS)$, $\shm\boxtimes^L_\shd\shn$ belongs to $\rD^\rb_\coh(\shd_{X\times \XS/S})$ and we have
\[
C:=\Char(\shm\boxtimes^L_\shd\shn)\subset\Char(\shm)\times_S\Char(\shn).
\]
This is seen by considering local resolutions of $\shm$ \resp $\shn$ by free $\DXS$-modules, showing both inclusions $C\subset T^*X\times\Char(\shn)$ and $C\subset\Char(\shm)\times T^*X$. Therefore, if~$\shm,\shn$ are holonomic, so is $\shm\boxtimes^L_\shd\shn$.
\end{remark}

\section{Regular holonomic \texorpdfstring{$\DXS$}{DXS}-modules}\label{sec:3}

In the special case of Lemma \ref{lem:basepullback} where $\pi$ is the inclusion $i_{s_o}:\{s_o\}\hto S$ of a point $s_o$ in $S$, we~recall a consequence of \cite[(A.10)]{Ka2}:
\begin{proposition}[{\cf \cite[Prop.\,3.1]{MFCS1}}]\label{prop:3.1}
For any $\shm, \shn$ in $\rD^\rb(\DXS)$, for any $s_o\in S$, the natural morphism
\[
Li^*_{s_o} \Rhom_{\DXS}(\shm, \shn)\to \Rhom_{i^*_{s_o}\DXS}(Li^*_{s_o}\shm, Li^*_{s_o}\shn)
\]
is an isomorphism in $\rD(\C_X)$.
\end{proposition}

\subsection{Characterization of relative regular holonomicity}
The category of \emph{regular holonomic $\DXS$\nobreakdash-mod\-ules} was introduced in \cite{MFCS2} as well as the full subcategory $\rD^\rb_\rhol(\DXS)$ of $\rD^\rb_{\hol}(\DXS)$ of bounded complexes of $\DXS$\nobreakdash-modules having regular holonomic cohomology. According to~\cite{MFCS2}, we say that an object $\shm\in\rD^\rb_{\hol}(\DXS)$ is \emph{regular} if it satisfies
\par\smallskip\noindent
$(\mathrm{Reg}\,1)$\enspace
For each $s_o\in S$ and any $j\in\ZZ$, $Li^*_{s_o}\shh^j(\shm)\in\rD^\rb_{\rhol}(\DX)$.
\par\smallskip
An alternative and natural property of regularity would be the following:
\par\smallskip
\noindent$(\mathrm{Reg}\,2)$\enspace
For each $s_o\in S$, $Li^*_{s_o}\shm\in\rD^\rb_{\rhol}(\DX)$.
\par\smallskip
Regularity in either sense is the same property for objects of $\Mod(\DXS)$. Property $(\mathrm{Reg}\,1)$ is by definition compatible with the truncation functors while Property
$(\mathrm{Reg}\,2)$ is compatible with base change on $S$, meaning that, for any morphism $\pi:S'\to S$ of complex manifolds and any object $\shm\in \rD^\rb_{\hol}(\DXS)$
which satisfies $(\mathrm{Reg}\,2)$, the pullback $L\pi^\ast (\shm)\in \rD^\rb_{\hol}(\DXpS)$
satisfies $(\mathrm{Reg}\,2)$ too. We~enlarge the setting for further use since both conditions make sense for any complex in $\rD^\rb(\DXS)$.

\begin{proposition}\label{Prop:2}
Let $Y$ be a hypersurface of $X$.
On any complex manifold $S$,
\begin{enumerate}\renewcommand{\theenumi}{\roman{enumi}}
\item\label{Prop:21}
for a complex in $\rD^\rb_\coh(\DXS)$ (\resp in $\rD^\rb_\coh(\DXS(*Y))$), the condition $(\mathrm{Reg}\,1)$ is equivalent to $(\mathrm{Reg}\,2)$, and we denote both by $(\mathrm{Reg})$,
\item\label{Prop:22}
the category of coherent $\DXS$ (\resp coherent $\DXS(*Y)$)-modules satisfying $(\mathrm{Reg})$ is closed under taking extensions in the category $\Mod(\DXS)$(\resp in $\Mod(\DXS(*Y))$) and sub-quotients in the category $\Mod_{\coh}(\DXS)$ (\resp $\Mod_{\coh}(\DXS(*Y))$),
\item\label{Prop:22b}
the category of regular holonomic $\DXS$-modules is closed under taking extensions in $\Mod(\DXS)$ and sub-quotients in $\Mod_{\coh}(\DXS)$.
\end{enumerate}
\end{proposition}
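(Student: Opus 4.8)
The plan is to deduce all three statements from the behaviour of the conditions $(\mathrm{Reg}\,1)$ and $(\mathrm{Reg}\,2)$ under the point-restriction functors $Li_{s_o}^*$, reducing everything to the corresponding \emph{absolute} facts about $\rD^\rb_\rhol(\DX)$ (stability of regular holonomicity under extensions and sub-quotients in $\Mod_{\rhol}(\DX)$, and the fact that $\rD^\rb_\rhol(\DX)$ is a triangulated subcategory of $\rD^\rb_\hol(\DX)$). The key observation throughout is that for a coherent $\DXS$-module $\shm$ the complex $Li_{s_o}^*\shm$ has cohomology only in degrees $-\dim S,\dots,0$, with $\shh^0 Li_{s_o}^*\shm = i_{s_o}^*\shm$, and that $i_{s_o}^*$ is right exact on $\Mod_\coh(\DXS)$; the holonomicity of all these cohomology modules is guaranteed by Lemma \ref{lem:basepullback}.

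\emph{Proof of \eqref{Prop:21}.} Since $(\mathrm{Reg}\,1)$ is stable under truncation, by applying it to each $\shh^j(\shm)$ it suffices to treat a single coherent module $\shm$; then $(\mathrm{Reg}\,1)$ literally reads $Li_{s_o}^*\shm\in\rD^\rb_\rhol(\DX)$, which is $(\mathrm{Reg}\,2)$. For a general complex $\shm\in\rD^\rb_\coh(\DXS)$, the implication $(\mathrm{Reg}\,1)\Rightarrow(\mathrm{Reg}\,2)$ follows from the triangulated-subcategory property of $\rD^\rb_\rhol(\DX)$ inside $\rD^\rb_\hol(\DX)$ applied along the truncation triangles, while $(\mathrm{Reg}\,2)\Rightarrow(\mathrm{Reg}\,1)$ requires extracting regularity of $Li_{s_o}^*\shh^j(\shm)$ from regularity of $Li_{s_o}^*\shm$; here one argues by descending induction on the length of $\shm$ using the triangle $\tau_{\le j-1}\shm\to\tau_{\le j}\shm\to\shh^j(\shm)[-j]$, but one must be careful that $Li_{s_o}^*$ does not commute with truncation. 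This is the technical heart of \eqref{Prop:21}: one uses that $Li_{s_o}^*$ has amplitude $[-\dim S,0]$, so the hypercohomology spectral sequence $\shh^a Li_{s_o}^*\shh^b(\shm)\Rightarrow \shh^{a+b}Li_{s_o}^*\shm$ has only finitely many columns, and one peels off the top cohomology $\shh^b(\shm)$ with $b$ maximal, for which $\shh^a Li_{s_o}^*\shh^b(\shm)$ appears as a subquotient of $\shh^{a+b}Li_{s_o}^*\shm$ with no differentials hitting or leaving the relevant corner; regularity of the latter then forces regularity of $\shh^a Li_{s_o}^*\shh^b(\shm)$ for all $a$, hence of $Li_{s_o}^*\shh^b(\shm)$, and one removes $\shh^b(\shm)$ from $\shm$ and repeats. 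The localized case $\rD^\rb_\coh(\DXS(*Y))$ is identical, noting $Li_{s_o}^*$ commutes with localization along $Y$.

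\emph{Proof of \eqref{Prop:22} and \eqref{Prop:22b}.} Let $0\to\shm'\to\shm\to\shm''\to0$ be an exact sequence of coherent $\DXS$-modules. Lemma \ref{Lem:1} (\resp Lemma \ref{lem:basepullback}) ensures holonomicity of all terms and of the cohomology of their $Li_{s_o}^*$. Applying $Li_{s_o}^*$ gives a long exact sequence of coherent holonomic $\DX$-modules, so each $\shh^a Li_{s_o}^*\shm$ is, up to a two-step extension, built from subquotients of $\shh^a Li_{s_o}^*\shm'$, $\shh^a Li_{s_o}^*\shm''$ and $\shh^{a+1}Li_{s_o}^*\shm''$. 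If $\shm'$ and $\shm''$ satisfy $(\mathrm{Reg})$, these are all regular, and stability of $\rD^\rb_\rhol(\DX)$ under extensions and sub-quotients in $\Mod_\hol(\DX)$ gives regularity of $\shh^a Li_{s_o}^*\shm$, i.e. $(\mathrm{Reg})$ for $\shm$; the same long exact sequence run in the other direction, together with right exactness of $i_{s_o}^*$, yields the sub-quotient statements. This proves \eqref{Prop:22}; \eqref{Prop:22b} is the case $Y=\emptyset$ combined with the definition of regular holonomic module (a coherent $\DXS$-module is regular holonomic iff it is holonomic and satisfies $(\mathrm{Reg})$), using that holonomicity itself is stable under extensions and sub-quotients by Lemma \ref{Lem:1}. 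The main obstacle is the bookkeeping in \eqref{Prop:21} with the non-exactness of $Li_{s_o}^*$ relative to the $t$-structure; once that spectral-sequence argument is set up cleanly, \eqref{Prop:22} and \eqref{Prop:22b} are formal consequences of the absolute theory.
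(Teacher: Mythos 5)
There are two genuine gaps, and they sit exactly at the two technical hearts of the proposition. First, in your proof of \eqref{Prop:21}, the spectral-sequence ``corner'' argument for $(\mathrm{Reg}\,2)\Rightarrow(\mathrm{Reg}\,1)$ only works when $Li_{s_o}^*$ has amplitude $[-1,0]$, i.e.\ when $\dim S=1$. For $\dim S\geq2$ the functor $Li_{s_o}^*$ has amplitude $[-\dim S,0]$, and for $a\leq-2$ the term $E_2^{a,b_{\max}}=\shh^aLi_{s_o}^*\shh^{b_{\max}}(\shm)$ does admit outgoing differentials $d_r$ (landing in columns $b<b_{\max}$), so only a quotient of it survives to $E_\infty$; equivalently, in the long exact sequence attached to $\tau^{\leq b_{\max}-1}\shm\to\shm\to\shh^{b_{\max}}(\shm)[-b_{\max}]$, the module $\shh^aLi_{s_o}^*\shh^{b_{\max}}(\shm)$ is an extension involving a piece of $\shh^{a+b_{\max}+1}Li_{s_o}^*\tau^{\leq b_{\max}-1}\shm$, which is not yet known to be regular. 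Regularity of $Li_{s_o}^*\shm$ alone therefore does not force regularity of all $\shh^aLi_{s_o}^*\shh^{b_{\max}}(\shm)$. The paper's proof circumvents precisely this: it restricts to smooth hypersurface germs $H\subset S$ (so the amplitude is $[-1,0]$, giving the isomorphism \eqref{eq:isoH-1} and one short exact sequence), and even then it must invoke the sub-quotient stability \eqref{Prop:22} in dimension $d-1$ to handle the submodule $\shh^0Li_H^*\shh^0\shm\subset\shh^0Li_H^*\shm$. This is why \eqref{Prop:21} and \eqref{Prop:22} are proved by an intertwined induction on $\dim S$ rather than separately; your argument, which tries to prove \eqref{Prop:21} directly at points, cannot quote the relative sub-quotient stability without circularity.

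Second, your treatment of sub-quotients in \eqref{Prop:22} (``the same long exact sequence run in the other direction\ldots yields the sub-quotient statements'') is not a formal consequence. Given $0\to\shm_1\to\shm\to\shm_2\to0$ with $\shm$ satisfying $(\mathrm{Reg})$, the long exact sequence \eqref{E:coseq} only shows that $\shh^{-1}Li_H^*\shm_1$ and $\shh^{0}Li_H^*\shm_2$ are sub-quotients of cohomologies of $Li_H^*\shm$; the two middle terms $\shh^{-1}Li_H^*\shm_2$ and $\shh^{0}Li_H^*\shm_1$ are linked by the connecting map, and each is an extension involving a piece of the other, so neither is controlled by $Li_H^*\shm$ alone. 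This is exactly where the paper does real work: it splits off the $H$-torsion part $\shm'_2$ of $\shm_2$ (whose torsion-free complement is $i_H^*$-acyclic) via the Cartesian diagrams \eqref{E:cs}, and then runs an induction on the integer $k$ with $s^k\shm'_2=0$ to propagate regularity through the torsion. Without an argument of this kind (or some substitute), your proof of \eqref{Prop:22} — and hence of \eqref{Prop:22b}, and, by the circularity noted above, of \eqref{Prop:21} for $\dim S\geq2$ — is incomplete. Your extension statement and the deduction of \eqref{Prop:22b} from \eqref{Prop:21}, \eqref{Prop:22} and Lemma \ref{Lem:1} are fine.
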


We note that \eqref{Prop:22b} follows from \eqref{Prop:21} and \eqref{Prop:22} together with Lemma \ref{Lem:1}, so that we will focus on the latter properties. The proof in both cases is the same because it is based on the coherence of the rings involved. We provide it in the localized case.

\begin{lemma}\label{d12}
Condition $(\mathrm{Reg}\,1)$ implies $(\mathrm{Reg}\,2)$.
\end{lemma}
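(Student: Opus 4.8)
The plan is to prove Lemma~\ref{d12} by reducing the statement $(\mathrm{Reg}\,2)$ to $(\mathrm{Reg}\,1)$ through a dévissage on the amplitude of $\shm$, using the fact that $Li^*_{s_o}$ is a left-derived functor that behaves well with respect to the canonical truncation on $\rD^\rb(\DXS)$. Fix $s_o\in S$ and let $\shm\in\rD^\rb(\DXS)$ satisfy $(\mathrm{Reg}\,1)$, \ie $Li^*_{s_o}\shh^j(\shm)\in\rD^\rb_{\rhol}(\DX)$ for all $j$. We must show $Li^*_{s_o}\shm\in\rD^\rb_{\rhol}(\DX)$. First I would recall that $\rD^\rb_{\rhol}(\DX)$ is a full triangulated subcategory of $\rD^\rb(\DX)$, stable by shifts and cones, so it suffices to build $Li^*_{s_o}\shm$ from the complexes $Li^*_{s_o}\shh^j(\shm)[-j]$ by finitely many distinguished triangles.

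The key step is the following standard argument. For each integer $n$ one has the canonical truncation triangle
\[
\tau_{\leq n-1}\shm \to \tau_{\leq n}\shm \to \shh^n(\shm)[-n] \xrightarrow{+1}
\]
in $\rD^\rb(\DXS)$. Applying the triangulated functor $Li^*_{s_o}$ yields a distinguished triangle
\[
Li^*_{s_o}\tau_{\leq n-1}\shm \to Li^*_{s_o}\tau_{\leq n}\shm \to Li^*_{s_o}\big(\shh^n(\shm)[-n]\big) \xrightarrow{+1}
\]
in $\rD^\rb(\DX)$. By hypothesis the third term $Li^*_{s_o}\shh^n(\shm)[-n]$ lies in $\rD^\rb_{\rhol}(\DX)$. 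Proceeding by induction on the length of the amplitude of $\shm$ (starting from an integer $n_0$ with $\tau_{\leq n_0}\shm=0$, where the statement is trivial), one concludes that $Li^*_{s_o}\tau_{\leq n}\shm\in\rD^\rb_{\rhol}(\DX)$ for every $n$. Since $\shm$ is bounded, $\tau_{\leq n}\shm=\shm$ for $n$ large, and thus $Li^*_{s_o}\shm\in\rD^\rb_{\rhol}(\DX)$, which is exactly $(\mathrm{Reg}\,2)$ for $\shm$ at $s_o$. As $s_o\in S$ was arbitrary, this proves $(\mathrm{Reg}\,2)$.

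The only point requiring a little care — and what I expect to be the main (mild) obstacle — is the compatibility of $Li^*_{s_o}$ with the truncation triangles: one must make sure that $Li^*_{s_o}$ is genuinely a triangulated functor on the bounded derived category, which is immediate since $Li^*_{s_o}(\cbbullet)=\DX\otimes^L_{i_{s_o}^{-1}\DXS}i_{s_o}^{-1}(\cbbullet)$ is a left-derived tensor functor; boundedness of the output is ensured because $i_{s_o}$ has finite Tor-dimension (the point $\{s_o\}\hto S$ being a regular embedding of codimension $\dim S$ in a manifold), so that $Li^*_{s_o}$ sends $\rD^\rb(\DXS)$ to $\rD^\rb(\DX)$, a fact already implicitly used in the very statement of $(\mathrm{Reg}\,1)$ and $(\mathrm{Reg}\,2)$. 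With this in hand the dévissage is formal. Note that the converse implication, $(\mathrm{Reg}\,2)\Rightarrow(\mathrm{Reg}\,1)$, is \emph{not} formal in this way and requires a genuinely different argument (extracting the regularity of each $Li^*_{s_o}\shh^j(\shm)$ from that of the total complex), which is presumably treated separately in the sequel.
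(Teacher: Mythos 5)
Your proof is correct and follows essentially the same route as the paper: induction on the amplitude of $\shm$ via canonical truncation triangles, using that $Li^*_{s_o}$ is triangulated and that $\rD^\rb_{\rhol}(\DX)$ is stable under cones, with the single-module case being immediate from $(\mathrm{Reg}\,1)$. The only (inessential) difference is that you truncate with $\tau_{\leq n}$ while the paper splits off $\shh^0\shm$ against $\tau^{\geq1}\shm$.
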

\begin{proof}
We argue by induction on the amplitude of the complex~$\shm$.
We may assume that $\shm\in \rD^{\geq 0}_{\coh}(\DXS(*Y))$
and we consider the following distinguished triangle
\begin{equation}\label{eq:A}
\shh^0\shm\to\shm\to \tau^{\geq1}\shm\To{+1},
\end{equation}
where $\tau^{\geq 1}$ is the truncation functor with respect to the natural $t$-structure on
$\rD^\rb_{\coh}(\DXS(*Y))$.
Let us assume that $\shm$ satisfies $(\mathrm{Reg}\,1)$, hence
by definition and induction, both $\shh^0\shm$ and $\tau^{\geq 1}\shm$ satisfy $(\mathrm{Reg}\,1)$.
As remarked, $\shh^0\shm$ satisfies $(\mathrm{Reg}\,2)$ too and by induction on the amplitude of $\shm$,
$\tau^{\geq 1}\shm$ satisfies $(\mathrm{Reg}\,2)$.
\end{proof}

\begin{proof}[Proof of Proposition \ref{Prop:2}]
For $d\geq0$, we denote by $(\mathrm{Reg}\,1)_d$, \resp $(\mathrm{Reg}\,2)_d$, the corresponding condition for $\dim S\leq d$.
If $\dim S=0$, $\eqref{Prop:21}_{0}$ holds true, and $\eqref{Prop:22}_{0}$ is proved \eg in \cite[Th.\,5.3.4]{Bjork93}. We thus assume from now on that $d\geq1$ and we proceed by induction on $d:=\dim S$, denoting by \eqref{Prop:21}$_{d}$ and
\eqref{Prop:22}$_{d}$ the statements of the proposition restricted to $\dim S\leq d$. We will prove the following implications for $d\geq 1$:
\begin{enumeratea}
\item\label{Prop:2a}
$\eqref{Prop:21}_{d-1} \wedge \eqref{Prop:22}_{d-1}\Longrightarrow \eqref{Prop:21}_{d}$;
\item\label{Prop:2b}
$\eqref{Prop:21}_{d}\wedge \eqref{Prop:22}_{d-1} \Longrightarrow \eqref{Prop:22}_{d}$.
\end{enumeratea}
\noindent

Let us start with \eqref{Prop:2a}. Assuming that both $ \eqref{Prop:21}_{d-1}$ and $\eqref{Prop:22}_{d-1}$ hold, we have to prove that $(\mathrm{Reg}\,2)_d\Rightarrow(\mathrm{Reg}\,1)_d$.
Due to the induction hypothesis $\eqref{Prop:21}_{d-1}$ we simply write $(\mathrm{Reg})_{d-1}$ for either
$(\mathrm{Reg}\,1)_{d-1}$ or $(\mathrm{Reg}\,2)_{d-1}$.

We note that $\shm\in\rD^\rb_{\coh}(\DXS(*Y))$ satisfies $(\mathrm{Reg}\,2)_d$ if and only if for each smooth codimension-one germ $(H,s_o)\subset (S,s_o)$, $Li_H^*\shm$ satisfies $(\mathrm{Reg})_{d-1}$. It is then enough to prove that, for such an $\shm$, $Li_H^*\shh^j\shm$ satisfies $(\mathrm{Reg})_{d-1}$ for any $j$ and~$H$. We shall argue by induction on the amplitude of $\shm$.
We may assume that $\shm\in \rD^{\geq 0}_{\coh}(\DXS(*Y))$ and we consider the distinguished triangle \eqref{eq:A}.
We deduce an isomorphism
\begin{equation}\label{eq:isoH-1}
\shh^{-1}Li^*_{H}\shh^0\shm\simeq \shh^{-1}Li^*_{H}\shm
\end{equation}
and an exact sequence
\[
0\to \shh^0 L i^*_{H}\shh^0\shm\to
\shh^0 L i^*_{H}\shm\to \shh^0 L i^*_{H}\tau^{\geq 1}\shm\to 0.
\]
(Note that $\shh^kLi^*_{H}\shh^0\shm=0$ for $k\neq0, -1$.)
Since $L i^*_{H}\shm$ satisfies $(\mathrm{Reg})_{d-1}$, so does $\shh^k L i^*_{H}\shm$ ($k=-1,0$), and so does $\shh^{-1}Li^*_{H}\shh^0\shm$ by \eqref{eq:isoH-1}. Since
\eqref{Prop:22}$_{d-1}$ is assumed to hold,
any coherent sub-quotient of $\shh^0 L i^*_{H}\shm$
satisfies $(\mathrm{Reg})_{d-1}$, hence so does $\shh^0 L i^*_{H}\shh^0\shm$,
which proves, following Lemma~\ref{d12}, that $Li^*_{H}\shh^0\shm$ satisfies $(\mathrm{Reg})_{d-1}$.
Thus $\shh^0\shm$ satisfies
$(\mathrm{Reg}\,2)_d$
and, by the distinguished triangle \eqref{eq:A},
$\tau^{\geq 1}\shm$ satisfies $(\mathrm{Reg}\,2)_d$.
Induction on the cohomological length applied to $\tau^{\geq 1}\shm$ implies that $Li_H^*\shh^j\shm$ satisfies $(\mathrm{Reg})_{d-1}$ for any $j\geq1$, which concludes the proof of \eqref{Prop:2a} and we now simply write $(\mathrm{Reg})_d$.

Let us now prove \eqref{Prop:2b}. The extension property in $\eqref{Prop:22}_d$ is clear. Let us consider stability by sub-quotients in $\Mod_\coh(\DXS(*Y))$. Let $\shm\in \Mod_\coh(\DXS(*Y))$ satisfy $(\mathrm{Reg})_d$. Given any short exact sequence
\begin{equation}\label{E:exactseq}
0\to \shm_1\to \shm\to\shm_2\to 0
\end{equation}
of coherent $\DXS(*Y)$-modules, we wish to prove that $\shm_1$ and $\shm_2$ satisfy $(\mathrm{Reg})_d$. Owing to our assumption on $\shm$, $Li^*_H\shm$ satisfies $(\mathrm{Reg})_{d-1}$ for any smooth codimension-one germ $(H,s_o)\subset (S,s_o)$, and it is enough to prove that either $Li^*_H\shm_1$ or $Li^*_H\shm_2$ satisfies $(\mathrm{Reg})_{d-1}$. From \eqref{E:exactseq} we obtain the long exact sequence:
\begin{multline}\label{E:coseq}
0\to \shh^{-1}Li^*_H\shm_1\to \shh^{-1}Li^*_H\shm\to \shh^{-1}Li^*_H\shm_2\\
\to
\shh^{0}Li^*_H\shm_1\to \shh^{0}Li^*_H\shm\to
\shh^{0}Li^*_H\shm_2\to 0.
\end{multline}
Since $Li^*_H\shm$ satisfies $(\mathrm{Reg})_{d-1}$ then so do $\shh^{-1}Li^*_H\shm_1$ and $\shh^{0}Li^*_H\shm_2$ owing to $\eqref{Prop:22}_{d-1} $, and it remains to be proved that either $\shh^{-1}Li^*_H\shm_2$, or $\shh^{0}Li^*_H\shm_1$, satisfies $(\mathrm{Reg})_{d-1}$. Let $s$ be a local coordinate on $S$ vanishing on $H$.

Let us denote by~$\shm'$ the pullback of $\shm'_2:=\tors_H(\shm_2)$ in $\shm$ and by $\shm_{2,\tf}$ the quotient $\shm_2/\shm'_2$. The commutative diagram below is Cartesian and its columns and rows are short exact sequences:
\begin{equation}\label{E:cs}
\begin{array}{c}
\xymatrix@R=.5cm{
0\ar[r] & \shm_1\ar[r] \ar@<-2pt>@{=}[d]& \shm' \ar@{^{ (}->}[d] \ar[r]&\shm'_2\ar[r] \ar@{^{ (}->}[d] &0 \\
0\ar[r] & \shm_1\ar[r]& \shm\ar[r]\ar[d] & \shm_2\ar[r] \ar[d]&0 \\
& & \shm_{2,\tf}\ar@{=}[r] & \shm_{2,\tf}&
}
\end{array}
\end{equation}
Since $\shm_{2,\tf}$ is $i^*_{H}$-acyclic, the exact sequence \eqref{E:coseq} for the middle column splits as
\begin{gather*}
\shh^{-1}Li^*_H\shm'\isom \shh^{-1}Li^*_H\shm,\\
0\to
\shh^{0}Li^*_H\shm'\to \shh^{0}Li^*_H\shm\to
\shh^{0}Li^*_H\shm_{2,\tf}\to 0,
\end{gather*}
which implies that $Li_H^*\shm'$ and $Li_H^*\shm_{2,\tf}$ satisfy $(\mathrm{Reg})_{d-1}$ by $\eqref{Prop:22}_{d-1}$, hence $\shm'$ and $\shm_{2,\tf}$ satisfy $(\mathrm{Reg})_{d}$. We now prove that $Li_H^*\shm'_2$ satisfies $(\mathrm{Reg})_{d-1}$, that will conclude the proof. Since $\shm'_2$ is $\DXS(*Y)$-coherent, there exists locally an integer $k\geq1$ such that $s^k\shm'_2=\nobreak0$.
We will prove by induction on~$k$ that any $\DXS(*Y)$-coherent torsion quotient of a $\DXS(*Y)$-coherent module satisfying $(\mathrm{Reg})_{d}$,
satisfies $(\mathrm{Reg})_{d}$ too.

If $k=1$, we have $\shh^{-1}Li^*_{H}\shm'_2\simeq \shh^{0}Li^*_{H}\shm'_2$, and, by $\eqref{Prop:22}_{d-1}$ the latter satisfies $(\mathrm{Reg})_{d-1}$, being a quotient of $\shh^{0}Li^*_{H}\shm'$.

If $k>1$, we argue with the following Cartesian commutative diagram, analogous to \eqref{E:cs}:
\[
\begin{array}{c}
\xymatrix@R=.5cm{
0\ar[r] & \shm_1\ar[r] \ar@{=}[d]& \shm'' \ar@{^{ (}->}[d] \ar[r]& \shm''_2\ar[r] \ar@{^{ (}->}[d] &0 \\
0\ar[r] & \shm_1\ar[r] & \shm' \ar[d] \ar[r]& \shm'_2\ar[r] \ar[d] ^{s}&0 \\
& & s\cdot\shm'_2\ar@{=}[r] & s\cdot\shm'_2 &
}
\end{array}
\]
By the induction hypothesis on $k$, $Li_H^*(s\cdot\shm'_2)$ satisfies $(\mathrm{Reg})_{d-1}$ since $s\cdot\shm'_2$ is a $\DXS(*Y)$-coherent quotient of $\shm'$ which is annihilated by $s^{k-1}$. It~follows that, by~considering the middle vertical sequence, so does $Li_H^*\shm''$. Since $\shm''_2$ is a $\DXS(*Y)$-coherent quotient of $\shm''$ annihilated by $s$, $Li_H^*(\shm''_2)$ satisfies $(\mathrm{Reg})_{d-1}$. It~follows that, by~considering the first horizontal sequence, so does $Li_H^*\shm_1$, hence finally, by considering the middle horizontal sequence, so does $Li_H^*\shm'_2$, as~wanted.
\end{proof}

\begin{corollary}\label{cor:prop2}
\label{cor:prop22}
The category $\rD^\rb_{\rhol}(\DXS)$ is a full triangulated subcategory of $\rD^\rb_{\coh}(\DXS)$ stable by duality.
\end{corollary}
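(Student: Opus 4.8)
The plan is to deduce everything from Proposition \ref{Prop:2} and Lemma \ref{Lem:1}, together with the corresponding absolute facts about $\rD^\rb_\rhol(\DX)$. Fullness is immediate: by construction $\rD^\rb_\rhol(\DXS)$ is the full subcategory of $\rD^\rb_\hol(\DXS)$, hence of $\rD^\rb_\coh(\DXS)$, whose objects have regular holonomic cohomology; so there is nothing to prove beyond closedness under the shift functors (obvious), under cones, and under $\bD$.

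For the triangulated structure, I would take a distinguished triangle $\shm'\to\shm\to\shm''\To{+1}$ in $\rD^\rb_\coh(\DXS)$ with two of the three terms in $\rD^\rb_\rhol(\DXS)$. After rotating the triangle and using stability under shift, it suffices to treat the case where $\shm'$ and $\shm$ lie in $\rD^\rb_\rhol(\DXS)$, and to show $\shm''\in\rD^\rb_\rhol(\DXS)$. The long exact cohomology sequence exhibits each $\shh^j(\shm'')$, which is $\DXS$-coherent, as an extension of a $\DXS$-coherent submodule of $\shh^{j+1}(\shm')$ by a $\DXS$-coherent quotient of $\shh^j(\shm)$. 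Since $\shh^{j+1}(\shm')$ and $\shh^j(\shm)$ are regular holonomic, Lemma \ref{Lem:1} shows these sub- and quotient modules are holonomic, and Proposition \ref{Prop:2}\eqref{Prop:22b}---closure of regular holonomic $\DXS$-modules under sub-quotients in $\Mod_\coh(\DXS)$ and under extensions in $\Mod(\DXS)$---shows they are regular holonomic and that their extension $\shh^j(\shm'')$ is regular holonomic as well. Hence $\shm''\in\rD^\rb_\rhol(\DXS)$, so $\rD^\rb_\rhol(\DXS)$ is a full triangulated subcategory of $\rD^\rb_\coh(\DXS)$.

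For stability by duality, recall that $\bD$ already preserves $\rD^\rb_\hol(\DXS)$ and is an involution there, so it is enough to show that $\bD\shm$ is regular whenever $\shm\in\rD^\rb_\rhol(\DXS)$; and since $\bD\shm\in\rD^\rb_\hol(\DXS)\subset\rD^\rb_\coh(\DXS)$, Proposition \ref{Prop:2}\eqref{Prop:21} reduces this to checking $(\mathrm{Reg}\,2)$ for $\bD\shm$, namely $Li^*_{s_o}\bD\shm\in\rD^\rb_\rhol(\DX)$ for every $s_o\in S$. Here I would use that $\bD$ is built (up to a shift and a twist by an $\sho_{\XS}$-invertible sheaf) from $\Rhom_{\DXS}(\cdot,\DXS)$, as recalled from \cite[Def.\,3.4]{MFCS1}, together with the compatibility of $Li^*_{s_o}$ with $\Rhom_{\DXS}$ recorded in \cite[Prop.\,3.1]{MFCS1} and the obvious identification $Li^*_{s_o}\DXS\simeq\DX$, to obtain a canonical isomorphism $Li^*_{s_o}\bD_{\DXS}\shm\simeq\bD_{\DX}Li^*_{s_o}\shm$. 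Since $\shm$ satisfies $(\mathrm{Reg}\,2)$, one has $Li^*_{s_o}\shm\in\rD^\rb_\rhol(\DX)$, and the classical stability of $\rD^\rb_\rhol(\DX)$ under the absolute duality then gives $\bD_{\DX}Li^*_{s_o}\shm\in\rD^\rb_\rhol(\DX)$. Thus $\bD\shm$ satisfies $(\mathrm{Reg}\,2)$, and, $\bD$ being an involution, $\rD^\rb_\rhol(\DXS)$ is stable under $\bD$.

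The only point requiring real care is the base-change isomorphism $Li^*_{s_o}\bD_{\DXS}\shm\simeq\bD_{\DX}Li^*_{s_o}\shm$, i.e.\ the commutation of relative duality with restriction to a parameter $s_o$; once this is granted, the remainder is a formal consequence of Proposition \ref{Prop:2}, Lemma \ref{Lem:1} and their absolute counterparts.
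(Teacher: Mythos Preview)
Your proof is correct and, for the duality part, essentially identical to the paper's: both reduce to the base-change isomorphism $Li^*_{s_o}\bD\shm\simeq\bD Li^*_{s_o}\shm$ (the paper derives it from \cite[(A.10)]{Ka2} in analogy with \cite[Prop.\,2.1 \& 3.1]{MFCS1}, which is exactly the compatibility you invoke) together with stability of $\rD^\rb_\rhol(\DX)$ under absolute duality.

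For the triangulated part your argument via the long exact cohomology sequence and Proposition~\ref{Prop:2}\eqref{Prop:22b} is correct but more laborious than necessary. The paper exploits instead that, by Proposition~\ref{Prop:2}\eqref{Prop:21}, membership in $\rD^\rb_\rhol(\DXS)$ is characterised by the condition $(\mathrm{Reg}\,2)$: for every $s_o$, $Li^*_{s_o}\shm\in\rD^\rb_\rhol(\DX)$. Since $Li^*_{s_o}$ is a triangulated functor and $\rD^\rb_\rhol(\DX)$ is a triangulated subcategory of $\rD^\rb_\coh(\DX)$, closure under cones is immediate. Your approach amounts to verifying $(\mathrm{Reg}\,1)$ directly at the level of cohomology sheaves; it has the virtue of being self-contained (it does not rely on the absolute fact that $\rD^\rb_\rhol(\DX)$ is triangulated, only on closure under subquotients and extensions), but the paper's route is shorter once the equivalence $(\mathrm{Reg}\,1)\Leftrightarrow(\mathrm{Reg}\,2)$ is in hand.
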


\begin{proof}
We note that $(\mathrm{Reg}\,2)$ implies that $\rD^\rb_{\rhol}(\DXS)$ is a full triangulated subcategory of $\rD^\rb_{\hol}(\DXS)$. Since the latter is a full triangulated subcategory of $\rD^\rb_{\coh}(\DXS)$, the first assertion follows. Stability by duality follows from the same property in the absolute case (\cf\eg\cite[Th.\,5.4.15\,(4)]{Bjork93}), together with the isomorphism $Li^*_{s_o}\bD\shm\simeq\bD Li^*_{s_o}\shm$, which follows from Proposition~\ref{prop:3.1}.
\end{proof}

\subsection{Stability of regular holonomicity under base pullback and base pushforward}\label{subsec:3c}
For a proper morphism $f:X\to Y$, it has been shown in \cite[Cor.\,2.4]{MFCS2} that if~$\shm$ is an object of $\rD^\rb_\rhol(\DXS)$ with $f$-good cohomology, then $\Df_*\shm$ belongs to $\rD^\rb_\rhol(\DYS)$. On the other hand, stability of regular holonomicity (and hence its coherence) by pullback $\Df^*$ has been shown in \cite{FMFS1} only if $\dim S=1$ as a consequence of the Riemann-Hilbert correspondence proved there. This will be obtained in general by the proof of Theorem \ref{th:inverseimage} in Section \ref{subsec:pfinv}. In this section, we consider on the other hand the behaviour with respect to base pullback and pushforward.

\begin{proposition}[Stability under base pullback]\label{LReginverse}
Let $\pi:S'\to S$ be a morphism of complex manifolds and let $\shm$ be an object of $\rD^\rb_{\rhol}(\DXS)$. Then $L\pi^*\shm$ belongs to $\rD^\rb_{\rhol}(\DXSp)$.
\end{proposition}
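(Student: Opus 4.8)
The plan is to deduce the statement from Proposition \ref{Prop:2}\eqref{Prop:21}, which interchanges the two forms $(\mathrm{Reg}\,1)$ and $(\mathrm{Reg}\,2)$ of regularity on coherent complexes, together with the base-change compatibility of $(\mathrm{Reg}\,2)$ recorded at the beginning of Section \ref{sec:3}. The only non-formal ingredient needed is that $L\pi^*$ preserves boundedness and coherence, which is already part of Lemma \ref{lem:basepullback}.

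First I would observe that, by Lemma \ref{lem:basepullback}, $L\pi^*\shm$ belongs to $\rD^\rb_\hol(\DXSp)$, hence in particular to $\rD^\rb_\coh(\DXSp)$; so it only remains to check that it satisfies a regularity condition. Since $\shm\in\rD^\rb_\rhol(\DXS)\subset\rD^\rb_\coh(\DXS)$, it satisfies $(\mathrm{Reg}\,1)$, hence $(\mathrm{Reg}\,2)$ by Proposition \ref{Prop:2}\eqref{Prop:21}: for every $s_o\in S$ one has $Li^*_{s_o}\shm\in\rD^\rb_\rhol(\DX)$. Now fix $s'_o\in S'$ and set $s_o:=\pi(s'_o)$. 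The factorization $i_{s_o}=\pi\circ i_{s'_o}$ of the inclusion of the reduced point into $S$ yields, by transitivity of derived inverse images ($Li^*_{s'_o}\circ L\pi^*\simeq L(\pi\circ i_{s'_o})^*$), a natural isomorphism $Li^*_{s'_o}(L\pi^*\shm)\simeq Li^*_{s_o}\shm$ in $\rD^\rb(\DX)$; the right-hand side lies in $\rD^\rb_\rhol(\DX)$, so $L\pi^*\shm$ satisfies $(\mathrm{Reg}\,2)$. This is exactly the base-change compatibility of $(\mathrm{Reg}\,2)$ specialized to point inclusions.

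Finally, I would apply Proposition \ref{Prop:2}\eqref{Prop:21} over the base $S'$ to the coherent complex $L\pi^*\shm$: there $(\mathrm{Reg}\,2)$ is equivalent to $(\mathrm{Reg}\,1)$, so $L\pi^*\shm\in\rD^\rb_\rhol(\DXSp)$, as claimed. I do not expect a genuine obstacle here once Proposition \ref{Prop:2} is in hand, since the argument is a purely formal shuttle between the two equivalent regularity conditions combined with the transitivity of restriction along $\{s'_o\}\hto S'\to S$. The only point requiring a little care is to make sure $L\pi^*\shm$ is really an object of $\rD^\rb_\coh(\DXSp)$, so that Proposition \ref{Prop:2} applies, and this is precisely what Lemma \ref{lem:basepullback} and the inclusion $\rD^\rb_\hol\subset\rD^\rb_\coh$ provide.
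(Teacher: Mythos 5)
Your proof is correct and follows essentially the same route as the paper: holonomicity via Lemma \ref{lem:basepullback}, and regularity from the isomorphism of functors $Li^*_{s'_o}\circ L\pi^*\simeq Li^*_{\pi(s'_o)}$, with the passage between $(\mathrm{Reg}\,1)$ and $(\mathrm{Reg}\,2)$ provided by Proposition \ref{Prop:2} (which the paper's terse argument uses implicitly). Your write-up merely makes explicit what the paper leaves to the reader.
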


\begin{proof}
We already know that $L\pi^*\shm$ belongs to $\rD^\rb_{\hol}(\DXSp)$ by Lemma \ref{lem:basepullback}. Regularity follows from the isomorphism of functors $Li_{s'_o}^*L\pi^*\simeq Li_{\pi(s'_o)}^*$ for any $s'_o\in\nobreak S'$.
\end{proof}

Let $t_o$ be a \emph{fat point} of $S$, that~is, a complex subspace of $S$ supported on a reduced point $|t_o|\in S$. In other words, the ideal $\shi_{t_o}\subset\sho_S$, which satisfies $\shi_{t_o}\subset\mathfrak{m}_{|t_o|}$ (which is the ideal associated to
$|t_o|$ such that its fiber in $|t_o|$ is the maximal ideal of $\sho_{S,|t_o|}$),
contains some power $\mathfrak{m}_{|t_o|}^k$.
By~abuse of notation we will still denote by $\shi_{t_o}$ (\resp $\mathfrak{m}_{|t_o|}$) the ideal $\shi_{t_o,|t_o|}$ (\resp $\mathfrak{m}_{|t_o|, |t_o|}$) and also the sheaf $p^{-1}\shi_{t_o}$
(\resp $p^{-1}\mathfrak{m}_{|t_o|}$).
Let $i_{t_o}:t_o\to S$ denote the natural morphism of complex spaces defined by the surjective morphism $\sho_S\to\sho_S/\shi_{t_o}$. For an $\sho_{\XS}$-module, \resp a~$\DXS$-module~$\shm$, we~set
\[
i_{t_o}^*\shm:=(\pOS/p^{-1}\shi_{t_o})\otimes_{\pOS}\shm,
\]
that we regard in a natural way as a $\DX$-module since $\sho_{S,|t_o|}/\shi_{t_o}$ is a finite-dimensional vector space. We define thereby the pullback functor
\[
Li_{t_o}^*:\rD^\rb(\DXS)\to\rD^\rb(\DX).
\]
We note that, endowed with its natural structure of $\DX$-module, $i_{t_o}^*\DXS$ is coherent. As~a~consequence, if $\shm$ is $\DXS$-coherent, then $Li_{t_o}^*\shm$ has $\DX$-coherent cohomology. In other words, $Li_{t_o}^*$ induces a functor $\rD^\rb_\coh(\DXS)\to\rD^\rb_\coh(\DX)$.

\begin{corollary}\label{cor:fatpoint}
Let $\shm$ be an object of $\rD^\rb_\coh(\DXS)$. Then, for any fat point~$t_o$ of~$S$, $Li_{t_o}^*\shm$ belongs to $\rD^\rb_\rhol(\DX)$ if and only if $Li_{|t_o|}^*\shm$ does so.
\end{corollary}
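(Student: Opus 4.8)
The plan is to reduce the statement about a general fat point $t_o$ to the one about the underlying reduced point $|t_o|$ by filtering the structure sheaf $\sho_{t_o}=\sho_S/\shi_{t_o}$ along powers of the maximal ideal. First I would observe that one implication is essentially trivial: if $Li_{t_o}^*\shm\in\rD^\rb_\rhol(\DX)$, then since $i_{|t_o|}$ factors through $i_{t_o}$ (as $\shi_{t_o}\subset\mathfrak m_{|t_o|}$), we have $Li_{|t_o|}^*\shm\simeq Li_{|t_o|}^*\circ Li_{t_o}^*\shm$ up to the appropriate base-change identification, and regular holonomicity in the absolute case is stable under the further pullback. (Alternatively, and more symmetrically, I would treat both directions simultaneously via the filtration argument below.)

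For the substantial direction, suppose $Li_{|t_o|}^*\shm\in\rD^\rb_\rhol(\DX)$. Working in a neighbourhood of $|t_o|$, choose $k$ with $\mathfrak m_{|t_o|}^k\subset\shi_{t_o}$, and consider the finite decreasing filtration of $\sho_{t_o}$ by the images $\mathfrak m_{|t_o|}^j\sho_{t_o}$ for $0\le j\le k$. Each graded piece $\gr^j=\mathfrak m_{|t_o|}^j\sho_{t_o}/\mathfrak m_{|t_o|}^{j+1}\sho_{t_o}$ is a finite-dimensional $\CC$-vector space, hence isomorphic to a finite direct sum of copies of the residue field $\CC_{|t_o|}=\sho_S/\mathfrak m_{|t_o|}$. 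Tensoring this filtration with $\shm$ over $\pOS$ and deriving, I get a finite sequence of distinguished triangles in $\rD^\rb_\coh(\DX)$ relating $Li_{t_o}^*\shm$ to the complexes $(\gr^j\text{-part})\otimes^L$, each of which is, up to a finite direct sum and shift-free reindexing, a copy of $Li_{|t_o|}^*\shm$ — here I would use the (derived) tensor-associativity identification $(\sho_S/\shi_{t_o})\otimes^L_{\sho_S}\shm$ filtered by the subquotients, together with the fact that each subquotient, being a sum of copies of $\CC_{|t_o|}$, produces $Li_{|t_o|}^*\shm^{\oplus r_j}$. Since $\rD^\rb_\rhol(\DX)$ is a full triangulated subcategory of $\rD^\rb_\coh(\DX)$ closed under direct sums and cones, climbing the filtration one step at a time yields $Li_{t_o}^*\shm\in\rD^\rb_\rhol(\DX)$.

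The one point requiring genuine care — and the main potential obstacle — is the compatibility of the derived pullback functors with this filtration, i.e.\ making precise that $Li_{t_o}^*\shm$ is computed by a complex carrying a finite filtration whose graded pieces are the $L i_{|t_o|}^*\shm^{\oplus r_j}$. The cleanest way is to replace $\shm$ by a bounded-above complex of locally free $\DXS$-modules (which exist locally since a coherent $\DXS$-module has a length-$2\dim X$ free resolution), so that $Li_{t_o}^*$ and $Li_{|t_o|}^*$ become honest termwise tensor products; then the filtration of $\sho_{t_o}$ by $\mathfrak m_{|t_o|}^j$-powers induces an honest finite filtration of complexes with the asserted graded pieces, and the distinguished triangles are the usual ones attached to a filtered complex. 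One must also check that the $\DX$-module structure coming from the $\DXS$-structure matches on each graded piece the structure used to define $Li_{|t_o|}^*$; this is immediate because the filtration is by $\pOS$-submodules of $\sho_{t_o}$, hence $\DXS$-linear, and $\DXS$ acts $\sho_S$-linearly so the action descends compatibly. With this in hand the induction on $k$ (equivalently, on the length of the filtration) goes through, and both implications of the corollary follow; in particular, combining with Proposition \ref{LReginverse} applied to the closed embedding $i_{|t_o|}$, one recovers that $\rhol$-ness can always be tested on the reduced point.
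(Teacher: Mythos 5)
Your dévissage for the implication [$Li^*_{|t_o|}\shm$ regular holonomic $\Rightarrow Li^*_{t_o}\shm$ regular holonomic] is correct and is essentially the paper's own argument: the paper interpolates with the ideals $\shi_k=\shi_{t_o}\cap\fm_{|t_o|}^k$, while you filter $\sho_S/\shi_{t_o}$ by the images of the $\fm_{|t_o|}^j$; in both versions the graded pieces are annihilated by $\fm_{|t_o|}$, hence give, after derived tensoring, finite direct sums of copies of $Li^*_{|t_o|}\shm$, and one climbs finitely many distinguished triangles inside the triangulated subcategory $\rD^\rb_\rhol(\DX)$. Your technical point about computing the pullbacks termwise on a locally free $\DXS$-resolution is fine, since $\DXS$ is $\pOS$-flat.

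The gap is in the converse implication, which you declare ``essentially trivial''. The ``further pullback'' you invoke is $(\sho_S/\fm_{|t_o|})\otimes^L_{\sho_{t_o}}(\cbbullet)$ over the Artinian local ring $\sho_{t_o}=\sho_{S,|t_o|}/\shi_{t_o}$: this is not an inverse image along a morphism of complex manifolds, so no stability-under-pullback statement (absolute, or Proposition \ref{LReginverse} in the relative setting) applies as cited; moreover $\sho_S/\fm_{|t_o|}$ has infinite Tor-dimension over $\sho_{t_o}$ as soon as $t_o\neq|t_o|$, so the asserted stability is exactly what has to be proved. Your closing appeal to Proposition \ref{LReginverse} for $i_{|t_o|}$ also misfires, since that proposition assumes $\shm\in\rD^\rb_\rhol(\DXS)$, which is not a hypothesis here. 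Nor can the filtration argument ``treat both directions simultaneously'': in each triangle one needs two vertices to control the third, and when only $Li^*_{t_o}\shm$ is known to be regular holonomic the unknown vertices are precisely the graded pieces, i.e.\ the copies of $Li^*_{|t_o|}\shm$, so nothing propagates backwards. The statement is nevertheless true and your factorization is the right starting point, but it requires an argument: write $Li^*_{|t_o|}\shm\simeq(\sho_S/\fm_{|t_o|})\otimes^L_{\sho_{t_o}}Li^*_{t_o}\shm$, note that this complex is a priori bounded with $\DX$-coherent cohomology (compute over the regular ring $\sho_S$), resolve $\sho_S/\fm_{|t_o|}$ by finite free $\sho_{t_o}$-modules, and deduce, via the resulting (finitely truncated) spectral sequence, that each cohomology is an iterated extension of coherent $\DX$-subquotients of finite direct sums of the cohomologies of $Li^*_{t_o}\shm$, hence regular holonomic by Proposition \ref{Prop:2}\eqref{Prop:22b}. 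With that supplement your proof is complete.
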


\begin{proof}
For $k\geq0$ we set $\shi_k=\shi_{t_o}\cap\mathfrak{m}_{|t_o|}^k$ (with $\mathfrak{m}_{|t_o|}^0:=\sho_S$), so that $\shi_1=\shi_{t_o}$ and $\shi_k=\mathfrak{m}_{|t_o|}^k$ for~$k$ large enough. It is enough to prove that \hbox{$p^{-1}(\shi_k/\shi_{k+1})\otimes^L_{\pOS}\shm$} belongs to $\rD^\rb_\rhol(\DX)$ for any $k$ if and only if $Li_{|t_o|}^*\shm$ does so. Since the sheaf $\shi_k/\shi_{k+1}$ is an $\sho_S/\mathfrak{m}_{|t_o|}$-module
supported on $|t_o|$ whose fiber is a finite dimensional vector space, we have
\begin{align*}
p^{-1}(\shi_k/\shi_{k+1})\otimes^L_{\pOS}\shm
&\simeq p^{-1}(\shi_k/\shi_{k+1})\otimes^L_{p^{-1}(\sho_S/\mathfrak{m}_{|t_o|})}(p^{-1}(\sho_S/\mathfrak{m}_{|t_o|})\otimes^L_{\pOS}\shm)\\
&=(\shi_k/\shi_{k+1})\otimes Li^*_{|t_o|}\shm,
\end{align*}
and the conclusion follows.
\end{proof}

\begin{setting}\label{set:local}
We consider a local setting where $S$ is a polydisc $\Delta^d$ written as $\Delta^{d-1}\!\times\!\nobreak\Delta=S'\times\Delta$, and we denote by $q:S\to S'$ the projection $(s',t)\mto s'$, and we keep the same notation after taking the product with $X$. Recall (\cf\eg\cite[Prop.\,A.14]{Ka2}) that the sheaves of rings $q^{-1}\sho_{S'}$, $q^{-1}\sho_{\XS'}$ and $q^{-1}\DXSp$ are Noetherian.

Let $h(s',t)=t^k+\sum_{i=0}^{k-1}h_i(s')t^i$ be a Weierstrass polynomial, with $h_i$ holomorphic on $S'$ and let $T=h^{-1}(0)$. The equivalence between the categories of
\begin{itemize}
\item
coherent $\sho_S$-modules supported on $T$,
\item
$h$-nilpotent coherent $q^{-1}\sho_{S'}[t]$-modules,
\item
$h$-nilpotent $q^{-1}\sho_{S'}[t]$-modules which are $q^{-1}\sho_{S'}$-coherent,
\end{itemize}
extends in a natural way to $\sho_{\XS}$, to $\DXS$ and to $\DXS(*Y)$ for a given hypersurface $Y$ of $X$. For example, in one direction, if $\shm$ is $\DXS$-coherent and supported on $\XT$, then $\shm$ is $q^{-1}\DXSp$-coherent and each local section is annihilated by some power of~$h$. By taking local $\DXS$-generators of $\shm$, we conclude that there exists locally an integer $\ell\geq1$ such that $h^\ell\shm=0$. Conversely, if $\shm$ satisfies the latter property, we can regard it as an $h$-nilpotent coherent $q^{-1}\DXSp[t]$-module and the associated $\DXS$-module is $\DXS\otimes_{q^{-1}\DXSp[t]}\shm$.
\end{setting}

By definition, the characteristic variety of a coherent $q^{-1}\DXSp$-module is (locally) the support in $(T^*X)\times S$ of the graded coherent $q^{-1}\gr^F\DXSp$-module with respect to any local coherent $q^{-1}F_\sbullet\DXSp$-filtration. Such a module is said to be holonomic if this support is contained in $\Lambda\times S$ for some Lagrangian variety $\Lambda\subset T^*X$.

\begin{remark}[Holonomic and regular holonomic $q^{-1}\DXSp$-modules]\label{rem:TS}
Given a coherent $\DXS$-module $\shm$ supported on $T$ as above, one checks that a coherent $F_\sbullet\DXS$-filtration is also a coherent $q^{-1}F_\sbullet\DXSp$-filtration by the above correspondence. As a consequence, such a module is $\DXS$-holonomic if and only if it is $q^{-1}\DXSp$-holonomic.

On the other hand, we say that a holonomic $q^{-1}\DXSp$-module $\shm$ is regular if, for any $s'_o\in S'$ and any $s_o\in q^{-1}(s'_o)$, the holonomic $\DX$-module $i_{s_o}^{-1}(Li_{q^{-1}(s'_o)}^*\shm)$ is regular.
\end{remark}

\begin{corollary}\label{cor:TS}
A coherent $\DXS$-module $\shm$ supported on $X\times T$ is regular holonomic if and only if, when regarded as a $q^{-1}\DXSp$-module, it is regular holonomic.
\end{corollary}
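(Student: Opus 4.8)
The plan is to reduce the statement to the definitions of regularity on each side, and then to leverage Corollary~\ref{cor:fatpoint} (the fat-point criterion) together with the description of fibers of the projection $q:S=S'\times\Delta\to S'$. Recall from Remark~\ref{rem:TS} that a holonomic $q^{-1}\DXSp$-module $\shm$ (supported on $\XT$) is, by definition, regular if for every $s'_o\in S'$ and every $s_o\in q^{-1}(s'_o)$ the holonomic $\DX$-module $i_{s_o}^{-1}\bigl(Li_{q^{-1}(s'_o)}^*\shm\bigr)$ is regular; and by Remark~\ref{rem:TS} the two notions of holonomicity coincide, so I only need to compare the two regularity conditions. On the $\DXS$-side, $\shm$ is regular holonomic iff it satisfies $(\mathrm{Reg})$, i.e.\ $Li_{s_o}^*\shm\in\rD^\rb_\rhol(\DX)$ for every $s_o\in S$.

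First I would fix $s_o=(s'_o,t_o)\in S$ with $t_o\in\Delta$. Since $\shm$ is supported on $\XT$, if $t_o$ is not a root of the Weierstrass polynomial $h(s'_o,\cdot)$ then both $Li_{s_o}^*\shm=0$ and the relevant restriction of the $q^{-1}\DXSp$-module vanish, so there is nothing to check; hence I may assume $|t_o|\in T$. Now the fiber $q^{-1}(s'_o)\simeq\{s'_o\}\times\Delta$ is a one-dimensional disc, and $T\cap q^{-1}(s'_o)$ is the finite set of roots of $h(s'_o,\cdot)$. The key point is that $Li_{q^{-1}(s'_o)}^*\shm$ is a $\shd_{X\times q^{-1}(s'_o)/q^{-1}(s'_o)}$-module supported on this finite set of points, hence decomposes as a direct sum over those points; restricting further to $s_o$ amounts to applying $i_{s_o}^{-1}$, but because $\shm$ is $h$-nilpotent, $i_{s_o}^*\shm$ (the absolute fiber over $s_o\in S$) and $i_{s_o}^{-1}Li_{q^{-1}(s'_o)}^*\shm$ differ only by a finite filtration whose graded pieces are finite direct sums of copies of one another — precisely the computation carried out in the proof of Corollary~\ref{cor:fatpoint}. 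Thus $Li_{s_o}^*\shm\in\rD^\rb_\rhol(\DX)$ if and only if $i_{s_o}^{-1}Li_{q^{-1}(s'_o)}^*\shm\in\rD^\rb_\rhol(\DX)$.

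Concretely, I would argue as follows. Write $t_o\in\Delta$ and consider the fat point of $q^{-1}(s'_o)$ defined by a suitable power of the maximal ideal at $t_o$; since $\shm$ restricted to $q^{-1}(s'_o)$ is annihilated by a power of $h(s'_o,\cdot)$, and $h(s'_o,\cdot)$ vanishes to finite order at $t_o$, the restriction $i_{s_o}^{-1}Li_{q^{-1}(s'_o)}^*\shm$ is computed from a fat-point restriction at $t_o$ on the one-dimensional fiber. On the other hand $Li_{s_o}^*\shm$ is the fat-point--free (reduced-point) restriction at $s_o\in S$. Exactly as in Corollary~\ref{cor:fatpoint}, filtering by powers of the ideal and using that $\mathfrak m_{|t_o|}^k/\mathfrak m_{|t_o|}^{k+1}$ is a finite-dimensional vector space, one sees that membership of the fat-point restriction in $\rD^\rb_\rhol(\DX)$ is equivalent to membership of the reduced restriction $Li_{|t_o|}^*$, hence to $Li_{s_o}^*\shm\in\rD^\rb_\rhol(\DX)$. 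Running this over all $s_o\in S$ (equivalently, all $s'_o\in S'$ and all $s_o\in q^{-1}(s'_o)$) gives the equivalence of the two regularity conditions, and hence the corollary.

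The main obstacle is the bookkeeping in the second step: matching the fat-point restriction $i_{s_o}^{-1}Li_{q^{-1}(s'_o)}^*\shm$ appearing in the definition of regularity for $q^{-1}\DXSp$-modules (Remark~\ref{rem:TS}) with the reduced-point restriction $Li_{s_o}^*\shm$ used in $(\mathrm{Reg})$. One must check that the relevant fat point of $q^{-1}(s'_o)$ — governed by the vanishing order of $h(s'_o,\cdot)$ at $t_o$ — is large enough that $i_{q^{-1}(s'_o)}^*$ followed by $i_{t_o}^*$ on the fiber agrees with $i_{s_o}^*$ on $S$ up to a finite filtration with graded pieces that are finite sums of reduced-point restrictions; this is where Corollary~\ref{cor:fatpoint} is invoked, now applied on the one-dimensional base $q^{-1}(s'_o)$ rather than on $S$. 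Once this compatibility is in place, the statement follows formally, since $\rD^\rb_\rhol(\DX)$ is triangulated and closed under finite direct sums and sub-quotients in $\Mod_\coh(\DX)$.
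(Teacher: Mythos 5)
Your proof is correct and takes essentially the same route as the paper: the nilpotency of $h$ on $\shm$ yields a splitting of the relevant derived restriction into a summand and its shift, and Corollary~\ref{cor:fatpoint} then converts fat-point into reduced-point restrictions, giving both implications at once. The only (harmless) difference in bookkeeping is that the paper applies Corollary~\ref{cor:fatpoint} directly to the fat point of $S$ defined by $(s'_1,\dots,s'_{d-1},h^k)$, whereas you first restrict to the one-dimensional fiber $q^{-1}(s'_o)$ and invoke the fat-point criterion there.
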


\begin{proof}
By Remark \ref{rem:TS}, we only need to check regularity, and we can suppose from the start that~$\shm$ is $\DXS$-holonomic supported on $X\times T$. We assume first that $\shm$ is $q^{-1}\DXSp$-regular. We wish to prove that $Li_{s_o}^*\shm$ is $\DX$-regular holonomic for any $s_o\in S$. It is enough to prove this for $s_o\in T$. Let us choose coordinates $(s'_1,\dots,s'_{d-1},t)$ of $S'\times\Delta$ centered at $s_o$. There exists $k\geq1$ such that $\sho_S/(h^k)\otimes^L_{\sho_S}\shm\simeq\shm[1]\oplus\shm$, since $h^k\shm=0$ for some $k\geq1$, so, by the assumption,
\[
\sho_S/(s'_1,\dots,s'_{d-1})\otimes^L_{\sho_S}\bigl[\sho_S/(h^k)\otimes^L_{\sho_S}\shm\bigr]=\sho_S/(s'_1,\dots,s'_{d-1},h^k)\otimes^L_{\sho_S}\shm
\]
is a regular holonomic $\DX$-module. Since the support of $\sho_S/(s'_1,\dots,s'_{d-1},h^k)$ is a finite union of fat points in $S$ (defined by the ideal generated by $h^k(0,t)$), one of which is supported at $s_o$, we~conclude that $Li_{s_o}^*\shm$ is a regular holonomic $\DX$-module by applying Corollary \ref{cor:fatpoint}. The converse is proved similarly.
\end{proof}

\begin{theorem}[Stability under projective base pushforward]\label{th:Regdirect}
Let $\pi:S\to S'$ be a projective morphism of complex manifolds and let~$\shm$ be an object of $\rD^\rb_{\rhol}(\DXS)$. Assume that the cohomology of $\shm$ is $\pi$-good. Then $R\pi_*\shm$ belongs to $\rD^\rb_{\rhol}(\DXSp)$.
\end{theorem}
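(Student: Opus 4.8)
The plan is to reduce, in a few steps, to the pushforward along the projection $\PP^m\to\{\mathrm{pt}\}$, and to settle that case by an induction on $m$ whose base is the hard point. Holonomicity of the cohomology sheaves of $R\pi_*\shm$, together with $\Char R^j\pi_*\shm\subset\pi(\Char\shm)$, is already given by Proposition~\ref{prop:basepushhol} applied to each $\shh^j\shm$ and the hypercohomology spectral sequence (using that $\rD^\rb_\hol(\DXSp)$ is triangulated); so only regularity is at stake, and by Proposition~\ref{Prop:2} it suffices to verify condition $(\mathrm{Reg}\,2)$ for $R\pi_*\shm$, i.e.\ $Li_{s'_o}^*R\pi_*\shm\in\rD^\rb_\rhol(\DX)$ for every $s'_o\in S'$. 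Since $\pi$ is projective, factor $\pi=p\circ\iota$ with $\iota\colon S\hookrightarrow\PP^m\times S'$ a closed embedding of complex manifolds and $p\colon\PP^m\times S'\to S'$ the projection, so $R\pi_*=Rp_*\circ\iota_*$. One checks that $\iota_*\shm$ is regular holonomic relative to $\PP^m\times S'$ with $p$-good cohomology: goodness and holonomicity are clear since $\iota$ is a closed embedding and the fibers of $p$ meet $\Supp\iota_*\shm$ in fibers of $\pi$, which are projective hence compact; and for regularity, $Li_{z_o}^*\iota_*\shm=0$ for $z_o\notin\iota(S)$, while for $z_o=\iota(s_o)$ a Koszul resolution of the residue field at $z_o$ adapted to the submanifold $\iota(S)$ exhibits $Li_{z_o}^*\iota_*\shm$ as a finite direct sum of shifts of $Li_{s_o}^*\shm\in\rD^\rb_\rhol(\DX)$ (alternatively one invokes Corollary~\ref{cor:fatpoint}). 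We may thus assume $\pi=p\colon\PP^m\times S'\to S'$. Base change along the Cartesian square with vertices $\PP^m\times\{s'_o\}$, $\PP^m\times S'$, $\{s'_o\}$, $S'$ (valid since $p$ is proper and the cohomology is $p$-good) gives $Li_{s'_o}^*Rp_*\shn\simeq R\bar p_*\,L\iota_{s'_o}^*\shn$ with $\bar p\colon\PP^m\to\{\mathrm{pt}\}$ and $\iota_{s'_o}\colon\PP^m=\PP^m\times\{s'_o\}\hookrightarrow\PP^m\times S'$; by Proposition~\ref{LReginverse}, $L\iota_{s'_o}^*\shn\in\rD^\rb_\rhol(\DXP)$, still with good cohomology. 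Hence it is enough to prove: for $\shn\in\rD^\rb_\rhol(\DXP)$ with good cohomology, $R\Gamma(\PP^m,\shn)\in\rD^\rb_\rhol(\DX)$.

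This compact-parameter statement we prove by induction on $m$, the case $m=0$ being trivial. For $m\ge2$, blow up a point of $\PP^m$ to get $\beta\colon\wt{\PP}\to\PP^m$ together with a $\PP^1$-bundle $\rho\colon\wt{\PP}\to\PP^{m-1}$; since $R\beta_*\sho_{\wt\PP}=\sho_{\PP^m}$, the projection formula gives $R\beta_*L\beta^*\simeq\id$ on complexes with $\sho$-good cohomology, so $R\Gamma(\PP^m,\shn)\simeq R\Gamma(\PP^{m-1},R\rho_*L\beta^*\shn)$. Here $L\beta^*\shn$ is regular holonomic and good (Proposition~\ref{LReginverse}), and base change along the fibers of $\rho$ (each isomorphic to $\PP^1$) shows, exactly as above, that $R\rho_*L\beta^*\shn$ satisfies $(\mathrm{Reg}\,2)$ over $\PP^{m-1}$ once the statement is known for $\PP^1\to\{\mathrm{pt}\}$; being also holonomic, $R\rho_*L\beta^*\shn$ then lies in $\rD^\rb_\rhol(\shd_{X\times\PP^{m-1}/\PP^{m-1}})$ with good cohomology, and the inductive hypothesis applied to $R\Gamma(\PP^{m-1},-)$ concludes. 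Everything thus comes down to the base case $m=1$.

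This base case is the step I expect to be the main obstacle: for $\shn\in\rD^\rb_\rhol(\shd_{X\times\PP^1/\PP^1})$ with good cohomology, show $R\Gamma(\PP^1,\shn)\in\rD^\rb_\rhol(\DX)$. The plan there is to reduce $\shn$ to a single coherent module carrying, locally on $X$, a good $F_\sbullet\DX$-filtration by $\sho_{X\times\PP^1}$-coherent submodules which is global over $\PP^1$ (using goodness and compactness of $\PP^1$), then to algebraize it along $\PP^1$ by relative GAGA over $X$, to compute $R\Gamma(\PP^1,\shn)$ by the two-term \v{C}ech complex of the algebraic direct images along the two standard affine charts of $\PP^1$, and finally to extract regularity of the resulting $\DX$-complex from the regularity of $\shn$ in the $X$-direction, working along a stratification of $X$ adapted to the projection $\Lambda\subset T^*X$ of $\Char\shn$ and using the stability of $\rD^\rb_\rhol$ under $\shd$-module pushforward in the $X$-direction (\cite[Cor.\,2.4]{MFCS2}), external products, and localization. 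An alternative is to avoid the algebraization and instead use the Beilinson resolution of the diagonal on $\PP^1\times\PP^1$ to write $\shn$ — hence $R\Gamma(\PP^1,\shn)$ — in terms of the regular holonomic $\DX$-complexes $Li_z^*\shn$ ($z\in\PP^1$) furnished by the base-change argument above; when $\dim S=1$ one may also hope to obtain this directly from the results of \cite{FMFS1}.
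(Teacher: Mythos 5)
Your reduction to the compact case is sound and parallels the paper's Step~1: factoring $\pi$ through $\PP^m\times S'$, disposing of the closed embedding via fat points (Corollary \ref{cor:fatpoint}), and using proper base change plus Proposition \ref{LReginverse} to reduce regularity of $R\pi_*\shm$ to the statement that $R\Gamma(\PP^m,\shn)\in\rD^\rb_\rhol(\DX)$ for $\shn\in\rD^\rb_\rhol(\DXP)$ with good cohomology. The further induction on $m$ via a point blow-up and the $\PP^1$-bundle $\rho:\wt\PP\to\PP^{m-1}$ is plausible (modulo checking that the base pushforward $R\rho_*$ preserves goodness, which you assert but do not verify), but it only funnels everything into the case of a one-dimensional compact parameter space, and that case is precisely where your argument stops.

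That base case is a genuine gap, not a technicality: you offer three tentative plans (relative GAGA and a two-chart \v{C}ech computation, a Beilinson resolution of the diagonal, or an appeal to \cite{FMFS1}), none of which is carried out, and none addresses the actual difficulty, namely how to pass from regularity of the fibers $Li_{s}^*\shn$ (which is the \emph{hypothesis}) to regularity of the $\DX$-complex of global sections. In particular \cite{FMFS1} contains no base pushforward statement, and the Beilinson resolution expresses $R\Gamma(\PP^1,\shn)$ through twists $\shn(-1)$, $\shn$, not through restrictions $Li_s^*\shn$, so regularity does not follow. The paper's proof supplies exactly the missing mechanism: after reducing (by induction on $\dim\Supp_S\shm$) to a module with no submodule of smaller $S$-support, one shows that the evaluation map $\pi_*\shm\to i_{\bms_o}^{-1}\shm/\fm_{\bms_o}^k\shm$ to a well-chosen fat point is injective for $k\gg0$ --- this rests on a Noetherianity/Nakayama-type argument ($\shm'=\fm_{\bms_o}\shm'$ forces $\shm'=0$) --- so that $\pi_*\shm$ is a coherent submodule of a regular holonomic $\DX$-module and Proposition \ref{Prop:2}\eqref{Prop:22b} applies; the higher direct images are then handled by Serre twisting $\shm(\ell H)$ with a hyperplane $H$ cutting the $S$-support properly, using the Grauert--Remmert vanishing of Lemma \ref{Lemma:l} and induction on the support dimension. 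Without an argument of this kind (or a worked-out substitute), your proposal does not prove the theorem.
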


\begin{proof}
A standard consequence of Proposition~\ref{prop:basepushhol} is that $R\pi_*\shm$ belongs to $\rD^\rb_{\hol}(\DXSp)$. Furthermore, we note that the question is local with respect to $X$ and to $S'$.

\subsubsection*{Step 1: Reduction to the case where $S'$ is a point}
Since $\pi$ is projective, we can regard~$\pi$ (locally with respect to $S'$) as the composition of the inclusion $S\hto\PP^m\times S'$ and the projection $\PP^m\times S'\to S'$ for a suitable $m$. Moreover, we note that the result is easy if~$\pi$ is a closed embedding. We~can thus assume that $\pi$ is a projection $S=\PP^m\times S'\to S'$. For a complex $\shm$ in $\rD^\rb_{\rhol}(\DXS)$, proving the $\DXSp$-regularity of $R\pi_*\shm$ amounts to proving the $\DX$-regularity of $Li_{s'_o}^*R\pi_*\shm$ for any $s'_o\in S'$. Let $\fm_{s'_o}$ be the maximal ideal sheaf of~$\sho_{S'}$ at $s'_o$. Let us set $\PP^m_{s'_o}=\PP^m\times\{s'_o\}$ and consider the following Cartesian square
\[
\xymatrix{
X\times\PP^m_{s'_o}
\ar[r]^-{\pi'} \ar@{^{(}->}[d]_{i'}
&
X\times \{s'_o\} \ar@{^{ (}->}[d]^{i_{s'_o}} \\
\XS \ar[r]^-{\pi} & \XS'
}
\]
Then we have
\begin{align*}
Li_{s'_o}^*R\pi_*\shm&=i_{s'_0}^{-1}((\sho_{\XS'}/p^{-1}(\fm_{s'_o})\sho_{\XS'})\otimes_{\sho_{\XS'}}^LR\pi_*\shm)\\
&\simeq i^{-1}_{s'_o}R\pi_*\bigl(\pi^{-1}(\sho_{\XS'}/p^{-1}(\fm_{s'_o})\sho_{\XS'})\otimes^L_{\pi^{-1}\sho_{\XS'}}\shm\bigr)\\
&\simeq R\pi'_*{i'}^{-1}\bigl(\pi^{-1}(\sho_{\XS'}/p^{-1}(\fm_{s'_o})\sho_{\XS'})\otimes^L_{\pi^{-1}\sho_{\XS'}}\shm\bigr)\\
&\overset{(*)}{\simeq}R\pi'_*\bigl(\sho_{X\times \PP^m_{s'_o} }\otimes^L_{i'^{-1}\sho_{\XS}}i'^{-1}\shm\bigr)\\
&=R\pi'_*(Li^{\prime*}\shm),
\end{align*}
where the isomorphism $(*)$ follows by extension of scalars since $\shm$ is a complex of $\sho_{\XS}$-modules. By Proposition \ref{LReginverse}, $Li^{\prime*}\shm$ is a complex with regular holonomic cohomologies, so that if we know the theorem for~$\pi'$, we deduce it for $\pi$.

From now on, we assume that $S'$ is a point. In such a case, $S$ is a projective space~$\PP^m$. Since the question is local with respect to $X$ and since $S$ is compact, we can assume that the sets $I$ and $J_i$ occurring in \eqref{eq:chardec} are finite, so the $S$-support $T$ of~$\shm$ is a~closed analytic subset of $S$. We argue by induction on the dimension of the $S$-support of~$\shm$.
\subsubsection*{Step 2: Case $\dim \Supp_S\shm=0$}

If the $S$-support of $\shm$ has dimension zero, it consists of a finite number of points, and it is enough to consider the case where the support consists of one point $s_o\in S$. By a standard argument we may assume that $\shm$ is concentrated in degree zero and locally we can assume that there exists $k\geq1$ such that, denoting by $\fm_{s_o}$ the maximal ideal sheaf of~$s_o$ in~$S$, we have $\fm_{s_o}^k\shm=0$. If we denote by $t_o$ the fat point supported by~$s_o$ with ring $\sho_S/\fm_{s_o}^k\sho_S$, we conclude that $\shm=i_{s_o*}i_{t_o}^*\shm$. Therefore, $R\pi_*\shm=
R\pi_*i_{s_o*}i_{t_o}^*\shm=i_{t_o}^*\shm$. In~this case, the theorem follows from Corollary \ref{cor:fatpoint}.

\subsubsection*{Step 3: Case $\dim \Supp_S\shm\geq1$}
We recall that, in this step, $\pi$ is the constant map on $S=\PP^m$.

\subsubsection*{\textup{(i)} Proof of the regularity of $\pi_*\shm$}
Let $d\geq1$. We assume now that the statement holds true for any complex $\shm'$ whose $S$-support has dimension $<d$ and we aim at proving the same property for any complex $\shm$ with $S$-support $T$ of dimension $d$. We~may then reduce again to the case where $\shm$ is a single module.

We first prove that $\pi_*\shm$ is regular holonomic (instead of all modules $R^k\pi_*\shm$).

One checks that the $\DXS$-submodule~$\shm'$ of $\shm$ consisting of local sections $m$ such that the $S$\nobreakdash-support of $\DXS\cdot m$ has dimension $<d$ is holonomic (denoting by $T_{<d}$ the union of irreducible components of $T$ of dimension $<d$, $\shm'$ is locally defined as the sheaf of local sections annihilated by some power of the ideal $\cI_{T_{<d}}$ of $T_{<d}$ in $\sho_S$). By Proposition \ref{Prop:2}\eqref{Prop:22b}, $\shm'$ is regular holonomic and, by the induction hypothesis, $\pi_*\shm'$ is regular holonomic. It is thus enough to prove regularity of $\pi_*(\shm/\shm')$, and we can likewise assume that
\begin{enumerate}
\step\label{step:assumption}
$\shm$ has no nonzero coherent submodule with $S$-support of dimension $<d$.
\end{enumerate}
Let us denote by $\Lambda$ a Lagrangian subvariety of $T^*X$ such that the characteristic variety of $\shm$ is contained in $\Lambda\times S$.
Since $T$ is compact, it has a finite number of irreducible components and we index by $I_d$ those which are of dimension $d$. We choose a point~$s_i$ on the smooth part of each $T_i$ ($i\in I_d$). For the sake of simplicity, we~denote by $\bms_o$ the finite set $\{s_i\mid i\in I_d\}$.

There is a natural morphism of $\DX$-modules
\begin{equation}\label{eq:piiso}
\pi_*\shm\to i_{\bms_o}^{-1}\shm
\end{equation}
which associates to a section $m\in\Gamma(U;\pi_*\shm)=\Gamma(U\times S;\shm)$ ($U$ open in $X$) its germ along $U\times \bms_o$. Since $\shm$ has no $S$-torsion supported in dimension $<d$, this morphism is \emph{injective}. Indeed, let $x_o\in U\subseteq X$ and let $m\in\Gamma(U\times S;\shm)$ belong to the kernel, that~is, such that its germ at $(x_o,\bms_o)$ vanishes, \ie $m$ is zero in some neighborhood of $(x_o,\bms_o)$ that we write $U\times V$, up to shrinking $U$. Let us suppose by contradiction that $m\not=0$. Let us consider the coherent $\DUS$-submodule $\DUS\cdot m\subset\shm_{|U\times S}$ which is holonomic with $S$-support contained in $T$. By \eqref{step:assumption} $\DUS\cdot m$ has $S$-support of dimension $d$. Hence any irreducible $d$-dimensional component of its $S$-support is equal to some $T_i$ for a suitable $i\in I_d$, but since $m$ is zero on $U\times V$ this is not possible.

Let $\fm_{\bms_o}$ denote the ideal sheaf of $\bms_o$ in $\sho_S$. For each $k\geq1$, we consider the induced morphism
\[
\pi_*\shm\to i_{\bms_o}^{-1}\shm/i_{\bms_o}^{-1}\fm_{\bms_o}^k\shm.
\]
The proposition will be proved if we prove that this morphism is injective for $k$ large enough, since the right-hand side is regular holonomic by Corollary \ref{cor:fatpoint}.

Its kernel $\shn_k$ is a coherent, hence holonomic, $\DX$-submodule of $\pi_*\shm$ and the sequence $(\shn_k)$ is decreasing with characteristic variety contained in $\Lambda$. It is thus stationary. Let $\shn\subset\pi_*\shm$ denote this constant value. We conclude that the map
\[
\shn\to i_{\bms_o}^{-1}\shm/i_{\bms_o}^{-1}\fm_{\bms_o}^k\shm
\]
is zero for any $k$. Since $\shn=\shn_k$ for $k$ large enough, we aim at proving that $\shn=0$.

The image of $\shn$ by \eqref{eq:piiso} is contained in $\bigcap_ki_{\bms_o}^{-1}\fm_{\bms_o}^k\shm$, and since \eqref{eq:piiso} is injective, it suffices to prove that $\shm':=\bigcap_ki_{\bms_o}^{-1}\fm_{\bms_o}^k\shm=0$. For $x_o\in X$, let us denote by $\shd_{(x_o,\bms_o)}$ the germ at $(x_o,\bms_o)$ of $\DXS$ and similarly by $\shm_{(x_o,\bms_o)}$ that of $\shm$. It~suffices thus to prove that, for all $x_o\in X$, the germ $\shm'_{(x_o,\bms_o)}=\bigcap_k\fm_{\bms_o}^k\shm_{(x_o,\bms_o)}$ is zero. We note that $\shm'_{(x_o,\bms_o)}$ is of finite type over $\shd_{(x_o,\bms_o)}$ since the latter ring is Noetherian, hence $\shm'_{(x_o,\bms_o)}$ is holonomic with characteristic variety contained in $\Lambda_{x_o}\times(S,\bms_o)$, where~$\Lambda_{x_o}$ is the germ of $\Lambda$ along $T^*_{x_o}X$. Furthermore, it satisfies $\shm'_{(x_o,\bms_o)}=\fm_{\bms_o}\shm'_{(x_o,\bms_o)}$. The Nakayama-type argument given in the proof of \cite[Prop.\,1.9(1)]{MFCS2} shows that $\shm'_{(x_o,\bms_o)}=0$ for each $x_o$. This concludes the proof of Step~3(i).

\subsubsection*{\textup{(ii)} Regularity of $R^k\pi_*\shm$}
We use the result of (i) in order to prove holonomicity and regularity of $R^k\pi_*\shm$ for all $k$.

For any $\ell\in\ZZ$, we consider the line bundle $\sho_S(\ell)$ that we can realize as $\sho_S(\ell H)$ for any hyperplane $H$ of $S$. For a $\DXS$-module or an $\sho_{\XS}$-module $\shn$, we set $\shn(\ell)=\pOS(\ell)\otimes_{\pOS}\nobreak\shn$.

We apply the technical lemma~\ref{Lemma:l} below to $\shm$. We note that $\shm(\ell)$ is $\DXS$ regular holonomic: indeed, this is a local property on $S$, and locally $\sho_S(\ell)\simeq\sho_S$. Also, $\shm(\ell)$ is $\pi$-good. Furthermore, we choose a hyperplane $H\subset S$ such that $\dim H\cap T<\dim T$, and we realize $\sho_S(\ell)$ as $\sho_S(\ell H)$. We also consider $\shm(\ell H)$ with such a choice of $H$. From Step~3(i), we know that $\pi_*\shm(\ell H)$ is $\DX$-regular. Let us consider the sheaf $\sho_{\ell H}$ defined by the exact sequence
\[
0\to\sho_S\to\sho_S(\ell H)\to\sho_{\ell H}\to0
\]
yielding to the distinguished triangle
\[
\shm\to \shm(\ell H)\to\shn\To{+1}
\]
where $\shn$ is a complex in $\rD^\rb_{\rhol}(\DXS)$ with $S$-support contained in $T\cap H$. We deduce the long exact sequence
\[
0\to H^{-1} R\pi_*\shn\to \pi_*\shm\to\pi_*\shm(\ell H)\to H^0R\pi_*\shn\to R^1\pi_*\shm\to0
\]
and the isomorphisms $R^{k+1}\pi_*\shm\simeq H^kR\pi_*\shn$ for $k\geq1$. By the induction hypothesis, $R\pi_*\shn$ belongs to $\rD^\rb_{\rhol}(\DX)$. The latter isomorphism implies the regularity of $R^{k+1}\pi_*\shm$ for $k\geq1$, and, together with Corollary \ref{cor:prop2}, it~implies that of $\pi_*\shm$ and $R^1\pi_*\shm$.
\end{proof}

\begin{lemma}\label{Lemma:l}
Let us assume $S=\PP^ m$. Let $\shn$ be a coherent $\pi$-good $\DXS$-module. Then, for any $x_o\in X$, there exists a neighborhood $\nb(x_o)\subset X$ and an integer $\ell\geq0$ such that, for any $k\geq1$, $R^k\pi_*\shn(\ell)_{|\nb(x_o)}=0$.
\end{lemma}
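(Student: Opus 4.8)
The plan is to read this as a relative Serre vanishing statement over the projective morphism $\pi\colon\XS=X\times\PP^m\to X$, and to reduce it, via the semi-global presentations of good $\DXS$-modules furnished by Lemma~\ref{lem:cplxqgood}\eqref{lem:cplxqgood3}, to the coherent case, where Serre's classical vanishing theorem applies. Everything is computed on underlying sheaves (the resulting $\DX$-module structure is irrelevant for a vanishing statement), and I would rely on two standard facts about the proper map $\pi$: that $R\pi_*$ commutes with filtered inductive limits of sheaves, and that $R^k\pi_*$ kills $\sho$-good sheaves for $k>m$ (both obtained from the standard finite affine covering of $\PP^m$ together with Cartan's theorem~B). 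Since $\sho_S(1)$ is locally trivial, twisting by $\sho_S(\ell)$ preserves goodness and coherence; and since $p^{-1}\sho_S$ is central in $\DXS$, this twist commutes with the tensor operations below.

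First I would localise near the fibre. As $\shn$ is $\pi$-good, it is good on an open neighbourhood of $\{x_o\}\times\PP^m$, which by the tube lemma we may take of the form $W=V'\times\PP^m$. Choose a decreasing chain of coordinate polydiscs $x_o\in V_m\Subset\cdots\Subset V_0\Subset V'$, put $U_j=V_j\times\PP^m$ and set $\nb(x_o):=V_m$. Applying Lemma~\ref{lem:cplxqgood}\eqref{lem:cplxqgood3} successively on $U_0,\dots,U_{m-1}$ produces short exact sequences of $\DXS$-modules
\begin{equation*}
0\to\shn_{j+1}\to\DXS\otimes_{\sho_{\XS}}\shl_j\to\shn_j\to0\qquad(0\leq j\leq m-1),
\end{equation*}
with $\shn_0:=\shn|_{U_0}$, each $\shl_j$ being $\sho_{U_j}$-coherent and each $\shn_{j+1}$ good on $U_j$. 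Granting for the moment that, for $\ell$ large enough independently of $j$, one has $R^i\pi_*\bigl((\DXS\otimes_{\sho_{\XS}}\shl_j)(\ell)\bigr)|_{\nb(x_o)}=0$ for all $i\geq1$, the long exact sequences yield $R^k\pi_*(\shn(\ell))|_{\nb(x_o)}\simeq R^{k+m}\pi_*(\shn_m(\ell))|_{\nb(x_o)}$ for every $k\geq1$; since $k+m>m$ and $\shn_m(\ell)$ is $\sho$-good on $U_{m-1}\supset\overline{\nb(x_o)\times\PP^m}$, the right-hand side vanishes, which is the assertion.

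It then remains to prove that, for an $\sho$-coherent sheaf $\shl$ on a neighbourhood of $\overline{\nb(x_o)\times\PP^m}$, one has $R^i\pi_*\bigl((\DXS\otimes_{\sho_{\XS}}\shl)(\ell)\bigr)|_{\nb(x_o)}=0$ for $i\geq1$ and $\ell$ large. For this I would filter by the order: $\DXS=\varinjlim_p F_p\DXS$, each $F_p\DXS$ being $\sho_{\XS}$-locally free of finite rank $N_p$, so that $\DXS\otimes_{\sho_{\XS}}\shl$ is the increasing union of the $\sho$-coherent sheaves $F_p\DXS\otimes_{\sho_{\XS}}\shl$. Taking $\nb(x_o)$ to be a coordinate polydisc on which all $F_p\DXS$ are simultaneously $\sho$-free, one gets $(F_p\DXS\otimes_{\sho_{\XS}}\shl)(\ell)\simeq\shl(\ell)^{\oplus N_p}$ over $\nb(x_o)\times\PP^m$, hence $R^i\pi_*\bigl((F_p\DXS\otimes_{\sho_{\XS}}\shl)(\ell)\bigr)|_{\nb(x_o)}\simeq\bigl(R^i\pi_*\shl(\ell)|_{\nb(x_o)}\bigr)^{\oplus N_p}$. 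A single application of Serre's vanishing theorem to the coherent sheaf $\shl$ on $\nb(x_o)\times\PP^m\to\nb(x_o)$ then gives an $\ell_0\geq0$, independent of $p$, with $R^i\pi_*\shl(\ell)|_{\nb(x_o)}=0$ for $i\geq1$ and $\ell\geq\ell_0$; passing to the inductive limit over $p$ concludes. Applying this to $\shl=\shl_0,\dots,\shl_{m-1}$ and taking $\ell$ to be the maximum of the resulting bounds completes the proof.

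I expect the one genuine subtlety to be the uniformity in $\ell$: Serre's theorem cannot be applied directly to $\shn$, which is not $\sho$-coherent, and a naive approximation of $\shn$ by an increasing sequence of $\sho$-coherent submodules would yield Serre bounds with no uniform control. Passing through the presentations $\DXS\otimes_{\sho_{\XS}}\shl$ and exploiting the $\sho$-local freeness of the order filtration — so that a single Serre bound for $\shl$ controls all its graded pieces at once — is precisely what makes the argument work; the remaining inputs (commutation of $R\pi_*$ with filtered colimits and its vanishing above the fibre dimension, for $\pi$ proper) are standard.
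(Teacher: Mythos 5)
Your proof is correct, and its skeleton is the one the paper uses: iterate the presentations of Lemma \ref{lem:cplxqgood}\eqref{lem:cplxqgood3} on a shrinking chain of neighbourhoods, pick a single twist $\ell$ by Grauert--Remmert for the finitely many coherent sheaves $\shl_j$, and dispose of the remainder term by a cohomological-dimension bound for $R\pi_*$. You implement two steps differently. For the induced modules you write $\DXS\otimes_{\sho_{\XS}}\shl=\varinjlim_p F_p\DXS\otimes_{\sho_{\XS}}\shl$ with each $F_p\DXS$ free over a coordinate polydisc, so one Serre bound for $\shl$ controls all $p$ at once, and you conclude by commutation of $R^i\pi_*$ with filtered colimits; the paper instead observes that, $S'$ being a point, $\pi$ coincides with the projection $q:\XS\to X$, so \eqref{eq:DXS} gives $\DXS\otimes_{\sho_{\XS}}\shl\simeq\pi^{-1}\DX\otimes_{\pi^{-1}\sho_X}\shl$ and a single projection formula yields $R\pi_*\bigl((\DXS\otimes_{\sho_{\XS}}\shl)(\ell)\bigr)\simeq\DX\otimes_{\sho_X}\pi_*\shl(\ell)$. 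For the remainder you stop after $m$ steps and invoke $R^{i}\pi_*=0$ for $i>m$ on $\sho$-good modules; the paper avoids proving anything of this kind for good modules by taking a resolution of length $2m+1$ and using only the topological bound $2m$, valid for arbitrary sheaves, together with a truncation argument. Both routes work; the one imprecision in yours is the attribution of the colimit fact: commutation of $R^k\pi_*$ with filtered inductive limits does not follow from the affine covering and Cartan~B (sections over the non-compact Stein pieces $V\times U_\alpha$ do not commute with filtered colimits), but from the properness of $\pi$, \ie compactness of the fibres, so that $R\pi_*=R\pi_!$ and the standard commutation of $R^k\pi_!$ with filtered colimits applies; the covering argument is what gives, for coherent sheaves, the vanishing above degree $m$, which then passes to good modules by that commutation. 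With this reference corrected, your order-filtration trick is a perfectly sound, and arguably more elementary, substitute for the projection-formula computation, at the cost of the extra (standard) input $R^{>m}\pi_*=0$ on good modules.
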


A similar result is well-known to hold for a coherent $\sho_{\XS}$-module, as a consequence of Grauert-Remmert's Theorems A and B (\cf\cite{G-R58} and \cite[Th.\,IV.2.1]{B-S77b}).

\begin{proof}
For any $N\geq1$, by iteration of Lemma \ref{lem:cplxqgood}\eqref{lem:cplxqgood3} we obtain a distinguished triangle on $\nb(x_o)\times S$
\[
\shn'[N]\to\DXS\otimes_{\sho_{\XS}}\shl_\sbullet\to\shn\To{+1}.
\]
Let us choose $N=2m+1$ and let $\ell$ be an integer given by the Grauert-Remmert theorems for $\shl_i$ ($i=-N,\dots,0$) in $\nb(x_o)$. We~consider the same triangle obtained after tensoring with $\pOS(\ell)$. Since $\pi$ has cohomological dimension $2m$, we~find that $R^k\pi_*\shn'(\ell)[N]=0$ for any $k\geq0$. On~the other hand, on $\nb(x_o)$ (\cf \eqref{eq:DXS}),
\begin{align*}
R\pi_*(\DXS\otimes_{\sho_{\XS}}\shl_i(\ell))&\simeq R\pi_*(\pi^{-1}\DX\otimes_{\pi^{-1}\sho_X}\shl_i(\ell))\\
&\simeq\DX\otimes_{\sho_X}R\pi_*\shl_i(\ell)=\DX\otimes_{\sho_X}\pi_*\shl_i(\ell).
\end{align*}
Since $R\pi_*\shn(\ell)_{|\nb(x_o)}$ is in non negative degrees it follows that $R\pi_*\shn(\ell)_{|\nb(x_o)}$ is isomorphic to the complex $\tau^{\geq 0}(\DX\otimes_{\sho_X}\pi_*\shl_\sbullet(\ell))$ and since the latter is in non-positive degrees, we conclude that $R\pi_*\shn(\ell)_{|\nb(x_o)}\simeq\pi_*\shn(\ell)_{|\nb(x_o)}$.
\end{proof}

\subsection{Integrable regular holonomic \texorpdfstring{$\DXS$}{DXS}-modules}\label{subsec:3b}

The following proposition, answering a question of Lei Wu, shows that, under a suitable condition on the characteristic variety, a coherent $\DXS$-submodule of a regular holonomic $\shd_{\XS}$-module is relatively regular holonomic. We call such $\DXS$-modules \emph{integrable} since their relative connection can be lifted as an integrable connection, \ie a $\shd_{\XS}$-module structure.The condition on the characteristic variety is due to the restrictive definition of a holonomic $\DXS$-module (\cf after Lemma \ref{lem:cplxqgood}).

\begin{proposition}\label{prop:Wu}
Let $X$ and $S$ be complex manifolds and let $\shn$ be a regular holonomic $\shd_{\XS}$-module. Assume that the characteristic variety of $\shn$ is contained in \hbox{$\Lambda\!\times\! T^*S$} for some conic Lagrangian closed analytic subset $\Lambda$ in $T^*X$. Let $\shm$ be a coherent $\DXS$-submodule of~$\shn$. Then $\shm$ is a regular holonomic $\DXS$-module.
\end{proposition}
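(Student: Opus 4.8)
The plan is to reduce the statement to the already-established characterization of relative regular holonomicity, namely Property $(\mathrm{Reg}\,2)$, by controlling the behaviour of the inclusion $\shm\hookrightarrow\shn$ under the restriction functors $Li_{s_o}^*$. First I would observe that $\shm$ is $\DXS$-holonomic: its characteristic variety is contained in that of $\shn$ viewed appropriately, but one must use the hypothesis $\Char(\shn)\subset\Lambda\times T^*S$ precisely here. Indeed, for a coherent $\DXS$-submodule of a coherent $\shd_{\XS}$-module, the relative characteristic variety (in $T^*X\times S$) is the image of the $\shd_{\XS}$-characteristic variety under the projection $T^*X\times T^*S\to T^*X\times S$ that forgets the $T^*S$-directions; the product hypothesis $\Lambda\times T^*S$ guarantees this image is contained in $\Lambda\times S$, which is exactly the relative-holonomicity condition. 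Without the product assumption the $S$-directions of the characteristic variety could prevent relative holonomicity, which is why the hypothesis is needed.

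Next, since $\shm$ is now known to be holonomic, by Proposition \ref{Prop:2}\eqref{Prop:21} it suffices to verify Property $(\mathrm{Reg}\,1)$, i.e.\ that $Li_{s_o}^*\shm\in\rD^\rb_{\rhol}(\DX)$ for every $s_o\in S$. The natural approach is to exploit the long exact sequence obtained by applying $Li_{s_o}^*$ to the short exact sequence $0\to\shm\to\shn\to\shn/\shm\to0$. The quotient $\shn/\shm$ is a coherent $\shd_{\XS}$-module (a quotient of the coherent $\shn$), hence its absolute restriction $Li_{s_o}^*(\shn/\shm)$ lies in $\rD^\rb_{\rhol}(\DX)$ by the absolute theory (Kashiwara's theorem: restriction of a regular holonomic $\shd_{\XS}$-module to $X\times\{s_o\}$ — or rather to the slice, which is $X$ — is regular holonomic; more precisely one uses that $\shn$ regular holonomic $\Rightarrow \shn/\shm$ regular holonomic $\Rightarrow$ its pullback by $i_{s_o}$ is regular holonomic by stability of $\rD^\rb_{\rhol}$ under pullback in the absolute setting). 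Similarly $Li_{s_o}^*\shn\in\rD^\rb_{\rhol}(\DX)$. The key point is that $Li_{s_o}^*$ here means the \emph{relative} pullback $(\pOS/p^{-1}\fm_{s_o})\otimes^L_{\pOS}(-)$; one checks that for a $\shd_{\XS}$-module this coincides (as a complex of $\DX$-modules) with the absolute non-characteristic-type restriction, or at least that there is a spectral sequence relating them whose terms are regular holonomic. Granting this, the long exact sequence expresses each $\shh^j Li_{s_o}^*\shm$ as built from sub- and quotient-modules of $\shh^j Li_{s_o}^*\shn$ and $\shh^{j+1}Li_{s_o}^*(\shn/\shm)$, which are regular holonomic $\DX$-modules; by the closure of $\mathrm{Mod}_{\rhol}(\DX)$ under sub-quotients and extensions (\cite[Th.\,5.3.4]{Bjork93}), $\shh^j Li_{s_o}^*\shm$ is regular holonomic for every $j$, i.e.\ $(\mathrm{Reg}\,1)$ holds.

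The main obstacle I anticipate is the compatibility between the \emph{relative} derived pullback $Li_{s_o}^*$ used throughout this paper and the pullback of $\shd_{\XS}$-modules that governs the absolute regular Riemann–Hilbert theory on $X\times\{s_o\}\cong X$. One must show that $Li_{s_o}^*\shn$, computed with $\pOS$, has regular holonomic $\DX$-cohomology whenever $\shn$ is a regular holonomic $\shd_{\XS}$-module with product-type characteristic variety — this is where the hypothesis $\Char(\shn)\subset\Lambda\times T^*S$ is used a second time, to ensure non-characteristicity of the slice $X\times\{s_o\}$ for $\shn$ in the $S$-directions, so that the relative and absolute restrictions agree and the absolute Kashiwara restriction theorem for regular holonomic modules applies. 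Once that comparison is in hand, the argument is a formal diagram chase with the long exact sequence plus the abelian-category stability properties already recorded in Lemma \ref{Lem:1} and Proposition \ref{Prop:2}. I would also remark that the same proof shows more generally that any subquotient, in $\mathrm{Mod}_{\coh}(\DXS)$, of an integrable regular holonomic $\DXS$-module is again integrable regular holonomic.
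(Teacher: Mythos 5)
There is a genuine gap, and it sits exactly where the real difficulty of the proposition lies. Your plan for regularity rests on the short exact sequence $0\to\shm\to\shn\to\shn/\shm\to0$ and on treating $Li_{s_o}^*(\shn/\shm)$ as a regular holonomic object of the absolute theory. But $\shm$ is only a $\DXS$-submodule of $\shn$: it is not stable under the derivations in the $S$-directions, so $\shn/\shm$ carries no $\shd_{\XS}$-module structure at all, and the absolute theory (Kashiwara's stability theorems) cannot be applied to it. Nor is $\shn/\shm$ coherent over $\DXS$ (already $\shn$ itself is typically not $\DXS$-coherent, e.g.\ $\shn=\shd_{\XS}/\shd_{\XS}\partial_x$). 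Consequently, in the long exact sequence the terms $\shh^{j}Li_{s_o}^*(\shn/\shm)$ are completely uncontrolled, and the kernel of $\shh^jLi_{s_o}^*\shm\to\shh^jLi_{s_o}^*\shn$, which is a quotient of $\shh^{j-1}Li_{s_o}^*(\shn/\shm)$, cannot be shown to be coherent, let alone regular holonomic; the argument therefore does not yield $(\mathrm{Reg})$ for $\shm$. A secondary error: the product hypothesis $\Char\shn\subset\Lambda\times T^*S$ does \emph{not} make the slice $X\times\{s_o\}$ non-characteristic for $\shn$ (take $\shn=\sho_X\boxtimes(\shd_S/\shd_S\,s)$, whose characteristic variety is $T^*_XX\times T^*_{\{s_o\}}S$); what saves the comparison for $\shn$ itself is that regular holonomicity is stable under \emph{arbitrary} inverse images in the absolute theory, but this does not rescue the $\shn/\shm$ term. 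Also, in your holonomicity step, the assertion that the relative characteristic variety of a coherent $\DXS$-submodule is the projection of the absolute characteristic variety of $\shn$ is not a correct general principle as stated (it is false e.g.\ for $\shm=0$, and projections behave badly under closure); the paper instead invokes the precise theorem \cite[Th.\,3.2]{Sabbah86II}, which says the components of $\Char\shm$ are relative conormals $T^*_{p|Z}(\XS/S)$ attached to the same $Z$'s as the components $T^*_Z(\XS)$ of $\Char\shn$, and then uses the product hypothesis to force $Z=Y\times T$ and $T^*_{p|Z}(\XS/S)=T^*_YX\times T$.

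For comparison, the paper's proof of regularity is substantially heavier precisely because the naive comparison with $\shn$ fails: in the case $\dim S=1$ it uses the Kashiwara--Malgrange $V$-filtration of $\shn$ along $s$ (coherence of $V_k\shn$ over $\DXS$, regularity of $\gr^V_k\shn$) together with the Artin--Rees lemma to compare $s\shm$ with $\shm\cap s^\ell V_k\shn$, and thereby controls $\ker s$ and $\mathrm{coker}\,s$ on $\shm$; the case $\dim S\geq2$ then proceeds by induction on $\dim S$, introducing the auxiliary $\shd_{\XpS'/S'}$-submodule $\shm_1\subset\shn$ generated by $\shm$ and proving a characteristic-variety claim for $\ker s$ and $\mathrm{coker}\,s$ on $\shm_1$ so that the induction hypothesis applies, before invoking Proposition \ref{Prop:2} and Corollary \ref{cor:prop2}. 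Some replacement for this $V$-filtration/Artin--Rees mechanism (or an equivalent device) is needed; the long exact sequence with $\shn/\shm$ cannot substitute for it.
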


Let us first recall two properties (\eqref{step:relchar} and \eqref{step:reholregdim1} below) of regular holonomic $\shd_{\XS}$-modules that will be useful for the proof and do not depend on the assumption on $\Char\shn$ made in the proposition.

Let $\shn$ be any regular holonomic $\shd_{\XS}$-module and let $\shm$ be a coherent $\DXS$-submodule of~$\shn$. Any irreducible component of $\Char\shn$ projects to $\XS$ as an irreducible closed analytic subset $Z$ and, denoting by $Z^\circ$ the smooth part of $Z$, this irreducible component is the closure of the conormal bundle $T^*_{Z^\circ}(\XS)$, that we denote by $T^*_Z(\XS)$. We also denote by $Z^{\circ\circ}$ the open set of the smooth locus $Z^\circ$ on which $p_{Z^\circ}:Z^\circ\to S$ has maximal rank. The closure in the relative cotangent space $T^*(\XS/S)$ of the relative conormal bundle $T^*_{p|Z^{\circ\circ}}(\XS/S)$ is denoted by $T^*_{p|Z}(\XS/S)$, and is a conic analytic subset of $T^*(\XS/S)$.
\begin{enumerate}
\step\label{step:relchar}
According to \textup{\cite[Th.\,3.2]{Sabbah86II}} any irreducible component of $\Char\shm$ is equal to $T^*_{p|Z}(\XS/S)$ for some $Z$ such that $T^*_Z(\XS)$ is an irreducible component of $\Char\shn$.
\end{enumerate}

\smallskip

Assume furthermore that $\dim S=1$. For each $s_o\in S$, we denote by $s$ a local coordinate on~$S$ vanishing at $s_o$. We postpone after the proof of Proposition \ref{prop:Wu} that of the following classical result.

\begin{enumerate}
\step\label{step:reholregdim1}
Under the above assumptions, the kernel and cokernel of $s:\shm\to\shm$ are regular holonomic $\DX$-modules.
\end{enumerate}

\begin{proof}[Proof of Proposition \ref{prop:Wu}, Step 1]
We now add the assumption on $\Char\shn$. We first show that~$\shm$ is relative holonomic with characteristic variety contained in $\Lambda\times S$. We note that $T^*_Z(\XS)$ is contained in $\Lambda\times T^*S$ if and only if $Z$ decomposes as the product $Y\times T$ for some irreducible closed analytic subsets $Y\subset X$ and $T\subset S$ and $T^*_YX$ is an irreducible component of $\Lambda$ (this seen by considering first the smooth part~$Z^\circ$). It is then easily seen that
$
T^*_{p|Z}(\XS/S)=(T^*_YX)\times T,
$
hence is contained in $\Lambda\times S$. The conclusion follows from \eqref{step:relchar}.
\end{proof}

\begin{proof}[Proof of Proposition \ref{prop:Wu}, Step 2]
It remains to show the relative regularity of $\shm$. We argue by induction on $d=\dim S$. If $d=1$, relative regularity is provided by \eqref{step:reholregdim1}, since we already have relative holonomicity by Step~1. We thus assume that the statement of the proposition holds whenever $\dim S\leq d-1$ and we assume $\dim S=d\geq2$. The question being local, we fix a local coordinate $s$ in $S$, defining a smooth hypersurface $H:=\{s=0\}$. Locally, we can assume that $S=H\times\CC$. According to Proposition \ref{Prop:2}, we~are reduced to proving that
\begin{enumerate}
\step\label{step:kercokers}
the kernel and cokernel of $s\colon \shm\!\to\!\shm$ are regular holonomic $\DXH$\nobreakdash-modules.
\end{enumerate}

For the sake of clarity we set
\[
X'=\XH,\ S'=\CC,\ p':\XpS'\to S'\quad\text{so that}\quad X=\XpS',\quad S=H\times S',
\]
We denote by $\shm_1$ the $\shd_{\XpS'/S'}$-submodule of the regular holonomic $\shd_{\XpS'}$-mod\-ule~$\shn$ generated by~$\shm$. It is $\shd_{\XpS'/S'}$--coherent, so by \eqref{step:reholregdim1}, the kernel $\shn_1$ and the cokernel $\shn'_1$ of $s:\shm_1\to\shm_1$ are $\shd_{X'}$-regular holonomic.

\begin{claim*}
The characteristic varieties $\Char\shn_1$ and $\Char\shn'_1$ are contained in $\Lambda\times T^*H$.
\end{claim*}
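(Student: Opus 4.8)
The plan is to reduce the computation of $\Char\shn_1$ and $\Char\shn'_1$ to a statement about the characteristic variety of $\shm_1$ as a $\shd_{X'\times S'/S'}$-module, where $X'=\XH$ and $S'=\CC$, and then to invoke the hypothesis $\Char\shn\subset\Lambda\times T^*S$ together with property \eqref{step:relchar}. First I would observe that $\shn_1=\ker(s\colon\shm_1\to\shm_1)$ and $\shn'_1=\operatorname{coker}(s\colon\shm_1\to\shm_1)$ are each $\shd_{X'}$-coherent submodules, resp.\ quotients, of $Li_H^*\shm_1$ (up to the shift, $Li_H^*\shm_1$ has cohomology $\shn_1$ in degree $-1$ and $\shn'_1$ in degree $0$). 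Consequently $\Char\shn_1$ and $\Char\shn'_1$ are contained in $\Char Li_H^*\shm_1$, and by the base-pullback estimate (Lemma \ref{lem:basepullback}, applied with $\pi=i_H$) this is contained in $i_H^{-1}\Char\shm_1$, where $\Char\shm_1$ is computed in $T^*X'\times S=(T^*X)\times T^*H\times S'$, wait---more precisely, $\shm_1$ is a $\shd_{X'\times S'/S'}$-module and its characteristic variety lives in $T^*X'\times S'$, so $\Char\shn_1,\Char\shn'_1\subset i_H^{-1}(\Char\shm_1)$ where $i_H$ here is the inclusion of the central fiber $\{s=0\}$ in the $S'$-direction. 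Hence it suffices to show $\Char\shm_1\subset(\Lambda\times T^*H)\times S'$ inside $(T^*X\times T^*H)\times S'=T^*X'\times S'$.

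Next I would compute $\Char\shm_1$ using property \eqref{step:relchar}: since $\shm_1$ is a coherent $\shd_{X'\times S'/S'}$-submodule of the regular holonomic $\shd_{X'\times S'}=\shd_{\XS}$-module $\shn$, every irreducible component of $\Char\shm_1$ is of the form $T^*_{p'|Z}(X'\times S'/S')$ for some irreducible $Z\subset X'\times S'=\XS$ with $T^*_Z(\XS)$ an irreducible component of $\Char\shn$. The hypothesis gives $T^*_Z(\XS)\subset\Lambda\times T^*S$; exactly as in the proof of Step 1 of Proposition \ref{prop:Wu}, this forces $Z=Y\times T$ with $Y\subset X$ irreducible, $T\subset S=H\times S'$ irreducible, and $T^*_YX$ an irreducible component of $\Lambda$. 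Writing $T$ locally (by a further product decomposition of its smooth part, if necessary, but in any case by projecting) so that its image in the $S'$-factor is handled, one gets $T^*_{p'|Z}(X'\times S'/S')=(T^*_YX)\times T^*_{T'}H\times(\text{points in }S')$ for the appropriate closed analytic $T'\subset H$, wait---the relative cotangent here is only with respect to $S'$, so the $H$-cotangent directions survive: the component is $(T^*_YX\times T^*_{T''}H)\times(S')$ for a suitable analytic subset, which is visibly contained in $(\Lambda\times T^*H)\times S'$. This is the key input; the bookkeeping is the same product-decomposition argument already used in Step 1, now applied in the relative setting over $S'$.

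Finally, combining the two steps: $\Char\shn_1$ and $\Char\shn'_1$ are contained in $i_H^{-1}(\Char\shm_1)\subset i_H^{-1}((\Lambda\times T^*H)\times S')=\Lambda\times T^*H$ inside $T^*X'=T^*(\XH)$, which is the assertion of the Claim. The main obstacle I anticipate is not any deep computation but rather the careful handling of the three cotangent factors $T^*X$, $T^*H$, $T^*S'=T^*\CC$ and of the two different "relative'' structures in play (over $S=H\times S'$ versus over $S'$ alone): one must verify that when $Z=Y\times T$ the relative conormal $T^*_{p'|Z}(\XS/S')$ genuinely keeps all of its $T^*H$-directions (it is only the $S'$-directions that are killed), so that the bound is $\Lambda\times T^*H$ and not merely $\Lambda\times T^*_HH=\Lambda\times(\text{zero section})$. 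Once that point is pinned down, the Claim follows formally from \eqref{step:relchar}, Lemma \ref{lem:basepullback}, and the product-decomposition observation reused verbatim from Step 1.
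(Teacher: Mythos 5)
Your proof is correct and follows essentially the same route as the paper: apply \eqref{step:relchar} to $\shm_1\subset\shn$ relative to the one-dimensional base $S'$, use the hypothesis on $\Char\shn$ to force $Z=Y\times T$, and bound $\Char\shn_1$, $\Char\shn'_1$ by the fiber of $\Char\shm_1$ above $s=0$. Two harmless imprecisions: the components of $\Char\shm_1$ are the relative conormals $(T^*_YX)\times T^*_{q|T}(H\times S'/S')$, which in general are \emph{not} products of the form $(T^*_YX\times T^*_{T''}H)\times S'$ (only the containment in $\Lambda\times T^*H\times S'$ holds, and that is all you actually use), and Lemma \ref{lem:basepullback} is stated for holonomic modules whereas $\shm_1$ need not be holonomic in the paper's restrictive relative sense---but the characteristic-variety estimate in its proof is valid for any coherent relative module, which is exactly what the paper uses implicitly at the same point.
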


\begin{proof}
We apply \eqref{step:relchar} to $\shm_1\!\subset\!\shn$ and to the map~\hbox{$\XpS'\!\to\!S'$}. Since any irreducible component of $\Char\shn$ takes the form $T^*_YX\times T^*_TS=T^*_Z(\XpS')$ with $Z=Y\times T$, any irreducible component of $\Char\shm_1$ takes the form \hbox{$T^*_{p'_{|Z}}(\XpS'/S')$} for some such~$Z$. Denoting by $q:S\to S'$ the projection $H\times S'\to S'$, the latter space reads
\[
(T^*_YX)\times T^*_{q_{|T}}(H\times S'/S').
\]
The fiber of the composition $T^*_{q|T}(H\times S'/S')\to T\to S'$ above $s=0$ ($s$ is the coordinate on $S'$) is contained in $T^*H$ and is Lagrangian: indeed, it has dimension $\dim H$ since $T^*_{s|T}(H\times S'/S')$ has dimension $\dim H+1$, and it is easily seen to be isotropic. We~denote it by $\Lambda_T$. The characteristic varieties of $\shn_1$ and $\shn'_1$ are contained in the fiber above $s=0$ of $\Char\shm_1$, hence in the union, over $Y,T$ occurring in $\Char\shn$, of~the Lagrangian subsets $T^*_YX\times\Lambda_T$, as claimed, since $\Lambda$ is nothing but $\bigcup_YT^*_YX$.
\end{proof}

As a consequence, $\shn_1$ is a regular holonomic $\shd_{\XH}$-module satisfying the assumption of the proposition, and $\ker (s:\shm\to\shm)$ is a coherent $\DXH$-submodule of it. We can thus apply the induction hypothesis to conclude that \eqref{step:kercokers} holds for $\ker s$.

On the other hand, for each $k\geq0$, by applying \eqref{step:reholregdim1} to $s^k\shm_1$, we obtain, according to the claim, that $s^k\shm_1/s^{k+1}\shm_1$ is a regular holonomic $\shd_{\XH}$-module with characteristic variety contained in $\Lambda\times T^*H$. Then
\[
\shm^{(k)}:=(\shm\cap s^k\shm_1)/(\shm\cap s^{k+1}\shm_1)
\]
is a coherent $\DXH$-submodule of $s^k\shm_1/s^{k+1}\shm_1$. The induction hypothesis implies that $\shm^{(k)}$ is $\DXH$-regular holonomic. It follows by Proposition \ref{Prop:2}(iii) that \hbox{$\shm/(\shm\cap s^{k+1}\shm_1)$} is also a regular holonomic $\DXH$-module. It remains to notice that coherence implies that, locally on~$X$, there exists $k$ such that $\shm\cap s^{k+1}\shm_1 \subset s\shm$, so that $\shm/s\shm$ is a coherent quotient of a regular holonomic $\DXH$-module.
Again by Proposition \ref{Prop:2}(iii) it is also a regular holonomic $\DXH$-module, concluding the proof of \eqref{step:kercokers}.
\end{proof}

\begin{proof}[Proof of \eqref{step:reholregdim1}]
We~consider the Kashiwara-Malgrange $V$-filtration $V_\sbullet\shn$ of $\shn$ relative to the function~$s$ (\cf \eg \cite{M-S86b,M-M04}). This is an increasing filtration indexed by~$\ZZ$ which satisfies, owing to the regularity of $\shn$, the following properties:
\begin{itemize}
\item
$V_k\shn$ is $\DXS$-coherent for any $k\in\ZZ$ (\cite[Th.\,(4.12.1)]{M-S86b}),
\item
for each $k\in \ZZ$, $s(V_k\shn)\subset V_{k-1}\shn$ and for $k\leq-1$, the multiplication by $s$ on $V_k\shn$ is injective with image $V_{k-1}\shn$ (\cf\eg\cite[Prop.\,(4.5.2)]{M-S86b}),
\item
each $\gr_k^V\shn$ is a regular holonomic $\DX$-module (\cf\eg\cite[Cor.\,4.7-5]{M-M04}).
\end{itemize}

From the second point we deduce that, for $k\leq-1$, $Li_{s_o}^*V_k\shn=i_{s_o}^*V_k\shn=\gr_k^V\shn$, and by the third point the latter is $\DX$-regular holonomic. For $k\geq0$, $V_k\shn$ is a successive extension of~$V_{-1}\shn$ by regular holonomic $\DX$\nobreakdash-modules $\gr_\ell^V\shn$. Therefore, for each $k$, the kernel and cokernel of $s:V_k\shn\to V_k\shn$ are $\DX$-regular holonomic: this is proved by induction on $k$
since it is clear for $k<0$, and the inductive step follows by considering the snake lemma applied to the following commutative diagram of exact sequences
\[
\xymatrix@R=.7cm{
0\ar[r]&V_{k-1}\shn\ar[r]\ar[d]_s&V_k\shn\ar[d]^s\ar[r]&\gr_k^V\shn\ar[r]\ar[d]^0&0\\
0\ar[r]&V_{k-1}\shn\ar[r]&V_k\shn\ar[r]&\gr_k^V\shn\ar[r]&0
}
\]
We deduce that $sV_k\shn/s^\ell V_k$ is regular holonomic for any $\ell \geq 1$. As any $\DXS$-coherent submodule of $\shn$, locally on $\XS$, $\shm$ is contained in $V_k\shn$ for some $k\gg0$ and, by the Artin-Rees lemma, $s\shm$ contains $\shm\cap s^\ell V_k\shn$ for some $\ell\geq1$. Since $\ker (s:\shm\to\shm)$ is contained in $\ker( s:V_k\shn\to V_k\shn)$, the former is regular holonomic since the latter is so. On the other hand, by considering the inclusion and the quotient maps
\[
sV_k\shn/s^\ell V_k\shn\hfrom (\shm\cap sV_k\shn)/(\shm\cap s^\ell V_k\shn)\to\hspace*{-5mm}\to(\shm\cap sV_k\shn)/s\shm
\]
we conclude that $(\shm\cap sV_k\shn)/s\shm$ is also $\DX$-regular holonomic. Furthermore, by considering the exact sequence
\[
0\to(\shm\cap sV_k\shn)/s\shm\to\shm/s\shm\to V_k\shn/sV_k\shn,
\]
we conclude that $\shm/s\shm$ is $\DX$-regular holonomic.
\end{proof}

\section{Holonomic \texorpdfstring{$\DXS$}{DXS}-modules of D-type and applications}\label{sec:4}

\subsection{\texorpdfstring{$S$}{S}-locally constant sheaves and their associated relative connections}
Let $X$ be a connected complex manifold and let $L$ be an $S$-locally constant sheaf of $\pOS$-modules on $\XS$ (\cf\cite[App.]{MFCS2}). For any $x_o\in X$, $L$ is uniquely deter\-mined from a monodromy representation $\pi_1(X,x_o)\to\Aut_{\sho_S}(G)$ with $G=i_{x_o}^{-1}L$. As a consequence, there exists an $\sho_S$-module $G$ such that, for any $1$-connected open subset~$U$ of $\XS$, there exists an isomorphism $L_{|U\times S}\simeq p_U^{-1}G$. Two choices of $G$ are isomorphic, but non-canonically in general. Furthermore, $L$ is $\pOS$-coherent if and only if $G$ is $\sho_S$-coherent.

To any $S$-locally constant sheaf $L$ one can associate an exact sequence of sheaves of $\pOS$-modules\vspace*{-3pt}
\begin{equation}\label{eq:ttf}
0\to L_\rt\to L\to L_\tf\to0
\end{equation}
where $L_\rt$ denotes the subsheaf of $\pOS$-torsion and $L_\tf$ the maximal $\pOS$-torsion-free quotient. Then $L_\rt$ and $L_\tf$ are $S$-locally constant and the previous exact sequence yields (for any choice of~$G$) to the exact sequence of $\sho_{S}$-modules
\[
0\to G_\rt\to G\to G_\tf\to0,
\]
with $G_\rt$ and $G_\tf$ defined similarly. The following is straightforward.

\pagebreak[2]
\begin{lemma}\label{lem:behaviourpi}
Let $\pi:S\to S'$ be a holomorphic map between complex manifolds.
\begin{enumerate}
\item\label{lem:behaviourpi1}
If $L$ is an $S$-locally constant sheaf on $\XS$ with associated $\sho_S$-module $G$, then $\pi_!L$ is $S'$-locally constant on $\XS'$ and $\pi_!G$ is an associated $\sho_{S'}$-module to~$\pi_!L$.

\item\label{lem:behaviourpi2}
If $L'$ is an $S'$-locally constant sheaf on $\XS'$ with associated $\sho_S'$-module~$G'$, then $\pi^*L'$ is $S$-locally constant on $\XS$ and $\pi^*G'$ is an associated $\sho_S$-module to~$\pi^*L'$.
\end{enumerate}
\end{lemma}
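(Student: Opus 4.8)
The plan is to reduce both assertions to the monodromy description recalled just before the lemma. Since $X$ is connected, an $S$-locally constant sheaf $L$ of $\pOS$-modules on $\XS$ is determined by the pair formed by $G=i_{x_o}^{-1}L$ and the monodromy $\rho_L\colon\pi_1(X,x_o)\to\Aut_{\sho_S}(G)$, and likewise over $S'$. Hence, in each case, it suffices (i)~to exhibit local isomorphisms over $1$-connected opens of $X$ showing that the sheaf in question is again locally constant along $X$ with $\sho$-module coefficients, (ii)~to compute its restriction along the relevant section over $x_o$, and (iii)~to identify the resulting monodromy.

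For \eqref{lem:behaviourpi1}, recall that here $\pi$ also denotes the map $\id\times\pi\colon\XS\to\XS'$, and regard $\pi_!G$ as an $\sho_{S'}$-module: it is the $\sho_{S'}$-submodule of $\pi_*G$ formed by the sections whose support is proper over $S'$, a submodule since multiplying a section by a holomorphic function does not enlarge its support. Over any $1$-connected open $U\subset X$ one has $L_{|U\times S}\simeq p_U^{-1}G$, and I would apply base change for $(\cbbullet)_!$ to the Cartesian square
\[
\xymatrix@R=.5cm{
U\times S\ar[r]^-{p_U}\ar[d]_{\id\times\pi} & S\ar[d]^{\pi}\\
U\times S'\ar[r]^-{p'_U} & S'
}
\]
to obtain $(\id\times\pi)_!(p_U^{-1}G)\simeq(p'_U)^{-1}(\pi_!G)$; this shows $\pi_!L$ is $S'$-locally constant. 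Applying the same base change to the Cartesian square built from the inclusions $\{x_o\}\times S\hto\XS$ and $\{x_o\}\times S'\hto\XS'$ gives $i_{x_o}^{-1}(\pi_!L)\simeq\pi_!(i_{x_o}^{-1}L)=\pi_!G$, and the naturality of base change shows that the transition isomorphisms of $\pi_!L$ over overlaps are the images under $\pi_!$ of those of $L$. Since $G$, being an $\sho_S$-module, is in particular a $\pi^{-1}\sho_{S'}$-module, and $\pi_!$ defines a functor $\Mod(\pi^{-1}\sho_{S'})\to\Mod(\sho_{S'})$, this yields $\rho_{\pi_!L}=\pi_!\circ\rho_L$, proving the claim.

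For \eqref{lem:behaviourpi2}, by definition $\pi^*L'=\pOS\otimes_{\pi^{-1}\pOSp}\pi^{-1}L'$, again with $\pi=\id\times\pi$. Restricting along $\{x_o\}\times S\hto\XS$ and using that the inverse image of sheaves is exact and commutes with tensor products, one gets $i_{x_o}^{-1}(\pi^*L')\simeq\sho_S\otimes_{\pi^{-1}\sho_{S'}}\pi^{-1}(i_{x_o}^{-1}L')=\pi^*G'$. Over a $1$-connected open $U\subset X$ one similarly obtains $(\pi^*L')_{|U\times S}\simeq\pi^*\bigl((p'_U)^{-1}G'\bigr)\simeq p_U^{-1}(\pi^*G')$, because $p_U^{-1}$ is exact and commutes with tensor products; hence $\pi^*L'$ is $S$-locally constant, and applying the functor $\pi^*\colon\Mod(\sho_{S'})\to\Mod(\sho_S)$ to the transition automorphisms gives $\rho_{\pi^*L'}=\pi^*\circ\rho_{L'}$.

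I expect the only point requiring genuine care — the main, though minor, obstacle — to lie in \eqref{lem:behaviourpi1}: one must check that the base-change isomorphism for $(\cbbullet)_!$ is compatible with the $\pOSp$-module structures, so that $\pi_!G$ (and not merely its underlying sheaf of abelian groups) is an associated module, and that the local identifications over the various $U$ are natural, so that they glue. Both follow formally from the construction of the base-change morphism together with the projection formula. Note that no coherence is claimed, so that no properness hypothesis on $\pi$ is needed.
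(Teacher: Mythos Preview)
Your argument is correct and is precisely the natural unpacking of the claim via the monodromy description and proper base change for $(\cbbullet)_!$; the paper itself gives no proof beyond declaring the lemma ``straightforward,'' so your write-up supplies exactly the details one would expect.
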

We shall denote by $\rd_{\XS/S}:\sho_{\XS}\to\Omega^1_{\XS/S}$ the relative differential associated to $p$.
Let us recall the Riemann-Hilbert correspondence for coherent $S$-local systems proved in \cite[Th.\,2.23 p.\,14]{De}, in the particular case of a projection $\XS\to S$, where~$X$ and $S$ are complex manifolds.

The functor $L\mto (E_L,\nabla)=(\sho_{\XS}\otimes_{\pOS}L,\rd_{\XS/S}\otimes\id)$ induces an equivalence between the category of coherent $S$-locally constant sheaves of $\pOS$-modules and the category of coherent $\sho_{\XS}$-modules $E$ equipped with an integrable relative connection $\nabla:E\to\Omega^1_{\XS/S}\otimes_{\sho_{\XS}}E$. A~quasi-inverse is given by $(E,\nabla)\mto E^\nabla=\ker\nabla$. The monodromy representation attached to the coherent $S$-locally constant sheaf $E^{\nabla}$ is also called the monodromy representation of $\nabla$ on~$E$. Let~us emphasize a direct consequence:

\begin{corollary}\label{cor:Enabla}
Let $(E,\nabla)$ be a coherent $\sho_{\XS}$-module equipped with an integrable relative connection. Then the natural $\sho_{\XS}$-linear morphism
\[
\sho_{\XS}\otimes_{\pOS}E^\nabla\to E
\]
is an isomorphism compatible with the integrable connections $\rd_{\XS/S}\otimes\id$ and~$\nabla$.
\end{corollary}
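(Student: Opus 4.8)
The plan is to deduce the corollary directly from the Deligne equivalence recalled just above. Write $F$ for the functor $L\mapsto(E_L,\nabla)=(\sho_{\XS}\otimes_{\pOS}L,\rd_{\XS/S}\otimes\id)$ from coherent $S$-locally constant sheaves of $\pOS$-modules to coherent $\sho_{\XS}$-modules equipped with an integrable relative connection, and $G$ for the quasi-inverse $(E,\nabla)\mapsto E^\nabla=\ker\nabla$. First I would record that $(F,G)$ forms an adjunction, $F$ being left adjoint to $G$: given such an $L$ and such an $(E,\nabla)$, a morphism $\phi\colon\sho_{\XS}\otimes_{\pOS}L\to E$ compatible with the connections is determined by the $\pOS$-linear map $\ell\mapsto\phi(1\otimes\ell)$ it induces on $L$, whose image lies in $E^\nabla$ because the connection of $F(L)$ vanishes on $1\otimes L$; conversely any $\pOS$-linear map $L\to E^\nabla$ extends uniquely to such a $\phi$ by $\sho_{\XS}$-linearity. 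The counit of this adjunction at $(E,\nabla)$ is then precisely the multiplication map $\sho_{\XS}\otimes_{\pOS}E^\nabla\to E$, $f\otimes e\mapsto fe$, which is the morphism appearing in the statement (and its source is indeed a coherent $\sho_{\XS}$-module, since $E^\nabla$ is $\pOS$-coherent by Deligne's theorem).

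Since Deligne's theorem asserts that $F$ and $G$ are quasi-inverse equivalences of categories, both the unit and the counit of the adjunction $F\dashv G$ are natural isomorphisms; applying this to the counit shows that $\sho_{\XS}\otimes_{\pOS}E^\nabla\to E$ is an isomorphism. Its compatibility with $\rd_{\XS/S}\otimes\id$ and $\nabla$ is then automatic, being the statement that the counit is a morphism in the target category; concretely it is the Leibniz identity $\nabla(fe)=(\rd_{\XS/S}f)\otimes e+f\nabla e=(\rd_{\XS/S}f)\otimes e$ for $e$ a local section of $E^\nabla$, which equals $(\rd_{\XS/S}\otimes\id)(f\otimes e)$.

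I do not expect a genuine obstacle here; the only point requiring a little care is to identify the abstract isomorphism furnished by the equivalence with the explicit multiplication map of the statement, and phrasing the argument through the counit of $F\dashv G$ disposes of this automatically. Alternatively, one could bypass the adjunction language: the displayed morphism is visibly $\sho_{\XS}$-linear and connection-compatible, and applying $G$ then $F$ to $(E,\nabla)$ returns $F(E^\nabla)=(\sho_{\XS}\otimes_{\pOS}E^\nabla,\rd_{\XS/S}\otimes\id)$, which the equivalence identifies with $(E,\nabla)$; one then only has to check that this identification is the displayed morphism, which is the same bookkeeping point.
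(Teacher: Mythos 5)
Your proof is correct and follows the same route the paper intends: the paper states the corollary as a direct consequence of Deligne's equivalence without further argument, and your adjunction/counit formulation is just a careful unwinding of that, correctly identifying the abstract isomorphism $FG\simeq\Id$ with the explicit multiplication map.
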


\begin{proposition}\label{prop:behaviourpi}
Notation as in Lemma \ref{lem:behaviourpi}.
\begin{enumerate}
\item\label{prop:behaviourpi1}
Let $L$ be a coherent $S$-locally constant sheaf. If $\pi$ is proper, then $\pi_*L$ is $S'$\nobreakdash-coherent and there exists a natural morphism\vspace*{-3pt}
$
E_{\pi_*L}\to\pi_*E_L.
$
\item\label{prop:behaviourpi2}
Let $L'$ be a coherent $S'$-locally constant sheaf. Then $E_{\pi^*L'}\simeq\pi^*(E_{L'})$.
\end{enumerate}
\end{proposition}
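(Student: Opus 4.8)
The plan is to deduce both assertions from the fact that $L\mapsto E_L=\sho_{\XS}\otimes_{\pOS}L$ is an extension-of-scalars functor, combined with Lemma~\ref{lem:behaviourpi} (which records how $S$-local constancy behaves under $\pi$) and, for the coherence part of \eqref{prop:behaviourpi1}, with Grauert's direct image theorem. Throughout I write $\Pi=\id_X\times\pi:\XS\to\XS'$ for the induced map (the map still denoted $\pi$ in the statement), and use $p'\circ\Pi=\pi\circ p$ and $q'\circ\Pi=q$, whence $\Pi^{-1}\pOSp=p^{-1}\pi^{-1}\sho_{S'}$ and $\Pi^*\Omega^1_{\XS'/S'}=\Omega^1_{\XS/S}$.

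For \eqref{prop:behaviourpi1}, I would first note that $\Pi$ is proper, so $\Pi_*L=\Pi_!L$, which by Lemma~\ref{lem:behaviourpi}\eqref{lem:behaviourpi1} is $S'$-locally constant with associated $\sho_{S'}$-module $\pi_!G=\pi_*G$ (again $\pi$ being proper). Since $L$ is coherent, $G$ is $\sho_S$-coherent, and Grauert's direct image theorem for the proper map $\pi$ shows that $\pi_*G$ is $\sho_{S'}$-coherent; hence $\pi_*L$ is $S'$-coherent. The natural morphism is obtained by adjunction: the inclusion $L\hookrightarrow E_L$ is $\pOS$-linear, so it pushes forward to a $\pOSp$-linear morphism $\pi_*L\to\pi_*E_L$ (restricting scalars along $\pOSp\to\Pi_*\pOS$), whose target is an $\sho_{\XS'}$-module; by the universal property of $E_{\pi_*L}=\sho_{\XS'}\otimes_{\pOSp}\pi_*L$ this extends uniquely to an $\sho_{\XS'}$-linear morphism $E_{\pi_*L}\to\pi_*E_L$. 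Finally I would check, via the projection formula for the locally free sheaves $q^*\Omega^p_X$, that $\pi_*E_L$ carries a natural integrable connection relative to $S'$ and that this morphism is horizontal.

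For \eqref{prop:behaviourpi2}, the argument is a direct comparison of iterated tensor products. Unwinding the definitions, $\pi^*L'=\pOS\otimes_{\Pi^{-1}\pOSp}\Pi^{-1}L'$ and $\pi^*E_{L'}=\sho_{\XS}\otimes_{\Pi^{-1}\sho_{\XS'}}\Pi^{-1}E_{L'}$; substituting $E_{L'}=\sho_{\XS'}\otimes_{\pOSp}L'$, using associativity of the tensor product and $\Pi^{-1}\pOSp=p^{-1}\pi^{-1}\sho_{S'}$, both $E_{\pi^*L'}$ and $\pi^*E_{L'}$ get canonically identified with $\sho_{\XS}\otimes_{p^{-1}\pi^{-1}\sho_{S'}}\Pi^{-1}L'$. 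Because $\Pi^*\Omega^1_{\XS'/S'}=\Omega^1_{\XS/S}$, this identification intertwines the relative differential $\rd_{\XS/S}\otimes\id$ on $E_{\pi^*L'}$ with the pullback connection $\Pi^*\nabla'$ on $\pi^*E_{L'}$, so it is an isomorphism of $\sho_{\XS}$-modules with integrable relative connection.

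The only non-formal input is Grauert's coherence theorem, used in \eqref{prop:behaviourpi1}; everything else is manipulation of tensor products, inverse images and adjunctions. Accordingly I expect no serious obstacle: the sole point requiring a little care is the bookkeeping of the relative connections---verifying horizontality in \eqref{prop:behaviourpi1} and connection-compatibility in \eqref{prop:behaviourpi2}---which is handled by the projection formula applied to the locally free relative differentials together with $q'\circ\Pi=q$.
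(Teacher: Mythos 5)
Your proposal is correct and follows essentially the same route as the paper: coherence of $\pi_*L$ via Lemma \ref{lem:behaviourpi}\eqref{lem:behaviourpi1} together with Grauert's theorem, the morphism $E_{\pi_*L}\to\pi_*E_L$ obtained from $\sho_{\XS'}\to\pi_*\sho_{\XS}$ by extension of scalars (the paper writes this as the composition $\sho_{\XS'}\otimes_{\pOSp}\pi_*L\to\pi_*\sho_{\XS}\otimes_{\pOSp}\pi_*L\to\pi_*(\sho_{\XS}\otimes_{\pOS}L)$, which is the same map as your adjunction argument), and a direct tensor-product identification for \eqref{prop:behaviourpi2}, which the paper dismisses as straightforward.
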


\begin{proof}\mbox{}
\begin{enumerate}
\item
Since $\pi$ is proper, $\pi_*L$ is a coherent $S'$-locally constant sheaf on $\XS'$ (Lemma \ref{lem:behaviourpi}\eqref{lem:behaviourpi1} and Grauert's theorem). The natural morphism $\sho_{\XS'}\to\pi_*\sho_{\XS}$ induces a composed morphism\vspace*{-3pt}
\[
\sho_{\XS'}\otimes_{\pOSp}\pi_* L\to\pi_*\sho_{\XS}\otimes_{\pOSp}\pi_*L\to\pi_*(\sho_{\XS}\otimes_{\pOS}L),
\]
which is clearly compatible with the relative differential $\rd\otimes\id$. We note that if~$\pi$ is surjective with connected fibers, the first morphism is an isomorphism since $\sho_{\XS'}\to\pi_*\sho_{\XS}$ is then an isomorphism. This is the case if for example $S'$ is a complex manifold and $\pi$ is a proper modification of $S'$.

\item
The second point is straightforward.\qedhere
\end{enumerate}
\end{proof}

\subsection{The Deligne extension of an \texorpdfstring{$S$}{S}-locally constant sheaf}\label{subsec:Deligneext}
We recall Theorem~2.6 and extend Corollary 2.8 of \cite{MFCS2} to the case where $\dim S>1$.

\begin{notation}\label{nota:Dtype}
Let $Y$ be a hypersurface in $X$ (assumed to be connected) and let us denote the inclusion by $j:X^*:=X\moins Y\hto X$. Let $L$ be a coherent $S$-locally constant sheaf on $\XsS$. Let $(E_L,\nabla)=(\sho_{\XsS}\otimes_{\pOS}\nobreak L,\rd_{\XsS/S}\otimes\id)$ be the associated cohe\-rent $\sho_{\XsS}$-module with flat relative connection $\nabla=\rd_{\XsS/S}\otimes\id$, so that $L=E_L^\nabla$. We~simply set $E=E_L$ when the context is clear, that we consider as a left $\DXsS$-module. We sometimes call $L$ a coherent $S$-local system.
\end{notation}

\begin{lemma}\label{lem:exactjstar}
The functor $j_*E_{\scriptscriptstyle\bullet}:L\mto j_*E_L$ with values in $\Mod(\DXS)$ is exact.
\end{lemma}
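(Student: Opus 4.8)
The plan is to reduce the exactness of $L\mapsto j_*E_L$ to exactness on stalks, where it becomes a statement about the behaviour of sections of moderate growth (or rather, since at this stage we only have the naive $j_*$, about sections of $E_L$ on punctured neighbourhoods) under short exact sequences of $S$-locally constant sheaves. First I would recall that $j_*$ is left exact in general, so the only issue is right exactness: given a short exact sequence $0\to L_1\to L_2\to L_3\to 0$ of coherent $S$-locally constant sheaves on $X^*\times S$, we must show that $j_*E_{L_2}\to j_*E_{L_3}$ is surjective. Applying the exact functor $\sho_{\XsS}\otimes_{\pOS}(-)$ (exact because $\sho_{\XsS}$ is faithfully flat over $\pOS$, as $\sho_{\XsS}\simeq\pOS\otimes_{p^{-1}\CC_S}\sho_{X^*}$ and $\sho_{X^*}$ is flat over $\CC_{X^*}$) yields a short exact sequence $0\to E_{L_1}\to E_{L_2}\to E_{L_3}\to 0$ of $\DXsS$-modules, compatibly with the relative connections. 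So the claim is that $R^1j_*E_{L_1}$ maps to zero in the relevant sense, i.e.\ that $j_*E_{L_2}\to j_*E_{L_3}$ is onto.

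The key step is a local computation near a point $y_o\in Y$. I would choose local coordinates so that $Y=\{x_1\cdots x_r=0\}$ is a normal crossing divisor — this is legitimate \emph{only} after a suitable reduction, and here is where the main work lies: the hypersurface $Y$ is arbitrary, not necessarily normal crossing. The standard device (as in the absolute case and in \cite{MFCS2}) is to observe that the statement is local on $X$ and, using that $E=\sho_{\XsS}\otimes_{\pOS}L$, to reduce to the normal crossing case either by resolution of singularities of $Y$ together with the projection formula and Proposition~\ref{prop:behaviourpi}, or directly by the fact that $j_*E_L$ depends only on a punctured neighbourhood and one can restrict to the complement of the singular locus of $Y$ (of codimension $\geq 2$) and invoke a Hartogs-type extension — but one must be careful since $L$ need not be $\pOS$-locally free, only coherent. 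I expect the cleanest route is: reduce to $X$ a polydisc and $Y$ a coordinate hyperplane $\{x_1=0\}$ by a normalization/resolution argument, then compute $j_*E_L$ explicitly via the monodromy decomposition of $L$ with respect to the loop around $\{x_1=0\}$.

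In the normal crossing model, writing $T$ for the monodromy automorphism (an $\sho_S$-linear automorphism of the coherent module $G$ attached to $L$, as in the paragraph after Lemma~\ref{lem:exactjstar}'s setup), the sections of $j_*E_L$ near a point of $\{x_1=0\}$ are identified with multivalued flat sections of moderate growth, which via the exponential $x_1^{A}$ (for $A$ a logarithm of $T$, or more intrinsically via the $T$-eigenspace-free Deligne construction) are expressed in terms of $G$ itself with a twist. The point is that this identification $j_*E_L\simeq(\text{something built from }G)$ is \emph{functorial and exact} in $G$ because it only involves the exact functor $\sho_{\XS}(*Y)\otimes(-)$ followed by taking a kernel that is preserved by surjections — concretely, one shows $j_*E_L$ is, locally, $\sho_{\XS}[x_1^{-1}]\otimes_{\pOS}L$ cut out by the relative connection being "logarithmic," and the snake lemma applied to $0\to L_1\to L_2\to L_3\to 0$ gives the result once one knows $R^1$ of the relevant operator vanishes on $L_1$, which holds because the operator $x_1\partial_{x_1}+A$ acting on $G[x_1^{-1}]$-valued expansions is surjective term by term.

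The main obstacle, as indicated, is handling the arbitrary (possibly very singular) hypersurface $Y$: one cannot directly write down flat sections of moderate growth without a resolution, and after resolving one must transport the exactness statement back down, which requires knowing that $j_*$ commutes appropriately with proper pushforward along the resolution — this is where Proposition~\ref{prop:behaviourpi}\eqref{prop:behaviourpi1} and the fact that a resolution of $Y$ is an isomorphism over $X^*$ are used. Once the normal crossing case is secured, the reduction is formal; the normal crossing case itself is the classical Deligne computation, made exact in $L$ by the flatness of $\sho$ over $\pOS$ and the explicit term-by-term surjectivity of the connection operator on Laurent-type expansions.
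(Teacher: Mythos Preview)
Your approach is far more elaborate than what is needed, and the paper's proof is essentially one line: every $(x_o,s_o)\in Y\times S$ has a fundamental system of open neighbourhoods $U\times U(s_o)$ with $U$ a polydisc in $X$; since the complement of a hypersurface in a Stein manifold is Stein, $(U\setminus Y)\times U(s_o)$ is Stein, and as $E_L$ is $\sho_{\XsS}$-coherent, Cartan's Theorem~B gives $H^k((U\setminus Y)\times U(s_o),E_L)=0$ for all $k\geq1$. Hence $R^kj_*E_L=0$ for $k\geq1$, and since $L\mapsto E_L$ is exact (as you note, by flatness of $\sho_{\XsS}$ over $\pOS$), the functor $L\mapsto j_*E_L$ is exact. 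No resolution of singularities, no normal crossing reduction, and no monodromy computation is required.

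Your route also contains a genuine confusion. You write that ``$j_*E_L$ is, locally, $\sho_{\XS}[x_1^{-1}]\otimes_{\pOS}L$ cut out by the relative connection being logarithmic''. But $j_*E_L$ is the \emph{full} pushforward: its stalk at a point of $Y\times S$ involves all holomorphic sections on a punctured neighbourhood, including those with essential singularities along $Y$, and is not the localization $\sho_{\XS}(*Y)\otimes_{\pOS}L$. What you are describing (moderate growth, logarithmic connection, term-by-term surjectivity of $x_1\partial_{x_1}+A$ on Laurent expansions) is the \emph{Deligne extension} $\wt E_L$ treated later in Section~\ref{subsec:Deligneext}, not $j_*E_L$. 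So even in the normal crossing case your explicit computation would establish exactness of $L\mapsto\wt E_L$ rather than of $L\mapsto j_*E_L$, and the reduction from arbitrary $Y$ to normal crossings via resolution would still require control of $j_*$ under proper modifications that Proposition~\ref{prop:behaviourpi}\eqref{prop:behaviourpi1} does not supply (it only gives a morphism, not an isomorphism). The Stein argument avoids all of this.
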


\begin{proof}
Any $(x_o,s_o)\!\in\!Y\times S$ has a fundamental system of open neighborhood $U\times\nobreak U(s_o)$ such that $(U\moins D)\times U(s_o)$ is Stein. Since $E_L$ is $\sho_{\XsS}$-coherent, the result follows.
\end{proof}

We assume from now on that $Y=D$ is a divisor with normal crossings in $X$. Let $\varpi:\wt X\to X$ denote the real blowing up of $X$ along the components of~$D$. We~denote by $\wtj:X^*\hto\wt X$ the inclusion, so that $j=\varpi\circ\wtj$. Let $x_o\in D$, $\wt x_o\in\varpi^{-1}(x_o)$ and let $s_o\in S$. Choose local coordinates $(x_1,\dots,x_n)$ at $x_o$ such that $D=\{x_1\cdots x_\ell=0\}$ and consider the associated polar coordinates $(\rhog,\thetag,\boldsymbol{x}'):=(\rho_1,\theta_1,\dots,\rho_\ell,\theta_\ell,x_{\ell+1},\dots,x_n)$ so that $\wt x_o$ has coordinates $\rhog_o=0$, $\thetag_o$, $\boldsymbol{x}'_o=0$. We~also denote by $\varpi$ the induced map $\wXS\to\XS$ and by $\wt p:\wXS\to S$ the projection.

\begin{definition}
The subsheaf $\sha^\rmod_{\wXS}$ of $\wtj_*\sho_{\XsS}$ of holomorphic functions having \emph{moderate growth} along $\varpi^{-1}(D)$ is defined by the following two requirements:
\begin{itemize}
\item
$\wtj^{-1}\sha^\rmod_{\wXS}=\sho_{\XsS}$.
\item
For any $x_o\in D$, $\wt x_o\in\varpi^{-1}(x_o)$ and $s_o\in S$, a germ $\wt h\in(\wtj_*\sho_{\XsS})_{(\wt x_o,s_o)}$ is said to have \emph{moderate growth}, \ie to belong to $\sha^\rmod_{\wXS,(\wt x_o,s_o)}$, if there exist open sets
\begin{equation}\label{eq:Ueps}
\wt U_\epsilon:=\{\Vert\rhog\Vert<\epsilon,\Vert\boldsymbol{x}'\Vert<\epsilon,\Vert\thetag-\thetag_o\Vert<\epsilon\}\quad\text{and}\quad U(s_o)\ni s_o
\end{equation}
($\epsilon$ small enough) such that, setting $U^*_\epsilon:=\wt U_\epsilon\moins\{\rho_1\cdots\rho_\ell=0\}$, $\wt h$ is defined on $U^*_\epsilon\times U(s_o)$ and $|\wt h|$ is bounded on this open set by $C\Vert\rhog\Vert^{-N}$, for some $C,N>0$.
\end{itemize}
\end{definition}

We~recall the following properties of the sheaf $\sha^\rmod_{\wXS}$:
\begin{enumerate}
\item\label{enum:Amod1}
$\sha^\rmod_{\wXS}$ is a $\varpi^{-1}\DXS(*D)$-module (\cf\cite[\S1.1.6]{Sabbah00}) which is $\varpi^{-1}\sho_{\XS}$-flat (\cf \cite[Th.\,4.6.1]{Mochizuki10}),
\item\label{enum:Amod1b}
for any coherent $\sho_{\XS}$-module $M$, the natural morphism
\[
\sha^\rmod_{\wXS}\otimes_{\varpi^{-1}\sho_{\XS}}\varpi^{-1}M\to\wtj_*j^{-1}M
\]
is injective (\cf\loccit),
\item\label{enum:Amod2}
$R\varpi_*\sha^\rmod_{\wXS}=\varpi_*\sha^\rmod_{\wXS}=\sho_{\XS}(*D)$ (\cf\cite[Cor.\,II.1.1.18]{Sabbah00}).
\end{enumerate}
Let us already notice for later use that these properties imply, for any $\sho_S$-module $G$,
\begin{equation}\label{eq:varpiAmod}
\varpi_*(\sha^\rmod_{\wXS}\otimes_{\wpOS}\wt p^{-1}G)\simeq\sho_{\XS}(*D)\otimes_{\pOS}p^{-1}G.
\end{equation}
Indeed, we have
\begin{align*}
R\varpi_*(\sha^\rmod_{\wXS}\otimes_{\wpOS}\wt p^{-1}G)
&\simeq R\varpi_*(\sha^\rmod_{\wXS}\otimes^L_{\wpOS}\wt p^{-1}G)\quad\text{by \eqref{enum:Amod1}}\\
&\simeq R\varpi_*(\sha^\rmod_{\wXS})\otimes^L_{\pOS} p^{-1}G\\
&\simeq\sho_{\XS}(*D)\otimes^L_{\pOS}p^{-1}G\quad\text{by \eqref{enum:Amod2}}\\
&\simeq\sho_{\XS}(*D)\otimes_{\pOS}p^{-1}G,
\end{align*}
where the latter isomorphism follows from flatness of $\sho_{\XS}(*D)$ over $\pOS$. Furthermore, \eqref{enum:Amod1b}~implies that, for any coherent $\sho_S$-module $G$, the natural morphism
\[
\sha^\rmod_{\wXS}\otimes_{\tilde{p}^{-1}\sho_S}\tilde{p}^{-1}G\to\wtj_*(\sho_{\XsS}\otimes_{\pOS}p^{-1}G)
\]
is injective.

Note that, $U^*_\epsilon$ being contractible, we have $L_{|U^*_\epsilon\times U(s_o)}\simeq p^{-1}_{U^*_\epsilon}G_{|U(s_o)}$ (\cf\cite[Prop.\,A.12]{MFCS2}). We thus have an identification
\begin{multline}\label{eq:jtilde}
(\wtj_*E)_{|\wt U_\epsilon\times U(s_o)}\simeq\wtj_*\Bigl(\sho_{U^*_\epsilon\times U(s_o)}\otimes_{p^{-1}\sho_{U(s_o)}} p^{-1}_{U^*_\epsilon}G_{|U(s_o)},\rd_{U^*_\epsilon\times U(s_o)/U(s_o)}\otimes \Id\Bigr)\\
\overset\sim\longleftarrow\Bigl((\wtj_*\sho_{U^*_\epsilon\times U(s_o)})\otimes_{\wt p^{-1}\sho_{U(s_o)}}\wt p^{-1}_{\wt U_\epsilon}G_{|U(s_o)},\rd_{\wt U_\epsilon\times U(s_o)/U(s_o)}\otimes \Id\Bigr).
\end{multline}
Indeed, for a polysector $\wt V_\epsilon\subset\wt U_\epsilon$ and $V\subset U(s_o)$, the natural morphism
\[
\Gamma(V_\epsilon^*\times V;\sho_{\XS})\otimes_{\sho(V)}\Gamma(\wt V_\epsilon\times V;\tilde{p}^{-1}G)\to\Gamma(V_\epsilon^*\times V;\sho_{\XS})\otimes_{\sho(V)}\Gamma(V_\epsilon^*\times V;p^{-1}G)
\]
is an isomorphism, since the adjunction morphism $p_{W*}p_W^{-1}G\to G$ is an isomorphism for both $W=V_\epsilon^*$ and $W=\wt V_\epsilon$ (both being connected, \cf\eg\cite[Prop.\,A.1(2)]{MFCS2}). The assertion is obtained by passing from the pre-sheaf isomorphism to a sheaf isomorphism.

\begin{definition}[The Deligne extension]\label{def:modgrowth}
Let $x_o\in X$, $\wt x_o\in\varpi^{-1}(x_o)$ and $s_o\in S$.
\begin{enumerate}
\item\label{def:modgrowth1}
A germ $\wt v$ of $(\wtj_*E_L)_{(\wt x_o,s_o)}$ is said to have \emph{moderate growth} if for some open set $\wt U_\epsilon\times U(s_o)$ as above, and for some (equivalently, any) identification $L_{|U^*_\epsilon\times U(s_o)}\simeq p^{-1}_{U^*_\epsilon}G_{|U(s_o)}$, $\wt v$ belongs to the image of the \emph{injective} morphism
\[
\Gamma\bigl(\wt U_\epsilon\times U(s_o);\sha^\rmod_{\wXS}\otimes_{\wpOS}\wt p^{-1}G\bigr)\to\Gamma(\wt U_\epsilon\times U(s_o);\wtj_*E_L).
\]

\item\label{def:modgrowth2}
A germ $v$ of $(j_*E_L)_{(x_o,s_o)}$ is said to have \emph{moderate growth} if for each~$\wt x_o$ in $\varpi^{-1}(x_o)$, the corresponding germ in $(\wtj_*E_L)_{(\wt x_o,s_o)}$ has moderate growth. In particular, this holds for any local section of $E_L$ at $(x_o,s_o)$ if $x_o\notin D$.

\item\label{def:modgrowth3}
The subsheaf of $j_*E_L$ consisting of local sections whose germs have moderate growth is denoted by $\wt E_L$. It satisfies $j^*\wt E_L=E_L$. It is called \emph{the Deligne extension} of $E_L$.
\end{enumerate}
\end{definition}

\begin{remark}
\label{RTMF}
By definition, with the previous notations, $v$ has moderate growth if and only if, on any such polysector $U_\epsilon\times U(s_o)$, for any isomorphism $L|_{U_\epsilon^*\times U(s_o)}\simeq p_{U_\epsilon^*}^{-1}G_{|U(s_o)}$, for any family of local generators $(g_j)$ of $G$ on $U(s_o)$, $v$ can be written as $\sum_jv_j\otimes g_j$ with $v_j$ being holomorphic functions on the corresponding polysector in $\XsS$ with moderate growth with respect to $D$.
\end{remark}

Since $\sha^\rmod_{\wXS}$ is stable by derivations with respect to $X$, $\wt E_L$ is a $\DXS(*D)$-submodule of $j_*E_L$.

\begin{proposition}[First properties of the Deligne extension $\wt E_L$]\label{prop:firsttildeE}\mbox{}
Let $L$ be a coherent $S$-locally constant sheaf on $\XsS$ and let $(E_L,\nabla)=(\sho_{\XsS}\otimes_{\pOS} L,\rd_{\XsS/S}\otimes\id)$ be the associated $\sho_{\XsS}$-module with flat relative connection.\begin{enumerate}
\item\label{prop:firsttildeE1}
The assignment $L\mto(\wt E_L,\nabla)$ with values in $\Mod(\DXS)$ is functorial.

\item\label{prop:firsttildeE2}
Let $\pi:S\to S'$ be a proper holomorphic map. Then the natural morphism $j_*(E_{\pi_*L},\nabla)\to \pi_*j_*(E_L,\nabla)$ sends the subsheaf $\wt E_{\pi_*L}$ to $\pi_*\wt E_L$.
\item\label{prop:firsttildeE3}
Let $\pi:S'\to S$ be a holomorphic map. Then the natural morphism $\pi^*j_*(E_L,\nabla)\to j_*(E_{\pi^*L},\nabla)$ sends isomorphically $\pi^*(\wt E_L,\nabla)$ to $(\wt E_{\pi^*L},\nabla)$.
\end{enumerate}
\end{proposition}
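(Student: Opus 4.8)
The plan is to treat the three statements in turn, each time reducing to a local computation near a point $(x_o,s_o)$ with $x_o\in D$, using polar coordinates as in \eqref{eq:Ueps} and the identification \eqref{eq:jtilde}. For \eqref{prop:firsttildeE1}, functoriality of $L\mto j_*E_L$ is Lemma \ref{lem:exactjstar} (it is even exact), so it suffices to check that a morphism $\phi:L_1\to L_2$ of coherent $S$-locally constant sheaves carries moderate-growth germs to moderate-growth germs. Working on a contractible $\wt U_\epsilon\times U(s_o)$, pick associated $\sho_S$-modules $G_1,G_2$ and a matrix representation of $\phi$ with entries in $\sho_{U(s_o)}$ (after trivializing $L_i$ over $U^*_\epsilon\times U(s_o)$); since $\sha^\rmod_{\wXS}$ is a $\wpOS$-module, such entries act on $\sha^\rmod_{\wXS}\otimes_{\wpOS}\wt p^{-1}G_1$ and land in $\sha^\rmod_{\wXS}\otimes_{\wpOS}\wt p^{-1}G_2$, which is exactly the defining image in Definition \ref{def:modgrowth}\eqref{def:modgrowth1}. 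Compatibility with $\nabla$ is immediate since $\nabla=\rd\otimes\id$ on both sides and $\phi$ is $\pOS$-linear, hence flat.

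For \eqref{prop:firsttildeE2}, the natural morphism $j_*(E_{\pi_*L},\nabla)\to\pi_*j_*(E_L,\nabla)$ is the one produced in Proposition \ref{prop:behaviourpi}\eqref{prop:behaviourpi1} after applying $j_*$. The claim is local on $X$ and on $S'$; fix $x_o\in D$ and $s'_o\in S'$, and use that $\pi$ is proper so that $\pi^{-1}(U(s'_o))$ may be covered by finitely many charts $\wt U_\epsilon\times U(s_o)$ over which $L$ is of the form $p^{-1}G$. By Lemma \ref{lem:behaviourpi}\eqref{lem:behaviourpi1}, $\pi_!G=\pi_*G$ is an associated $\sho_{S'}$-module to $\pi_*L$. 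A germ $\wt v$ of $\wt E_{\pi_*L}$ lies, by definition, in the image of $\Gamma(\wt U_\epsilon\times U(s'_o);\sha^\rmod_{\wt X\times S'}\otimes_{\wt p'^{-1}\sho_{S'}}\wt p'^{-1}\pi_*G)$; pushing through the morphism $\sha^\rmod_{\wt X\times S'}\otimes\wt p'^{-1}\pi_*G\to\pi_*(\sha^\rmod_{\wXS}\otimes\wpOS\text{-part})$ coming from $\sho_{\XS'}\to\pi_*\sho_{\XS}$ and $\pi_*\pi^{-1}\to\id$, one lands in $\pi_*$ of the moderate-growth subsheaf, which is $\pi_*\wt E_L$. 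The only point needing care is that the moderate-growth bound $C\Vert\rhog\Vert^{-N}$ is preserved: since $\pi$ does not touch the $X$-directions, the estimate in the $\rhog$-variables is unchanged fiberwise over $S'$, and $\pi$ being proper the constants can be taken uniform on the finitely many charts.

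For \eqref{prop:firsttildeE3}, which is the one I expect to be the actual content, the target is an isomorphism, not merely a morphism. Start from the natural morphism $\pi^*j_*(E_L,\nabla)\to j_*(E_{\pi^*L},\nabla)$; by Proposition \ref{prop:behaviourpi}\eqref{prop:behaviourpi2} we have $E_{\pi^*L}\simeq\pi^*E_L$, and by Lemma \ref{lem:behaviourpi}\eqref{lem:behaviourpi2} if $G$ is associated to $L$ then $\pi^*G$ is associated to $\pi^*L$. Locally on $\wt U_\epsilon\times U(s_o)$ with $L\simeq p^{-1}G$, the Deligne extension $\wt E_L$ is, by \eqref{eq:jtilde} together with Definition \ref{def:modgrowth}, identified with $\sha^\rmod_{\wXS}\otimes_{\wpOS}\wt p^{-1}G$ (this identification of $\wt E_L$ with the moderate-growth tensor product is worth isolating as the crux: the injective map of Definition \ref{def:modgrowth}\eqref{def:modgrowth1} is, over a contractible chart where $L$ trivializes, an isomorphism onto $\wt E_L$ by Remark \ref{RTMF}). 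Pulling back by $\pi$ and using associativity of $\otimes$ together with $\pi^*\wpOS$-base change, $\pi^*(\sha^\rmod_{\wXS}\otimes_{\wpOS}\wt p^{-1}G)\simeq(\pi^*\sha^\rmod_{\wXS})\otimes_{\wt p'^{-1}\sho_{S'}}\wt p'^{-1}\pi^*G$; so the claim reduces to the statement that the canonical map $\pi^*\sha^\rmod_{\wXS}\to\sha^\rmod_{\wt X\times S'}$ is an isomorphism, i.e., that a function on $\XsS$ of moderate growth along $\varpi^{-1}(D)$ pulled back along $\id\times\pi$ still has moderate growth and, conversely, every germ of $\sha^\rmod_{\wt X\times S'}$ arises this way after the tensor identification. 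The direction ``pulled-back functions keep moderate growth'' is clear since the estimate $|\wt h|\le C\Vert\rhog\Vert^{-N}$ on $U^*_\epsilon\times U(s_o)$ pulls back to the same estimate on $U^*_\epsilon\times\pi^{-1}(U(s_o))$, the $\rhog$-variable being untouched; the converse is where one must use that, at the level of associated $\sho$-modules, $\wt E_L$ is \emph{freely} (as an $\sha^\rmod$-module, locally) generated by the chosen generators $(g_j)$ of $G$, so that a moderate-growth section of $\pi^*$ is a combination $\sum_j\wt h_j\otimes\pi^*g_j$ with $\wt h_j\in\sha^\rmod$ and hence comes from $\pi^*\wt E_L$. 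Compatibility with $\nabla$ on both sides is once more automatic because the connections are $\rd\otimes\id$ and $\pi$ only affects the parameter directions along which the relative connections are by definition trivial. The main obstacle, then, is purely bookkeeping about moderate growth under the parameter pullback together with pinning down the local ``freeness'' of $\wt E_L$ over $\sha^\rmod_{\wXS}$ relative to a set of $\sho_S$-generators of $G$; everything else is formal from the functoriality already recorded in Lemma \ref{lem:behaviourpi} and Proposition \ref{prop:behaviourpi}.
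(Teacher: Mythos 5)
Your treatment of \eqref{prop:firsttildeE1} and \eqref{prop:firsttildeE2} is correct and is essentially the paper's proof: \eqref{prop:firsttildeE1} is a direct check from Definition \ref{def:modgrowth}, and \eqref{prop:firsttildeE2} reduces, via \eqref{eq:jtilde} and the morphism of Proposition \ref{prop:behaviourpi}\eqref{prop:behaviourpi1}, to the fact that pulling back functions along $\id\times\pi$ preserves the moderate-growth bound.

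Your proof of \eqref{prop:firsttildeE3}, however, has a genuine gap --- and indeed the paper does not prove \eqref{prop:firsttildeE3} at this stage, but only later as a consequence of Theorem \ref{th:Deligneext}. First, your local identification of $\wt E_L$ with $\sha^\rmod_{\wXS}\otimes_{\wpOS}\wt p^{-1}G$ ignores the monodromy of $L$ around $D$: the trivialization $L\simeq p^{-1}G$ only holds on polysectors $U^*_\epsilon\times U(s_o)$ \eqref{eq:Ueps}, not on a punctured neighbourhood of a point of $D$, so it identifies the moderate-growth subsheaf of $\wtj_*E_L$ on $\wXS$ sector by sector but not the sheaf $\wt E_L$ on $\XS$; gluing across sectors is exactly where the paper needs the untwisting by $\prod_i x_i^{A_i(s)}$, with $A_i(s)$ a logarithm of the monodromy (Wang's formula) whose matrix entries have moderate growth --- that is, the content of the proof of Theorem \ref{th:Deligneext}\eqref{th:Deligneext1}, which yields $\wt E_L\simeq\wh E_G=\sho_{\XS}(*D)\otimes_{\pOS}p^{-1}G$.

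Second, and more seriously, the statement you reduce to --- that the canonical map $\pi^*\sha^\rmod_{\wXS}\to\sha^\rmod_{\wt X\times S'}$ is an isomorphism --- is false: it is not surjective. Take $S$ a point, $S'$ a disc with coordinate $s'$, $X=\CC$, $D=\{x=0\}$, $G=\sho_S$. Then $x^{s'}$ is a germ of $\sha^\rmod_{\wt X\times S'}$ on any sector, but it cannot be written as a finite sum $\sum_{i=1}^N f_i(x,s')\,g_i(x)$ with $f_i$ holomorphic and $g_i\in\sha^\rmod_{\wt X}$: expanding in powers of $s'$, all germs $(\log x)^k$ ($k\geq0$) would lie in the finitely generated $\sho_{X,x_o}$-module generated by $g_1,\dots,g_N$, so by Noetherianity the chain of submodules generated by $1,\log x,\dots,(\log x)^k$ would stabilize, giving $(\log x)^{k+1}=\sum_{j\leq k}a_j(x)(\log x)^j$ with $a_j$ holomorphic, which is impossible (divide by $(\log x)^k$ and let $x\to0$ in the sector). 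So your ``local freeness over $\sha^\rmod$'' argument does not give surjectivity of $\pi^*\wt E_L\to\wt E_{\pi^*L}$. The correct mechanism is downstairs, after applying $\varpi_*$: single-valuedness kills such sections, $R\varpi_*\sha^\rmod_{\wXS}=\sho_{\XS}(*D)$ (\cf \eqref{eq:varpiAmod}), and $\sho_{\XS}(*D)$ does base-change well, $\pi^*\sho_{\XS}(*D)\simeq\sho_{X\times S'}(*D)$. This is exactly the paper's route: first \eqref{lftmf} identifies $\wt E_L$ with $\wh E_G$, and then \eqref{prop:firsttildeE3} follows by taking $0$-th cohomology in the base-change computation \eqref{lftmf2}.
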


\begin{proof}
We only prove \eqref{prop:firsttildeE1} and \eqref{prop:firsttildeE2}, and \eqref{prop:firsttildeE3} will be a consequence of Theorem \ref{th:Deligneext} below.
\begin{enumerate}
\item
A morphism of $S$-locally constant sheaves $\varphi:L\to L'$ defines a morphism $\varphi:E_L\to E_{L'}$ compatible with~$\nabla$, hence $j_*E_L\to j_*E_{L'}$ compatible with~$\nabla$, and we only need to check that it sends $\wt E_L$ to $\wt E_{L'}$. This is straightforward from the definition.

\item
From the commutative diagram
\begin{equation}\label{eq:commutpij}
\begin{array}{c}
\xymatrix{
\XsS\ar@{^{ (}->}[r]^-j\ar[d]_\pi&\XS\ar[d]^\pi\\
\XsS'\ar@{^{ (}->}[r]^-j&\XS'
}
\end{array}
\end{equation}
together with the natural morphism of Proposition \ref{lem:behaviourpi}\eqref{lem:behaviourpi1}, we obtain the morphism $j_*(E_{\pi_*L},\nabla)\to \pi_*j_*(E_L,\nabla)$ and similarly $\wtj_*(E_{\pi_*L},\nabla)\to \pi_*\wtj_*(E_L,\nabla)$. Let us consider an open subset $\wt U_\epsilon\subset\wt X$ as in \eqref{eq:Ueps}. Then $U_\epsilon^*$ is contractible and for any $s'_o\in S'$, $L_{|U_\epsilon^*\times\pi^{-1}(U(s'_o))}\simeq p_{U_\epsilon^*}^{-1}G_{|\pi^{-1}(U(s'_o))}$. In order to simplify the notation, we set $S'=U(s'_o)$ and $S=\pi^{-1}(U(s'_o))$. We also set $p=p_{\wt U_\epsilon}$, so that $p_{U_\epsilon^*}=p\circ\wtj$. According to \eqref{eq:jtilde}, we are led to proving that the natural morphism
\[
(\wtj_*\sho_{U_\epsilon^*\times S'})\otimes_{\wpOSp}\wt p^{\prime-1}\pi_*G\to \pi_*\bigl((\wtj_*\sho_{U_\epsilon^*\times S})\otimes_{\pOS}p^{-1}G\bigr)
\]
sends $\sha^\rmod_{\wt U_\epsilon\times S'}\otimes_{\wpOSp}\wt p^{\prime-1}\pi_*G$ to $\pi_*(\sha^\rmod_{\wt U_\epsilon\times S}\otimes_{\pOS}p^{-1}G)$.
Following the proof of Proposition \ref{prop:behaviourpi}\eqref{prop:behaviourpi1}, we are led to proving that the natural morphism $\wtj_*\sho_{U_\epsilon^*\times S'}\to\pi_*(\wtj_*\sho_{U_\epsilon^*\times S})$ sends $\sha^\rmod_{\wt U_\epsilon\times S'}$ to $\pi_*\sha^\rmod_{\wt U_\epsilon\times S}$. This follows from the definition of moderate growth, owing to the properness of $\pi$.\qedhere
\end{enumerate}
\end{proof}

\subsection{Regular holonomicity of the Deligne extension of an \texorpdfstring{$S$}{S}-locally constant sheaf}
We continue to refer to Notation \ref{nota:Dtype} and assume that $Y=D$ has normal crossings.

\begin{theorem}\label{th:Deligneext}
Assume that $L$ is a coherent $S$-locally constant sheaf on $\XsS$. Then
\begin{enumerate}
\item\label{th:Deligneext1}
the subsheaf $\wt E_L$ of $j_*E_L$ is $\sho_{\XS}(*D)$-coherent;
\item\label{th:Deligneext2}
the functor $L\mto \wt E_L$ from the category of coherent $S$-locally constant sheaves on $\XsS$ to that of $\DXS$-modules is fully faithful;
\item\label{th:Deligneext3}
as a $\DXS$-module, $(\wt E_L,\nabla)$ is regular holonomic.
\end{enumerate}
\end{theorem}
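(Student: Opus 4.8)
The plan is to treat the three assertions in the order stated, using the explicit local description of $\wt E_L$ from \eqref{eq:jtilde} together with the known properties \eqref{enum:Amod1}--\eqref{enum:Amod2} of $\sha^\rmod_{\wXS}$. For \eqref{th:Deligneext1}, the question is local on $X$ near a point $x_o\in D$; away from $D$ there is nothing to prove since $j^*\wt E_L=E_L$ is $\sho_{\XsS}$-coherent. Near $x_o$, working on a polydisc $\wt U_\epsilon$ as in \eqref{eq:Ueps}, I would first reduce to the case where $G$ is a \emph{coherent} $\sho_S$-module, and then, using the standard structure theory of coherent $S$-local systems (the monodromy is unipotent after a finite cover, or more precisely one passes to the associated flat bundle via Deligne's equivalence recalled before Corollary~\ref{cor:Enabla}), reduce to the model situation: $X=\Delta^n$, $D=\{x_1\cdots x_\ell=0\}$, and $(E_L,\nabla)$ given on $X^*\times S$ by a connection with logarithmic poles whose residues act $\sho_S$-linearly on $G$ with eigenvalues in a fixed slice (say real part in $[0,1)$). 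In that model one computes $\wt E_L$ directly: a section $\sum_j \wt v_j\otimes g_j$ of $\wtj_*(E_L)$ has moderate growth (Remark~\ref{RTMF}) iff each $\wt v_j$ has moderate growth, and the classical Deligne computation shows the moderate-growth solutions are exactly the $\sho_{\XS}(*D)$-module generated by the canonical flat frame $x^{-A}$ (with $A$ the residue matrix). Hence locally $\wt E_L\simeq \sho_{\XS}(*D)\otimes_{\pOS}p^{-1}G$, which by \eqref{eq:varpiAmod} is visibly $\sho_{\XS}(*D)$-coherent when $G$ is $\sho_S$-coherent. The general case follows by writing $G$ as a direct limit of coherent submodules and using Lemma~\ref{lem:exactjstar} (exactness of $j_*E_\sbullet$) to pass to the limit; one must check that moderate growth is compatible with this limit, which is where a little care is needed, but the bound is uniform in each coherent piece by construction.

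For \eqref{th:Deligneext2}, fully faithfulness, I would argue as follows. Given $L,L'$ and a $\DXS$-linear $\phi:\wt E_L\to\wt E_{L'}$, restrict along $j$ to get a $\DXsS$-linear map $E_L\to E_{L'}$; by the Deligne equivalence (before Corollary~\ref{cor:Enabla}) this is induced by a unique morphism $L\to L'$ of $S$-local systems, so the functor is full once we know every $\phi$ extends the restriction, i.e.\ that $j_*$ is injective on $\Hom$-groups here. Faithfulness is immediate since $\wt E_L\hookrightarrow j_*E_L$ and $j_*$ is faithful on sheaves supported scheme-theoretically, combined with $j^*\wt E_L=E_L$. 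The one real point is that a $\DXsS$-morphism $E_L\to E_{L'}$, which automatically sends flat sections to flat sections, sends moderate-growth sections to moderate-growth sections; this follows because such a morphism is given by a matrix of sections of $\sho_{\XsS}$ that extend to $\wt E_{\shhom}$ and hence themselves have moderate growth, so multiplication preserves the moderate-growth condition. Thus the functor $L\mto\wt E_L$ is fully faithful.

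For \eqref{th:Deligneext3}, relative regular holonomicity, I would use the characterization $(\mathrm{Reg}\,2)$ and Proposition~\ref{Prop:2}, i.e.\ reduce to checking that $\wt E_L$ is $\DXS$-coherent, holonomic with characteristic variety in $\Lambda\times S$ for a fixed conic Lagrangian $\Lambda\subset T^*X$, and that $Li_{s_o}^*\wt E_L$ is $\DX$-regular holonomic for every $s_o\in S$. Coherence over $\DXS$ is immediate from \eqref{th:Deligneext1} since $\sho_{\XS}(*D)$-coherent modules with connection are $\DXS(*D)$-coherent, hence $\DXS$-coherent when finitely generated over $\DXS$ — but actually one wants $\wt E_L$ itself, as $\sho_{\XS}(*D)$-coherent with integrable relative connection, to be $\DXS$-coherent, which it is (it is even $\DXS$-generated by a coherent $\sho_{\XS}$-submodule, the Deligne lattice). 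For the characteristic variety: in the local model above $\wt E_L\simeq\sho_{\XS}(*D)\otimes_{\pOS}p^{-1}G$ as an $\sho$-module, so a coherent $F_\sbullet\DXS$-filtration has associated graded supported on the union of the zero section and the conormals $T^*_{D_I}X$ of the coordinate strata, times $S$; hence $\Char\wt E_L\subset\Lambda\times S$ with $\Lambda=\bigcup_I T^*_{D_I}X$, independent of $s_o$. Finally, for regularity, the key observation is compatibility of the Deligne extension with base change: by Proposition~\ref{prop:firsttildeE}\eqref{prop:firsttildeE3} we have $\pi^*\wt E_L\simeq\wt E_{\pi^*L}$ for any $\pi:S'\to S$, and in particular $Li_{s_o}^*\wt E_L\simeq \wt E_{i_{s_o}^*L}$ — once one checks $i_{s_o}^*$ is non-derived here, i.e.\ that $\wt E_L$ is $\sho_S$-flat, which again follows from the local model $\wt E_L\simeq\sho_{\XS}(*D)\otimes_{\pOS}p^{-1}G$ when $G$ is $\sho_S$-locally free (reduce to this case via a presentation of $G$ and right-exactness, using that both $\shh^{-1}$ and $\shh^0$ of $Li_{s_o}^*$ are controlled by the local model). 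But $i_{s_o}^*L$ is an ordinary coherent (even finite-dimensional) local system on $X^*$, so $\wt E_{i_{s_o}^*L}$ is the classical Deligne meromorphic extension, which is a regular holonomic $\DX$-module by the absolute theory (\cf e.g.\ \cite[Th.\,5.4.15]{Bjork93}). Hence $(\mathrm{Reg}\,2)$ holds and $\wt E_L\in\rD^\rb_\rhol(\DXS)$.

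The main obstacle I expect is the reduction to the local model in \eqref{th:Deligneext1}, and specifically the passage from a general (possibly non-coherent, or coherent but not locally free) $\sho_S$-module $G$ to the locally free case: the Deligne extension is defined section-wise by a growth condition, and one must verify that forming $\wt E_L$ commutes with the direct-limit and with right-exact operations on $G$, uniformly enough that the coherence and flatness statements survive. This is exactly the kind of point where the one-dimensional argument of \cite{MFCS2} used that $G_\tf$ is locally free; in higher dimension one instead leans on the exactness of $j_*E_\sbullet$ (Lemma~\ref{lem:exactjstar}), on the flatness of $\sha^\rmod_{\wXS}$ over $\varpi^{-1}\sho_{\XS}$ \eqref{enum:Amod1}, and on \eqref{eq:varpiAmod}, together with Rossi's flattening theorem if a local trivialization of $G$ after a modification of $S$ is needed to pin down the characteristic variety and flatness. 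Everything else is a fairly direct transcription of the classical Deligne theory with the extra $p^{-1}\sho_S$-linear parameter.
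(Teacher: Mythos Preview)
Your treatment of \eqref{th:Deligneext1} and \eqref{th:Deligneext2} is essentially the paper's approach: reduce locally to the model $(\wh E_G,\wh\nabla)=(\sho_{\XS}(*D)\otimes_{\pOS}p^{-1}G,\,\rd+\sum A_i\,\rd x_i/x_i)$ via a logarithm of the monodromy, identify $\wt E_L$ with $\wh E_G$ by the moderate-growth computation, and deduce fullness by a coherence-plus-support argument. (Your worry about direct limits of coherent pieces is unnecessary, since $L$ is assumed coherent, hence $G$ is $\sho_S$-coherent from the start.)

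The genuine gap is in \eqref{th:Deligneext3}. You assert that $\DXS$-coherence of $\wt E_L$ ``is immediate'' from $\sho_{\XS}(*D)$-coherence because $\wt E_L$ is ``$\DXS$-generated by a coherent $\sho_{\XS}$-submodule, the Deligne lattice''. This fails in the relative setting. In the local model, acting repeatedly by $\nabla_{\partial_{x_i}}$ on $(x_1\cdots x_\ell)^{-1}\otimes G$ produces $(x_1\cdots x_\ell)^{-1}x_i^{-k}\otimes(A_i-1)\cdots(A_i-k)G$; to reach all pole orders you need $A_i(s)-k$ invertible for every $k\geq1$. When the eigenvalues of $A_i(s)$ vary holomorphically and cross positive integers as $s$ moves, no single $\sho_{\XS}$-coherent lattice generates $\wh E_G$ over $\DXS$ on a full neighbourhood in $S$. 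Your computation of the characteristic variety then presupposes a coherent $F_\sbullet\DXS$-filtration, which does not yet exist. Likewise, your appeal to $Li_{s_o}^*\wt E_L\simeq\wt E_{i_{s_o}^*L}$ via Proposition~\ref{prop:firsttildeE}\eqref{prop:firsttildeE3} (beware: the paper proves that statement \emph{after} Theorem~\ref{th:Deligneext}, using only part~\eqref{th:Deligneext1}) still leaves you needing $\sho_S$-flatness of $\wt E_L$, which you yourself note requires reducing to $G$ locally free.

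The paper's route is precisely to make that reduction the backbone of the argument: an induction on $(\dim S,\rk G)$ ordered lexicographically. The torsion case ($\rk G=0$) drops $\dim S$ via Corollary~\ref{cor:TS}; a proper surjective generically finite base change (Rossi's flattening plus resolution, Proposition~\ref{prop:Rossi}) reduces to $G$ locally free, using Theorem~\ref{th:Regdirect} and Proposition~\ref{Prop:2}\eqref{Prop:22b} to descend regular holonomicity along $R\pi_*L\pi^*$; then one peels off an eigenvalue of $A_1$ by pulling back to the spectral cover $\Sigma$ and resolving, dropping $\rk G$; finally, in rank one, after shifting so that $\alpha_i(s)\in\ZZ\Rightarrow\alpha_i(s)=0$, one writes down an explicit cyclic $\DXS$-presentation. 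Rossi's theorem and the base-change stability results are not a finishing touch here but the engine that manufactures $\DXS$-coherence; your plan treats them as optional and therefore leaves the holonomicity claim unproved.
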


We will make use of the following flattening result for a coherent $\sho_S$-module (here, we only need a local version, but the corresponding global one also holds true).

\begin{proposition}\label{prop:Rossi}
Near each $s_o\!\in\!S$, there exists a projective modification \hbox{$\pi:S'\to S$} with~$S'$ smooth such that the torsion-free quotient of $\pi^*G$ is $\sho_{S'}$-locally free.
\end{proposition}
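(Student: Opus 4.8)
The strategy is to invoke Rossi's flattening theorem and combine it with the elementary fact that a coherent sheaf that is flat over a complex space is locally free, taking care of the passage between ``strict transform'' and ``torsion-free quotient of the pullback''.

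First I would reduce to the case where $G$ is $\sho_S$-torsion-free. It suffices to prove the statement for $G_\tf$: if $\pi\colon S'\to S$ is any projective modification with $S'$ smooth (hence reduced, and irreducible after shrinking $S$ around $s_o$), then $G_\rt$ is a coherent $\sho_S$-module with $\Supp G_\rt\subsetneq S$, so $\pi^{-1}(\Supp G_\rt)$ is a proper closed analytic subset of $S'$, hence nowhere dense; thus the image of $\pi^*G_\rt$ in $\pi^*G$ is an $\sho_{S'}$-torsion submodule, and applying $\pi^*$ to $0\to G_\rt\to G\to G_\tf\to0$ gives a canonical isomorphism $(\pi^*G)_\tf\simeq(\pi^*G_\tf)_\tf$. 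So from now on I assume $G$ is torsion-free and $S$ connected.

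Next I would apply Rossi's flattening theorem \cite{Rossi68} to $G$ near $s_o$: it yields a projective modification $\pi_1\colon S_1\to S$, which we may take with $S_1$ reduced and irreducible, such that the strict transform $G_1$ of $G$ under $\pi_1$ is $\sho_{S_1}$-flat. Since $\pi_1$ is an isomorphism over the complement of its exceptional locus $E$ (a nowhere-dense analytic subset of the reduced irreducible space $S_1$), a local section of $\pi_1^*G$ is a torsion section if and only if it is supported on $E$ — one implication because $E$ is nowhere dense, the other because $\pi_1^*G$ is identified with the torsion-free sheaf $G$ away from $E$. Hence $G_1=(\pi_1^*G)_\tf$. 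Being a coherent $\sho_{S_1}$-module that is flat over each Noetherian local ring $\sho_{S_1,s}$, the sheaf $G_1$ is $\sho_{S_1}$-locally free.

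Finally, to obtain a smooth source I would compose with a resolution of singularities $\rho\colon S'\to S_1$ (a projective modification with $S'$ smooth) and set $\pi:=\pi_1\circ\rho$, again a projective modification of $S$ near $s_o$. Then $\rho^*G_1$ is $\sho_{S'}$-locally free; and since the kernel of $\pi_1^*G\to G_1$ is a torsion sheaf supported on $E$, its pullback to $S'$ is supported on the nowhere-dense set $\rho^{-1}(E)$ and contributes only torsion to $\pi^*G$, so $(\pi^*G)_\tf\simeq\rho^*G_1$ is locally free, as required. The only substantial input is Rossi's flattening theorem; everything else is bookkeeping of torsion subsheaves under modifications plus the fact that a finitely generated flat module over a Noetherian local ring is free. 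I expect the identification of the torsion-free quotient of the pullback with the flat strict transform — and the verification that this identification is preserved by the resolution step — to be the only point requiring genuine care.
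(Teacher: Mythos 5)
Your proof is correct and follows essentially the same route as the paper: apply Rossi's flattening theorem and then compose with a resolution of singularities of the resulting (possibly singular) modification near the fiber over $s_o$. The only difference is presentational: the paper quotes Rossi's theorem directly in the form ``the torsion-free quotient of the pullback is locally free'', so the preliminary reduction to torsion-free $G$, the identification of the strict transform with the torsion-free quotient, and the flat-implies-locally-free step become unnecessary, while the bookkeeping you carry out for the resolution step is left implicit in the paper.
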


\begin{proof}
We first apply the flattening theorem \cite[Th.\,3.5]{Rossi68} to the coherent sheaf $G$. There exists thus a projective modification $\pi'':S''\to S$ such that $\pi^{\prime\prime*}G$, when quotiented by its $\sho_{S''}$-torsion, is $\sho_{S''}$-locally free. We then apply resolution of singularities $\pi':S'\to S''$ of $S''$ in the neighborhood of the projective subset $\pi^{\prime\prime-1}(s_o)$ (\cf\cite[\S7, Main Th.\,I']{Hironaka64}) and denote by $\pi$ the morphism $\pi''\circ\pi'$, which answers the question.
\end{proof}

\begin{proof}[Proof of Theorem \ref{th:Deligneext}\eqref{th:Deligneext1}]
We recall the proof of \cite[Th.\,2.6]{MFCS2} for the $\sho_{\XS}(*D)$-coher\-ence. The problem is local on $\XS$. We~thus assume that $\XS$ is a small neighborhood of $(x_o,s_o)$ as above. In such a neighborhood, giving the local system is equivalent to giving $T_1,\dots,T_\ell\in\Aut(G)$ which pairwise commute. Let $U(s_o)$ be an open neighborhood of $s_o$ isomorphic to an open polydisc. The formula \cite[(2.11)]{Wang08} defining a logarithm of $T_i$ can be used to show that there exist $A_1(s),\dots,A_\ell(s)\in\End_{\sho_S}(G_{|U(s_o)})$ which pairwise commute, such that $T_i=\exp(-2\pi i A_i(s))$ for each $i$ on $U(s_o)$ (\cf \cite[Cor.\,2.3.10\,\&\,Chap.\,3]{Wang08}). Set $\wh E_G:=\sho_{\XS}(*D)\otimes_{\pOS}p^{-1}G$, equipped with the connection $\wh\nabla$ such that $\wh\nabla_{x_i\partial_{x_i}}$ acts on $1\otimes_{\pOS}p^{-1}G$ by $\id\otimes A_i(s)$ if~$i=1,\dots,\ell$, and zero otherwise. Then the monodromy representation of $\wh\nabla$ on~$\wh E_{G|\XsS}$ is given by $T_1,\dots,T_\ell$, hence an isomorphism $L\simeq(\wh E_{G|\XsS})^{\wh\nabla}$, from which one deduces, according to Corollary \ref{cor:Enabla}, an isomorphism
\begin{equation}\label{lftmf}
\begin{aligned}
(E_L,\nabla)&=(\sho_{\XsS}\otimes_{\pOS}L,\rd_{\XsS/S}\otimes\id)\\
&\simeq(\sho_{\XsS}\otimes_{\pOS}(\wh E_{G|\XsS})^{\wh\nabla},\rd_{\XsS/S}\otimes\id)\simeq(\wh E_{G|\XsS}, \wh\nabla).
\end{aligned}
\end{equation}

It~follows that $\wt E_L\!\simeq\!\wt{\wh E\,}_{\!G|\XsS}$ and we are thus reduced to proving that \hbox{$\wt{\wh E\,}_{\!G|\XsS}\!=\!\wh E_G$} (we~have trivialized the locally constant sheaf but the connection is not trivial anymore).

\begin{remark}\label{rem:pGOXsS}
Let $p_{X^*}^{-1}G$ be the constant $S$-local system on $\XsS$. Thus, locally on $\XS$, there exist $\sho_{\XsS}$-linear isomorphisms
$
E_L\simeq\wh E_{G|\XsS}\simeq E_{p_{X^*}^{-1}G},
$
but in general we do not have $(E_L,\nabla)\simeq(E_{p_{X^*}^{-1}G},\wh\nabla)$, \ie this isomorphism is not $\DXsS$-linear.
\end{remark}

Let us prove the inclusion $\wt{\wh E\,}_{\!G|\XsS}\subset\wh E_G$. Let $v\in (j_*\wh E_G)_{(x_o,s_o)}$ be locally defined on $U\times\nobreak U(s_o)$. On any polysector $U^*_\epsilon\times U(s_o)$, we can choose logarithms of $x_i$ \hbox{($i=1,\dots,\ell$)} and the automorphism $\prod_{i=1}^\ell x_i^{A_i(s)}$ of $\wh E_{G|\XsS}=\sho_{U^*_\epsilon\times U(s_o)}\otimes_{p^{-1}\sho_{U(s_o)}}p^{-1}G$ is well-defined by setting $x_i^{A_i(s)}=\sum_k[(\log x_i)^k/k!]\otimes A_i(s)^k$. By definition, $v$ has moderate growth, \ie is a local section of $\wt{\wh E\,}_{\!G|\XsS}$, if and only if, on any such polysector $\wt U_\epsilon\times U(s_o)$, the section $w:=(\prod_{i=1}^\ell x_i^{-A_i(s)})\cdot v$ is a section of $\sha^\rmod_{\wXS}\otimes_{\wpOS}\wt p^{-1}G$.

Let $(g_j)$ be a family of local generators of $G$ on $U(s_o)$. The entries of the matrices of $\prod_{i=1}^\ell x_i^{A_i(s)}$ and of $\prod_{i=1}^\ell x_i^{-A_i(s)}$ with respect to this family have moderate growth. If, on such a polysector, $w$ writes $\sum_jw_j\otimes g_j$, where $w_j$ are sections of $\sha^\rmod_{\wXS}$, then $v=\prod_{i=1}^\ell x_i^{A_i(s)}\cdot w$ writes similarly $\sum_jv_j\otimes g_j$, where $v_j$ are sections of $\sha^\rmod_{\wXS}$. Conversely, if $v$ has the previous expression, then so does $w=\prod_{i=1}^\ell x_i^{-A_i(s)}\cdot v$. We conclude that, for any such polysector, the moderate growth condition on $v$ is equivalent to $v\in\Gamma(\wt U_\epsilon\times U(s_o);\sha^\rmod_{\wXS}\otimes_{\wpOS}\wt p^{-1}G)$. In other words, if $v$ has moderate growth we have by \eqref{eq:varpiAmod} that
\begin{align*}
v\in\Gamma(U\times U(s_o);\varpi_*\sha^\rmod_{\wXS}\otimes_{\wpOS}\wt p^{-1}G)
&=\Gamma(U\times U(s_o);\sho_{\XS}(*D)\otimes_{\pOS} p^{-1}G)\\
&=\Gamma(U\times U(s_o);\wh E_G),
\end{align*}
hence $\wt{\wh E\,}_{\!G|\XsS}\subset\wh E_G$. The reverse inclusion also follows from the moderate growth of the entries of the matrices of $\prod_{i=1}^\ell x_i^{A_i(s)}$ and of $\prod_{i=1}^\ell x_i^{-A_i(s)}$.
\end{proof}

\begin{proof}[Proof of Theorem \ref{th:Deligneext}\eqref{th:Deligneext2}]
We already know, by the Riemann-Hilbert correspondence of \cite[Th.\,2.23]{De}, that the functor $L\mto E_L$ is fully faithful, and it is clearly exact, so it remains to prove that $E_L\mto\wt E_L$ is so. Faithfulness is clear. Let us check fullness. Let $\varphi:\wt E_L\to\wt E_{L'}$ be a morphism and set $\wt\varphi=\wt{j^*\varphi}:\wt E_L\to\wt E_{L'}$. The kernel and cokernel of $\varphi-\wt\varphi$ are coherent $\sho_{\XS}(*D)$-modules by \ref{th:Deligneext}\eqref{th:Deligneext1}, and are zero when restricted to $\XsS$. Therefore they are zero, and $\varphi=\wt\varphi$, proving the assertion.
\end{proof}

\begin{proof}[Proof of Theorem \ref{th:Deligneext}\eqref{th:Deligneext3}]
The question is local at a point $(x_o,s_o)\in\XS$. By the proof of \ref{th:Deligneext}\eqref{th:Deligneext1}, we can work with
\[
(\wh E_G,\wh\nabla)=(\sho_{\XS}(*D)\otimes_{\pOS}p^{-1}G,\wh\nabla)
\]
(with $\wh\nabla=\rd\otimes\Id+\sum_i\rd x_i/x_i\otimes\nobreak A_i$ and $A_i\in\End_{\sho_S}(G)$). Our method is to reduce to the case where $G$ is locally free of rank one and then prove~\ref{th:Deligneext}\eqref{th:Deligneext3} for it.

We argue by induction on the lexicographically ordered pair $(\dim S,\rk G)$, where $\rk G$ is the rank of $G$ at a general point of $S$. The case $\dim S=0$ and $\rk G$ arbitrary is well-\allowbreak known. We~thus assume that $d=\dim S\geq1$ and \hbox{$r=\rk G\geq0$}, and that \ref{th:Deligneext}\eqref{th:Deligneext3} holds for $(\wh E_{G'},\wh\nabla)$ for any $\pOSp$-module $G'$ with $(\dim S',\rk G')<(d,r)$.

\subsubsection*{Step 1: The case $(d,0)$}
If $\rk G=0$, $G$ is an $\sho_S$-torsion module. Since the question is local, we can assume that the support of $G$ is contained in a hypersurface $T\subset S$ locally presented as in the local setting~\ref{set:local}. Then~$G$ is $q^{-1}\sho_{S'}$\nobreakdash-coherent and is endowed with an endomorphism~$t$ (\ie multiplication by~$t$), so that the natural $\sho_S$\nobreakdash-linear morphism $\sho_S\otimes_{q^{-1}\sho_{S'}[t]}G\to G$ is an isomorphism. Furthermore, the endo\-mor\-phisms~$A_i$ are $q^{-1}\sho_{S'}$-linear (via the natural inclusion $q^{-1}\sho_{S'}\hto\sho_S$. Let us set $(\wh E'_G,\nabla)=(\wh E_G,\wh\nabla)$ when regarded as a $q^{-1}\DXSp$-module. By the induction hypothesis, it is regular holonomic. By Corollary \ref{cor:TS} we conclude that $(\wh E_G,\wh\nabla)$ is regular holonomic, so~that~\ref{th:Deligneext}\eqref{th:Deligneext3} holds for $(\wh E_G,\wh\nabla)$.

\subsubsection*{Step 2: Reduction after proper surjective generically finite base change}

Let~\hbox{$\pi\,{:}\,S'\!\to\!S$} be a proper surjective generically finite morphism of complex manifolds and let $p':\XS'\to S'$ denote the projection.\textit{ Let us assume that \ref{th:Deligneext}\eqref{th:Deligneext3} holds for $(\sho_{\XS'}(*D)\otimes_{p^{\prime-1}\sho_{S'}} p^{\prime-1}\pi^*G,\pi^*\wh\nabla)$.} The purpose of Step~2 is to show that, under such an assumption, \ref{th:Deligneext}\eqref{th:Deligneext3} holds for~$(\wh E_G,\wh\nabla)$.

From now on, we assume $\dim S=d$ and $\rk G\geq1$. Let $G'$ be the $\sho_S$-torsion subsheaf of $G$. We note that the endomorphisms $A_i$ of $G$ preserve $G'$, so the exact sequence $0\to G'\to G\to G''\to0$ with $G''$ being torsion-free gives rise to an exact sequence
\[
0\to(\wh E_{G'},\wh\nabla)\to(\wh E_G,\wh\nabla)\to(\wh E_{G''},\wh\nabla)\to0.
\]
Since \ref{th:Deligneext}\eqref{th:Deligneext3} holds for $(\wh E_{G'},\wh\nabla)$ by Step~1, it~is enough to prove \ref{th:Deligneext}\eqref{th:Deligneext3} for $(\wh E_{G''},\wh\nabla)$. In other words, we may assume that $G$ is torsion-free.

By assumption, there exists a closed analytic subset $T$ of codimension~$\geq1$ in $S$ such that, setting $T'=\pi^{-1}(T)$, the morphism $\pi:S'\moins T'\to S\moins T$ is finite étale. We~have $\dim S'=\dim S$ and $\dim T'<\dim S$.

Remark that we have, for any holomorphic map $\pi:S'\to S$, and denoting for clarity by $\wt\pi$ the map $\id\times\pi:\XS'\to\XS$, the following list of canonical isomorphisms:
\begin{equation}\label{lftmf2}
\begin{aligned}
L\wt\pi^*(\sho&_{\XS}(*D)\otimes_{\pOS}p^{-1}G)\\
&=\sho_{\XS'}(*D)\otimes^L_{\wt\pi^{-1}\sho_{\XS}(*D)}\wt\pi^{-1}(\sho_{\XS}(*D)\otimes_{\pOS}p^{-1}G)\\
&\overset{(*)}=\sho_{\XS'}(*D)\otimes^L_{\wt\pi^{-1}\pOS}\wt\pi^{-1}p^{-1}G\\
&=\sho_{\XS'}(*D)\otimes^L_{\pOSp}(\pOSp\otimes^L_{\wt\pi^{-1}\pOS}\wt\pi^{-1}p^{-1}G)\\
&=\sho_{\XS'}(*D)\otimes^L_{\pOSp}p^{\prime-1}L\pi^*G=\sho_{\XS'}(*D)\otimes_{\pOSp}p^{\prime-1}L\pi^*G,
\end{aligned}
\end{equation}
which reads
\begin{equation}\label{lftmf3}
L\pi^*\wh E_G\simeq\sho_{\XS'}(*D)\otimes_{p^{\prime-1}\sho_{S'}} p^{\prime-1}L\pi^*G.
\end{equation}
We conclude that, for each $j$, we have $L^j\pi^*\wh\nabla_{x_k\partial_{x_k}}=\rd\otimes\id+\id\otimes L^j\pi^*A_k$.

Furthermore, under the assumption on $\pi$ for this step, according to the projection formula for $R\wt\pi_*$ applied to $(*)$, we have
\[
R\pi_*(L\pi^*\wh E_G)\simeq R\pi_*\sho_{\XS'}(*D)\otimes^L_{\pOS}p^{-1}G.
\]

By induction, if $j\neq0$, \ref{th:Deligneext}\eqref{th:Deligneext3} holds for $(\sho_{\XS'}(*D)\otimes_{p^{\prime-1}\sho_{S'}} p^{\prime-1}L^j\pi^*G,L^j\pi^*\wh\nabla)$, according to the argument given in Step~1, since $L^j\pi^*G$ is supported on $T'$, hence is a torsion module. Thus ~\ref{th:Deligneext}\eqref{th:Deligneext3} holds for $L\pi^*\wh E_G$ (\ie for each of its cohomology modules $L^j\pi^*\wh E_G$) since for $j=0$ it is the initial assumption.

Since $\wh E_G$ is $\sho$-quasi-coherent (\cf Remark \ref{rem:localiz}\eqref{rem:localiz1}), one deduces from Lemma \ref{lem:qcoh} that each $L^j\pi^*\wh E_G$ is $\pi$-good. Then, by Theorem \ref{th:Regdirect}, \ref{th:Deligneext}\eqref{th:Deligneext3}
~holds for $R\pi_*(L\pi^*\wh E_G)$, hence for $\shh^0(R\pi_*(L\pi^*\wh E_G))$.

The natural morphism $\sho_{\XS}(*D)\to R\pi_*\sho_{\XS'}(*D)$ yields a morphism
\[
\wh E_G\to \shh^0(R\pi_*(L\pi^*\wh E_G)).
\]
Both modules are $\sho_{\XS}(*D)$-coherent, hence so is the kernel of this morphism.

On $S\moins T$, we claim that this morphism is injective: indeed, since $\pi:S'\moins T'\to S\moins T$ is finite étale, the trace morphism $\mathrm{tr}_\pi:\pi_*\sho_{S'\moins T'}\to\sho_{S\moins T}$ defined by $\mathrm{tr}_\pi(\varphi)(s)=(1/\deg\pi)\sum_{s'\in\pi^{-1}(s)}\varphi(s')$ satisfies $\mathrm{tr}_\pi\circ\iota=\id$, if $\iota$ denotes the natural morphism $\sho_{S\moins T}\to\pi_*\sho_{S'\moins T'}$; hence $\wh E_{G|S\moins T}$ is locally a direct summand of the right-hand side. As~$G$, hence~$\wh E_G$, is assumed to be $\sho_S$\nobreakdash-torsion-free, the kernel is $\sho_S$\nobreakdash-torsion-free, $\sho_{\XS}(*D)$-coherent and $S$-supported on $T$. It is thus zero and this morphism is injective.

Furthermore, $\wh E_G$ is $\DXS$-coherent: indeed, as \ref{th:Deligneext}\eqref{th:Deligneext3} holds for $\shh^0(R\pi_*(L\pi^*\wh E_G))$ and $\wh E_G$ is $\sho$-good, it follows that $\wh E_G$ is $\DXS$-coherent, and then regular holonomic by Proposition \ref{Prop:2}\eqref{Prop:22b}, so that \ref{th:Deligneext}\eqref{th:Deligneext3} holds for~$\wh E_G$.

\subsubsection*{Step 3: Reduction to the case where $G$ is $\sho_S$-locally free}

We choose $\pi$ as in Proposition~\ref{prop:Rossi} and by Step 2 we can assume from the start that the torsion-free quotient~$G''$ of~$G$ is $\sho_S$-locally free. By Step 1, it is enough to prove \ref{th:Deligneext}\eqref{th:Deligneext3} for $(\wh E_{G''},\wh\nabla)$, \ie we~can assume (and we do assume from now on) that $G$ is $\sho_S$-locally free.\enlargethispage{\baselineskip}%

\subsubsection*{Step 4: The case where $G$ is $\sho_S$-locally free}
We still work locally on $S$ and we assume that~$G$ is $\sho_S$-free of rank $r\geq1$. By the induction hypothesis, \ref{th:Deligneext}\eqref{th:Deligneext3} holds for any $(\wh E_{G'},\wh\nabla)$ with $\rk G'<r$. Locally let us fix an $\sho_S$-basis of~$G$ and let $\sfA_1(s)$ denote the matrix of~$A_1$ in this basis. Let $\Sigma\subset S\times\CC$ be the zero locus of $P:=\det(\alpha_1\id-\sfA_1)$ and let $\sigma:\Sigma\to S$ denote the projection. Since $P$ is a Weierstrass polynomial with respect to the variable~$\alpha_1$ (considered as a coordinate on $\CC$) with coefficients in $\sho_S$, $\sigma$ is a finite morphism of degree $\deg\sigma$ and $\Sigma$ is defined by the corresponding reduced Weierstrass polynomial. We note the following two properties related to~$\sigma$ and similarly to $\id_X\times\sigma$:
\begin{enumerate}
\step\label{step:sigma1}\upshape
The sheaf $\sigma_*\sho_\Sigma$ is $\sho_S$-free of degree $\deg\sigma$ (with basis $1,\alpha_1,\dots,\alpha_1^{\deg\sigma-1}$).

\step\label{step:sigma2}\upshape
There exist dense analytic open subsets $\Sigma^\circ\!\subset\!\Sigma$ and $S^\circ\!\subset\! S$ such that \hbox{$\sigma:\Sigma^\circ\to S^\circ$} is a finite covering of degree $\deg\sigma$. The corresponding trace morphism $\mathrm{tr}_\sigma:\sigma_*\sho_{\Sigma^\circ}\to\sho_{S^\circ}$ has been defined above. If $\varphi$ is a section of $\sigma_*\sho_{\Sigma}$, its trace on~$\Sigma^\circ$ is bounded on $S^\circ$, hence extends holomorphically to $S$, so that $\mathrm{tr}_\sigma$ extends as a morphism \hbox{$\sigma_*\sho_{\Sigma}\to\sho_S$}. If $\iota$ denotes the natural morphism $\sho_S\to\sigma_*\sho_\Sigma$, we clearly have $\mathrm{tr}_\sigma\circ\iota=\id$, making $\sho_S$ a direct summand of $\sigma_*\sho_{\Sigma}$.
\end{enumerate}

Let $\pi:S'\to\Sigma$ be a resolution of singularities of $\Sigma$, so that the natural composed map $(\sigma\circ\pi):S'\to S$ is projective and generically finite. Let us set $G'=\ker(\alpha_1\id-\sfA_1\circ\sigma\circ\pi)\subset(\sigma\circ\pi)^*G$, where we regard $\alpha_1$ as a function $S'\to\CC$. Noting that $\sigma\circ \pi$ is generically a local isomorphism, it follows by construction that $\rk G'\geq1$. We consider the exact sequence $0\to G'\to(\sigma\circ\pi)^*G\to G''\to0$ which satisfies $0<\rk G'$ and $\rk G''<\rk G$, and which is preserved by the endomorphisms $(\sigma\circ\pi)^*A_i$, so that it induces an exact sequence of $\DXS$-modules
\[
0\to(\wh E_{G'},\wh\nabla)\to(\sigma\circ\pi)^*(\wh E_G,\wh\nabla)\to(\wh E_{G''},\wh\nabla)\to0.
\]
If $\rk G'<\rk G$, we can apply induction to $(\wh E_{G'},\wh\nabla),(\wh E_{G''},\wh\nabla)$ and conclude that \ref{th:Deligneext}\eqref{th:Deligneext3} holds for \hbox{$(\sigma\circ\pi)^*(\wh E_G,\wh\nabla)$}, hence for $(\wh E_G,\wh\nabla)$ according to Step~2.

If $\rk G'=\rk G$, then $\rk G''=0$, so Step 1 applies to $(\wh E_{G''},\wh\nabla)$, and we are reduced to proving \ref{th:Deligneext}\eqref{th:Deligneext3} for $(\wh E_{G'},\wh\nabla)$, \ie we can assume that $A_1=\alpha_1\id$. Iterating the argument, we are reduced to the case where $A_i=\alpha_i\id$ for $i=1,\dots,\ell$, where $\alpha_1,\dots,\alpha_\ell$ are holomorphic functions on $S$, and $G$ is $\sho_S$-locally free. By considering a local basis of~$G$, it suffices to consider the case where $\rk G=1$.

\subsubsection*{Step 5: The case where $G$ is $\sho_S$-locally free of rank one}
We now consider $(\wh E_G,\wh\nabla)=(\sho_{\XS}(*D),\rd_{\XS/S}+\sum_{i=1}^\ell\alpha_i(s)\rd x_i/x_i)$. The argument for obtaining \ref{th:Deligneext}\eqref{th:Deligneext3} is then similar to that used in the proof of \cite[Cor.\,2.8]{MFCS2}. One can assume that, in the neighborhood of $s_o$ and for any $i$, $\alpha_i(s)\in\ZZ\implique\alpha_i(s)=0$. Then there exists a surjective morphism
\[
\DXS/\bigl((x_i\partial_{x_i}-\alpha_i(s)+1)_{i=1,\dots,\ell},(\partial_{x_j})_{j=\ell+1,\dots,n}\bigr)\to(\wh E_G,\wh\nabla)
\]
sending $1$ to $1/x_1\cdots x_\ell$, which is easily seen to be an isomorphism by the condition on $(\alpha_i)_{i=1,\dots,\ell}$.
\end{proof}

\begin{proof}[Proof of Proposition \ref{prop:firsttildeE}\eqref{prop:firsttildeE3}]
The statement is local, so, as a consequence of \eqref{lftmf} in the proof of Theorem \ref{th:Deligneext}\eqref{th:Deligneext1} and keeping the same notation, we are reduced to proving $\pi^*(\wh E_G,\wh\nabla)\simeq(\wh E_{\pi^*G},\wh\nabla)$, that~is,
\[
\pi^*(\sho_{\XS}(*D)\otimes_{\pOS}p^{-1}G,\wh\nabla)\simeq(\sho_{\XS'}(*D)\otimes_{\pOSp}p^{-1}\pi^*G,\pi^*{\wh\nabla}),
\]
which follows by taking the $0$-cohomology in \eqref{lftmf2}.
\end{proof}

\subsection{\texorpdfstring{$\DXS$}{DXS}-modules of D-type}
Recall Notation \ref{nota:Dtype}. In this section, we exhibit a family of regular holonomic $\DXS$-modules, that we call \emph{of D-type} and we prove in Proposition \ref{prop:equivDel} a special case of the analogue in the relative setting of \cite[Th.\,2.3.2]{K-K81} asserting that the restriction functor to the complement of the divisor is an equivalence of categories. The general case will be obtained in Theorem~\ref{Clftmf} below.\enlargethispage{\baselineskip}%

Let $Y$ be a closed hypersurface of $X$.

\begin{definition}\label{def:Dtype}
We say that \emph{a coherent $\DXS(*Y)$-module $\shl$ is of D-type}~if
\begin{enumeratea}
\item\label{def:Dtype1}
there exists a coherent $S$-locally constant sheaf $L$ on $\XsS$ such that $\shl_{|\XsS}\simeq E_L=(\sho_{\XsS}\otimes_{\pOS}L,\rd_{\XsS/S}\otimes\id)$, equivalently, $\shl_{|\XsS}$ is $\DXsS$-holonomic with characteristic variety contained in the zero section,
\item\label{def:Dtype2}
for each $s\in S$, the cohomology of $Li_s^*\shl$ is $\DX$-regular holonomic (in particular, $\DX$-coherent). In other words $\shl$ satisfies condition $(\mathrm{Reg}\,2)$ and thus $(\mathrm{Reg}\,1)$ (\cf Proposition \ref{Prop:2}).
\end{enumeratea}
We say that $\shl$ is \emph{strict} if $L$ is $\pOS$-locally free.
\end{definition}

We denote by $\Mod_Y(\DXS(*Y))$ the full subcategory of $\Mod(\DXS(*Y))$ whose objects are coherent $\DXS(*Y)$-modules of D-type.

\begin{lemma}\label{lem:Dtype}
For $\pi:S'\to S$ and $\shl\in\Mod_Y(\DXS(*Y))$, $L\pi^*\shl$ has cohomology in $\Mod_Y(\DXSp(*Y))$.
\end{lemma}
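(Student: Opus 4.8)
The plan is to verify the two defining conditions of Definition \ref{def:Dtype} for each cohomology module $\shh^j(L\pi^*\shl)$, namely that it restricts to an $S'$-locally constant integrable connection on $\XsS'$, and that it satisfies $(\mathrm{Reg}\,2)$.

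First I would address condition \eqref{def:Dtype1}. Restricting to $\XsS'$, one has $(L\pi^*\shl)_{|\XsS'}\simeq L\pi^*(\shl_{|\XsS})\simeq L\pi^*E_L$, and since $\shl_{|\XsS}=E_L=(\sho_{\XsS}\otimes_{\pOS}L,\rd\otimes\id)$ is $\sho_{\XsS}$-locally free of finite rank (as $L$ is $\pOS$-coherent, hence $E_L$ is $\sho_{\XsS}$-coherent with integrable relative connection, thus locally free by Corollary \ref{cor:Enabla} applied fiberwise — or simply because $\sho\otimes_{p^{-1}\sho_S}p^{-1}(\text{coherent})$ need not be locally free; more precisely $E_L$ is $\sho_{\XsS}$-coherent and flat over $\pOS$ so that $L\pi^*E_L$ is concentrated in degree $0$). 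In fact the cleanest route is: $E_L$ is a flat $\sho_{\XsS}$-module localized suitably is not needed — rather, by the very definition $\Df^*$/base pullback for a locally constant sheaf, $L\pi^*E_L\simeq E_{\pi^*L}$ by Proposition \ref{prop:behaviourpi}\eqref{prop:behaviourpi2} (after noting $L\pi^*E_L$ has cohomology only in degree $0$ because $E_L$ is $\sho$-flat along $S$), and $\pi^*L$ is an $S'$-locally constant sheaf of $\pOSp$-modules by Lemma \ref{lem:behaviourpi}\eqref{lem:behaviourpi2}, coherent because $\pi^*G'$ is $\sho_{S'}$-coherent. Hence $\shh^0(L\pi^*\shl)_{|\XsS'}\simeq E_{\pi^*L}$ has characteristic variety in the zero section, and $\shh^j(L\pi^*\shl)_{|\XsS'}=0$ for $j\neq0$; so every cohomology module satisfies \eqref{def:Dtype1}.

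Next, condition \eqref{def:Dtype2}: for each $s'_o\in S'$ and each $j$, I must show $Li_{s'_o}^*\shh^j(L\pi^*\shl)$ has $\DX$-regular holonomic cohomology. By Proposition \ref{Prop:2}\eqref{Prop:21} it is equivalent to show $(\mathrm{Reg}\,2)$ for $L\pi^*\shl$ as a complex, \ie that $Li_{s'_o}^*(L\pi^*\shl)\in\rD^\rb_\rhol(\DX)$ for every $s'_o\in S'$. Here I invoke the functor isomorphism $Li_{s'_o}^*\circ L\pi^*\simeq Li_{\pi(s'_o)}^*$ (exactly the one used in the proof of Proposition \ref{LReginverse}). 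Since $\shl$ is of D-type it satisfies $(\mathrm{Reg}\,2)$, so $Li_{\pi(s'_o)}^*\shl\in\rD^\rb_\rhol(\DX)$, whence $Li_{s'_o}^*(L\pi^*\shl)\in\rD^\rb_\rhol(\DX)$. By Proposition \ref{Prop:2}\eqref{Prop:21} again (applied over $S'$), this implies $Li_{s'_o}^*\shh^j(L\pi^*\shl)\in\rD^\rb_\rhol(\DX)$ for every $j$ and $s'_o$, which is \eqref{def:Dtype2} for each cohomology module. Combining, each $\shh^j(L\pi^*\shl)$ lies in $\Mod_Y(\DXSp(*Y))$ — it is $\DXSp(*Y)$-coherent since $L\pi^*$ preserves coherence (the $\DXS(*Y)$-module $\shl$ has locally a finite free resolution).

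The only genuine subtlety — the point I would be most careful about — is the claim that $L\pi^*E_L$ is concentrated in degree zero, \ie that the higher $L^j\pi^*$ vanish on $E_L$. This holds because $E_L=\sho_{\XsS}\otimes_{\pOS}L$ with $L$ an $S$-locally constant sheaf of $\pOS$-modules: locally on $\XsS$ one has $L\simeq p^{-1}G'$ for a coherent $\sho_S$-module $G'$, so $L\pi^*E_L\simeq\sho_{\XsS'}\otimes_{\pOSp}p^{-1}L\pi^*G'$ and the issue reduces to whether $L\pi^*G'$ is concentrated in degree zero — which is \emph{not} automatic. So the correct statement one uses is that the functor $L\mto E_L$ intertwines $L\pi^*$ with itself in the sense $\shh^j(L\pi^*E_L)\simeq E_{\shh^j(L\pi^*L)}$ where $\shh^j(L\pi^*L)=p'^{-1}(L^j\pi^*G')$ is again $S'$-locally constant (Lemma \ref{lem:behaviourpi}\eqref{lem:behaviourpi2} applied to each $L^j\pi^*G'$, which is $\sho_{S'}$-coherent); this uses the $\sho_{\XsS}$-flatness of $\sho_{\XsS}$ over $\pOS$ to commute the tensor product past cohomology. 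With that bookkeeping done, each $\shh^j(L\pi^*\shl)_{|\XsS'}$ is of the form $E_{L_j}$ with $L_j$ coherent $S'$-locally constant, giving \eqref{def:Dtype1}, and the restriction argument above gives \eqref{def:Dtype2}, completing the proof.
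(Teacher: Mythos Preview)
Your proof is correct and follows the same approach as the paper: coherence of $L\pi^*\shl$ in $\rD^\rb_\coh(\DXSp(*Y))$ via local free resolutions (as in Lemma~\ref{lem:basepullback}), then $(\mathrm{Reg}\,2)$ for the complex via $Li_{s'_o}^*\circ L\pi^*\simeq Li_{\pi(s'_o)}^*$, and finally Proposition~\ref{Prop:2}\eqref{Prop:21} to pass to each cohomology module. The paper dispatches condition~\eqref{def:Dtype1} in one word (``clear''), the implicit reason being the characteristic-variety containment of Lemma~\ref{lem:basepullback}, which immediately forces $\Char(\shh^j(L\pi^*\shl)_{|\XsS'})$ into the zero section; your explicit identification $\shh^j(L\pi^*\shl)_{|\XsS'}\simeq E_{L_j}$ with $L_j$ locally $p'^{-1}(L^j\pi^*G)$ is correct but more than is needed---and your false starts about $E_L$ being locally free or $L\pi^*E_L$ being concentrated in degree zero should simply be deleted, since (as you eventually note) neither holds in general.
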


\begin{proof}
The good behaviour of \ref{def:Dtype}\eqref{def:Dtype1} by base pullback is clear. Let us check that of \ref{def:Dtype}\eqref{def:Dtype2}. Arguing as in Lemma \ref{lem:basepullback}, we see that $L\pi^*\shl$ is an object of $\rD^\rb_\coh(\DXSp(*D))$. For any $s'\in S'$ we have $Li_{s'}^*L\pi^*\shl\simeq Li_{\pi(s')}^*\shl$, so the complex $Li_{s'}^*L\pi^*\shl$ has $\DX$-regular holonomic cohomology. By Proposition \ref{Prop:2}\eqref{Prop:21}, each $Li_{s'}^*L^j\pi^*\shl$ also has $\DX$-regular holonomic cohomology.
\end{proof}

\begin{proposition}\label{prop:shltowtE}
Assume that $Y=D$ is a divisor with normal crossings in $X$.
\begin{enumerate}
\item\label{prop:shltowtE1}
If $\shl$ is $\DXS(*D)$-coherent of D-type and strict, then the natural morphism $\psi:\shl\to j_*E_L$ sends $\shl$ isomorphically to $\wt E_L$.
\item\label{prop:shltowtE2}
If $\shl$ is $\DXS(*D)$-coherent of D-type, then $\shl$ is $\DXS$-regular holonomic.
\end{enumerate}
\end{proposition}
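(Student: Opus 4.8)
The plan is to obtain both statements from Theorem~\ref{th:Deligneext}: the key point is that $\shl$ and the Deligne extension $\wt E_L$ can be realised as two $\DXS(*D)$-coherent submodules of $j_*E_L$ having the same restriction to $\XsS$, and therefore coincide by the ``localized and supported on $\YS$ $\Rightarrow$ zero'' principle (Lemma~\ref{lem:localization}\eqref{lem:localization2}); the regular holonomicity in \eqref{prop:shltowtE2} is then read off from Theorem~\ref{th:Deligneext}\eqref{th:Deligneext3}.

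For \eqref{prop:shltowtE1}, I would first describe the natural morphism as the composition $\psi\colon\shl\to j_*j^*\shl\isom j_*E_L$, the second arrow coming from the D-type structure, so that $j^*\psi$ is the identity of $E_L$. Then $\psi$ is injective: $\ker\psi$ is an $\sho$-quasi-coherent ($\Mod_{\Oqcoh}(\sho_{\XS})$ being abelian) $\DXS(*D)$-submodule of $\shl$ whose restriction to $\XsS$ vanishes, hence supported on $\YS$ and localized along $D$, so it is zero by Remark~\ref{rem:localiz}\eqref{rem:localiz2}. For the image, recall that by Theorem~\ref{th:Deligneext}\eqref{th:Deligneext1} $\wt E_L$ is $\sho_{\XS}(*D)$-coherent, hence $\DXS(*D)$-coherent, and that $j^*\wt E_L=E_L$ (Definition~\ref{def:modgrowth}\eqref{def:modgrowth3}). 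Thus, inside $j_*E_L$, the sum $\shn:=\psi(\shl)+\wt E_L$ is again $\DXS(*D)$-coherent (it is a quotient of $\psi(\shl)\oplus\wt E_L$ by $\psi(\shl)\cap\wt E_L$, the latter being coherent as a submodule of the coherent module $\wt E_L$, since $\sho_{\XS}(*D)$ has Noetherian stalks) and satisfies $j^*\shn=E_L$. Consequently $\shn/\psi(\shl)$ and $\shn/\wt E_L$ are $\sho$-quasi-coherent, localized along $D$ and supported on $\YS$, hence vanish by Lemma~\ref{lem:localization}\eqref{lem:localization2}; this gives $\psi(\shl)=\shn=\wt E_L$, so $\psi$ induces an isomorphism $\shl\isom\wt E_L$.

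For \eqref{prop:shltowtE2}, the same construction and argument apply, since they use only that $L$ is a \emph{coherent} $S$-locally constant sheaf (part of Definition~\ref{def:Dtype}\eqref{def:Dtype1}) and not its $\pOS$-local freeness; hence $\shl\simeq\wt E_L$ as $\DXS(*D)$-modules, a fortiori as $\DXS$-modules. By Theorem~\ref{th:Deligneext}\eqref{th:Deligneext3}, $\wt E_L$ is regular holonomic, so $\shl$ is too — and in particular $\shl$ is $\DXS$-coherent, a point not evident from the hypotheses. If one prefers to invoke \eqref{prop:shltowtE1} only in the strict case, the general case can instead be reduced to it by splitting off the $\sho_S$-torsion submodule $\shl_\rt$, which is $\DXS(*D)$-coherent because, for $f_0\in\sho_S$ locally annihilating $L_\rt$, the module $f_0\shl_\rt$ restricts to $0$ on $\XsS$, is $\sho$-quasi-coherent and localized, hence vanishes, so that $\shl_\rt=\ker(f_0\colon\shl\to\shl)$; one then treats $\shl_\rt$ by induction on $\dim S$ via Setting~\ref{set:local} and Corollary~\ref{cor:TS}, and reduces $\shl/\shl_\rt$ to the strict case using Rossi's flattening (Proposition~\ref{prop:Rossi}) and Theorem~\ref{th:Regdirect}; but this merely retraces the proof of Theorem~\ref{th:Deligneext}\eqref{th:Deligneext3}.

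The only delicate points in this scheme are the routine coherence and $\sho$-quasi-coherence checks that make Lemma~\ref{lem:localization}\eqref{lem:localization2} applicable at each step, and the observation that coherence over $\DXS$ — rather than merely over $\DXS(*D)$ — comes out of the identification $\shl\simeq\wt E_L$. All the genuine difficulty, namely the regular holonomicity of $\wt E_L$ (which rests on Rossi's flattening theorem, projective base pushforward, and an induction on $(\dim S,\rk G)$), is already encapsulated in Theorem~\ref{th:Deligneext}, which I take as given.
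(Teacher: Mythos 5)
Your argument for \ref{prop:shltowtE}\eqref{prop:shltowtE1} has a genuine gap at its central step: the claim that $\shn:=\psi(\shl)+\wt E_L\subset j_*E_L$ is $\DXS(*D)$-coherent (or at least $\sho$-quasi-coherent), which is what makes Lemma \ref{lem:localization}\eqref{lem:localization2} applicable and forces $\psi(\shl)=\shn=\wt E_L$. Note that your argument never invokes condition \eqref{def:Dtype2} of Definition \ref{def:Dtype} (regularity of $Li_s^*\shl$), and without that condition the conclusion is false already in the absolute case: take $X=\CC$, $D=\{0\}$, $\shl=(\sho_X(*D),\rd+\rd(1/z))$. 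This is a coherent $\DX(*D)$-module whose restriction to $X^*$ is isomorphic to $E_L$ for a trivial rank-one local system $L$, so it satisfies \eqref{def:Dtype1}; inside $j_*E_L\simeq j_*\sho_{X^*}$ one has $\psi(\shl)=e^{1/z}\sho_X(*D)$ while $\wt E_L=\sho_X(*D)$: two coherent $\DX(*D)$-submodules agreeing on $X^*$ yet distinct (and non-isomorphic, one being irregular). For these, $\psi(\shl)\cap\wt E_L$ and $\psi(\shl)+\wt E_L$ are neither coherent nor $\sho$-quasi-coherent: the intersection has zero stalk at $0$ but is nonzero on every punctured neighbourhood, hence is not of finite type at $0$, and $\shn/\wt E_L$ is a nonzero localized module supported on $D$ --- precisely what Lemma \ref{lem:localization}\eqref{lem:localization2} rules out for quasi-coherent modules. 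Your justification (``coherent as a submodule of the coherent module $\wt E_L$, since $\sho_{\XS}(*D)$ has Noetherian stalks'') is the weak point: a submodule of a coherent module is coherent only if it is locally of finite type, a sheaf-theoretic condition that Noetherian stalks do not give, and which in this situation encodes exactly the regularity input \eqref{def:Dtype2} you have not used. So the genuinely hard content is not ``encapsulated in Theorem \ref{th:Deligneext}'': it is the comparison of the abstract module $\shl$ with the growth-defined submodule $\wt E_L$, and this is where regularity must enter.

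The deduction of \eqref{prop:shltowtE2} inherits the gap: you assert that the same identification $\shl\simeq\wt E_L$ holds for merely coherent $G$, but that identification is exactly what needs proof, and the paper does not even assert it outside the strict case. The paper's route is different: injectivity of $\psi$ is proved as you do (via Remark \ref{rem:localiz}\eqref{rem:localiz2}), but the equality $\psi(\shl)=\wt E_L$ in the strict case is obtained by redoing the end of the proof of \cite[Prop.\,2.11]{MFCS2}, where condition \eqref{def:Dtype2} and the local freeness of $G$ are used; and \eqref{prop:shltowtE2} is then proved directly (holonomicity, hence $\DXS$-coherence, of $\shl$) by induction on $\dim S$: the torsion case via Setting \ref{set:local}, Remark \ref{rem:TS} and Corollary \ref{cor:TS}; the case $G_\tf$ locally free via the torsion/torsion-free splitting together with part \eqref{prop:shltowtE1}; and the general case via Rossi's flattening (Proposition \ref{prop:Rossi}), Theorem \ref{th:Regdirect} applied to $\pi_*\pi^*\shl$, and a separate treatment of the kernel of $\shl\to\pi_*\pi^*\shl$, whose $S$-support lies over a codimension-two modification locus. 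Your closing sketch gestures at this scheme, but it is offered only as an optional fallback and is too loose to stand in for it; as written, the main argument does not prove either part.
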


\begin{proof}
We note that the question is local on $\XS$, so we may replace~$E_L$ with~$\wh E_{G|\XsS}$ as in \eqref{lftmf}. Namely, we have
\[
\shl_{|\XsS}\simeq(\sho_{\XsS}\otimes_{\pOS}p^{-1}G,\wh\nabla).
\]
We note that:
\begin{itemize}
\item
If $\shl$ is strict, the second assertion follows from the first one, because $\wt E_L$ is $\DXS$-regular holonomic (Theorem \ref{th:Deligneext}\eqref{th:Deligneext3}).

\item
For any coherent $\DXS(*D)$-module of D-type, the morphism $\psi$ is injective. Indeed, according to the first point of Definition~\ref{def:Dtype}, the restriction of $\psi$ to $\XsS$ is an isomorphism. The assertion follows from the $\sho$\nobreakdash-quasi-coherence of $\shl$ (\cf Remark \ref{rem:localiz}\eqref{rem:localiz2}). We thus identify $\shl$ with a $\DXS(*D)$-submodule of $j_*E_L$.

\item
Due to \ref{def:Dtype}\eqref{def:Dtype2}, \ref{prop:shltowtE}\eqref{prop:shltowtE2} amounts to holonomicity of $\shl$ (in particular, $\DXS$-coherence).
\end{itemize}

\subsubsection*{Proof of Proposition \ref{prop:shltowtE}\eqref{prop:shltowtE1}}
We assume that $G$ is $\sho_S$-locally free of finite rank.
We~can mimic the end of the proof of \cite[Prop.\,2.11]{MFCS2} to directly show that $\psi$ is an isomorphism $\shl\isom\wt E_L$ because, although in \loccit\ we assumed the $\DXS$-coherence of~$\shl$, that proof works under the weaker assumption of its $\DXS(*Y)$-coherence.
This shows \ref{prop:shltowtE}\eqref{prop:shltowtE1}.
In particular this implies the $\DXS$-regular holonomicity of~$\shl$.\enlargethispage{\baselineskip}%

\subsubsection*{Proof of Proposition \ref{prop:shltowtE}\eqref{prop:shltowtE2}}
We now prove the holonomicity of $\shl$ by assuming only that~$G$ is $\sho_S$-coherent. As in the proof of Theorem \ref{th:Deligneext}\eqref{th:Deligneext3}, we argue by induction on the dimension of $S$. The case where $S$ is a point is well-known (\cite[Th.\,2.3.2]{K-K81}). We~assume that the result holds if $\dim S\leq d-1$ ($d\geq1$) and that $\dim S=d$.

\subsubsection*{Step~1: The case where $G$ is a torsion $\sho_S$-module}
Since we work locally, we can assume that the support of $G$ is contained in a hypersurface $T$ of $S$, having equation $h=0$, endowed with a finite morphism $q$ to $S'$ of dimension $d-1$, and that $h^kG=0$ for some $k\geq1$. We claim that any local section $m$ of $\shl$ is annihilated by $h^k$. Indeed, for any such~$m$, $h^km$ is zero on $\XsS$ and we can apply the result of Remark \ref{rem:localiz}\eqref{rem:localiz2}.

Therefore, $\Supp_S\shl$ is contained in $T$. By the induction hypothesis and the equivalence recalled in Setting \ref{set:local}, we deduce that $\shl$ is $q^{-1}\DXSp$-holonomic, hence, arguing as in Remark \ref{rem:TS}, we~conclude that $\shl$ is $\DXS$-holonomic, as desired.

\subsubsection*{Step~2: The case where $G_\tf$ is $\sho_S$-locally free}
We consider the exact sequence \eqref{eq:ttf} and we assume that $G_\tf$ is $\sho_S$-locally free. We also consider the similar exact sequence $0\to\shl_\rt\to\shl\to\shl_\tf\to0$. Let us prove that $\shl_\tf$ is of D-type. First, $\shl_\rt$ is easily seen to be $\DXS(*D)$-coherent, hence so is $\shl_\tf$. Next, $\shl_\tf$ satisfies the first point of Definition \ref{def:Dtype}, with local system $p^{-1}G_\tf$.

For the second point, we note that the image of $i^*_s\shl_\rt\to i^*_s\shl$ is $\DX$-coherent since $i^*_s\shl$ is so and $i^*_s\shl_\rt$ is $\sho$-quasi-coherent. As a consequence, it is $\DX$-regular holonomic since $i^*_s\shl$ is so, and thus $i^*_s\shl_\tf$ is regular holonomic. On the other hand, for~\hbox{$j<0$}, $L^ji^*_s\shl_\tf$ is $\sho$-quasi-coherent and supported on $D$ by our assumption on $G_\tf$, so $L^ji^*_s\shl_\tf=0$ by Lemma \ref{lem:localization}\eqref{lem:localization2}.

In conclusion, $\shl_\tf$ is of D-type, and we also deduce that $\shl_\rt$ is of D-type. By Step~1 and \ref{prop:shltowtE}\eqref{prop:shltowtE1}, $\shl$ is holonomic if $G_\tf$ is $\sho_S$-locally free.

\subsubsection*{Step~3: The general case}
Let \hbox{$\pi:S'\to S$} be a projective modification as in Proposition \ref{prop:Rossi} such that the $S'$-torsion-free quotient of $\pi^*G_\tf$ is $\sho_{S'}$-locally free. The $S'$-torsion-free quotient of $\pi^*G$, being equal to it, is then $\sho_{S'}$-locally free.

By Lemma \ref{lem:Dtype}, $L\pi^*\shl$ has cohomology in $\Mod_D(\DXSp(*D))$. We can therefore apply Steps 1 and ~2 to deduce that $\pi^*\shl$ is $\DXSp$-holonomic, and it is regular by Definition \ref{def:Dtype}\eqref{def:Dtype2}. It is moreover $\pi$-good (\cf Lemma~\ref{lem:qcoh}). Hence, $\pi_*\pi^*\shl$ is $\DXS$-regular holonomic (Theorem \ref{th:Regdirect}).

Since $\shl$ is $\DXS(*D)$-coherent, the image $\shl'$ of the adjunction morphism $\shl\to\pi_*\pi^*\shl$ is $\DXS$-coherent, hence regular holonomic (Proposition \ref{Prop:2}\eqref{Prop:22b}) and its kernel is $\DXS(*D)$-coherent. It follows that the latter, which clearly satisfies the first point of Definition \ref{def:Dtype}, also satisfies the second point since $\shl$ and $\shl'$ do so (\cf Proposition \ref{Prop:2}). Since $\pi$ is biholomorphic above $S\moins T$ for some closed analytic subset $T$ of codimension $\geq 2$, this kernel satisfies the assumption of Step~1. It is thus $\DXS$-holonomic, hence $\shl$ is $\DXS$-holonomic, as desired.
\end{proof}

Note that the assignment $\shl\mto L=\shh^0\DR(\shl_{|\XsS})$ is a functor which takes values in the category of coherent $S$-locally constant sheaves, since the characteristic variety of $\shl_{|\XsS}$ is contained in the zero section.

\begin{proposition}\label{prop:equivDel}
The category of strict regular holonomic $\DXS$-modules of D-type with singularities along~$D$ is equivalent to the category of $S$-locally constant sheaves on \hbox{$\XsS$} which are $\pOS$-locally free of finite rank, under the correspondences $\shl\mto L=\shh^0\DR(\shl_{|\XsS})$ and \hbox{$L\mto \shl=\wt E_L$}.
\end{proposition}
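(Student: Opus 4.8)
The plan is to show that the two functors $\shl\mto L=\shh^0\DR(\shl_{|\XsS})$ and $L\mto\wt E_L$ are quasi-inverse equivalences between the two stated categories. First I would verify that both functors are well-defined on these categories: if $\shl$ is strict regular holonomic of D-type, then by Definition \ref{def:Dtype}\eqref{def:Dtype1} one has $\shl_{|\XsS}\simeq E_L=(\sho_{\XsS}\otimes_{\pOS}L,\rd_{\XsS/S}\otimes\id)$ for a coherent $S$-locally constant sheaf $L$ which, by strictness, is $\pOS$-locally free; moreover $L\simeq E_L^\nabla=\shh^0\DR(\shl_{|\XsS})$ by the relative Riemann--Hilbert correspondence of \cite[Th.\,2.23]{De} recalled before Corollary \ref{cor:Enabla}. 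Conversely, for $L$ that is $\pOS$-locally free of finite rank, Theorem \ref{th:Deligneext}\eqref{th:Deligneext3} shows $\wt E_L$ is $\DXS$-regular holonomic, Theorem \ref{th:Deligneext}\eqref{th:Deligneext1} that it is $\sho_{\XS}(*D)$-coherent, and $j^*\wt E_L=E_L$ by Definition \ref{def:modgrowth}\eqref{def:modgrowth3}, so $\wt E_L$ is a $\DXS(*D)$-coherent module of D-type in the sense of Definition \ref{def:Dtype}, with associated local system $L$; strictness of $\wt E_L$ as a module of D-type is exactly $\pOS$-local freeness of $L$.

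Next I would establish the two natural isomorphisms exhibiting these as quasi-inverse. In one direction, for $L$ as above, the composite $L\mto\wt E_L\mto\shh^0\DR((\wt E_L)_{|\XsS})=\shh^0\DR(E_L)=E_L^\nabla\simeq L$ is the canonical one, as just noted. In the other direction, I must produce a functorial isomorphism $\shl\simeq\wt E_L$ for $L=\shh^0\DR(\shl_{|\XsS})$. This is precisely the content of Proposition \ref{prop:shltowtE}\eqref{prop:shltowtE1}: since $\shl$ is of D-type and strict, the natural morphism $\psi\colon\shl\to j_*E_L$ (which exists because $\shl$ is $\sho$-quasi-coherent, hence the adjunction $\shl\to j_*j^*\shl=j_*E_L$ is defined and injective by Remark \ref{rem:localiz}\eqref{rem:localiz2}) sends $\shl$ isomorphically onto $\wt E_L$. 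Functoriality of $\psi$ in $\shl$, together with functoriality of $L\mto\wt E_L$ from Proposition \ref{prop:firsttildeE}\eqref{prop:firsttildeE1} and of $\shl\mto\shh^0\DR(\shl_{|\XsS})$ (noted just before the statement), yields that these isomorphisms assemble into isomorphisms of functors $\id\simeq\wt E_{(-)}\circ\shh^0\DR((-)_{|\XsS})$ and $\shh^0\DR((\wt E_{(-)})_{|\XsS})\simeq\id$, compatible with the connections.

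It then remains to check that the two correspondences are mutually inverse on morphisms, i.e.\ that each functor is fully faithful; but fullness and faithfulness of $L\mto\wt E_L$ is Theorem \ref{th:Deligneext}\eqref{th:Deligneext2}, and fullness and faithfulness of the Riemann--Hilbert functor $L\mto E_L$ (hence, after passing to $\XsS$, of $\shl\mto\shh^0\DR(\shl_{|\XsS})$ on the subcategory at hand) is \cite[Th.\,2.23]{De}; combined with the essential-surjectivity statements above this gives the equivalence. The only genuine subtlety — the main obstacle — is verifying that $\psi$ really lands in $\wt E_L$ and is an isomorphism onto it without assuming $\DXS$-coherence of $\shl$ a priori, but this is already taken care of in Proposition \ref{prop:shltowtE}\eqref{prop:shltowtE1} (whose proof, as remarked there, only needs $\DXS(*D)$-coherence); so with those results in hand the assembly of the equivalence is formal.
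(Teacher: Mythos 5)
Your proposal is correct and follows essentially the same route as the paper: the identification $\shl_{|\XsS}\simeq E_L$ via the relative Riemann--Hilbert correspondence on $\XsS$, the isomorphism $\shl\isom\wt E_L$ from Proposition \ref{prop:shltowtE}\eqref{prop:shltowtE1} for essential surjectivity, and Theorem \ref{th:Deligneext}\eqref{th:Deligneext2} for full faithfulness. Your version merely spells out the routine well-definedness and functoriality checks that the paper leaves implicit.
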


\begin{proof}
Owing to the Riemann-Hilbert correspondence on $\XsS$ (\cf\cite[Rem.\,A.10]{MFCS2}), we~have $\shl_{|\XsS}\simeq(\sho_{\XsS}\otimes_{\pOS}L,\rd_{\XS/S}\otimes\id)=E_L$. Since the natural morphism $\shl\to j_*E_L$ sends isomorphically $\shl$ to $\wt E_L$ (Proposition \ref{prop:shltowtE}\eqref{prop:shltowtE1}), the functor $L\mto \wt E_L$ from the category of $S$-locally constant sheaves on $\XsS$ which are $\pOS$-locally free of finite rank to that of regular holonomic $\DXS$-modules of D-type is essentially surjective. That it is fully faithful has been proved in Theorem \ref{th:Deligneext}\eqref{th:Deligneext2}.
\end{proof}

\subsection{Proof of Theorem \ref{th:inverseimage}}\label{subsec:pfinv}

Although the next proposition is not general enough to prove Theorem \ref{th:inverseimage}, it will be one of the main tools for its proof.

Let $f:X\to Y$ be a morphism of real or complex analytic manifolds, we denote by $\shd_{{Y\leftarrow X}/S}$ and by $\shd_{{X\to Y}/S}$ the relative transfer bi-modules.

\begin{proposition}\label{prop:localization}
Let $\shm$ be a regular holonomic $\DXS$-module with $X$-support $Z=\bigcup_iZ_i$ (see \eqref{eq:strictperv2}). Let $Y\subset X$ be a hypersurface such that $Y\supset Z_i$ if $\dim Z_i<\dim Z$, and $Z_o:=Z\moins(Y\cap Z)$ is smooth of pure dimension $\dim Z$. Then the localized $\DXS$-module $\shm(*Y)$ is regular holonomic and locally isomorphic to the projective pushforward of a relative $\shd$-module of D\nobreakdash-type.
\end{proposition}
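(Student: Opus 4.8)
The plan is to make the statement local on $X$ and, near a point $x_o\in X$, to realise $\shm(*Y)$ as the pushforward of a Deligne extension along a resolution of the closure of $Z_o:=Z\moins(Y\cap Z)$. Since $\shm(*Y)|_{\XsS}=\shm|_{\XsS}$ has $X$-support $Z_o$, which is closed in $X^*:=X\moins Y$ and smooth of pure dimension $\dim Z$, the module $\shm(*Y)$ is supported on $\ov{Z_o}\times S$; working near $x_o$ I may therefore assume $x_o\in\ov{Z_o}$, the remaining case being trivial. Over $\XsS$, the relative Kashiwara equivalence identifies $\shm|_{\XsS}$ with the pushforward by the closed embedding $i\colon Z_o\hto X^*$ of a coherent $\shd_{Z_o\times S/S}$-module whose characteristic variety lies in the zero section; by Corollary \ref{cor:Enabla} the latter is of the form $E_L=(\sho_{Z_o\times S}\otimes_{p^{-1}\sho_S}L,\rd\otimes\id)$ for a coherent $S$-locally constant sheaf $L$ on $Z_o\times S$, and it is automatically regular holonomic.

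Next, by Hironaka's theorem I would choose, near $x_o$, a projective morphism $f\colon W\to X$ with $W$ smooth of dimension $\dim Z$ — a resolution of $\ov{Z_o}$ followed by the closed embedding $\ov{Z_o}\hto X$ — such that $f$ restricts to an isomorphism $W\moins D\isom Z_o$, where $D:=f^{-1}(Y)$ is a normal crossing divisor in $W$. Pulling $L$ back to $\wt L$ on $(W\moins D)\times S$ and forming the Deligne extension $\wt E_{\wt L}$ of Definition \ref{def:modgrowth}, Theorem \ref{th:Deligneext} shows that $\wt E_{\wt L}$ is $\sho_{W\times S}(*D)$-coherent and $\shd_{W\times S/S}$-regular holonomic, hence of D-type. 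As $\wt E_{\wt L}$ is $\sho$-good and $f$ is proper, it is $f$-good, so by \cite[Cor.\,2.4]{MFCS2} the complex $\Df_*\wt E_{\wt L}$ lies in $\rD^\rb_\rhol(\DXS)$; I set $\shn:=\shh^0\Df_*\wt E_{\wt L}$, a regular holonomic $\DXS$-module which, by construction, is (the degree-zero part of) the projective pushforward of a relative $\shd$-module of D-type, and which satisfies $\shn=\shn(*Y)$ since $\wt E_{\wt L}$ is localised along $D=f^{-1}(Y)$ and $\shd$-module pushforward commutes with localisation.

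It then remains to identify $\shn$ with $\shm(*Y)$. On $\XsS$ the isomorphism $W\moins D\isom Z_o$ identifies $(\wt E_{\wt L})|_{(W\moins D)\times S}=E_{\wt L}$ with the pullback of $E_L$, whence a canonical isomorphism $\shn|_{\XsS}\simeq\shm|_{\XsS}$ concentrated in degree $0$. Both $\shn$ and $\shm(*Y)$ are $\sho$-quasi-coherent and localised along $Y$, and they agree on $\XsS$; by Lemma \ref{lem:localization}\eqref{lem:localization3} it therefore suffices to produce a $\DXS$-linear morphism between them restricting to this isomorphism, after which $\shn\simeq\shm(*Y)$, and in particular $\shm(*Y)$ is $\DXS$-coherent and regular holonomic.

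Producing this comparison morphism is the main difficulty, and I expect it to use the regularity of $\shm$ in an essential way. It cannot be obtained by merely extending the identity of $\shm|_{\XsS}$ across $Y$: the $\shd$-module pushforward of $E_{\wt L}$ by the affine open embedding $W\moins D\hto W$ is the full sheaf-theoretic direct image $\wtj_*E_{\wt L}$ rather than its moderate-growth part $\wt E_{\wt L}$, and in the relative setting a section of $\shm$ on $\XsS$ need not extend meromorphically along $Y$ (there is no unconditional Bernstein--Sato theory here). The point should be that, $\shm$ being regular, $\shm(*Y)$ is generated locally by sections of moderate growth along $Y$, which when pulled back to $W$ correspond exactly to sections of $\wt E_{\wt L}$; this would yield the required map $\shn\to\shm(*Y)$. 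Making this precise — matching the moderate-growth structure of a regular holonomic module localised along $Y$ with the Deligne extension upstairs — is where the bulk of the argument lies; I anticipate combining the available characterisations of relative regular holonomicity with the flattening input (Proposition \ref{prop:Rossi}) already used in the proof of Theorem \ref{th:Deligneext}, and reducing by induction on $\dim S$ to the classical absolute case.
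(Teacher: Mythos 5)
Your reduction (localize, Kashiwara equivalence over $\XsS$ to get a coherent $S$-local system $L$ on $Z_o\times S$, resolution $f\colon X'\to X$ with $f^{-1}(Y)$ a normal crossing divisor and $X'\moins f^{-1}(Y)\isom Z_o$, then push forward a D-type module and compare) is exactly the paper's skeleton, but the step you flag as "the main difficulty" is a genuine gap, and the route you sketch for closing it is not the one that works. If you take as your D-type module the Deligne extension $\wt E_{\wt L}$ of the pulled-back local system, there is simply no natural $\DXS$-linear arrow between $\shh^0\Df_*\wt E_{\wt L}$ and $\shm(*Y)$ to which Lemma \ref{lem:localization}\eqref{lem:localization3} could be applied; and the substitute you propose --- that "$\shm(*Y)$ is generated locally by sections of moderate growth along $Y$" --- is not available in the relative setting (this is precisely the kind of statement that fails without a relative Bernstein--Sato theory, and it is not among the characterizations of relative regularity established in the paper), so the comparison cannot be completed along these lines without essentially reproving the proposition.

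The paper's proof avoids the issue by choosing a different module upstairs: not the Deligne extension of $\wt L$, but the localized pullback $\shm^{\prime\delta}(*Y')$ with $\shm^{\prime\delta}=\shh^\delta\Df^*\shm$, $\delta=\dim Z-\dim X$. Because $f$ is a modification onto its image, \eqref{E:new2} identifies $\shd_{X'\times S/S}(*Y')$ with the localized transfer module, which gives $\DXpS(*Y')$-coherence of $\shm^{\prime\delta}(*Y')$ for free; the D-type property (Definition \ref{def:Dtype}) is then checked fiberwise, using that $Li_s^*$ commutes with $\Df^*$ and the regularity of $\shm$ --- this is exactly where the regularity hypothesis enters --- and Proposition \ref{prop:shltowtE}\eqref{prop:shltowtE2} yields regular holonomicity upstairs (only a posteriori, via Proposition \ref{prop:shltowtE}\eqref{prop:shltowtE1}, does one recognize this module as a Deligne extension). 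The decisive gain is that the comparison map now comes for free from the adjunction morphism $\Df_*\Df^*\shm[\delta]\to\shm$: after localizing and applying \cite[Cor.\,2.4]{MFCS2} one gets a morphism of coherent $\DXS(*Y)$-modules $\shh^0(\Df_*\shm^{\prime\delta})(*Y)\to\shm(*Y)$ which restricts to an isomorphism on $\XsS$, and Lemma \ref{lem:localization} kills its kernel and cokernel. So the missing ingredient in your write-up is not a growth estimate on sections of $\shm(*Y)$ but the replacement of $\wt E_{\wt L}$ by $(\Df^*\shm)(*Y')$, which builds the comparison morphism into the construction.
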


\begin{proof}
The question is local. The assumption on $Y$ implies that the characteristic variety of $\shm_{|(X\moins Y)\times S}$ is contained in $(T^*_{Z_o}X)\times S$. By Kashiwara's equivalence, $\shm_{|(X\moins Y)\times S}$ is the pushforward by the inclusion map of a coherent $\sho_{Z_o\times S}$-module with flat relative connection, which is thus of the form $(\sho_{Z_o\times S}\otimes_{\pOS} L,\rd_{Z_o\times S/S})$ for some coherent locally constant $p^{-1}_{Z_o}\sho_S$-module $L$.

One can find a complex manifold~$X'$ together with a divisor with normal crossings $Y'\subset\nobreak X'$ and a projective morphism $f:X'\to X$ which induces a biholomorphism \hbox{$X'\moins Y'\isom Z_o$} (\cf\eg\cite[Prologue, Th.\,4]{A-H-V18}). We~set $\delta=\dim Z-\dim X=\dim X'-\dim X\leq0$. For each $\ell$, we consider the $\DXpS$-module $\shm^{\prime\ell}:=\shh^\ell\Df^*\shm$. Although it is not yet known to be coherent, it is $f$-$\sho$-good in the sense of Definition \ref{def:qcoh} (Proposition \ref{prop:qcoh}\eqref{prop:qcohf1}). By~considering the filtration by the order of the pole along $Y'$, one checks that $\shm^{\prime\ell}(*Y')$ is also $f$-$\sho$-good (\cf Remark \ref{rem:localiz}\eqref{rem:localiz1}).

If $\ell\neq\delta$, the sheaf-theoretic restriction of $\shm^{\prime\ell}$ to $(X'\moins Y')\times S$ is zero, therefore $\shm^{\prime\ell}(*Y')=0$ owing to $\sho$-quasi-coherence (\cf Lemma \ref{lem:localization}\eqref{lem:localization2}). Since \hbox{$\sho_{\XpS}(*Y')$} is flat over $\sho_{\XpS}$, we conclude that
\begin{equation}\label{eq:DpiMloc}
\Df^*\bigl(\shm(*Y)\bigr)[\delta]\simeq\bigl(\Df^*\shm\bigr)(*Y')[\delta]\simeq\shm^{\prime\delta}(*Y').
\end{equation}
We can interpret $\Df^*\bigl(\shm(*Y)\bigr)$ as the pullback of $\shm(*Y)$ as a $\DXS(*Y)$-module. Since $f$ is a modification and $Y'=f^{-1}(Y)$, we have
\begin{equation}\label{E:new2}
\begin{split}
\DXpS(*Y')&\simeq\sho_{\XpS}(*Y')\otimes_{f^{-1}(\sho_{\XS}(*Y))}f^{-1}\DXS(*Y)\\[
-3pt]
&=\shd_{X'\to X/S}(*Y')
\end{split}
\end{equation}
(in local coordinates $x'$ in $X'$ and $x$ in $X$, the determinant of the matrix expressing $\partial_{x'_i}$ in terms of $\partial_{x_j}$ is invertible in $\sho_{\XpS}(*Y')$). It follows that $\shm^{\prime\delta}(*Y')=\DXpS(*Y')\otimes_{f^{-1}\DXS(*Y)}f^{-1}\shm(*Y)$ is $\DXpS(*Y')$-coherent.

Since $Li_s^*$ commutes with $\Df^*$, $\shm^{\prime\delta}(*Y')$ is $\DXpS(*Y')$-coherent of D-type (Definition \ref{def:Dtype}) so, by Proposition \ref{prop:shltowtE}\eqref{prop:shltowtE2}, it is $\DXpS$-regular holonomic. Since $\shm^{\prime\delta}(*Y')$ is $f$-$\sho$-good and $\DXpS$-coherent, it is $f$-good.

According to \cite[Cor.\,2.4]{MFCS2}, $\Df_*(\shm^{\prime\delta}(*Y'))$ has regular holonomic cohomology. Furthermore, since $\shh^j(\Df_*\shm^{\prime\delta}(*Y'))$ is supported on $\YS$ for $j\neq0$, and since $\Df_*(\shm^{\prime\delta}(*Y'))\simeq(\Df_*\shm^{\prime\delta})(*Y)$, we have
\[
\Df_*(\shm^{\prime\delta}(*Y'))\simeq \shh^0\Df_*(\shm^{\prime\delta}(*Y'))\simeq\shh^0(\Df_*\shm^{\prime\delta})(*Y).
\]
On the other hand, there is a natural adjunction morphism (\cf \cite[Lem.\,4.28 \& Prop.\,4.34]{Ka2})
\[
\Df_*\Df^*\shm[\delta]\to\shm,
\]
which induces a morphism of coherent $\DXS(*Y)$-modules
\begin{equation}\label{eq:MprimeM}
\shh^0(\Df_*\shm^{\prime\delta}(*Y'))\simeq(\shh^0\Df_*\shm^{\prime\delta})(*Y)\to\shm(*Y),
\end{equation}
where the left-hand side is $\DXS$-coherent and regular holonomic. Its cokernel is zero on \hbox{$(X\moins Y)\times S$} and $\DXS(*Y)$-coherent, hence it is zero according to~Lemma~\ref{lem:localization}\eqref{lem:localization2}, so that this morphism is an isomorphism. In conclusion, $\shm(*Y)$ is regular holonomic.
\end{proof}

\begin{corollary}\label{cor:localization}
Let $\shm$ be a regular holonomic $\DXS$-module and let $Y$ be any hypersurface in $X$. Then the localized $\DXS$-module $\shm(*Y)$ is regular holonomic (hence $\DXS$-coherent).
\end{corollary}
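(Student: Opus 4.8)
The plan is to combine Proposition~\ref{prop:localization} with two preliminary reductions, using crucially that a localized module $\shm(*Y)$ is automatically $\sho$-quasi-coherent. Recall that localization along a hypersurface is an exact functor on $\sho$-quasi-coherent modules (flatness of $\sho_{\XS}(*Y)$ over $\sho_{\XS}$, cf. the proof of Lemma~\ref{lem:localization}), that $\shm(*Y)$ is $\sho$-quasi-coherent by Lemma~\ref{lem:localization}\eqref{lem:localization1}, and that any $\sho$-quasi-coherent $\DXS$-submodule of a coherent $\DXS$-module is $\DXS$-coherent. Hence, to prove that $\shm(*Y)$ is regular holonomic it suffices, by Proposition~\ref{Prop:2}\eqref{Prop:22b}, to produce a $\DXS$-linear embedding of $\shm(*Y)$ into a regular holonomic $\DXS$-module. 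The question being local on $X$ and on $S$, I argue by induction on $n:=\dim\Supp_X\shm$; if $n\leq 0$ then $\shm$ is killed by a power of a local equation $f$ of $Y$ wherever $f$ vanishes and $f$ is invertible elsewhere, so $\shm\to\shm(*Y)$ is onto and $\shm(*Y)$ is regular holonomic by Proposition~\ref{Prop:2}\eqref{Prop:22b}.

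First reduction: let $\shm_{[Y]}\subset\shm$ be the submodule of sections supported on $\YS$; it is $\sho$-quasi-coherent, hence regular holonomic (Proposition~\ref{Prop:2}\eqref{Prop:22b}), and $\shm_{[Y]}(*Y)=0$ by Lemma~\ref{lem:localization}\eqref{lem:localization2}, so $\shm(*Y)\simeq(\shm/\shm_{[Y]})(*Y)$. Since $\shm/\shm_{[Y]}$ is regular holonomic and no irreducible component of its $X$-support is contained in $Y$, we may assume the same of $\shm$. Second reduction: let $\shm_{<n}\subset\shm$ be the submodule of sections $m$ with $\dim\Supp_X(\DXS\cdot m)<n$; it is an $\sho$-quasi-coherent, hence coherent, hence regular holonomic submodule of $\shm$ with $\dim\Supp_X\shm_{<n}<n$, so $\shm_{<n}(*Y)$ is regular holonomic by the induction hypothesis. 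The exact sequence $0\to\shm_{<n}(*Y)\to\shm(*Y)\to(\shm/\shm_{<n})(*Y)\to0$ reduces us, by Proposition~\ref{Prop:2}\eqref{Prop:22b}, to the module $\shm/\shm_{<n}$. Altogether we may assume that $Z:=\Supp_X\shm$ is of pure dimension $n$, that no component $Z_i$ of $Z$ is contained in $Y$, and that $\shm$ has no nonzero submodule whose $X$-support has dimension $<n$.

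Now choose, locally on $X$, a hypersurface $\Sigma\subset X$ with $\Sigma\supset Z_{\mathrm{sing}}:=\mathrm{Sing}(Z)$ and containing no $Z_i$; this is possible since $\dim Z_{\mathrm{sing}}<n\leq\dim X$ and $Z_{\mathrm{sing}}$ contains no $Z_i$. Set $Y_1:=Y\cup\Sigma$. Then the pair $(\shm,Y_1)$ satisfies the hypotheses of Proposition~\ref{prop:localization}: $Z$ has no component of dimension $<n$, and $Z\setminus(Y_1\cap Z)$ is an open subset of $Z\setminus Z_{\mathrm{sing}}$, hence smooth of pure dimension $n$. Consequently $\shm(*Y_1)$ is regular holonomic, in particular $\DXS$-coherent, and $\shm(*Y_1)=\bigl(\shm(*Y)\bigr)(*\Sigma)$ because $Y\subset Y_1$. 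I then check that the natural morphism $\shm(*Y)\to\shm(*Y_1)$ is injective: its kernel consists of the sections $s$ of $\shm(*Y)$ killed by a power of a local equation $g$ of $\Sigma$. Writing $s=m/f^{\ell}$ with $m\in\shm$ and $f$ a local equation of $Y$, the relation $g^{k}s=0$ gives $f^{j}g^{k}m=0$ in $\shm$ for some $j$, whence $\Supp_X(\DXS\cdot m)\subset Z\cap\{fg=0\}=(Z\cap Y)\cup(Z\cap\Sigma)$, a closed analytic set of dimension $<n$ by our choice of $Y$ and of $\Sigma$; by the last reduction this forces $\DXS\cdot m=0$, so $s=0$. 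Therefore $\shm(*Y)$ is an $\sho$-quasi-coherent $\DXS$-submodule of the coherent module $\shm(*Y_1)$, hence $\DXS$-coherent, hence regular holonomic by Proposition~\ref{Prop:2}\eqref{Prop:22b}.

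The main obstacle is the injectivity of $\shm(*Y)\to\shm(*Y_1)$; it is precisely there that the two preliminary reductions are used, since without them $\shm(*Y)$ could carry $\Sigma$-torsion coming either from components of $Z$ lying in $Y$ or from submodules of $\shm$ with $X$-support of dimension $<n$. Everything else is formal: exactness of localization on $\sho$-quasi-coherent modules, the stability properties of $\rD^{\rb}_{\rhol}$ recorded in Proposition~\ref{Prop:2}, and the already established Proposition~\ref{prop:localization}.
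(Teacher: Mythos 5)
Your argument is correct in outline and rests on the same two ingredients as the paper's proof---Proposition~\ref{prop:localization} applied to an enlarged hypersurface, and induction on $\dim\Supp_X\shm$---but the mechanism is genuinely different. The paper makes no purity reductions: it chooses $Y'$ as in Proposition~\ref{prop:localization}, observes that $Y\cup Y'$ also qualifies, localizes the four-term exact sequence $0\to\Gamma_{[Y']}\shm\to\shm\to\shm(*Y')\to R^1\Gamma_{[Y']}\shm\to0$ along $Y$, applies the induction hypothesis to the two local cohomology modules (whose $X$-supports have dimension $<\dim Z$), and concludes by the closure properties of Proposition~\ref{Prop:2}\eqref{Prop:22b}; in particular it never needs the map $\shm(*Y)\to\shm(*(Y\cup Y'))$ to be injective. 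You instead purify $\shm$ first (removing the sections supported on $\YS$ and the submodule $\shm_{<n}$) precisely so that this map becomes injective, and then finish by the coherence of $\sho$-quasi-coherent $\DXS$-submodules of coherent modules. Both routes work; the paper's is shorter and avoids support bookkeeping, while yours produces the concrete embedding $\shm(*Y)\hto\shm(*(Y\cup\Sigma))$.

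The price of your route is a cluster of support-theoretic assertions left unproved: that $\shm/\shm_{[Y]}$ has no irreducible component of its $X$-support contained in $Y$; that $\shm_{<n}$ is $\sho$-quasi-coherent with $\dim\Supp_X\shm_{<n}<n$; and that after the second reduction $Z$ is pure of dimension $n$ while the properties ``no nonzero submodule with $X$-support of dimension $<n$'' and ``no component of $Z$ inside $Y$'' persist for the quotient. All of these are true and can be justified with tools already in the paper: since the characteristic variety of any coherent sub- or quotient module of $\shm$ is a union of components of $\Char\shm$, the subsheaf $\shm_{<n}$ coincides with the sections annihilated by powers of the ideal of $Z_{<n}:=\bigcup_{\dim Z_i<n}Z_i$ (the same device, for the $S$-support, appears in Step~3(i) of the proof of Theorem~\ref{th:Regdirect}), which gives its quasi-coherence and the bound on its support; and an isolation argument (restrict to an open subset of $X$ meeting one maximal component of the $X$-support and avoiding the others, then use that a coherent module supported on $\YS$ has all local sections killed by powers of a local equation of $Y$) yields the statements about components of the support. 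With these details supplied, your proof is complete; also note the slip ``by our choice of $Y$'' in the injectivity step: $Y$ is given, and what is really used there is the first reduction.
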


\begin{proof}
The question is local on $\XS$. Let $Z$ denote the $X$-support of $\shm$. We argue by induction on the dimension of $Z$. The case where $\dim Z=0$ is clear because either $Z\subset Y$ and then $\shm(*Y)=0$, or $Z\not\subset Y$ and then $\shm(*Y)=\shm$.

Let $Y'$ be a hypersurface satisfying the properties as in Proposition \ref{prop:localization}. Then $Y\cup Y'$ satisfies the same properties. We consider the following commutative diagram
\[
\xymatrix@=.5cm{
0\ar[r]&\Gamma_{[Y']}\shm\ar[d]\ar[r]&\shm\ar[d]\ar[r]&\shm(*Y')\ar[d]\ar[r]&R^1\Gamma_{[Y']}\shm\ar[d]\ar[r]&0\\
0\ar[r]&(\Gamma_{[Y']}\shm)(*Y)\ar[r]&\shm(*Y)\ar[r]&\shm(*(Y'\cup Y))\ar[r]&(R^1\Gamma_{[Y']}\shm)(*Y)\ar[r]&0
}
\]
By Proposition \ref{prop:localization}, the terms of the top horizontal line, together with $\shm(*(Y'\cup Y))$, are regular holonomic. On the other hand, the support of the regular holonomic modules $\Gamma_{[Y']}\shm$ and $R^1\Gamma_{[Y']}\shm$ is of dimension $<\dim Z$, hence the assertion holds for these modules by the induction hypothesis, and the extreme terms of the lower horizontal line are regular holonomic. It follows that the remaining term $\shm(*Y)$ is regular holonomic.
\end{proof}

\begin{corollary}\label{cor:gammaY}
Let $\shm$ be a regular holonomic $\DXS$-module and let $Y$ be any closed analytic subset in $X$. Then $R\Gamma_{[Y]}\shm$ belongs to $\rD^\rb_\rhol(\DXS)$.
\end{corollary}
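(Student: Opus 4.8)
The plan is to deduce this from Corollary~\ref{cor:localization} by a standard Čech dévissage. The assertion being local on~$\XS$, I~would fix a point of~$X$ and choose finitely many holomorphic functions $f_1,\dots,f_r$ generating the ideal of~$Y$ near it; if $Y=X$ there is nothing to prove, and one discards the $f_i$ that vanish identically. Since the functor of sections supported on $Y$, hence $R\Gamma_{[Y]}$, depends only on the reduced closed set $|Y|=\bigcap_i\{f_i=0\}$, we may assume that $Y=H_1\cap\cdots\cap H_r$ with $H_i:=\{f_i=0\}$ a hypersurface of~$X$.

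Next I~would invoke the classical fact that the stable Koszul (Čech) complex of the sequence $(f_1,\dots,f_r)$ computes the derived functor $R\Gamma_{[Y]}$, so that $R\Gamma_{[Y]}\shm$ is represented in $\rD^\rb(\DXS)$ by the bounded complex
\[
\shc^\cbbullet:\quad \shm\longrightarrow\bigoplus_{i}\shm(*H_i)\longrightarrow\bigoplus_{i<j}\shm(*(H_i\cup H_j))\longrightarrow\cdots\longrightarrow\shm(*(H_1\cup\cdots\cup H_r)),
\]
with $\shm$ in degree~$0$ and $\shc^k=\bigoplus_{|I|=k}\shm(*\bigcup_{i\in I}H_i)$, the differentials being the usual alternating sums of localization morphisms (here one uses that $\prod_{i\in I}f_i$ has zero set $\bigcup_{i\in I}H_i$). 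The degree-$0$ term of $\shc^\cbbullet$ is regular holonomic by hypothesis, and every term in positive degree is the $\DXS$-localization of the regular holonomic module~$\shm$ along the hypersurface $\bigcup_{i\in I}H_i$, hence is regular holonomic by Corollary~\ref{cor:localization}, finite direct sums preserving regularity by Proposition~\ref{Prop:2}\eqref{Prop:22b}.

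It then remains to observe that $\shc^\cbbullet$ is a bounded complex of regular holonomic---in particular coherent---$\DXS$-modules, so that each of its cohomology modules, being a coherent subquotient of one of the terms, is again regular holonomic by Proposition~\ref{Prop:2}\eqref{Prop:22b} (and Lemma~\ref{Lem:1} for coherence). Thus $R\Gamma_{[Y]}\shm\simeq\shc^\cbbullet$ belongs to $\rD^\rb_\rhol(\DXS)$. The step needing the most care will be the identification of $R\Gamma_{[Y]}\shm$ with the sheaf-theoretic complex $\shc^\cbbullet$ (equivalently, the Mayer--Vietoris presentation of $R\Gamma_{[Y]}$ attached to the hypersurfaces~$H_i$); once this is granted, the statement is a formal consequence of Corollary~\ref{cor:localization} and of the fact that $\rD^\rb_\rhol(\DXS)$ is a full triangulated subcategory of $\rD^\rb_\coh(\DXS)$ (Corollary~\ref{cor:prop2}). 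One could equally well argue by induction on~$r$, using for a single hypersurface~$H$ the distinguished triangle $R\Gamma_{[H]}\shn\to\shn\to\shn(*H)\To{+1}$ in $\rD^\rb_\coh(\DXS)$ together with Corollary~\ref{cor:localization}, and the identity $R\Gamma_{[H_1\cap\cdots\cap H_r]}=R\Gamma_{[H_1]}\circ\cdots\circ R\Gamma_{[H_r]}$.
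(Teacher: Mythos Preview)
Your proposal is correct and follows essentially the same approach as the paper, which simply states that the question is local, that the hypersurface case follows from Corollary~\ref{cor:localization}, and that the general case follows by writing $Y$ locally as an intersection of hypersurfaces. You have filled in the details of this last step via the stable Koszul/\v{C}ech complex (and mention the equivalent iterated $R\Gamma_{[H_i]}$ argument), both of which are standard ways to carry out what the paper leaves implicit.
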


\begin{proof}
The question is local. The case of a hypersurface follows from Corollary \ref{cor:localization}. The general case follows by writing locally $Y$ as the intersection of hypersurfaces.
\end{proof}

\begin{corollary}\label{cor:Dtype2}
Let $Y$ be a closed hypersurface of $X$ and let $\shm$ be a coherent $\DXS(*Y)$-module $\shl$ of D-type.
Then $\shm$ belongs to $\Mod_\rhol(\DXS)$.
\end{corollary}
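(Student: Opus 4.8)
The plan is to reduce to the normal-crossings case, which is covered by Proposition~\ref{prop:shltowtE}\eqref{prop:shltowtE2}, by an embedded resolution, pulling $\shl$ back and then pushing it forward, exactly along the lines of the proof of Proposition~\ref{prop:localization}. As the assertion is local on $\XS$, I would first fix an embedded resolution $f\colon X'\to X$ of the pair $(X,Y)$: a projective modification, biholomorphic over $X\moins Y$, such that $Y':=f^{-1}(Y)$ is a normal crossings divisor (\cf\eg\cite[Prologue, Th.\,4]{A-H-V18}); in particular $\delta:=\dim X'-\dim X=0$.

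Next I would consider the localized pullback $\shl':=\DXpS(*Y')\otimes_{f^{-1}\DXS(*Y)}f^{-1}\shl$. By \eqref{E:new2}, $\DXpS(*Y')\simeq\shd_{X'\to X/S}(*Y')$ is locally free over $f^{-1}\DXS(*Y)$; hence $\shl'$ is $\DXpS(*Y')$-coherent and $(\Df^*\shl)(*Y')$ is concentrated in degree~$0$, equal there to $\shl'$ (compare \eqref{eq:DpiMloc}). Replacing $\shl$ locally by a $\DXS$-coherent submodule $\shm_0$ generating it over $\DXS(*Y)$, so that $\shm_0(*Y)=\shl$, one gets likewise $\shl'\simeq(\shh^0\Df^*\shm_0)(*Y')$, which is $f$-$\sho$-good by Proposition~\ref{prop:qcoh}\eqref{prop:qcohf1} and Remark~\ref{rem:localiz}\eqref{rem:localiz1}. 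I would then check that $\shl'$ is of D-type along $Y'$: condition~\eqref{def:Dtype1} holds since over $X'\moins Y'\simeq X\moins Y$ the module $\shl'$ is identified with $\shl_{|\XsS}\simeq E_L$; and condition~\eqref{def:Dtype2} holds since, $\DXpS(*Y')$ being $f^{-1}\DXS(*Y)$-free, one has $Li_s^*\shl'\simeq\bigl(\Df^*(Li_s^*\shl)\bigr)(*Y')$ for each $s\in S$, whose cohomology is $\shd_{X'}$-regular holonomic because that of $Li_s^*\shl$ is $\shd_X$-regular holonomic and regularity is preserved by $\shd$-module pullback and by localization in the absolute case.

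By Proposition~\ref{prop:shltowtE}\eqref{prop:shltowtE2}, $\shl'$ is then $\DXpS$-regular holonomic, hence $\DXpS$-coherent; combined with the $f$-$\sho$-goodness above, $\shl'$ is $f$-good, so \cite[Cor.\,2.4]{MFCS2} gives $\Df_*\shl'\in\rD^\rb_\rhol(\DXS)$. Since $\shl'$ is localized along $Y'$, we have $\Df_*\shl'\simeq(\Df_*\shl')(*Y)$; and since $f$ is biholomorphic over $X\moins Y$, the cohomology $\shh^j\Df_*\shl'$ is supported on $\YS$ for $j\neq0$, hence vanishes there for $j\neq 0$ by Lemma~\ref{lem:localization}\eqref{lem:localization2}, so that $\shn:=\shh^0\Df_*\shl'\simeq\Df_*\shl'$ is a regular holonomic $\DXS$-module with $\shn=\shn(*Y)$. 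Finally, localizing the adjunction morphism $\Df_*\Df^*\shm_0\to\shm_0$ along $Y$ produces a morphism $\psi\colon\shn\to\shl$ of $\sho$-quasi-coherent $\DXS(*Y)$-modules, both localized along $Y$, which is an isomorphism over $\XsS$ since $f$ is there a biholomorphism; by Lemma~\ref{lem:localization}\eqref{lem:localization3}, $\psi$ is an isomorphism, whence $\shl\simeq\shn$ lies in $\Mod_\rhol(\DXS)$.

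The main obstacle I anticipate is not any deep input—regular holonomicity on the normal-crossings model is supplied directly by Proposition~\ref{prop:shltowtE}\eqref{prop:shltowtE2}—but the bookkeeping around the localization functor: namely, making sure that $\shl'$ is genuinely $f$-good (so that \cite[Cor.\,2.4]{MFCS2} applies) and that the natural morphisms between pullbacks and pushforwards are compatible with the operations $(*Y)$ and $(*Y')$.
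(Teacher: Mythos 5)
Your proposal follows essentially the same route as the paper, whose own proof of this corollary is precisely "run the arguments of Proposition~\ref{prop:localization}": resolve $(X,Y)$ so that $Y'=f^{-1}(Y)$ has normal crossings, pull $\shl$ back as a $\DXS(*Y)$-module, observe the pullback is of D-type along $Y'$ and apply Proposition~\ref{prop:shltowtE}\eqref{prop:shltowtE2}, push forward by \cite[Cor.\,2.4]{MFCS2}, and compare with $\shl$ via adjunction and Lemma~\ref{lem:localization}. Two justifications should be repaired, though neither changes the architecture. First, the claim that $\DXpS(*Y')$ is \emph{locally free} over $f^{-1}\DXS(*Y)$ is neither proved in the paper nor needed (\eqref{E:new2} only identifies it with $\shd_{X'\to X/S}(*Y')$, and flatness of $\sho_{\XpS}(*Y')$ over $f^{-1}\sho_{\XS}(*Y)$ is not established); the degree-zero concentration of the localized pullback should instead be obtained exactly as in \eqref{eq:DpiMloc}, i.e.\ the negative-degree cohomologies of $\Df^*\shm_0$ vanish off $Y'$ because $f$ is an isomorphism there, are $\sho$-quasi-coherent, hence die after localization by Lemma~\ref{lem:localization}\eqref{lem:localization2}. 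Second, the existence of a local $\DXS$-coherent lattice $\shm_0$ with $\shm_0(*Y)=\shl$ deserves a word: the coherence criterion stated in Section~\ref{sec:2} only covers $\sho$-quasi-coherent submodules of \emph{coherent} $\DXS$-modules, so you should note that the submodule generated by finitely many local $\DXS(*Y)$-generators of $\shl$ carries the good filtration $F_j\DXS\cdot(\text{generators})$, whose terms are $\sho_{\XS}$-coherent by $\sho$-quasi-coherence of $\shl$, whence $\DXS$-coherence; with these two points filled in, your argument matches the intended one.
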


\begin{proof}
This is obtained by the arguments used in the proof of Proposition \ref{prop:localization}.
\end{proof}

\begin{proof}[Proof of Theorem \ref{th:inverseimage}]
We regard the morphism $f$ as the composition of a closed inclusion and a projection. The latter case is clear, so we only consider the case where $f:Y\hto X$ is the inclusion of a closed submanifold. Then Corollary \ref{cor:gammaY} implies that $R\Gamma_{[Y]}\shm$ is regular holonomic. By Kashiwara's equivalence, we conclude that $\Df^*\shm$ belongs to $\rD^\rb_\rhol(\DYS)$.
\end{proof}

\begin{proposition}\label{tmf12}
If $\shm,\shn$ are objects of $\rD^\rb_\rhol(\DXS)$, then so is $\shm\otimes_{\sho_{\XS}}^L\shn$.
\end{proposition}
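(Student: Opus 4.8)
The plan is to reduce the statement to the behaviour of the $S$-external product, to check its regularity fibrewise, and then to apply Theorem \ref{th:inverseimage}. By Remark \ref{rem:extprod} one has $\shm\otimes^L_{\sho_{\XS}}\shn\simeq\Ddelta^*(\shm\boxtimes^L_{\sho_S}\shn)$, where $\delta\colon\XS\hto(X\times X)\times S$ is the diagonal embedding over $S$, that is, $\Df^*$ for $f$ the diagonal morphism $X\to X\times X$. Since Theorem \ref{th:inverseimage} tells us that this $\Df^*$ sends $\rD^\rb_\rhol(\shd_{X\times X\times S/S})$ into $\rD^\rb_\rhol(\DXS)$, it is enough to prove that $\shm\boxtimes^L_{\sho_S}\shn$ belongs to $\rD^\rb_\rhol(\shd_{X\times X\times S/S})$.

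By Remark \ref{rem:extprod}, $\shm\boxtimes^L_{\sho_S}\shn$ is already a bounded complex with holonomic cohomology, so only the regularity remains, which I would establish through property $(\mathrm{Reg}\,2)$: for every $s_o\in S$, $Li_{s_o}^*(\shm\boxtimes^L_{\sho_S}\shn)\in\rD^\rb_\rhol(\shd_{X\times X})$. The key input is the identification
\[
Li_{s_o}^*(\shm\boxtimes^L_{\sho_S}\shn)\simeq(Li_{s_o}^*\shm)\boxtimes^L_{\CC}(Li_{s_o}^*\shn),
\]
expressing that the restriction of an $S$-external product to a fibre is the absolute external product over $\CC$; this is a base-change computation in the spirit of \cite[Prop.\,3.1]{MFCS1}. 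Granting it, $Li_{s_o}^*\shm$ and $Li_{s_o}^*\shn$ lie in $\rD^\rb_\rhol(\DX)$ because $\shm,\shn$ satisfy $(\mathrm{Reg}\,2)$ (available by Proposition \ref{Prop:2}, since they belong to $\rD^\rb_\rhol(\DXS)$), and the external product over $\CC$ of objects of $\rD^\rb_\rhol(\DX)$ is regular holonomic by the classical absolute theory (\cf\eg\cite{Bjork93}); hence $(\mathrm{Reg}\,2)$ holds for $\shm\boxtimes^L_{\sho_S}\shn$, so it is regular holonomic and we conclude by the first paragraph.

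I expect the main obstacle to be precisely this fibrewise identification. One has to unwind the definition of the $S$-external product as a derived tensor product over $r_1^{-1}\sho_{\XS}\otimes_{\pOS}r_2^{-1}\sho_{\XS}$ (with $r_1,r_2$ the two projections $X\times X\times S\to\XS$ and $p$ the projection to $S$), apply $(\pOS/p^{-1}\fm_{s_o})\otimes^L_{\pOS}(\cdot)$, and commute it past both derived tensor factors; flatness of $\sho_{X\times X\times S}$ over $\pOS$ together with the distributivity of $(\pOS/p^{-1}\fm_{s_o})\otimes^L_{\pOS}(\cdot)$ over $\otimes^L_{\pOS}$ should carry this through, the special case $\shn=\sho_{\XS}$ (where the formula degenerates to $Li_{s_o}^*$ commuting with the flat pullback $r_1^*$) being a reassuring check. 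Should this computation turn out unpleasant in the derived category, an alternative is to first reduce to the case where $\shm$ and $\shn$ are single modules by dévissage along the truncation triangles (using that $(\mathrm{Reg}\,1)$ is stable by truncation and that regular holonomic $\DXS$-modules are closed under extensions and subquotients, \cf Proposition \ref{Prop:2}), although one still needs the fibrewise identification afterwards.
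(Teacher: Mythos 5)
Your proof is correct and follows essentially the same route as the paper: reduce via $\Ddelta^*$ of the $S$-external product, invoke Theorem \ref{th:inverseimage} for the diagonal, take holonomicity from Remark \ref{rem:extprod}, and get regularity from the fibrewise identification $Li_{s_o}^*(\shm\boxtimes^L_{\sho_S}\shn)\simeq(Li_{s_o}^*\shm)\boxtimes_\CC(Li_{s_o}^*\shn)$ together with the absolute theory. The paper simply asserts that identification without detail, so your extra care in justifying it (via flatness of $\sho_{X\times X\times S}$ over $\pOS$) is a harmless elaboration, not a deviation.
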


\begin{proof}
Recall that the tensor product has been defined in Remark \ref{rem:extprod}. According to Theorem~\ref{th:inverseimage} applied to the diagonal embedding $\delta$, it is enough to prove that the $S$-external tensor product $\shm\boxtimes^L_\shd\shn$ is an object of $\rD^\rb_\rhol(\shd_{(X\times X)\times S/S})$. Holonomicity has been observed in Remark \ref{rem:extprod}. Regularity follows from the isomorphism $Li_s^*(\shm\boxtimes^L_\shd\nobreak\shn)\simeq\shd_{X\times X}\otimes_{\DX\boxtimes_\CC\DX}(Li_s^*\shm\boxtimes_\CC Li_s^*\shn)$ and from the regular holonomicity of the latter as a complex in $\rD^\rb(\shd_{X\times X})$.
\end{proof}

Let $Y_i$ ($i=1,\dots,p$) be hypersurfaces of $X$ defined as the zero set of holomorphic functions $h_i:X\to \CC$, set $Y=\bigcup_iY_i$ and let $\shn$ be a $\DXS(*Y)$-module. We~regard~$\shn$ as an $\sho_{\XS}(*Y)$-module with flat relative connection $\nabla$, and for a \hbox{tuple} $\alpha=(\alpha_1,\dots,\alpha_p)$ of holomorphic functions $\alpha_i:S\to\CC$, we~denote by $\shn h^\alpha$ the $\sho_{\XS}$-module $\shn$, endowed with the flat relative connection $\nabla+\sum_i\alpha_i\id\otimes\rd h_i/h_i$. The functor $\shn\to\shn h^\alpha$ is an auto-equivalence of the category $\Mod(\DXS(*Y))$, as well as of $\Mod_\coh(\DXS(*Y))$. We have a functorial isomorphism $\shn h^\alpha\simeq\sho_{\XS}(*Y)h^\alpha\otimes_{\sho_{\XS}}\shn\simeq\sho_{\XS}(*Y)h^\alpha\otimes_{\sho_{\XS}}^L\shn$.

\begin{corollary}\label{cor:nabladf}
Assume that $\shm$ is a regular holonomic $\DXS$-module. Then so is $\shm(*Y)h^\alpha$.
\end{corollary}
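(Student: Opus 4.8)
The plan is to reduce the assertion to two inputs: that $\sho_{\XS}(*Y)h^\alpha$ is itself a regular holonomic $\DXS$-module, and that $\rD^\rb_\rhol(\DXS)$ is stable under $\sho_{\XS}$-tensor product (Proposition~\ref{tmf12}). Granting the first input, I would argue as follows. Writing $h=\prod_ih_i$ for a defining equation of $Y$ and applying the functorial isomorphism recalled just before the statement to $\shn=\shm(*Y)$ (together with $\sho_{\XS}(*Y)\otimes_{\sho_{\XS}}\shm=\shm(*Y)$), one obtains
\[
\shm(*Y)h^\alpha\simeq\sho_{\XS}(*Y)h^\alpha\otimes_{\sho_{\XS}}\shm.
\]
Since $\sho_{\XS}(*Y)h^\alpha$ is $\sho_{\XS}$-flat (as an $\sho_{\XS}$-module it is $\sho_{\XS}(*Y)$), the right-hand side equals the derived tensor product $\sho_{\XS}(*Y)h^\alpha\otimes^L_{\sho_{\XS}}\shm$; both factors being regular holonomic $\DXS$-modules (the second by hypothesis, the first by the sub-claim), Proposition~\ref{tmf12} shows this object lies in $\rD^\rb_\rhol(\DXS)$, and as it is concentrated in degree zero it is precisely the regular holonomic $\DXS$-module $\shm(*Y)h^\alpha$.

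It thus remains to prove the sub-claim, and for this I would check that $\sho_{\XS}(*Y)h^\alpha$ --- that is, $\sho_{\XS}(*Y)$ equipped with the flat relative connection $\rd_{\XS/S}+\sum_i\alpha_i\,\id\otimes\rd h_i/h_i$ --- is a coherent $\DXS(*Y)$-module of D-type, and then invoke Corollary~\ref{cor:Dtype2}. Coherence over $\DXS(*Y)$ holds because $\sho_{\XS}(*Y)$ is cyclic, hence $\DXS(*Y)$-coherent, and $\shn\mapsto\shn h^\alpha$ is an auto-equivalence of $\Mod_\coh(\DXS(*Y))$. Condition~\ref{def:Dtype}\eqref{def:Dtype1} holds because the restriction to $\XsS$ is a rank-one relative connection, whose associated $S$-local system is the $\pOS$-locally free rank-one sheaf with monodromy $\exp(-2\pi i\alpha_j)$ around each $Y_j$; in particular its characteristic variety is the zero section. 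For condition~\ref{def:Dtype}\eqref{def:Dtype2}, the $\pOS$-flatness of $\sho_{\XS}(*Y)$ gives, for each $s\in S$, an identification $Li_s^*(\sho_{\XS}(*Y)h^\alpha)\simeq\sho_X(*Y)h^{\alpha(s)}$, which is a regular holonomic $\DX$-module by the classical theory of regular meromorphic connections.

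The one step meriting care is this verification of condition~\ref{def:Dtype}\eqref{def:Dtype2}: one must be sure that $Li_s^*$ is computed by ordinary restriction (using the $\pOS$-flatness of $\sho_{\XS}(*Y)$), and then that $\sho_X(*Y)h^{\beta}$ is regular holonomic on $X$ for a fixed complex tuple $\beta=\alpha(s)$. If one does not wish to cite this last fact, it can be obtained exactly as in Step~5 of the proof of Theorem~\ref{th:Deligneext}\eqref{th:Deligneext3}, by resolving the singularities of $\bigcup_iY_i$ and using the normal crossing model $\DX/\bigl((x_i\partial_{x_i}-\beta_i+1)_i,(\partial_{x_j})_j\bigr)$. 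Everything else in the argument is formal.
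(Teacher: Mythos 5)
Your proof is correct and follows the same route as the paper's (sketched) argument: show that $\sho_{\XS}(*Y)h^\alpha$ is a coherent $\DXS(*Y)$-module of D-type (using $Li_s^*$ and the classical regularity of $\sho_X(*Y)h^{\alpha(s)}$), conclude its regular holonomicity from Corollary~\ref{cor:Dtype2}, and then combine $\shm(*Y)h^\alpha\simeq\sho_{\XS}(*Y)h^\alpha\otimes_{\sho_{\XS}}\shm$ with Proposition~\ref{tmf12}. The only difference is that you spell out the D-type verification and the flatness point that the paper leaves implicit.
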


\begin{proof}[Sketch of proof]
One checks that the coherent $\DXS(*Y)$-module $\sho_{\XS}(*Y)h^\alpha$ is of D\nobreakdash-type along~$Y$ by using that $\sho_X(*Y)h^{\alpha(s)}$ is regular holonomic for each \hbox{$s\!\in\! S$}. Hence $\sho_{\XS}(*Y)h^\alpha$ is regular holonomic by Corollary \ref{cor:Dtype2}. Then, on noting that $\shm(*Y)h^\alpha\simeq\sho_{\XS}(*Y)h^\alpha\otimes_{\sho_{\XS}}\shm$, we conclude by applying Proposition \ref{tmf12}.
\end{proof}

\begin{example}[Generalized Mellin transform]\label{ex:maisonobe}
Let $f_1,\dots,f_p$ be meromorphic functions on $X$, \ie locally each $f_i$ is the quotient of two holomorphic functions $h_i,g_i$ without common factor. Let~$Y$ be the union of the divisors of zeros and poles of the functions~$f_i$, \ie locally the divisors of zeros of $h_i,g_i$. We also set $S=\CC^p$ with its analytic topology. It is usual to denote by $\sho_{\XS}(*Y)f^s$ the $\sho_{\XS}$-module $\sho_{\XS}(*Y)$ equipped with the twisted connection $\rd+\sum_is_i\rd f_i/f_i$. The same argument as in Corollary \ref{cor:nabladf} shows that $\sho_{\XS}(*Y)f^s$ is regular holonomic and that, if $\shm$ is a regular holonomic $\DX$\nobreakdash-module, then the $\DXS$-module $q^*(\shm(*Y))f^s:=q^*\shm\otimes_{\sho_{\XS}}\sho_{\XS}(*Y)f^s$ (where $q:\XS\to X$ denotes the projection) is also regular holonomic.

Furthermore, the $\DXS$-submodule of $q^*(\shm(*Y))f^s$ generated by the image of $q^*\shm\otimes1\cdot f^s$ is also regular holonomic, according to Proposition \ref{Prop:2}\eqref{Prop:22b}, as it is clearly coherent (being locally of finite type in a coherent $\DXS$-module). This property is the $S$-analytic variant of \cite[Prop.\,13]{Maisonobe16}, with the regularity assumption however.

Since $\shm$ is regular holonomic, it is good, and so is $q^*(\shm(*Y))f^s$. Therefore, if $X$ is compact, the $\shd$-module pushforward $\Dp_*\bigl[q^*(\shm(*Y))f^s\bigr]$ is an object of $\rD^\rb_\coh(\sho_S)$. This is the generalized Mellin transform of $\shm$ with respect to $(f_1,\dots,f_p)$.
\end{example}

\subsection{Another characterization of regular holonomicity}\label{subsec:5b}
For a closed analytic subset $Y$ of $X$, we denote by $\sho_{\wh{\YS}}= \varprojlim_{k\in\Z} \sho_{\XS}/\shj^k$ the formal completion of $\sho_{\XS}$ along $\YS$, where $\shj$ denotes the defining ideal of $\YS$ in $\XS$. Let $i:Y\hto X$ denote the inclusion. We consider the exact sequence of sheaves supported on $\YS$:
\[
0\to i_*i^{-1}\sho_{\XS}\to\sho_{\wh{\YS}}\to\shq_{\YS}\to0.
\]

\begin{corollary}
If $\shm\in\rD^\rb_\rhol(\DXS)$, then the complexes
\[
\Rhom_{\DXS}(\shm, \sho_{\wh{\YS}})\quand\Rhom_{\DXS}(\shm, \shq_{\YS})
\]
belong to $\rD^\rb_\cc(\pOS)$.
\end{corollary}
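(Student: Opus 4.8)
The plan is as follows. The exact sequence displayed before the statement yields, upon applying $\Rhom_{\DXS}(\shm,-)$, a distinguished triangle
\[
\Rhom_{\DXS}(\shm,Ri_*i^{-1}\sho_{\XS})\to\Rhom_{\DXS}(\shm,\sho_{\wh{\YS}})\to\Rhom_{\DXS}(\shm,\shq_{\YS})\To{+1},
\]
so it suffices to prove that the two outer terms lie in $\rD^\rb_\cc(\pOS)$. For the first one, $i$ is a closed embedding, hence $Ri_*=i_*$ is exact and $i^{-1}$ commutes with $\Rhom$ on coherent modules; by $(i^{-1},i_*)$-adjunction,
\[
\Rhom_{\DXS}(\shm,Ri_*i^{-1}\sho_{\XS})\simeq Ri_*\bigl(i^{-1}\Rhom_{\DXS}(\shm,\sho_{\XS})\bigr).
\]
Since $\shm$ is holonomic, $\Rhom_{\DXS}(\shm,\sho_{\XS})$ coincides up to a shift with $\pSol_X(\shm)$, hence is an object of $\rD^\rb_\cc(\pOS)$; restriction to $\YS$ and pushforward along the closed embedding~$i$ preserve $\C$-constructibility, so this term is as wanted.

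For the third term I would establish the relative analogue of Mebkhout's formal-analytic comparison theorem, namely $\Rhom_{\DXS}(\shm,\shq_{\YS})=0$ for $\shm\in\rD^\rb_\rhol(\DXS)$; this gives the assertion a fortiori. The question is local on $\XS$. Writing $Y$ locally as a finite intersection of hypersurfaces, a Mayer-Vietoris argument for formal completions (as in the proof of Corollary \ref{cor:gammaY}) reduces to the case where $Y$ is a hypersurface. Then the triangle $R\Gamma_{[Y]}\shm\to\shm\to\shm(*Y)\To{+1}$, whose three vertices lie in $\rD^\rb_\rhol(\DXS)$ by Corollaries \ref{cor:localization} and \ref{cor:gammaY}, together with the dévissage $0\to\shm_\rt\to\shm\to\shm_\tf\to0$ (both terms regular holonomic by Proposition \ref{Prop:2}\eqref{Prop:22b}), reduces the problem to two cases: \textbf{(a)} $\shm$ is supported on $\YS$; \textbf{(b)} $\shm=\shm(*Y)$ is $\sho_S$-torsion-free.

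In case \textbf{(b)}, after the dévissage on $\dim\Supp_X\shm$ used in Corollary \ref{cor:localization}, Proposition \ref{prop:localization} realizes $\shm(*Y)$ locally as a projective pushforward $\Df_*\shl$ with $f:X'\to X$ a modification, $Y':=f^{-1}(Y)$ a normal crossing divisor and $\shl$ a $\DXpS(*Y')$-module of D-type. Using the compatibility of formal completion along $\YS$ with the proper pushforward $\Df_*$ (the formal functions theorem for the proper map $f$, in the relative analytic setting) together with $\Df_*$-adjunction, one reduces to a D-type module along a normal crossing divisor~$D$; then the reductions in the proofs of Proposition \ref{prop:shltowtE} and Theorem \ref{th:Deligneext}\eqref{th:Deligneext3} (a flattening base change to the strict case, the local identification with $\wh E_G=(\sho_{\XS}(*D)\otimes_{\pOS}p^{-1}G,\wh\nabla)$, and the induction on $(\dim S,\rk G)$) bring the matter to the rank-one model $\sho_{\XS}(*D)h^\alpha$. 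There $\Rhom_{\DXS}(\sho_{\XS}(*D)h^\alpha,\shq_{\YS})$ is computed by an explicit Koszul complex on the commuting operators $x_i\partial_{x_i}-\alpha_i(s)+1$ and $\partial_{x_j}$, and it vanishes because these operators act diagonally on Laurent monomials and a monomial lying in one of their kernels is already a section of $i^{-1}\sho_{\XS}$, so it contributes nothing to $\shq_{\YS}$. In case \textbf{(a)}, resolution of singularities together with the formal functions theorem reduces to $Y$ smooth, and Kashiwara's equivalence then writes $\shm=\Di_*\shn$ with $\shn$ regular holonomic over $\DYS$; one checks that $\Di^!$ sends the inclusion $i_*i^{-1}\sho_{\XS}\hto\sho_{\wh{\YS}}$ to an isomorphism (both sides compute $\sho_{\YS}$ up to the codimension shift, since the local equations of $Y$ form a regular sequence on $i^{-1}\sho_{\XS}$ and on $\sho_{\wh{\YS}}$ alike), whence $\Di^!\shq_{\YS}=0$ and $\Rhom_{\DXS}(\shm,\shq_{\YS})\simeq\Di_*\Rhom_{\DYS}(\shn,\Di^!\shq_{\YS})=0$.

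The main obstacle is case \textbf{(b)}: proving the formal functions theorem in the relative analytic category and its compatibility with the $\shd$-module pushforward $\Df_*$, and carrying out the rank-one computation with a holomorphically varying exponent $\alpha=\alpha(s)$. The possible $\sho_S$-torsion of $\shm$ causes no trouble, being stripped off by the dévissage $0\to\shm_\rt\to\shm\to\shm_\tf\to0$, and the $\pOS$-module structure is transported through all the reductions without difficulty, since the formal directions of $\YS=Y\times S$ lie entirely along~$X$.
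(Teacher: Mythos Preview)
Your treatment of the first term of the triangle is fine and matches the paper. For the remaining two terms, however, you take a very different and far harder route than the paper, and your sketch has real gaps.

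The paper does \emph{not} prove the vanishing of $\Rhom_{\DXS}(\shm,\shq_{\YS})$ here. Instead it proves constructibility of the \emph{middle} term directly, via the local-cohomology/formal-completion duality
\[
\Rhom_{\DXS}(R\Gamma_{[Y]}\shm,\sho_{\XS})\;\isom\;\Rhom_{\DXS}(\shm,\sho_{\wh{\YS}}),
\]
which is obtained by mimicking the absolute argument (as in \cite[Cor.\,2.7-2]{Mebkhout04}). Since $R\Gamma_{[Y]}\shm\in\rD^\rb_\rhol(\DXS)$ by Corollary~\ref{cor:gammaY}, the left-hand side lies in $\rD^\rb_\cc(\pOS)$ by \cite[Th.\,3.7]{MFCS1}. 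Constructibility of the $\shq_{\YS}$-term then follows from the triangle. The vanishing you aim for is proved only afterwards (Theorem~\ref{T:sreg}), and precisely by using this constructibility together with $Li_s^*$-commutation and Nakayama.

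Your direct attack on vanishing requires, as you acknowledge, a relative analytic formal-functions theorem compatible with $\Df_*$; this is not available in the paper and is a nontrivial input. More seriously, the rank-one computation is not correct as stated: sections of $\shq_{\YS}$ are classes of formal series along $\YS$, not Laurent monomials, and the kernel of $x_i\partial_{x_i}-\alpha_i(s)+1$ on such series for a \emph{holomorphically varying} $\alpha_i(s)$ cannot be handled by a monomial-by-monomial argument (for generic $s$ the kernel on formal series may be nonzero yet convergent, but you must control this uniformly in $s$ and pass through the quotient by $i^{-1}\sho_{\XS}$). The Mayer--Vietoris reduction for formal completions along a non-hypersurface $Y$ is also left as an allusion. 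In short: the strategy is not wrong in spirit, but the missing key idea is the isomorphism above, which makes the corollary a two-line consequence of results already established.
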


\begin{proof}
If follows from Corollary \ref{cor:gammaY} that we have $R\Gamma_{[Y]}\shm\in\rD^\rb_\rhol(\DXS)$, so that the complex $\Rhom_{\DXS}(R\Gamma_{[Y]}\shm, \sho_{\XS})$ belongs to $\rD^\rb_\cc(\pOS)$, according to \cite[Th.\,3.7]{MFCS1}. On the one hand, by mimicking the proof when $S$ is reduced to a point (\cf\eg\cite[Cor.\,2.7-2]{Mebkhout04}), one finds a natural isomorphism
\[
\Rhom_{\DXS}(R\Gamma_{[Y]}\shm; \sho_{\XS})
\isom \Rhom_{\DXS}(\shm, \sho_{\wh{\YS}}),
\]
hence the $S$-$\CC$-constructibility of the latter complex. On the other hand, we have natural isomorphisms
\begin{align*}
Ri_*i^{-1}\Rhom_{\DXS}(\shm, \sho_{\XS})&\isom Ri_*\Rhom_{\DXS}(i^{-1}\shm, i^{-1}\sho_{\XS})\\
&\simeq \Rhom_{\DXS}(\shm,Ri_*i^{-1}\sho_{\XS}),
\end{align*}
showing $S$-$\CC$-constructibility of the latter complex, and therefore that of $\Rhom_{\DXS}(\shm, \shq_{\YS})$.
\end{proof}

\begin{theorem}\label{T:sreg}
Let $\shm$ belong to $\rD^\rb_\hol(\DXS)$. Then $\shm$ is regular holonomic if and only if for any germ of closed analytic subset $Y\subset X$, $\Rhom_{\DXS}(\shm, \shq_{\YS})$ is isomorphic to zero.
\end{theorem}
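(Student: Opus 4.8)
The statement is local on $X$, and the plan is to reduce both implications to the absolute characterization of regularity by the comparison theorem (due to Kashiwara, Kawai and Mebkhout; \cf \cite{K-K81,Mebkhout04}): a complex $\shn\in\rD^\rb_\hol(\DX)$ belongs to $\rD^\rb_\rhol(\DX)$ if and only if $\Rhom_{\DX}(\shn,\shq_Y)\simeq0$ for every germ of closed analytic subset $Y\subset X$, where $\shq_Y$ is the cokernel of $Ri_*i^{-1}\sho_X\to\sho_{\wh Y}$ and $\sho_{\wh Y}$ is the formal completion of $\sho_X$ along $Y$. The bridge between the relative and the absolute situations is the base-change identity
\[
Li_{s_o}^*\Rhom_{\DXS}(\shm,\shq_{\YS})\simeq\Rhom_{\DX}(Li_{s_o}^*\shm,\shq_Y)\qquad(\shm\in\rD^\rb_\hol(\DXS),\ s_o\in S).
\]
To prove it, one first checks, in local coordinates $(x,s)$ with $Y=\{f_1=\cdots=f_r=0\}$ ($f_i$ depending on $x$ only), that each finite-order neighbourhood sheaf $\sho_{\XS}/\shj^k$, hence also $\sho_{\wh{\YS}}=\varprojlim_k\sho_{\XS}/\shj^k$ and $Ri_*i^{-1}\sho_{\XS}=i_*i^{-1}\sho_{\XS}$, is flat over $\pOS$, and that $Li_{s_o}^*=i_{s_o}^*$ sends these sheaves to $\sho_{\wh Y}$ and $Ri_*i^{-1}\sho_X$ respectively; applying $Li_{s_o}^*$ to the defining triangle of $\shq_{\YS}$ then gives $Li_{s_o}^*\shq_{\YS}\simeq\shq_Y$. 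The displayed identity follows by feeding this into the compatibility of $Li_{s_o}^*$ with the bifunctor $\Rhom_{\DXS}(\cdot,\cdot)$ proved in \cite[Prop.\,3.1]{MFCS1} (applicable since $\shm$ is $\DXS$-coherent), applied to $Ri_*i^{-1}\sho_{\XS}$ and $\sho_{\wh{\YS}}$ and then through the triangle.

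Assume first that $\shm\in\rD^\rb_\rhol(\DXS)$. By the preceding corollary, $\Rhom_{\DXS}(\shm,\shq_{\YS})$ lies in $\rD^\rb_\cc(\pOS)$, so its cohomology sheaves are locally of the form $p_{X_\alpha}^{-1}G_\alpha$ with $G_\alpha$ coherent over $\sho_S$; consequently each stalk of $\Rhom_{\DXS}(\shm,\shq_{\YS})$ is a bounded complex of finitely generated $\sho_{S,s_o}$-modules, and such a complex is acyclic as soon as $\kappa(s_o)\otimes^L_{\sho_{S,s_o}}(\cdot)$ is acyclic — that is, as soon as the corresponding stalk of $Li_{s_o}^*\Rhom_{\DXS}(\shm,\shq_{\YS})$ is — by Nakayama's lemma applied to the top cohomology. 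Hence it suffices to show $Li_{s_o}^*\Rhom_{\DXS}(\shm,\shq_{\YS})\simeq0$ for every $s_o$. Now $Li_{s_o}^*\shm\in\rD^\rb_\rhol(\DX)$ by definition of regularity (via $(\mathrm{Reg}\,2)$, equivalent to $(\mathrm{Reg}\,1)$ by Proposition \ref{Prop:2} since $\shm$ is coherent), so the base-change identity together with the absolute comparison theorem gives $Li_{s_o}^*\Rhom_{\DXS}(\shm,\shq_{\YS})\simeq\Rhom_{\DX}(Li_{s_o}^*\shm,\shq_Y)\simeq0$, as wanted.

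Conversely, assume $\Rhom_{\DXS}(\shm,\shq_{\YS})\simeq0$ for every germ of closed analytic subset $Y\subset X$. Fixing $s_o\in S$, we have $Li_{s_o}^*\shm\in\rD^\rb_\hol(\DX)$ by Lemma \ref{lem:basepullback}, and the base-change identity gives $\Rhom_{\DX}(Li_{s_o}^*\shm,\shq_Y)\simeq Li_{s_o}^*(0)\simeq0$ for every such $Y$; by the absolute characterization, $Li_{s_o}^*\shm$ is regular holonomic. Since $s_o$ is arbitrary, $\shm$ satisfies $(\mathrm{Reg}\,2)$, hence $(\mathrm{Reg}\,1)$ by Proposition \ref{Prop:2}, i.e. $\shm\in\rD^\rb_\rhol(\DXS)$. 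The main obstacle is the base-change identity, and within it the interchange of the derived pullback $Li_{s_o}^*$ with the inverse limit defining the formal completion $\sho_{\wh{\YS}}$ along $\YS$; this rests on the $\pOS$-flatness of the sheaves $\sho_{\XS}/\shj^k$ and a local coordinate computation, after which the remaining compatibilities are supplied by \cite[Prop.\,3.1]{MFCS1} and the passage to the absolute case is formal.
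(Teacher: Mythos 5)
Your proof is correct and takes essentially the same route as the paper: the base-change isomorphism $Li_{s_o}^*\shq_{\YS}\simeq\shq_Y$ (the paper's Lemma \ref{lem:for}, obtained there from the Mittag-Leffler condition via the surjectivity of $\sho_{\XS}/\shj^{k+1}\to\sho_{\XS}/\shj^{k}$), the compatibility of $Li_{s_o}^*$ with $\Rhom_{\DXS}$ from \cite[Prop.\,3.1]{MFCS1}, the absolute comparison theorem of \cite{K-K81}, and, for the direct implication, the $S$-$\CC$-constructibility corollary combined with a Nakayama-type reduction (\cite[Prop.\,2.2]{MFCS1} in the paper). The only imprecision is the ``hence'' asserting $\pOS$-flatness of $\varprojlim_k\sho_{\XS}/\shj^{k}$ (inverse limits of flat modules need not be flat), but this is harmless: as you note at the end, what is actually needed is the interchange of $Li_{s_o}^*$ with the limit, which follows from the term-wise flatness together with the surjective transition maps.
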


\begin{lemma}\label{lem:for}
For any closed analytic subset $Y\subset X$ we have
\[
Li^*_s \sho_{\wh{\YS}}\simeq \sho_{\wh Y}\quand Li^*_s \shq_{\YS}\simeq \shq_{Y}.
\]
\end{lemma}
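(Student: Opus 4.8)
The plan is to use $Li^{*}_{s}$ in its incarnation $(\pOS/p^{-1}\fm_{s})\otimes^{L}_{\pOS}(\cbbullet)$, where $\fm_{s}\subset\sho_{S}$ is the ideal sheaf of the reduced point~$s$, resolved by a Koszul complex. Everything in sight is supported on $\YS$, hence the question is local near $X\times\{s\}$; choosing local coordinates $(s_{1},\dots,s_{d})$ on~$S$ centered at~$s$, the sections $p^{*}s_{1},\dots,p^{*}s_{d}$ form a regular sequence in $\pOS$ there, so $K_{\sbullet}:=K_{\sbullet}(p^{*}s_{1},\dots,p^{*}s_{d})$ is a resolution of $\pOS/p^{-1}\fm_{s}$ by finite free $\pOS$-modules and $Li^{*}_{s}\shf\simeq K_{\sbullet}\otimes_{\pOS}\shf$ for any sheaf $\shf$ of $\pOS$-modules. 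I would first dispose of the routine ingredients: $\sho_{\XS}$ and $i^{-1}\sho_{\XS}$ are $\pOS$-flat, so $Li^{*}_{s}\sho_{\XS}\simeq\sho_{X}$ and $Li^{*}_{s}(i^{-1}\sho_{\XS})\simeq i^{-1}\sho_{X}$ in degree~$0$; and since $K_{\sbullet}$ is a bounded complex of finite free modules, the projection formula gives $Li^{*}_{s}(Ri_{*}i^{-1}\sho_{\XS})\simeq Ri_{*}i^{-1}\sho_{X}$, compatibly with the natural morphisms.

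The main point — and the only place where there is any real content — is the first isomorphism $Li^{*}_{s}\sho_{\wh{\YS}}\simeq\sho_{\wh Y}$, \ie that $Li^{*}_{s}$ commutes with formal completion. Writing $\shj^{k}=q^{-1}\shi_{Y}^{k}\cdot\sho_{\XS}$, one has $\sho_{\XS}/\shj^{k}\simeq q^{*}(\sho_{X}/\shi_{Y}^{k})$, which is $\pOS$-flat (being the pullback under $q$ of a sheaf on~$X$); hence $p^{*}s_{1},\dots,p^{*}s_{d}$ is a weakly regular sequence on $\sho_{\XS}/\shj^{k}$ and the complex $K_{\sbullet}\otimes_{\pOS}(\sho_{\XS}/\shj^{k})$ has cohomology $\sho_{\XS}/(\shj^{k}+(p^{*}s_{1},\dots,p^{*}s_{d}))\simeq\sho_{X}/\shi_{Y}^{k}$ in degree~$0$ and $0$ in other degrees. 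Because $K_{\sbullet}$ is finite free over $\pOS$, the functor $K_{\sbullet}\otimes_{\pOS}(\cbbullet)$ commutes with $\varprojlim_{k}$, so $Li^{*}_{s}\sho_{\wh{\YS}}$ is represented by the complex $\varprojlim_{k}\bigl(K_{\sbullet}\otimes_{\pOS}\sho_{\XS}/\shj^{k}\bigr)$. It then remains to identify the cohomology sheaves of this inverse limit with $\varprojlim_{k}\sho_{X}/\shi_{Y}^{k}=\sho_{\wh Y}$ in degree~$0$ and $0$ elsewhere. I would carry this out either on stalks — using the standard identification of the stalk of a formal completion with the $\shi_{Y}$-adic completion of the Noetherian analytic local ring, together with flatness of completion and the behaviour of regular sequences on flat modules — or directly on small Stein open sets, where $\Gamma$ is exact on coherent sheaves so that all the towers occurring satisfy Mittag-Leffler, and the Milnor $\varprojlim^{1}$-exact sequence yields the claim (the $R^{1}\varprojlim$-contributions vanish since the cohomology towers are zero in nonzero degrees and surjective in degree~$0$). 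This interchange is the step I expect to require the most care; everything else is formal.

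Finally, for the second isomorphism I would apply $Li^{*}_{s}$ to the defining exact sequence $0\to Ri_{*}i^{-1}\sho_{\XS}\to\sho_{\wh{\YS}}\to\shq_{\YS}\to0$ and substitute the two computations above, obtaining a distinguished triangle $Ri_{*}i^{-1}\sho_{X}\to\sho_{\wh Y}\to Li^{*}_{s}\shq_{\YS}\To{+1}$. A short verification shows that the first arrow is the natural (injective) morphism $Ri_{*}i^{-1}\sho_{X}\to\sho_{\wh Y}$ whose cokernel defines $\shq_{Y}$, whence $Li^{*}_{s}\shq_{\YS}$ is concentrated in degree~$0$ and isomorphic to $\shq_{Y}$, which completes the proof.
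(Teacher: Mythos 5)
Your proposal is correct and takes essentially the same route as the paper: identify $Li^*_s(\sho_{\XS}/\shj^k)\simeq\sho_X/\shi_Y^k$ (flatness over $\pOS$), commute $Li^*_s$ with the projective limit using the Mittag-Leffler condition coming from the surjectivity of the transition maps $\sho_{\XS}/\shj^{k+1}\to\sho_{\XS}/\shj^k$, and obtain the statement for $\shq_{\YS}$ from the defining triangle. The paper compresses this into one sentence (citing the properties of $Li^*_s$ and Mittag-Leffler); your Koszul-resolution and Stein-open bookkeeping is a fleshed-out version of the same argument.
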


\begin{proof}
Thanks to the properties of $Li^*_s$ (\cf Proposition \ref{prop:3.1}) the result follows from Mittag-Leffler's condition since the morphisms $\sho_{\XS}/\shj^{k+1}\to \sho_{\XS}/\shj^k$ are surjective.
\end{proof}

\begin{proof}[Proof of Theorem \ref{T:sreg}]
We first remark that the theorem holds if $S$ is reduced to a point, according to \cite[(6.4.6) \& (6.4.7)]{K-K81}.

If $\Rhom_{\DXS}(\shm, \shq_{\YS})=0$ for all germ $Y\subset X$, then for any $s\in S$
\begin{equation}\label{eq:reqQ}
\begin{split}
\Rhom_{\DX}(Li^*_s\shm,\shq_Y)&\simeq\Rhom_{\DX}(Li^*_s\shm,Li^*_s\shq_{\YS})\\
&\simeq Li_s^*\Rhom_{\DXS}(\shm, \shq_{\YS})=0.
\end{split}
\end{equation}
From the preliminary remark we conclude that $Li^*_s\shm\in\rD^\rb_{\rhol}(\DX)$ for any $s\in S$, hence $\shm\in\rD^\rb_{\rhol}(\DXS)$.

Conversely, if $\shm$ is regular holonomic, then $\Rhom_{\DXS}(\shm, \shq_{\YS})$ is $S$-$\CC$-con\-structible, and by the variant of Nakayama's lemma (\cf\cite[Prop.\,2.2]{MFCS1}), it is zero if (and only~if) $Li_s^*\Rhom_{\DXS}(\shm, \shq_{\YS})=0$ for any $s\in S$. Reading \eqref{eq:reqQ} backwards and according to the preliminary remark, we find that the latter property is satisfied, hence
\[
\Rhom_{\DXS}(\shm, \shq_{\YS})=0.\qedhere
\]
\end{proof}

\section{Construction of the relative Riemann-Hilbert functor \texorpdfstring{$\RH_X^S$}{RHS}}\label{S:RHS}

In this section, we extend the definition of the functor $\RH_X^S$ introduced in \cite{MFCS2} when $\dim S=\nobreak1$ to the case $\dim S\geq2$. We will check that it satisfies properties similar to those explained in \cite{MFCS2,FMFS1}.

\subsection{Reminder on the subanalytic site and complements}
We first recall the main results in \cite{MFP2}. We consider the site in the real analytic manifold $S_\RR$ given by the usual topology, that~is, where the family $\Op(S)$ consisting of all open sets. On~the other hand, we have the subanalytic site~$X_{\sa}$ underlying the real analytic manifold~$X_{\R}$ for which the family of open subsets $\Op(X_{\sa})$ consists of subanalytic open subsets in $X_{\R}$. Lastly, we let $X_{\sa}\times S$ be the subanalytic site underlying the real analytic manifold $X_{\R}\times S_{\R}$, for which the family of open sets $\Op(X_{\sa}\times S)$ consists of those which are finite unions of products $U\times V$ with $U\in\Op(X_{\sa})$ and $V$ is open in $S$. A subset $T \subset \Op(X_{\sa}\times S)$ is a covering of $W=U\times V \in \Op(X_{\sa}\times S)$ if and only if it admits a refinement $\{U_i\times V_j\}_{i \in I, j \in J}$ such that $\{U_i\}_{i \in\ I}$ is a covering of $U$ (in $X_{\sa}$) and $\{V_j\}_{j \in J}$ is a covering of $V$ (in $S$). In particular, when $U$ is relatively compact~$I$ is finite but $J$ needs not to be so even if $V$ is relatively compact.

We have the following commutative diagram, where the arrows are natural morphisms of sites induced by the inclusion of families of open subsets.
\begin{equation}\label{eq:diagram sites}
\begin{array}{c}
\xymatrix{
& X_{\sa}\times S \ar[dr]^a & \\
\XS \ar[rr]^{\rho'} \ar[ur]^{\rho_S} \ar[dr]^\rho & & X_{\sa}\times S_{\sa} \\
& (\XS)_{\sa} \ar[ur]^\eta &
}
\end{array}
\end{equation}

We recall that
\begin{enumerate}
\step\label{step:rhoS}
$\rho_S^{-1}$ commutes with tensor products (\cf \textup{\cite[Lem.\,18.3.1(ii)(c)]{KS3}}) and \hbox{$\rho_S^{-1}\rho_{S*}\!\simeq\!\Id$}. Furthermore, $\rho_S^{-1}$ admits a left adjoint $\rho_{S!}$ which is exact and commutes with tensor products (\cf \textup{\cite[\S3]{MFP2}}).
\end{enumerate}

The following proposition generalizes \cite[Prop.\,3.3]{MFCS2}. Its proof is completely similar.
\begin{proposition}\label{exact}
The category $\Mod_{\rc}(\pXOS)$ is acyclic for $\rho_S$:
\[
\forall F\in\Mod_{\rc}(\pXOS),\; \forall k\geq 1, \quad \shh^k R\rho_{S *}F=0.
\]
In particular $\rho_{S*}$ is exact on $\Mod_{\rc}(\pXOS)$.
\end{proposition}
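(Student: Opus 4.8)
The plan is to reduce the statement to a local computation of higher direct images under the morphism of sites $\rho_S:\XS\to X_{\sa}\times S$, following closely the pattern of the proofs of \cite[Props.\,3.3 \& 3.5]{MFCS2} but now dealing with the fact that the $S$-factor of the target site carries the \emph{usual} topology rather than the subanalytic one. First I would recall that for $F\in\Mod_{\rc}(p_X^{-1}\sho_S)$ the sheaf $\shh^k\rho_{S*}F$ is the sheafification on $X_{\sa}\times S$ of the presheaf $W=U\times V\mto H^k(U\times V;F)$, so it suffices to show that for a cofinal family of such opens $W$ (e.g. with $U$ subanalytic relatively compact in a chart and $V$ a relatively compact open in $S$) the cohomology $H^k(U\times V;F)$ vanishes for $k\geq1$; more precisely it suffices to show vanishing after restricting to a covering, which is exactly the flexibility the subanalytic-times-usual site gives us.

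The key steps, in order, would be: (i) By definition of $\Mod_{\rc}(p_X^{-1}\sho_S)$, there is a locally finite subanalytic stratification $(X_\alpha)$ of $X$ such that $F$ is, locally on $\XS$, of the form $p_{X_\alpha}^{-1}G_\alpha$ on each $X_\alpha\times S$ for a coherent $\sho_S$-module $G_\alpha$; by d\'evissage along the stratification (using the excision/Mayer--Vietoris exact sequences associated to the closed strata, which are subanalytic, hence admissible opens of the complement are available in $X_{\sa}$) one reduces to the case where $F=j_{!}p_{X_\alpha}^{-1}G_\alpha$ for a single locally closed subanalytic $X_\alpha$ and a coherent $G_\alpha$. (ii) For such $F$, using that $U$ may be chosen so that $U\cap X_\alpha$ is subanalytic and the pair behaves well (a subanalytic triangulation argument, or local conic structure of subanalytic sets as in \cite{KS1}), the computation of $H^k(U\times V;F)$ reduces by the projection formula / K\"unneth to the cohomology $H^k(V;G_\alpha)$ of the coherent $\sho_S$-module $G_\alpha$ on the open $V\subset S$, tensored with a constant (the cohomology of $U\cap X_\alpha$ with $\ZZ$-coefficients, which is a finite free $\ZZ$-module for $U$ small subanalytic). (iii) Now invoke Cartan's Theorem B: shrinking $V$ to a Stein open (these are cofinal among opens of $S$), $H^k(V;G_\alpha)=0$ for $k\geq1$ since $G_\alpha$ is $\sho_S$-coherent. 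Hence $H^k(U\times V;F)=0$ for $k\geq1$ on a cofinal family, so $\shh^k\rho_{S*}F=0$, which is the assertion; exactness of $\rho_S$ on $\Mod_{\rc}(p_X^{-1}\sho_S)$ is the immediate consequence since $\rho_{S*}$ is already left exact and we have just shown its higher derived functors vanish on this subcategory (using also that this subcategory is stable under the kernels/cokernels appearing, by the constructibility formalism).

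The main obstacle I expect is step (ii): making rigorous the claim that, locally on the subanalytic site, the cohomology of $U\times V$ with coefficients in $j_!p_{X_\alpha}^{-1}G_\alpha$ splits as a K\"unneth-type product of a topological factor coming from $U\cap X_\alpha$ and an $\sho_S$-coherent factor $H^\cbbullet(V;G_\alpha)$. The point is that the topology on the $S$-side is the usual one, so one cannot simply cite the subanalytic-sheaves formalism of Kashiwara--Schapira on a product subanalytic site; instead one must argue that $\rho_{S*}$ computed here is really the ordinary $R\Gamma$ on honest opens $U\times V$ of the topological space $\XS$, and then use that such $F$ is (locally) a pullback under the projection $U\times V\to V$ of the coherent sheaf $G_\alpha$ along the stratum, constant in the $U$-direction on $X_\alpha$. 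Once it is recognized that on these product opens one may use the ordinary Leray spectral sequence for $U\times V\to V$ together with properness/contractibility of the subanalytic slices $U\cap X_\alpha$ (after shrinking, by local triviality of subanalytic sets), the coherent-sheaf vanishing from Cartan's Theorem B finishes the argument; this is where \cite{MFP2} and the cofinality of products $U\times V$ with $V$ Stein (guaranteed since $S$ is a complex manifold) are essential.
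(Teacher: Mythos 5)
Your overall skeleton (compute $R^k\rho_{S*}F$ on product opens $U\times V$ with $V$ Stein, strip the $X$-factor using constructibility, conclude by Cartan's Theorem~B applied to the coherent restriction to $\{x\}\times S$) is the paper's, but the implementation of the middle step (ii) has a genuine gap. After your d\'evissage you must show that $H^k\bigl(U\times V;\,j_!p_{X_\alpha}^{-1}G_\alpha\bigr)=0$ for $k\geq1$ on a covering, and for this the K\"unneth factor coming from the $X$-direction is \emph{not} $H^k(U\cap X_\alpha;\Z)$ but $R\Gamma\bigl(U;j_!\C_{U\cap X_\alpha}\bigr)$, which records how $X_\alpha$ sits inside $U$ and can be nonzero in positive degrees even when $U\cap X_\alpha$ is contractible. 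Concretely, take $X=\R$, $X_\alpha=(0,1)$, $U=\R$, $V$ Stein and $G_\alpha$ coherent nonzero: from $0\to\C_{(0,1)\times V}\to\C_{\R\times V}\to\C_{(\R\setminus(0,1))\times V}\to0$ tensored with $q^{-1}G_\alpha$ one gets $H^1\bigl(U\times V;j_!p^{-1}G_\alpha\bigr)\simeq H^0(V;G_\alpha)\neq0$, although the slice $(0,1)$ is contractible and $V$ is Stein. So even granting a K\"unneth splitting (which itself cannot be obtained by ``properness of the slices'', since $U\times V\to V$ is not proper; one has to use a finite free resolution of $G_\alpha$ and finiteness of the constructible factor), the vanishing you want requires in addition that $R\Gamma\bigl(U;j_!\C_{U\cap X_\alpha}\bigr)$ be concentrated in degree $0$ for \emph{every} stratum piece, and contractibility of $U\cap X_\alpha$ does not give this; you only record finiteness of the factor, which yields K\"unneth but not the vanishing.

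The missing ingredient is exactly the lemma the paper uses instead of d\'evissage and K\"unneth: choose a subanalytic triangulation adapted to a stratification of $F$ (\cite[Prop.\,8.2.5]{KS1}) and cover $U$ by open stars $U(\sigma)$; then, as in the relative version of \cite[Prop.\,8.1.4]{KS1} (using \cite[Prop.\,A.7]{MFCS2}), one has $H^k\bigl(U(\sigma)\times V;F\bigr)\simeq H^k\bigl(V;F_{\{x\}\times S}\bigr)$ for $x\in|\sigma|$, for the whole constructible $F$ at once, and the right-hand side vanishes for $k\geq1$ because $F_{\{x\}\times S}$ is $\sho_S$-coherent and $V$ is Stein. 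If you keep your stratumwise reduction, the same star-shaped opens repair step (ii) (the topological factor becomes the stalk at $x$, placed in degree $0$), but that repair is precisely the Kashiwara--Schapira open-star argument, so your route then coincides with the paper's.
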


The next statement corrects \cite[Prop.\,3.5]{MFCS2}. Its proof is given in the appendix.

\begin{proposition}\label{P:C1}
Let $F^\cbbullet$ be a bounded complex of $\pXOS$-modules with $S$-$\R$-constructible cohomology. Then there exists an isomorphism $K^\cbbullet\to F^\cbbullet$ in $\rD^-(\pXOS)$, where $K^\cbbullet$ is a complex in $\rC^-(\pXOS)$ whose terms are locally finite sums \hbox{$\bigoplus_{\alpha\in A} \C_{U_{\alpha}\times V_{\alpha}} \otimes p_X^{-1}\sho_S$}, where the $U_{\alpha}$ are open subanalytic relatively compact in $X$ and the $V_{\alpha}$ are open relatively compact in $S$.
\end{proposition}

\begin{remark}\label{R1}
Recall the diagram \eqref{eq:diagram sites}.
From the fact that $a_*$ is fully faithful and that $a^{-1}a_*=\Id$, we have $\rho_{S*}=a^{-1}\rho'_*$ (thus $R\rho_{S*}=a^{-1}R\rho'_*$) as explained in \cite{MFP2} before Prop.\,3.1, we deduce that, for any open subset $U\times V\in\Op^\rmc(X_{\sa}\times S)$, the constant sheaf $\C_{U\times V}$ on the site $X_{\sa}\times S$ coincides with $\rho_{S*}\C_{U\times V}$. Similarly,
\[
R\rho_{S*}(\C_{U\times V}\otimes p_X^{-1}\sho_S)\simeq \rho_{S*}(\C_{U\times V})\otimes \rho_{S*}(\pXOS)\simeq \rho_{S*}(\C_{U\times V}\otimes p_X^{-1}\sho_S)
\]
since the isomorphisms hold true with $\rho'$ instead of $\rho_S$ (\cf \cite[Lem.\,3.6(2)]{MFP1}).
\end{remark}

\begin{remark}\label{R2}
For any $S$, the site $X_{\sa}\times S$ is a ringed site both relatively to the sheaf $\rho_{S*}(\pXOS)$ and to the sheaf $\rho_{S!} \sho_{\XS}$ (\cf \cite[p.\,449]{KS3}), and $\rho_S$ is a morphism of ringed sites in both cases. Thus, according to \hbox{\cite[Lem.\,18.3.1, Th.\,18.6.9]{KS3}}, $\rho_S^{-1}$ commutes with $\otimes$, $\otimes_{\rho_{S*}\pXOS}$, $\otimes_{\rho_{S!}\sho_{\XS}}$, $\otimes^L_{\rho_{S*}\pXOS}$ and $\otimes^L_{\rho_{S!}\sho_{\XS}}$.
We recall that $R\rho_{S*}\pXOS\simeq \rho_{S!}\pXOS$ (\cf \cite[Prop.\,3.16]{MFP2}).
Let $\pi:S'\to S$ be a morphism of complex manifolds. Since $\sho_{S'}$ is a $\pi^{-1}\sho_S$-module, $\rho_{S'*}(p_X^{-1}\sho_{S'})$ is a $\rho_{S'*}(p_X^{-1}\pi^{-1}\sho_S)$-module hence a $\pi^{-1}\rho_{S*}(p_X^{-1}\sho_S)$-module. Similarly, $\rho_{S'!}\sho_{\XS'}$ is a $\pi^{-1}\rho_{S!}\sho_{\XS}$-module. In other words, $\pi$ induces a morphism of ringed sites with respect to both sheaves of rings. Consequently, according to \cite[Th.\,18.6.9(i)]{KS3}, the derived functors $L\pi^*: \rD(\rho_{S*}\pXOS)\to \rD(\rho_{S'*}p^{-1}_X\sho_{S'})$ \resp (keeping the same notation $\pi$ for the morphism $\Id\times \pi$), $L\pi^*: \rD(\rho_{S!}\sho_{\XS})\to \rD(\rho_{S'!}\sho_{\XS'})$ are well-defined.

We recall that $\rho_S^{-1}R\rho_{S*}\simeq\id$, that $R\rho_{S*}$ commutes with $R\pi_*$ and $\rho_S^{-1}$ commutes with $\pi^{-1}$ according to \cite[Prop.\,17.5.3]{KS3}. Furthermore $(\pi^{-1}, \pi_*)$ is a pair of adjoint functors according to \cite[Th.\,17.5.2(i)]{KS3} and $\rho_S^{-1}$ commutes with $R\pi_*$,
which follows by copying the proof of \cite[Prop.\,2.2.1(ii)]{P05} since $\rho_S^{-1}$ admits an exact left adjoint ($\rho_{S!}$) hence takes injective sheaves to injective sheaves. (Note that, in general, we do not have $\pi^{-1}R\rho_{S*}=R\rho_{S'*}\pi^{-1}$ while we have $\pi^{-1}\rho_{S!}=\rho_{S'!}\pi^{-1}$.) We thus have isomorphisms of functors
\[
\rho_{S'}^{-1}\pi^{-1}R\rho_{S*}\simeq \pi^{-1},\quad
R\rho_{S'*}\rho_{S'}^{-1}\pi^{-1}R\rho_{S*}\simeq R\rho_{S'*}\pi^{-1}
\]
and, composing with the natural morphism $\Id\to R\rho_{S'*}\rho_{S'}^{-1}$, we deduce a natural morphism
\begin{starequation}\label{E3b}
\pi^{-1}R\rho_{S*}\to R\rho_{S'*}\pi^{-1}.
\end{starequation}%
\end{remark}
We have
\[\pi^{-1}\rho_{S*}(\C_{U\times V})=\pi^{-1}(\C_{U\times V})= \C_{U\times \pi^{-1}V}=\rho_{S'*}(\C_{U\times \pi^{-1}V})=\rho_{S'*}\pi^{-1}(\C_{U\times V}).
\]

\begin{lemma}\label{Lr}
Let $\pi: S'\to S$ be a morphism. Then we have an isomorphism of functors on $\rD^\rb_{\rc}(\pOS)$ with values in $\rD^\rb_{\rc}(\rho_{S'*}\pOSp)$:
\[
L\pi^*R\rho_{S*}(\cbbullet)\simeq R\rho_{S'*}L\pi^*(\cbbullet).
\]
\end{lemma}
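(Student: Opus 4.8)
The plan is to exhibit a natural morphism $\theta_F\colon L\pi^*\rho_{S*}F\to\rho_{S'*}L\pi^*F$ of triangulated functors $\rD^\rb_\rc(\pOS)\to\rD_\rc(\rho_{S'*}\pOSp)$ and then prove it is an isomorphism by reducing, via the explicit description of $\rc$-complexes in Proposition \ref{P:C1}, to the case of an elementary sheaf $p^{-1}\sho_S\otimes\C_{U\times V}$, where everything can be computed. To construct $\theta_F$: recall from Remark \ref{R2} that $L\pi^*$ is left adjoint to $R\pi_*$ (on both sites) and that $\rho_{S*}R\pi_*\simeq R\pi_*\rho_{S'*}$; applying $\rho_{S*}$ to the unit $\Id\to R\pi_*L\pi^*$ of the adjunction on $\XS$, composing with this isomorphism, and using $R\rho_{S*}=\rho_{S*}$, $R\rho_{S'*}=\rho_{S'*}$ on $\rc$-complexes (Proposition \ref{exact}) together with the fact that $L\pi^*$ preserves $\rc$-complexes (clear from the computation below on the generators of Proposition \ref{P:C1}), one gets a morphism $\rho_{S*}F\to R\pi_*\rho_{S'*}L\pi^*F$ whose adjoint is $\theta_F$.

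Next I would reduce to a single generator. Being an isomorphism in $\rD(\rho_{S'*}\pOSp)$ can be checked on cohomology sheaves, hence locally on $X_{\sa}\times S'$, and the class of $F$ for which $\theta_F$ is an isomorphism is stable under shifts, cones and finite direct sums. By Proposition \ref{P:C1} any $F\in\rD^\rb_\rc(\pOS)$ is represented by a bounded complex whose terms are direct sums of sheaves $p^{-1}\sho_S\otimes\C_{U\times V}$ with $U$ relatively compact subanalytic in $X$ and the $V$'s forming a locally finite covering of $S$; pulling these families back to $S'$ they stay locally finite, so over a relatively compact open set only finitely many summands are nonzero, and by additivity I am reduced to $F=p^{-1}\sho_S\otimes\C_{U\times V}$.

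For such an $F$ I would compute both sides. On the one hand, Remark \ref{R1} gives $\rho_{S*}F\simeq\C_{U\times V}\otimes\rho_{S*}(\pOS)$ with $\C_{U\times V}$ the constant sheaf on the site, and $\pi^{-1}(\C_{U\times V}\otimes\rho_{S*}\pOS)\simeq\C_{U\times\pi^{-1}V}\otimes\pi^{-1}\rho_{S*}\pOS$ (here $\pi^{-1}\rho_{S*}\C_{U\times V}=\rho_{S'*}\pi^{-1}\C_{U\times V}=\C_{U\times\pi^{-1}V}$, the key equality on constant sheaves); feeding this into $L\pi^*(-)=\rho_{S'*}\pOSp\otimes^L_{\pi^{-1}\rho_{S*}\pOS}\pi^{-1}(-)$ the change of rings cancels, and using Remark \ref{R1} once more, $L\pi^*\rho_{S*}F\simeq\rho_{S'*}\bigl(\pOSp\otimes\C_{U\times\pi^{-1}V}\bigr)$. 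On the other hand, on $\XS$ one has $L\pi^*(p^{-1}\sho_S\otimes\C_{U\times V})\simeq\pOSp\otimes\C_{U\times\pi^{-1}V}$, so $\rho_{S'*}L\pi^*F$ is the same object. Thus both sides lie in the essential image of $\rho_{S'*}$, on which $\rho_{S'}^{-1}$ is conservative because $\rho_{S'}^{-1}\rho_{S'*}\simeq\Id$ (Step \ref{step:rhoS}); applying $\rho_{S'}^{-1}$ and using that it commutes with $\pi^{-1}$ and with the relevant tensor products (Remark \ref{R2}) together with $\rho_S^{-1}\rho_{S*}\simeq\Id$, one identifies $\rho_{S'}^{-1}\theta_F$ with the canonical isomorphism $L\pi^*F\simeq L\pi^*\rho_{S'}^{-1}\rho_{S'*}F$, so $\theta_F$ is an isomorphism.

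The main obstacle is that $\pi^{-1}$ and $\rho_{S*}$ do not commute in general: only the one-sided natural morphism $\pi^{-1}\rho_{S*}\to\rho_{S'*}\pi^{-1}$ is available (Remark \ref{R2}), and it is genuinely not invertible for an arbitrary sheaf on the site. This is precisely why the proof must route through Proposition \ref{P:C1} to bring everything back to the constant sheaves $\C_{U\times V}$, for which Remark \ref{R1} does supply the equality $\pi^{-1}\rho_{S*}\C_{U\times V}=\rho_{S'*}\pi^{-1}\C_{U\times V}$; the only point requiring real care is checking that the reduction to finite direct sums and then to a single summand is compatible with all the functors $L\pi^*$, $\rho_{S*}$, $\rho_{S'*}$ simultaneously, which is handled by the locality of the statement and the local finiteness of the families in \loccit
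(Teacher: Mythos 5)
Your proposal is correct and follows essentially the same route as the paper: reduce via Proposition \ref{P:C1} to $F=\C_{U\times V}\otimes p^{-1}\sho_S$ and then run exactly the chain of identifications of Remark \ref{R1} (compatibility of $\rho_{S*}$ with the constant sheaves $\C_{U\times V}$, cancellation of the change of rings, and $\pi^{-1}\rho_{S*}\C_{U\times V}=\rho_{S'*}\C_{U\times\pi^{-1}V}$). The only organizational difference is that you construct the comparison morphism explicitly by adjunction and check it is an isomorphism objectwise, whereas the paper delegates this naturality/dévissage step to \cite[Prop.\,1.6 \& Rem.\,1.8]{FMFS1}.
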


\begin{proof}

Let $F\in \rD^\rb_{\rc}(\pOS)$.
By \eqref{E3b} we have a morphism (recalling that $R\rho_{S*}\pOS\simeq \rho_{S*}\pOS$)
\begin{multline*}
\rho_{S'*}p^{-1}\sho_{S'}\otimes^L_{\pi^{-1}\rho_{S*}p^{-1}\sho_{S}}\pi^{-1}R\rho_{S*}F\to \rho_{S'*}p^{-1}\sho_{S'}\otimes^L_{\pi^{-1}\rho_{S*}p^{-1}\sho_{S}}R\rho_{S'*}\pi^{-1}F\\
\to R\rho_{S'*}(p^{-1}\sho_{S'}\otimes^L_{\pi^{-1}p^{-1}\sho_{S}}\pi^{-1}F).
\end{multline*}
This gives the desired morphism $\tau_F:L\pi^*R\rho_{S*}(F)\to R\rho_{S'*}L\pi^*(F)$.
By Proposition~\ref{PL}, \cite[Rem.\,1.8]{FMFS1} and Remark \ref{R58} we may assume that $F=\C_{U\times V}\otimes p^{-1}\sho_S$, for some relatively compact open subsets respectively of~$X$ and~$S$, with $U$ subanalytic.
We have
\begin{align*}
L\pi^*\rho_{S*}(F)&=\rho_{S'*}(\pOSp)\otimes^L_{\pi^{-1}\rho_{S*}(\pOS)}\pi^{-1}\rho_{S*}(\C_{U\times V}\otimes p^{-1}\sho_S)\\
&\simeq \rho_{S'*}(\pOSp)\otimes^L_{\pi^{-1}\rho_{S*}(\pOS)}\pi^{-1}\rho_{S*}(\C_{U\times V})\otimes \pi^{-1}\rho_{S*}(p^{-1}\sho_S)\\
&\simeq \rho_{S'*}(\pOSp)\otimes \pi^{-1}\rho_{S*}(\C_{U\times V})
\simeq\rho_{S'*}(\pOSp)\otimes \rho_{S'*}\pi^{-1}(\C_{U\times V})\\
&\simeq\rho_{S'*}(\pOSp\otimes \C_{U\times \pi^{-1}V})
\simeq \rho_{S'*}(L\pi^*F).
\end{align*}
\end{proof}

\subsection{The construction of \texorpdfstring{$\RH_X^S$}{RHS} and behaviour under pushforward}
For this section, we refer to the notation introduced in \cite[\S2.2]{FMFS1}. However, instead of making use of the morphism $\rho'$ of \eqref{eq:diagram sites} as in \loccit, we replace it with $\rho_S$. Most proofs do not need any change.

We define the triangulated functor $\TH_X^S:\rD^\rb(p^{-1}\sho_S)^{\op}\to\rD^\rb(\DXS)$ by
\begin{align*}
\TH_X^S(F)&:=\rho_{S}^{-1}\rh_{\rho_{S *}\pOS}(R\rho_{S *}F, \Db^{t,S}_{\XS}),\\
\RH_X^S(F)&:=\rho_{S}^{-1}\rh_{\rho_{S *}\pOS}(R\rho_{S *}F, \sho^{t,S}_{\XS})[d_X].
\end{align*}
If $\dim S=1$ we recover the definitions of \cite{MFCS2} (where we restricted to $\rD^\rb_{\rc}(p^{-1}\sho_S)^{\op}$):
\begin{align*}
\TH_X^S(F)&:=\rho'^{-1}\rh_{\rho'_*\pOS}(\rho'_*F, \Db^ {t,S,\sharp}_{\XS}),\\
\RH_X^S(F)&:=\rho'^{-1}\rh_{\rho'_{*}\pOS}(\rho'_*F, \sho^{t,S,\sharp}_{\XS})[d_X].
\end{align*}
This can be seen as follows:
noting that (\cf\eqref{eq:diagram sites}) $\rho'_*=a_*\circ\rho_{S*}$, $a^{-1}a_*\!=\!\Id$ and $a^{-1}\sho^{t,S,\sharp}_{\XS}=\sho^{t,S}_{\XS}$ (\cf \cite[\S\S 3.1\,\&\, 3.2]{MFP2}), for $F\in \rD^\rb_{\rc}(p^{-1}\sho_S)$,
one finds a natural morphism
\[
\rho'^{-1}\rh_{\rho'_{*}\pOS}(\rho'_*F, \sho^{t,S,\sharp}_{\XS})\to \rho_{S}^{-1}\rh_{\rho_{S *}\pOS}(\rho_{S *}F, \sho^{t,S}_{\XS}).
\]
Let us check that is an isomorphism. The question being local, we can reduce, accord\-ing to Proposition~\ref{P:C1}, to proving the isomorphism for sheaves of the form $F=\C_{U\times V}\otimes p^{-1}\sho_S$, for some open subanalytic relatively compact subset $U$ of~$X$ and $V$ open in $S$. Both objects become then isomorphic to $R\Gamma_{X\times V} T\shh om(\C_{U\times S}, \sho_{\XS})$, as follows from \cite[Props.\,4.1\,\&\,4.7]{MFP1} and \cite[Prop.\,3.24]{MFP2}.

Let $U$ be a subanalytic open subset in $X$ and let us denote by $j: U\times S\to \XS$ the inclusion.

\begin{lemma}[Extension in the case of an open subanalytic set]\label{L:1/7}
Let $F\in\rD^\rb_\rc(p^{-1}_{U}\sho_S)$. Then there are natural isomorphisms in $\rD^\rb(\DXS)$
\begin{align}\tag{\protect\ref{L:1/7}$\,*$}\label{eq:L:1/7*}
\TH_X^S(j_!F)&\simeq\rho_S^{-1}Rj_*R\shhom_{\rho_{S*}p_{U}^{-1}\sho_S}(R\rho_{S*}F, j^{-1}\Db^{t, S}_{\XS}),
\\
\tag{\protect\ref{L:1/7}$\,**$}\label{eq:L:1/7**}
\RH_X^S(j_!F)[-d_X]&\simeq
\rho_S^{-1}Rj_*R\shhom_{\rho_{S*}p_{U}^{-1}\sho_S}(R\rho_{S*}F, j^{-1}\sho^{t, S}_{\XS}).
\end{align}
\end{lemma}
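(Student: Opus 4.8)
The plan is to deduce both isomorphisms from the $(j_!,j^{-1})$-adjunction on the partial subanalytic site, the crux being that $\rho_{S*}$ commutes with extension by zero along the subanalytic open embedding~$j$.

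First I would unwind the definitions: $\TH_X^S(j_!F)=\rho_S^{-1}\Rhom_{\rho_{S*}\pOS}(\rho_{S*}(j_!F),\Db^{t,S}_{\XS})$ and $\RH_X^S(j_!F)=\rho_S^{-1}\Rhom_{\rho_{S*}\pOS}(\rho_{S*}(j_!F),\sho^{t,S}_{\XS})[d_X]$. Since $\rho_S^{-1}$ admits the exact left adjoint $\rho_{S!}$ (\cf\eqref{step:rhoS}), it preserves injectives, hence commutes with $Rj_*$, exactly as in Remark \ref{R2} for $R\pi_*$. It will therefore suffice to establish, on $X_\sa\times S$ (where $j$ now also denotes the morphism of subanalytic sites $U_\sa\times S\to X_\sa\times S$), a natural isomorphism $\Rhom_{\rho_{S*}\pOS}(\rho_{S*}(j_!F),\shf)\simeq Rj_*\Rhom_{\rho_{S*}p_{U}^{-1}\sho_S}(\rho_{S*}F,j^{-1}\shf)$, first for $\shf=\Db^{t,S}_{\XS}$ and then for $\shf=\sho^{t,S}_{\XS}$; applying $\rho_S^{-1}$ (which commutes with $Rj_*$, so the placement of $\rho_S^{-1}$ relative to $Rj_*$ is immaterial) and keeping the shift $[d_X]$ for $\RH_X^S$ then yields \eqref{eq:L:1/7*} and \eqref{eq:L:1/7**}.

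The key step is to show that the natural morphism $j_!\rho_{S*}F\to\rho_{S*}(j_!F)$ is an isomorphism on $X_\sa\times S$. Here I would argue as in the proof of Lemma \ref{Lr}: by Proposition \ref{P:C1} applied over the complex manifold~$U$, together with \cite[Prop.\,1.6 \& Rem.\,1.8]{FMFS1}, one reduces to $F=\C_{W\times V}\otimes p_{U}^{-1}\sho_S$ with $W\subset U$ relatively compact subanalytic open and $V\subset S$ relatively compact open; then $W$ is subanalytic in~$X$, so $j_!F\simeq\C_{W\times V}\otimes\pOS$ on $\XS$, and by Remark \ref{R1} both $\rho_{S*}(j_!F)$ and $j_!\rho_{S*}F$ are identified with $\C_{W\times V}\otimes\rho_{S*}\pOS$ on the site, the general case following by dévissage. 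Once this is known, I would invoke the standard adjunction $\Rhom_{\rho_{S*}\pOS}(j_!G,\shf)\simeq Rj_*\Rhom_{j^{-1}\rho_{S*}\pOS}(G,j^{-1}\shf)$ for the inclusion of the open subsite~$j$ (\cf\cite{KS3}), noting that $j^{-1}\rho_{S*}\pOS\simeq\rho_{S*}p_{U}^{-1}\sho_S$ since restriction to an open subsite commutes with $\rho_{S*}$, and take $G=\rho_{S*}F$; this gives the isomorphism of the previous paragraph, and hence the lemma, for both choices of~$\shf$.

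The main obstacle is the commutation $\rho_{S*}j_!\simeq j_!\rho_{S*}$: it fails for a general open $j$, and its proof here genuinely uses the subanalyticity of~$U$, which is what forces the reduction to the elementary generators provided by Proposition \ref{P:C1}. The remaining ingredients—the commutation of $\rho_S^{-1}$ with $Rj_*$ and the $(j_!,j^{-1})$-adjunction for $\Rhom$—are routine.
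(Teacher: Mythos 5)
Your argument is correct and is essentially the proof the paper has in mind: the paper simply refers to \cite[Lem.\,3.25]{MFCS2}, whose proof runs along the same lines as yours --- the $(j_!,j^{-1})$-adjunction for the localization $U_{\sa}\times S\to X_{\sa}\times S$ combined with the identification $\rho_{S*}(j_!F)\simeq j_!\rho_{S*}F$ on $\rD^\rb_\rc$, checked on the elementary generators of Proposition \ref{P:C1} via Remark \ref{R1}, exactly in the style of Lemma \ref{Lr}. One remark: your aside that $\rho_S^{-1}$ commutes with $Rj_*$ is neither needed (in your setup the inner $Rj_*$ is the site-level pushforward and $\rho_S^{-1}$ is applied last) nor established in the paper, where only a natural morphism $\rho_S^{-1}Rj_*\to Rj_*\rho_S^{-1}$ is constructed in the proof of Lemma \ref{claim:5}, so it is best omitted.
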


\begin{proof}
This lemma is a variant of \cite[Lem.\,3.25]{MFCS2} and its proof is similar to that of \loccit
\end{proof}

We have the variant of \cite[Prop.\,2.1]{FMFS1} using \cite[Prop.\,3.30]{MFP2}:
\begin{proposition}\label{Prop:3.30}
For each $s\in S$ we have an isomorphism of functors
\[
Li^*_s\RH^S_X(\cbbullet)[-d_X]\simeq T\shhom(Li^*_s(\cbbullet), \sho_X).
\]
\end{proposition}

We have the relative version of~\hbox{\cite[Th.\,4.1]{Ka3}}:

\begin{theorem}\label{L:A3}
Let $f:X\to Y$ be a morphism of real analytic manifolds, let $F\in \rD^{\rb}_{\rc}(\pYOS)$. Then we have a canonical morphism in $\rD(\DYS)$, functorial with respect to $F$ and compatible in a natural way with the composition of morphisms
\[
\Df_!\TH_X^S(f^{-1}F)\to \TH_Y^S(F).
\]
\end{theorem}

The proof is stepwise similar to that with $\dim S=1$ in \cite[Lem.\,2.5]{FMFS1} using Proposition \ref{PL}.
We omit~it here and the detailed proof can be found in the arXiv version of this paper. Proposition 7.1 of \cite{Ka3} (see also \cite[Th.\,5.7 \,(5.12)]{KS4}) has a relative version already used in~\cite{MFCS2}, whose proof is given below.

\begin{theorem}\label{L:A4}
Let $f:X\to Y$ be a morphism of complex analytic manifolds, let $F\in \rD^{\rb}_{\rc}(\pXOS)$, and assume that $f$ is proper on $\Supp F$. Then there is a canonical isomorphism in $\rD^\rb(\DXS)$ which is compatible in a natural way with the composition of morphisms
\begin{starequation}\label{E021}
\Df_*\RH_X^S(F)[d_X]{\to} \RH_Y^S(Rf_*F)[d_Y].
\end{starequation}
\end{theorem}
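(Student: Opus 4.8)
The plan is to reduce the relative statement to the classical absolute one of Kashiwara--Schapira \cite{KS4}, using the same factorization and base-change machinery already set up in the paper. First I would factor $f$ as the composition of the graph embedding $\Gamma_f\colon X\hookrightarrow X\times Y$ followed by the projection $p_{XY}\colon X\times Y\to Y$; both are proper on the relevant supports (the first is a closed embedding, the second because $f$ is proper on $\Supp F$). Since $\Df_*$ is functorial for compositions and $Rf_*$ likewise, and $d_{X\times Y}=d_X+d_Y$, it suffices to treat the two cases separately: a closed embedding $i\colon X\hookrightarrow Y$, and a projection $p\colon X=Z\times Y\to Y$. For the closed embedding, Kashiwara's equivalence together with the compatibility of $\RH_X^S$ with $Ri_*i^{-1}$ reduces the claim to an identity that is purely local along $X$; one checks it by the adjunction $(i^{-1},Ri_*)$ and the fact that $\sho^{t,S}_{\XS}$ restricts correctly, exactly as in the absolute case.

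The core case is therefore the projection $p\colon Z\times Y\to Y$. Here I would mimic the proof of \cite[Prop.\,7.1]{KS4} (equivalently \cite[Th.\,4.4]{KS4}) working on the site $X_{\sa}\times S$. The starting point is the integration morphism for the relative transfer bimodule $\shd_{Y\leftarrow X/S}$, which realizes $\Df_*$ on the $\shd$-module side; on the topological side one uses proper pushforward $Rp_*=Rp_!$ and the projection formula. The identity \eqref{E021} is then obtained by combining: (i) the definition $\RH_X^S(F)=\rho_S^{-1}\rh_{\rho_{S*}\pOS}(\rho_{S*}F,\sho^{t,S}_{\XS})[d_X]$; (ii) the relative Grothendieck--type duality / integration isomorphism for tempered holomorphic functions along a smooth proper map, namely that $\Df_*$ of $\sho^{t,S}_{X\times Y\times S/S}$ computes $\sho^{t,S}_{\YS}$ up to the shift $[d_Z]=[d_X-d_Y]$ — this is the relative analogue of the absolute statement and is where \cite{MFP2} and the already-established Theorems \ref{L:A3} and \ref{L:A4}'s companion (the $\Db$-version, Theorem \ref{L:A3}) enter; and (iii) commutation of $\rho_S^{-1}$, $R\shhom$, and $\rho_{S*}$ with the relevant pushforwards, which is provided by Remark \ref{R2} and Lemma \ref{Lr}. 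Using Proposition \ref{P:C1} one may further reduce to the case $F=\CC_{U\times V}\otimes p^{-1}\sho_S$ with $U$ relatively compact subanalytic and $V$ relatively compact open Stein, where everything becomes a concrete computation via Lemma \ref{L:1/7}.

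The main obstacle I expect is step (ii): establishing the integration/trace isomorphism $\Df_*\sho^{t,S}_{X\times Y\times S/S}\simeq\sho^{t,S}_{\YS}[d_Y-d_X]$ in the relative tempered setting on the partial subanalytic site $X_{\sa}\times S$, with the correct $\shd_{Y\leftarrow X/S}$-linearity. In the absolute case this rests on the fact that $\int_f$ applied to $\sho^t_X$ recovers $\sho^t_Y$ with a shift, via the relative dualizing complex and the behavior of tempered functions under proper direct image (Theorem \ref{L:A3} is the $\Db$ version); here one must check that the parameter $S$ is inert throughout, i.e.\ that all constructions are $\rho_{S*}\pOS$-linear and compatible with the ringed-site structure of Remark \ref{R2}, and that the Stein/relatively-compact reductions of Proposition \ref{exact} and Proposition \ref{P:C1} allow the cohomological computations to go through unchanged. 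Once the tempered integration isomorphism is in hand, assembling \eqref{E021} is formal: apply $\rho_S^{-1}\rh_{\rho_{S*}\pOS}(\rho_{S*}F,-)$ to it, use the projection formula and adjunction $(Rp_!,p^!)$ on the topological side together with Theorem \ref{L:A3}, and conclude by the compatibilities recorded in Remarks \ref{R2}, \ref{R1} and Lemma \ref{Lr}. The shifts $[d_X]$ and $[d_Y]$ bookkeep exactly the relative dimension $d_Z$ contributed by the integration isomorphism, so they match on both sides.
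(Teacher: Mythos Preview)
Your overall strategy---graph factorization into a closed embedding and a projection, reduction via Proposition~\ref{P:C1} to constructible sheaves of the form $\C_{Z\times S}\otimes\pOS$, and reliance on the distribution version Theorem~\ref{L:A3}---is exactly what the paper does. The ingredients you list are the right ones.

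The one organizational point where the paper is cleaner concerns your step~(ii). You frame the core of the projection case as establishing a tempered-holomorphic integration isomorphism ``$\Df_*\sho^{t,S}_{X\times Y\times S/S}\simeq\sho^{t,S}_{\YS}[d_Y-d_X]$'' on the partial subanalytic site, and you correctly flag this as the main obstacle. The paper avoids this obstacle entirely: it first proves the \emph{full} pushforward isomorphism at the level of $\TH^S$, namely $\mu_F\colon\Df_*\TH_X^S(F)\isom\TH_Y^S(Rf_*F)$, as a separate lemma. Here everything lives on the ordinary topology (after $\rho_S^{-1}$) and is computed with honest distributions; for the projection $Y\times\R\to Y$ the argument reduces to Kashiwara's explicit integration sequence $0\to f_*\Gamma_{Z\times S}(\Db_{\XS})\To{\partial_t}f_*\Gamma_{Z\times S}(\Db_{\XS})\To{\int\cdot\,\rd t}\Gamma_{f(Z)\times S}(\Db_{\YS})\to0$. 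Only \emph{after} $\mu_F$ is known to be an isomorphism does the paper pass to $\RH^S$ by applying $\rh_{\shd_{\overline{Y}\times\overline{S}/\overline{S}}}(\sho_{\overline{Y}\times\overline{S}},\cbbullet)[d_Y]$ to both sides and invoking the relative version of \cite[Lem.\,7.2]{Ka3} to rewrite the transfer bimodule. This Dolbeault-then-transfer maneuver is purely algebraic and handles the shift bookkeeping automatically.

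So your proposal is correct, but the paper's ordering---prove everything for $\TH^S$ first, then deduce $\RH^S$ formally---turns your anticipated obstacle into a non-issue. If you rewrite step~(ii) in that spirit, the rest of your outline goes through verbatim.
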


\begin{proof}
We recall (\cf\cite[(4)]{FMFS1}) that
\begin{equation}\label{E:3n}
\RH_X^S(F)\simeq \rh_{\shd_{\overline{X}\times \overline{S}/\overline{S}}}(\sho_{\overline{X}\times \overline{S}}, \TH^S(F))[d_ X].
\end{equation}
In view of Theorem \ref{L:A3}, where we replace $F$ by $Rf_*F$, and by adjunction, we derive a natural morphism
\begin{equation}\label{E22}
\mu_F: \Df_*\TH_X^S(F)\to \TH_Y^S(Rf_*F).
\end{equation}

\begin{lemma}\label{Ldirim}
The morphism $\mu_F$ is an isomorphism.
\end{lemma}

The proof of this lemma, which is a relative version of \cite[Th.\,4.4]{KS4}, is given in the appendix and some more details are also given in the arXiv version of this article.

Applying $\rh_{\shd_{\overline{Y}\times \overline{S}/\overline{S}}}(\sho_{\overline{Y}\times \overline{S}}, \cbbullet)[d_Y]$ to both terms of \eqref{E22} the right term becomes $\RH_Y^S(Rf_*F)$. For the left side of~\eqref{E22} we obtain
\begin{align*}
&\rh_{\shd_{\overline{Y}\times \overline{S}/\overline{S}}}(\sho_{\overline{Y}\times \overline{S}}, \Df_*\TH_X^S(F) )[d_Y]\\
&\simeq
\rh_{\shd_{\overline{Y}\times \overline{S}/\overline{S}}}(\sho_{\overline{Y}\times \overline{S}}, Rf_*(\shd_{Y_{\R}\leftarrow X_{\R}/S_{\R}}\otimes^L_{\shd_{X_{\R}\times S_{\R}/S_{\R}}}\hspace*{-1mm}\TH_X^S(F)))[d_Y]\\
&\simeq Rf_*(\rh_{f^{-1}\shd_{\overline{Y}\times \overline{S}/\overline{S}}}(f^{-1}\sho_{\overline{Y}\times \overline{S}}, \shd_{Y_{\R}\leftarrow X_{\R}/S_{\R}}\otimes^L_{\shd_{X_{\R}\times S_{\R}/S_{\R}}}\hspace*{-1mm}\TH_X^S(F)))[d_Y]\\
&\underset{\textup{(a)}}{\simeq} Rf_*(\rh_{f^{-1}\shd_{\overline{Y}\times \overline{S}/\overline{S}}}(f^{-1}\sho_{\overline{Y}\times \overline{S}}, \shd_{Y_{\R}\leftarrow X_{\R}/S_{\R}})\otimes^L_{\shd_{Y_{\R}\times S_{\R}/S_{\R}}}\hspace*{-1mm}\TH_X^S(F))[d_Y]\\\
&\underset{\textup{(b)}}{\simeq}Rf_*(\shd_{Y\leftarrow X/S}\otimes^L_{\DXS}(\rh_{\shd_{\overline{X}\times \overline{S}/\overline{S}}}(\sho_{\overline{X}\times \overline{S}}, \shd_{ X_{\R}\times S_{\R}/S_{\R}})\otimes^L_{\shd_{X_{\R}\times S_{\R}/S_{\R}}}\hspace*{-1mm}\TH_X^S(F)))[d_Y]\\
&\simeq Rf_*(\shd_{Y\leftarrow X/S}\otimes^L_{\DXS}(\rh_{\shd_{\overline{X}\times \overline{S}/\overline{S}}}(\sho_{\overline{X}\times \overline{S}}, \TH_X^S(F)[d_X])).
\end{align*}
Here (a) follows from \cite[(A10)]{Ka2} and (b) follows from the relative version of \cite[Lem.\,7.2]{Ka3} which asserts that
\begin{multline*}
\rh_{\shd_{\overline{Y}\times \overline{S}/\overline{S}}}(\sho_{\overline{Y}\times \overline{S}}, \shd_{Y_{\R}\leftarrow X_{\R}/S_{\R}})\\
\simeq
\shd_{Y\leftarrow X/S}\otimes ^L_{\DXS}\rh_{\shd_{\overline{X}\times \overline{S}/\overline{S}}}(\sho_{\overline{X}\times \overline{S}}, \shd_{X_{\R}\times S_{\R}/S_{\R}})[d_X-d_Y].
\end{multline*}
Therefore, by applying $\rh_{\shd_{\overline{Y}\times \overline{S}/\overline{S}}}(\sho_{\overline{Y}\times \overline{S}}, \cbbullet)[d_Y]$ to the left-hand side of \eqref{E22} we obtain an isomorphism with $\Df_!\RH^S_X(F)$
which concludes the construction of the morphism \eqref{E021}. Lemma \ref{Ldirim} shows that it is an isomorphism.
\end{proof}

\subsection{Riemann-Hilbert correspondence for Deligne's extensions}\label{subsec:CDT}
We recall that, for $F\in\rD^\rb(\pOS)$ one defines $\bD'F:=\Rhom_{\pOS}(F,\pOS)$ and $\bD F:=\Rhom_{\pOS}(F,\pOS)[2d_X]$.

Let $L$ be a coherent $S$-locally constant sheaf on $\XsS$.
We consider the setting of Notation \ref{nota:Dtype} and assume that $Y=D$ has normal crossings in~$X$.

\begin{proposition}\label{CDT}
Let $L$ be a coherent $S$-locally constant sheaf on $\XsS$ and let $\wt E_L$ be the associated Deligne extension. Then
\begin{itemize}
\item
the complex of $\DXS$-modules $\RH_X^S(j_!\bD'L)[-d_X]$ is isomorphic to $\wt E_L$ and thus it is regular holonomic,
\item
$\pSol \wt E_L\simeq j_!\bD'L$.
\end{itemize}
\end{proposition}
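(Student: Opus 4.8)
The plan is to reduce the statement to the absolute Riemann-Hilbert correspondence over each fiber $\{s\}\times X$, using the fact that both sides of the asserted isomorphisms are already known to be regular holonomic (for $\RH_X^S(j_!\bD'L)[-d_X]$, this follows once we identify it with $\wt E_L$, which is Theorem \ref{th:Deligneext}\eqref{th:Deligneext3}) and then invoking the Nakayama-type criterion \cite[Prop.\,2.2]{MFCS1} to check an isomorphism fiberwise. First I would construct a natural morphism $\wt E_L\to\RH_X^S(j_!\bD'L)[-d_X]$, or dually a morphism $\pSol\wt E_L\to j_!\bD'L$, by exhibiting $\wt E_L$ as the Deligne extension and using the adjunction between $j_!$ and $j^{-1}$ together with the known structure of $\TH_X^S$ and $\sho^{t,S}_{\XS}$. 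The natural candidate is the composite coming from the identification $j^*\wt E_L=E_L$ and the comparison, on $X^*\times S$, between tempered holomorphic solutions of $E_L$ and the dual local system $\bD'L$; this comparison on $\XsS$ is the relative Riemann-Hilbert correspondence for flat relative connections, which is essentially Deligne's equivalence \cite[Th.\,2.23]{De}.

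Next I would reduce to the local setting of the proof of Theorem \ref{th:Deligneext}\eqref{th:Deligneext1}: working locally on $\XS$, replace $E_L$ by $\wh E_G=\sho_{\XS}(*D)\otimes_{\pOS}p^{-1}G$ with connection $\wh\nabla=\rd\otimes\id+\sum_i\rd x_i/x_i\otimes A_i$. Here the key computational input is the comparison between the moderate-growth Deligne extension $\wt{\wh E\,}_{\!G|\XsS}=\wh E_G$ and the complex $\RH_X^S$ applied to the constructible complex attached to $L$. I would compute $\RH_X^S(j_!\bD'L)$ by means of Lemma \ref{L:1/7} (extension across the subanalytic open set $X^*$), which expresses it as $\rho_S^{-1}Rj_*\Rhom_{\rho_{S*}p_{X^*}^{-1}\sho_S}(\rho_{S*}\bD'L, j^{-1}\sho^{t,S}_{\XS})[d_X]$; on $\XsS$ the tempered holomorphic solution complex of a flat relative connection recovers the dual $\pOS$-local system, so this reduces to showing that $Rj_*$ of (a twist of) $\sho^{t,S}$, combined with the monodromy operators $x_i^{A_i(s)}$, reproduces exactly the moderate-growth sections, i.e.\ $\wh E_G$. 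This is the relative analogue of the classical computation underlying the Riemann-Hilbert correspondence for regular meromorphic connections, and the fiberwise version (for each fixed $s$) is \cite[Cor.\,2.8]{MFCS2} type statements, reduced ultimately to rank one as in Steps 4--5 of the proof of Theorem \ref{th:Deligneext}\eqref{th:Deligneext3}.

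Having a morphism between two objects of $\rD^\rb_\rhol(\DXS)$ (for the first bullet) respectively of $\rD^\rb_\cc(\pOS)$ (for the second), I would then check it is an isomorphism by restricting to each $s\in S$: $Li_s^*$ commutes with $\RH_X^S$ up to the absolute functor $\RH_X$ (this is the analogue of \cite[Prop.\,3.1]{MFCS1} and the compatibilities recorded in Remark \ref{R2} and Lemma \ref{Lr}), and $Li_s^*\wt E_L$ is, by the local description, the absolute Deligne extension of $i_s^*E_L$; similarly $Li_s^*(j_!\bD'L)\simeq j_!\bD'(i_s^*L)$. So on each fiber the asserted isomorphisms become the classical statements "$\RH_X(j_!\bD'\mathcal{L})[-d_X]\simeq$ Deligne extension" and "$\pSol$ of the Deligne extension $\simeq j_!\bD'\mathcal{L}$" for the absolute regular Riemann-Hilbert correspondence of Kashiwara and Mebkhout. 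Since both sides of each bullet are $S$-$\CC$-constructible (resp.\ regular holonomic), the cone of the morphism has vanishing $Li_s^*$ for all $s$, hence vanishes by the Nakayama argument \cite[Prop.\,2.2]{MFCS1}.

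The main obstacle I expect is the construction and naturality of the comparison morphism itself — producing a well-defined $\DXS$-linear morphism $\wt E_L\to\RH_X^S(j_!\bD'L)[-d_X]$ (not merely an abstract isomorphism of objects) that is compatible with restriction to $X^*\times S$ and with $Li_s^*$. This requires carefully threading the adjunctions on the subanalytic site $X_{\sa}\times S$ (Lemma \ref{L:1/7}, Remark \ref{R1}) and the comparison of the two definitions of moderate growth — one via $\sha^\rmod_{\wXS}$ and one via $\sho^{t,S}_{\XS}$ — which is precisely the content announced to rely on \cite{MFP2}. Once the morphism is in place and shown to be the classical one on each fiber, the rest is formal. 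The second bullet follows from the first by applying $\pSol_X$ and using that $\pSol_X\circ\RH_X^S\simeq\id$ on $\rD^\rb_\cc(\pOS)$ once that identity is available, or directly by the same fiberwise reduction.
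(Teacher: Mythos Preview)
Your overall strategy---construct a comparison morphism and then verify it is an isomorphism fiberwise via the Nakayama-type criterion \cite[Prop.\,2.2]{MFCS1}---contains a genuine circularity. The Nakayama argument requires the cone of your morphism to lie in a category with the requisite finiteness (e.g.\ $\rD^\rb_\coh(\DXS)$); since $\wt E_L$ is regular holonomic by Theorem \ref{th:Deligneext}\eqref{th:Deligneext3}, this amounts to knowing that $\RH_X^S(j_!\bD'L)[-d_X]$ is $\DXS$-coherent. But that is precisely what you are trying to prove---you yourself write that it ``follows once we identify it with $\wt E_L$''. At this point in the paper, the fact that $\RH_X^S$ lands in $\rD^\rb_\rhol(\DXS)$ (statement \eqref{TM}) has not been established; Proposition \ref{CDT} is the base case of its inductive proof. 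Likewise, the commutation $Li_s^*\circ\RH_X^S\simeq\RH_X\circ Li_s^*$ that you invoke is not available and would itself require an analysis of $Li_s^*\sho^{t,S}_{\XS}$; Lemma \ref{Lr} only handles the $\rho_{S*}$ side.

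The paper proceeds instead by direct computation. It shows (Lemma \ref{LS}) that $\wt E^S_L:=\RH_X^S(j_!\bD'L)[-d_X]$ is concentrated in degree zero and locally $\sho_{\XS}$-isomorphic to $\wh E_G=\sho_{\XS}(*D)\otimes_{\pOS}p^{-1}G$; the key input is the identification $\rho_S^{-1}Rj_*j^{-1}\sho^{t,S}_{\XS}\simeq\sho_{\XS}(*D)$ (via \cite[Prop.\,3.24(1)]{MFP2} and \cite[Lem.\,7.5]{Ka3}). Then (Lemma \ref{claim:5}) both $\wt E_L$ and $\wt E^S_L$ are exhibited as $\DXS$-submodules of $j_*E_L$, and one checks that under the local $\sho_{\XS}$-trivialization $j_*E_L\simeq j_*\wh E_{p_{X^*}^{-1}G}$ both submodules become $\wh E_G$, hence coincide. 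This yields coherence and the isomorphism simultaneously, with no appeal to fiberwise restriction. You actually sketch this computation in your second paragraph (``$Rj_*$ of a twist of $\sho^{t,S}$ \dots\ reproduces exactly the moderate-growth sections''), but you treat it as producing only a morphism; carried out in full, it \emph{is} the proof, and the Nakayama step you layer on top is then both redundant and, as argued above, circular.

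For the second bullet, invoking $\pSol_X\circ\RH_X^S\simeq\id$ is again circular, since that identity is part of Theorem \ref{RHH}, whose proof relies on the present proposition. The paper instead uses Lemma \ref{lem:DRRHS}, namely $\pDR_X\circ\RH_X^S\simeq\bD$, which is established independently, together with $\pSol_X\simeq\bD\circ\pDR_X$ on holonomic objects---the holonomicity being supplied by the first bullet.
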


\begin{proof}
We adapt the idea of proof of \cite[Lem.\,4.2]{MFCS2}. Let us prove the second statement assuming the first one holds true. Firstly, the following lemma is similar to \cite[Lem.\,3.19]{MFCS2}:

\begin{lemma}\label{lem:DRRHS}
There exists an isomorphism of functors in $\rD^\rb_\rc(\pOS)$:
\[
\pDR_X(\RH_X^S(\cbbullet))\isom\bD(\cbbullet).
\]
\end{lemma}

Since $\RH_X^S(j!\bD'L)[-d_X]$ is holonomic by the first point, we have
\[
\pSol_X \RH_X^S(j_!\bD'L)[-d_X]\simeq \bD\pDR\RH_X^S(j_!\bD'L)[-d_X]
\overset{(*)}\simeq \bD \bD(j_!\bD'L)=j_!\bD' L,
\]
where $(*)$ follows from Lemma \ref{lem:DRRHS}.

Let us now prove the first statement. We will set
\[
\wt E^S_L=\RH_X^S(j_!\bD'L)[-d_X].
\]
Then, according to \eqref{eq:L:1/7**} and to \cite[Lem.\,3.22]{MFCS2}, we have
\begin{equation}\label{eq:ELStilde}
\begin{aligned}
\wt E^S_L&=\rho_{S}^{-1}Rj_*R\shhom_{\rho_{S*}p_{X^*}^{-1}\sho_S}(\rho_{S*}\bD'L, j^{-1}\sho^{t, S}_{\XS})\\
&\simeq \rho_S^{-1}Rj_*(\rho_{S*}L\otimes^L_{\rho_{S*}p_{X^*}^{-1}\sho_S}j^{-1}\sho_{\XS}^{t,S}).
\end{aligned}
\end{equation}

We make use of the following result of \cite{MFP2}, the proof of which we recall with details as it is used in an essential way below.

\begin{lemma}[{\cite[Prop.\,3.32]{MFP2}}]\label{LS}
The complex $\wt E^S_L$ is concentrated in degree zero.
\end{lemma}

\begin{proof}
Assume first that $L$ is the constant local system $p_{X^*}^{-1}G$ with $G$ being $\sho_S$-coherent. Since the question is local on $\XS$, we can assume that $G$ admits a finite resolution $\sho_S^\cbbullet\to G$ by free $\sho_S$-modules of finite rank. Then we have
\begin{multline}\label{eq:RHSG}
\rho_S^{-1}Rj_*(\rho_{S*}p_{X^*}^{-1}G\otimes_{p_{X^*}^{-1}\sho_S}^L j^{-1}\sho^{t,S}_{\XS})\\
\isofrom\rho_S^{-1}Rj_*(\rho_{S*}p_{X^*}^{-1}\sho_S^\cbbullet\otimes_{p_{X^*}^{-1}\sho_S} j^{-1}\sho^{t,S}_{\XS})\\
\simeq p_{X}^{-1}\sho_S^\cbbullet\otimes_{p_{X}^{-1}\sho_S} \rho_S^{-1}Rj_*j^{-1}\sho^{t,S}_{\XS}.
\end{multline}
We recall that, according to \cite[Prop.\,2.4.4\,(2.4.4)]{KS5}, we have isomorphisms of functors on $\rD^\rb(\C_{X_{\sa}\times S})$
\[
\Rhom (\rho_{S*}j_!\C_{\XsS}, (\cbbullet))\isom Rj_*\Rhom(\rho_{S*}\C_{\XsS}, j^{-1}(\cbbullet))\simeq Rj_*j^{-1}(\cbbullet).
\]
As a consequence, we obtain an isomorphism
\[
\rho^{-1}_SRj_*j^{-1}\sho^{t,S}_{\XS}\simeq \rho_S^{-1} \Rhom(\C_{\XsS}, \sho^{t,S}_{\XS}).
\]
It follows then from \cite[Prop.\,3.24(1)]{MFP2} applied to $\C_{\XsS}=\C_{X^*}\boxtimes\C_S$, that the latter complex is isomorphic to $\tho(\C_{\XsS}, \sho_{\XS})$, so that, according to \cite[Lem.\,7.5]{Ka3}
\begin{equation}\label{eq:rhojs}
\rho^{-1}_SRj_*j^{-1}\sho^{t,S}_{\XS}\simeq\sho_{\XS}(*D).
\end{equation}
We thus deduce from \eqref{eq:RHSG} and \eqref{eq:rhojs}:
\begin{equation}\label{eq:RHSGD}
\begin{aligned}
\rho_S^{-1}Rj_*(\rho_{S*}p_{X^*}^{-1}G\otimes_{p_{X^*}^{-1}\sho_S}^L j^{-1}&\sho^{t,S}_{\XS})\\[
-3pt]
&\simeq \rho_S^{-1}Rj_*(\rho_{S*}p_{X^*}^{-1}\sho_S^\cbbullet\otimes_{p_{X^*}^{-1}\sho_S} j^{-1}\sho^{t,S}_{\XS})\\
&\simeq p_{X}^{-1}\sho_S^\cbbullet\otimes_{p_{X}^{-1}\sho_S} \rho_S^{-1}Rj_*j^{-1}\sho^{t,S}_{\XS}\\
&\simeq p_{X}^{-1}\sho_S^\cbbullet\otimes_{p_{X}^{-1}\sho_S} \sho_{\XS}(*D)\\
&\simeq p_{X}^{-1}G\otimes_{p_{X}^{-1}\sho_S} \sho_{\XS}(*D),
\end{aligned}
\end{equation}
where the latter isomorphism follows from the $p_{X}^{-1}\sho_S$-flatness of $\sho_{\XS}(*D)$. We~conclude that $\wt E^S_{p_{X^*}^{-1}G}$ is concentrated in degree zero.

Let us now consider the case of a possibly non constant $S$-local system $L$ locally isomorphic to $p^{-1}G$, with $G$ being $\sho_S$-coherent. According to Remark \ref{rem:pGOXsS}, locally on $\XS$ we~have an isomorphism of $\sho_{\XsS}$-modules
\begin{equation}\label{ED}
L\otimes_{p_{X^*}^{-1}\sho_S}\sho_{\XsS}\simeq p_{X^*}^{-1}G\otimes_{p_{X^*}^{-1}\sho_S}\sho_{\XsS},
\end{equation}
and by flatness of $\sho_{\XsS}$ over $p_{X^*}^{-1}\sho_S$, the same equality holds with the derived tensor product $\otimes^L$. According to Proposition \ref{exact},
the natural morphism $\rho_{S*}L\to R\rho_{S*}L$ is an isomorphism and \cite[Prop.\,3.16]{MFP2} implies that $\rho_{S!}L\simeq R\rho_{S*}L$. Recall also that, by~\cite[Prop.\,3.13]{MFP2}, the functor $\rho_{S!}$ is exact and commutes with $\otimes$. From \eqref{ED} we~then derive $\sho_{\XsS}$-linear isomorphisms (they are a priori not $\DXsS$-linear since we use a tensor product over $\sho_{\XsS}$):
\begin{align*}
\rho_{S*}L\otimes^L_{\rho_{S*}p_{X*}^{-1}\sho_S}j^{-1}\sho_{\XS}^{t,S}&\simeq(\rho_{S!}L\otimes^L_{\rho_{S!}p_{X^*}^{-1}\sho_S}\rho_{S!}\sho_{\XsS})\otimes^L_{\rho_{S!}\sho_{\XsS}}j^{-1}\sho_{\XS}^{t,S}\\
&\simeq(\rho_{S!}(L\otimes^L_{p_{X^*}^{-1}\sho_S}\sho_{\XsS}))\otimes^L_{\rho_{S!}\sho_{\XsS}}j^{-1}\sho_{\XS}^{t,S}\\
&\simeq(\rho_{S!}(p_{X^*}^{-1}G\otimes^L_{p_{X^*}^{-1}\sho_S}\sho_{\XsS}))\otimes^L_{\rho_{S!}\sho_{\XsS}}j^{-1}\sho_{\XS}^{t,S}\\
&\simeq(\rho_{S!}p_{X^*}^{-1}G\otimes^L_{\rho_{S!}p_{X^*}^{-1}\sho_S}\rho_{S!}\sho_{\XsS})\otimes^L_{\rho_{S!}\sho_{\XsS}}j^{-1}\sho_{\XS}^{t,S}\\
&\simeq \rho_{S*}p_{_X^{*}}^{-1}G\otimes^L_{\rho_{S*}p_{X^*}^{-1}\sho_S}j^{-1}\sho_{\XS}^{t,S}.
\end{align*}
Then, due to \eqref{eq:ELStilde}, we deduce an $\sho_{\XS}$-linear isomorphism
\begin{equation}\label{eq:ESLGtilde}
\wt E^S_L\simeq \wt E^S_{p_{X^*}^{-1}G}\simeq p_{X}^{-1}G\otimes_{\pXOS} \sho_{\XS}(*D).
\end{equation}
By the first part of the proof, we deduce that $\wt E^S_L$ is concentrated in degree zero. This concludes the proof of Proposition \ref{LS}.
\end{proof}

\begin{lemma}\label{claim:5}
Both $\wt E_L$ and $\wt E^S_L$ are naturally $\DXS$-submodules of $j_*E_L$.
\end{lemma}

\begin{proof}
Firstly, applying the commutation of $\rho_{S}^{-1}$ with $j^{-1}$ together with the analogue of \cite[Cor.\,3.24]{MFCS2} entails that $\wt E^S_L$ and $\wt E_L$ coincide with $E_L$ when restricted to \hbox{$\XsS$}.

On the one hand, by construction, $\wt E_L$ is naturally a $\DXS$-submodule of $j_*E_L$. Let us check on the other hand that $\wt E^S_L$ is also naturally a $\DXS$-submodule of $j_*E_L$. From the natural morphism of functors $\id\to R\rho_{S*}\rho_S^{-1}$ on $\Mod(\C_{X_{\sa}\times S})$ we derive a natural morphism $\rho_S^{-1}Rj_*\to Rj_*\rho_{S}^{-1}$: denoting for a moment by $j_*^S$ the morphism in the subanalytic site, we have an isomorphism of functors $Rj_*^S\circ R\rho_{S*}\isom R\rho_{S*}\circ Rj_*$ (\cf \cite[Prop.\,2.2.1(i)]{P05}, by replacing there $\rho$ by $\rho_S$ and using the same argument), hence a morphism
\[
Rj_*^S\to Rj_*^S\circ R\rho_{S*}\rho_S^{-1}\isom R\rho_{S*} Rj_*^S\rho_S^{-1},
\]
and, by applying $\rho_S^{-1}$ on the left, we obtain the desired morphism. Recall also, as~already used, that $\rho_{S*}L\isom R\rho_{S*}L$. We then deduce a $\DXS$-linear morphism
\begin{align*}
\Psi:\wt E^S_L\simeq\rho_S^{-1}Rj_*(\rho_{S*}L&\otimes^L_{\rho_{S*}p_{X^*}^{-1}\sho_S} j^{-1}\sho^{t, S}_{\XS})\\
&\to Rj_*\rho_{S}^{-1}(\rho_{S*}L\otimes^L_{\rho_{S*}p_{X*}^{-1}\sho_S} j^{-1}\sho^{t, S}_{\XS})\\
&\overset{\textup{(a)}}\simeq Rj_*(L\otimes^L_{p_{X^*}^{-1}\sho_S}\rho_S^{-1} j^{-1}\sho^{t, S}_{\XS})\\
&\simeq Rj_*(L\otimes^L_{p_{X^*}^{-1}\sho_S}\sho_{\XsS})\\
&\overset{\textup{(b)}}\simeq Rj_*(L\otimes_{p_{X^*}^{-1}\sho_S}\sho_{\XsS})=Rj_*E_L\overset{\textup{(c)}}\simeq j_*E_L,
\end{align*}
where (a) follows from \eqref{step:rhoS}, (b) follows from the $p_{X^*}^{-1}\sho_S$-flatness of $\sho_{\XsS}$ and (c) from the Steinness of $j$.

In order to check that $\Psi$ is injective, it is enough to consider it as an $\sho_{\XS}$-linear morphism, and by \eqref{eq:ESLGtilde}, it is enough to check injectivity when $L=p_{X^*}^{-1}G$. Furthermore, the question is local on $S$.

In such a case, we consider a resolution $\sho_S^\cbbullet\to G$ as in the proof of Proposition~\ref{LS}. Then, locally on $S$, \eqref{eq:RHSGD} identifies $\wt E^S_{p_{X^*}^{-1}G}$ with $p_{X}^{-1}G\otimes_{\pXOS} \sho_{\XS}(*D)$ and the morphism~$\Psi$ with the natural morphism
\[
p_{X}^{-1}G\otimes_{\pXOS} \sho_{\XS}(*D)\to j_*(p_{X^*}^{-1}G\otimes_{p_{X^*}^{-1}\sho_S} \sho_{\XsS}),
\]
which is injective since $G$ is $\sho_S$-coherent, as wanted.
\end{proof}

\subsubsection*{End of the proof of Proposition \ref{CDT}}
By Lemma \ref{claim:5}, we are reduced to showing that both $\DXS$-submodules $\wt E^S_L$ and $\wt E_L$ of $j_*E_L$ coincide, and since the $\DXS$-structure is induced by that $j_*E_L$ for both, it is enough to check that they coin\-cide as $\sho_{\XS}$-submodules. The question is local on $\XS$. By Remark \ref{rem:pGOXsS}, there exists a \hbox{local} $\sho_{\XS}$-linear isomorphism $j_*E_L\simeq j_*\wh E_{p_{X^*}^{-1}G}$ under which $\wt E_L$ is identified with $\wh E_G=p_X^{-1}G\otimes_{\pXOS}\sho_{\XS}(*D)$. Besides, \eqref{eq:ESLGtilde} also identifies $\wt E^S_L$ with $p_X^{-1}G\otimes_{\pXOS}\sho_{\XS}(*D)$ as an $\sho_{\XS}$-module under the same local isomorphism, so~the $\sho_{\XS}$-submodules $\wt E_L$ and~$\wt E^S_L$ of $j_*E_L$ locally coincide, as desired.
\end{proof}

\section{Proof of Theorem \ref{RHH}}\label{sec:synopsis}
The proof of Theorem \ref{RHH} is similar to that of \cite[Th.\,1]{FMFS1}
where $\dim S=1$. However, various improvements are necessary in order to handle the case $\dim S\geq2$.

\subsection{First part of the proof of Theorem \ref{RHH}}
We prove that $\RH_X^S$ is a right quasi-inverse of the functor $\pSol_X:\rD^\rb_\rhol(\DXS)\to\rD^\rb_\cc(\pOS)$ by exhibiting an isomorphism of functors $\alpha:\id_{\rD^\rb_\cc(\pOS)}\isom\pSol_X\circ\RH_X^S$. This is the analogue in possibly higher dimension for $S$ of \cite[Th.\,3]{MFCS2}:
\begin{enumerate}
\step\label{TM}
For $F$ in $\rD^\rb_\cc(\pOS)$, $\RH_X^S(F)$ is an object of $\rD^\rb_\rhol(\DXS)$ and there exists a functorial isomorphism $\alpha_F:F\simeq\pSol_X(\RH_X^S(F))$ in $\rD^\rb_\cc(\pOS)$.
\end{enumerate}

\begin{proof}[Proof of \eqref{TM}]
We recall the notation for the duality functor (\cf\cite[Prop.\,2.23]{MFCS1}): for $F$ in $\rD^\rb_\rc(\pOS)$, we set $\bD'F=\Rhom_{\pOS}(F,\pOS)\in\rD^\rb_\rc(\pOS)$ and $\bD F=\bD'F[2\dim X]$. Functoriality in \eqref{TM} is obtained by means of Lemma \ref{lem:DRRHS}: once we know that, for $F$ in $\rD^\rb_\cc(\pOS)$, $\RH_X^S(F)$ belongs to $\rD^\rb_\rhol(\DXS)$, we can apply \cite[Cor.\,3.9]{MFCS1} together with bi-duality in $\rD^\rb_\cc(\pOS)$ (\cf \cite[Prop.\,2.23]{MFCS1}) to obtain a functorial isomorphism
\[
\alpha_F:F\isom\pSol_X(\RH_X^S(F)).
\]

For the first part of \eqref{TM}, the main step is provided by Proposition \ref{CDT}, where we proved the case $F=j_!\bD'L$ in the setting of Section~\ref{subsec:Deligneext} (from which we keep the notation), that~is, $D$ is a normal crossing divisor in~$X$ and $L$ is a coherent $S$-local system on $\XsS:=(X\setminus D)\times S$.

We now conclude the proof of \eqref{TM} by a standard induction on the dimension of the $X$-support of $F$, based on Theorem \ref{L:A4}, analogous to that of \cite[\S7.3]{Ka3} (\cf\cite[Th.\,3]{MFCS2} for the case $\dim S=1$).

We~thus assume that \eqref{TM} holds if $\dim\Supp_XF<k$ ($k\geq1$) and we prove that it holds for any~$F$ with $\dim\Supp_XF\leq k$. By functoriality, it is enough to prove the first part of \eqref{TM}, so that the question is local. By induction on the amplitude of the complex $F$, we can also assume that $F$ is an $S$-$\CC$-constructible sheaf. We can find a projective morphism $f:X'\to\nobreak X$ with $\dim X'=k$, which is biholomorphic from the complement $X^{\prime*}$ of a normal crossing divisor~$D'$ in~$X'$ to the smooth locus of dimension $k$ of $\Supp_XF$, so that $f^{-1}F$ is a coherent $S$-local system on $X^{\prime*}$. Let $j':X^{\prime*}\hto X'$ denote the inclusion. Proposition \ref{CDT} implies the regular holonomicity of $\RH^S(j'_!\bD'j^{\prime-1}f^{-1}F)$ and Theorem \ref{L:A4}, together with \cite[Cor.\,2.4]{MFCS2}, implies regular holonomicity of
\[
\RH_X^S(R f_*j'_!\bD'j^{\prime-1}f^{-1}F)\simeq\RH_X^S(j_!\bD'j^{-1}F),
\]
where $j$ denotes the inclusion of $f(X^{\prime*})$ in $X$. Hence, if $F$ is an $S$-$\CC$-constructible sheaf with $X$-support of dimension $\leq k$, $\RH_X^S(\bD'F)$ is regular holonomic since it fits in a distinguished triangle whose third term is regular holonomic by the induction hypothesis. The same holds for $\bD'F$ for any $F\in\rD^\rb_\cc(\pOS)$ with $\dim\Supp_XF\leq k$, and replacing $F$ with $\bD'F$, which has the same $X$-support, we~conclude that $\RH_X^S(F)$ is regular holonomic.
\end{proof}

\subsection{Second part of the proof of Theorem \ref{RHH}}
We prove that $\RH_X^S$ is a left quasi-inverse of the functor $\pSol_X:\rD^\rb_\rhol(\DXS)\to\rD^\rb_\cc(\pOS)$ by exhibiting an isomorphism of functors $\beta:\id_{\rD^\rb_\rhol(\DXS)}\isom\RH_X^S\circ\pSol_X$.

\begin{enumerate}
\step\label{fullyfaithful}
For each object $\shm$ of $\rD^\rb_\rhol(\DXS)$, there exists an isomorphism
\[
\beta_\shm:\shm\isom\RH_X^S(\pSol_X(\shm)),
\]
functorial in~$\shm$.
\end{enumerate}

\subsubsection*{Proof of \eqref{fullyfaithful}}
As in \cite[\S3.b]{FMFS1}, we construct a bi-functorial isomorphism (with respect to $\shm,\shn\in\rD^\rb_\rhol(\DXS)$) which, by applying $Li^*_s$ for any $s\in S$, yields that of \cite[Cor.\,8.6]{Ka3}:
\begin{equation}\label{eq:HomRSol}
\Hom_{\DXS}(\shm, \RH^S_X(\pSol_X(\shn)))\isom\Hom_{\pOS}(\pSol_X(\shn),\pSol_X(\shm)),
\end{equation}
that we also denote as $\eqref{eq:HomRSol}_{\shm,\shn}$. We then define the morphism
\[
\beta_\shm:\shm\to\RH^S_X(\pSol_X(\shm))
\]
as the unique morphism such that $\eqref{eq:HomRSol}_{\shm,\shm}(\beta_\shm)=\id_{\pSol(\shm)}$. One classically deduces from the full faithfulness of $\pSol$ in the absolute case (a consequence of the Riemann-Hilbert correspondence of \cite{Ka3}\,\& \cite{Mebkhout84}) that, for each $s\in S$, $\beta_{Li^*_s\shm}$ is an isomorphism. Therefore, by a Nakayama-type argument \cite[Prop.\,1.9\,\&\,Cor.\,1.10]{MFCS2}, $\beta_{\shm}$ is an isomorphism.

In order to check that $\beta_\shm$ is functorial with respect to $\shm$, that~is, it defines a morphism of functors $\id_{\rD^\rb_\rhol(\DXS)}\to\RH^S_X\circ\pSol_X$, we consider as in \cite[p.\,668]{MFCS2}, for a morphism $\varphi:\shm\to\shn$, the commutative diagram
\[
\xymatrix@C=1.8cm{
\Hom_{\DXS}(\shm, \RH^S_X(\pSol_X(\shm)))\ar[r]^-{\eqref{eq:HomRSol}_{\shm,\shm}}\ar@<4ex>[d]_{\RH^S(\pSol_X(\varphi))\circ\cbbullet}&\Hom_{\pOS}(\pSol_X(\shm),\pSol(\shm))\ar@<-1ex>[d]^{\cbbullet\circ\pSol_X(\varphi)}\\
\Hom_{\DXS}(\shm, \RH^S_X(\pSol_X(\shn)))\ar[r]^-{\eqref{eq:HomRSol}_{\shm,\shn}}&\Hom_{\pOS}(\pSol_X(\shn),\pSol_X(\shm))\\
\Hom_{\DXS}(\shn, \RH^S_X(\pSol_X(\shn)))\ar[r]^-{\eqref{eq:HomRSol}_{\shn,\shn}}\ar@<3ex>[u]_{\cbbullet\circ\varphi}&\Hom_{\pOS}(\pSol_X(\shn),\pSol_X(\shn))\ar@<-9ex>[u]^{\pSol_X(\varphi)\circ\cbbullet}
}
\]
Since $\id_{\pSol_X(\shm)}\circ\pSol_X(\varphi)=\pSol_X(\varphi)\circ\id_{\pSol_X(\shn)}$, we obtain from the commutativity of the diagram that
\[
\RH_X^S(\pSol_X(\varphi))\circ\beta_\shm=\beta_\shn\circ\varphi,
\]
which is the desired functoriality.

In order to obtain \eqref{eq:HomRSol}, it is enough to construct a bi-functorial morphism in $\rD^\rb(\pOS)$
\begin{equation}\label{eq:HomRSolloc}
\Rhom_{\DXS}(\shm, \RH^S_X(F))\to\Rhom_{\pOS}(F,\pSol_X(\shm))
\end{equation}
for $\shm\in\rD^\rb_\rhol(\DXS)$ and $F\in\rD^\rb_{\cc}(\pOS)$, and to show that it is an isomorphism. Then $\eqref{eq:HomRSol}_{\shm,\shn}$ is obtained by taking global sections of $\shh^0\eqref{eq:HomRSolloc}$ applied to $F=\pSol(\shn)$.

Such a morphism \eqref{eq:HomRSolloc} is given by \cite[(14)]{FMFS1}. In view of Proposition \ref{Prop:3.30} we can argue as in \loccit, where it is also shown that, for each $s_o\in S$, $Li_{s_o}^*\eqref{eq:HomRSolloc}$ can be identified with the morphism constructed in the absolute case by Kashiwara (\cite[Cor.\,8.6]{Ka3}), hence it is an isomorphism. To conclude that \eqref{eq:HomRSolloc} is an isomorphism, we~apply a Nakayama-type argument \cite[Prop.\,2.2]{MFCS1}.

To construct the morphism \eqref{eq:HomRSolloc} we need to check a finiteness property. The proof of \eqref{fullyfaithful} will thus be concluded with the proof of the following assertion.

\begin{enumerate}
\step\label{Tequivtf}
For any $\shm \in \rD^\rb_{\rhol}(\DXS)$ and for any $F\in\rD^\rb_\cc(\pOS)$, $\Rhom_{\DXS}(\shm, \RH^S_X(F))$ belongs to $\rD^\rb_{\cc}(\pOS)$.
\end{enumerate}

We also consider the statement:
\begin{enumerate}
\step\label{Rhom}
For any $\shm,\shn\in\rD^\rb_{\rhol}(\DXS)$, the complex $\Rhom_{\DXS}(\shm, \shn)$ belongs to $\rD^\rb_{\cc}(\pOS)$.
\end{enumerate}

We shall argue by induction on the pair $(\dim S,\dim\supp_X\shm)$ ordered lexicographically. For that purpose we introduce the following notations:
\begin{itemize}
\item
For $d\geq0$, we denote by $\eqref{Tequivtf}_d$, \resp $\eqref{Rhom}_d$ the corresponding statement concerning any $S$ satisfying $\dim S\leq d$.
\item
For $k\geq0$ we introduce the assertion $\eqref{Rhom}_{d,k}$ by requiring that the property holds for $\dim S\!\leq\!d-1$ or $\dim S\!=\!d$ and $\dim\supp_X\shm\!\leq\!k$.
\end{itemize}

Because of the first part of \eqref{TM} already proved, we have
\[
\eqref{Rhom}_d\implies\eqref{Tequivtf}_d.
\]
Conversely, the reverse implication also holds:
\[
\eqref{Tequivtf}_d \implies\eqref{Rhom}_d.
\]
Indeed we deduce from $\eqref{Tequivtf}_d$ an isomorphism $\beta_\shn:\shn\isom\RH^S_X(\pSol_X(\shn))$, so applying $\eqref{Tequivtf}_d$ with $F=\pSol_X(\shn)$ gives $\eqref{Rhom}_d$. In the following, we will focus on $\eqref{Rhom}_d$. Since belonging to $\rD^\rb_{\cc}(\pOS)$ is a local property (\cf\cite[\S2.5]{MFCS1}), we will allow restriction to an open neighborhood of a point of $\XS$ when needed.

\begin{proof}[Proof of $\eqref{Rhom}_d$]
The proof of $\eqref{Rhom}_d$ is done by induction on $d$, in order to reduce the assertion to the particular case of Lemma \ref{lem:D-type} below. Let us recall that $\eqref{Rhom}_0$ holds, according to \cite{Ka3}. So~we assume that $\eqref{Rhom}_{d-1}$ holds ($d\geq1$) and we consider $S$ with $\dim S=d$, together with any $\shm,\shn \in \rD^\rb_{\rhol}(\DXS)$.

\subsubsection*{Step 1: The $S$-torsion case}
\begin{lemma}\label{lem:Storsion}
Let $\dim S=d$ and let us assume that $\eqref{Rhom}_{d-1}$ holds. Let $\shm,\shn\in \rD^\rb_{\rhol}(\DXS)$ and assume that the cohomology of $\shm$ or that of $\shn$ is of $S$-torsion. Then $\eqref{Rhom}_d$ holds for $\shm,\shn$, that~is,
\[
\Rhom_{\DXS}(\shm, \shn)\in\rD^\rb_{\cc}(\pOS).
\]
\end{lemma}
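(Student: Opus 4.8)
The idea is to reduce the $S$-torsion case to the lower-dimensional statement $\eqref{Rhom}_{d-1}$ via the local structure of $S$-torsion modules recalled in Setting \ref{set:local} and the base change results of Section \ref{sec:2}. First I would work locally on $X$ and on $S$, so that $S=S'\times\Delta$ with projection $q:S\to S'$, and so that both $\shm$ and $\shn$ have $S$-support (in the sense after \eqref{eq:chardec}) contained in a hypersurface $T=h^{-1}(0)$ for a Weierstrass polynomial $h\in\sho_{S'}[t]$; by replacing $h$ with a power of it, we may even assume there is an integer $k\geq1$ such that $h^k$ annihilates the cohomology sheaves of whichever of $\shm$, $\shn$ is of $S$-torsion (taking the maximum of the finitely many local exponents on the relevant cohomology objects). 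Up to a dévissage along the cohomology of $\shm$ and of $\shn$ (using the distinguished triangles given by the truncation functors, and the fact that $\rD^\rb_\cc(\pOS)$ is triangulated, \cf \cite[\S2.5]{MFCS1}), we reduce to the case where $\shm$ and $\shn$ are single regular holonomic $\DXS$-modules, one of which, say $\shn$, is $S$-torsion supported on $\XT$ with $h^k\shn=0$.

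Next I would dévisser in $k$: from the exact sequences $0\to h\shn\to\shn\to\shn/h\shn\to0$ (note $h\shn$ and $\shn/h\shn$ are again regular holonomic by Proposition \ref{Prop:2}\eqref{Prop:22b}, being sub/quotient in $\Mod_\coh(\DXS)$), we reduce to $h\shn=0$, i.e. $\shn$ is a $\DXT$-module. By Corollary \ref{cor:TS}, $\shn$ regarded as a $q^{-1}\DXSp$-module is regular holonomic; likewise $\Rhom_{\DXS}(\shm,\shn)$ can be computed after the equivalence of Setting \ref{set:local} as $\Rhom_{q^{-1}\DXSp}(\shm,\shn)$ when $\shm$ too is annihilated by a power of $h$ — and if $\shm$ is not $S$-torsion but $\shn$ is, then $\Rhom_{\DXS}(\shm,\shn)=\Rhom_{\DXS}(\shm(*\!\cdot),\shn)$ is supported on $\XT$ anyway, so one replaces $\shm$ by its localization-free part or, more simply, observes that $\Rhom_{\DXS}(\shm,\shn)\simeq\Rhom_{\DXS}(R\Gamma_{[T]}\shm,\shn)$, and $R\Gamma_{[T]}\shm$ is regular holonomic and $S$-torsion by Corollary \ref{cor:gammaY} (applied with $Y$ the pullback of $T$); thus in all cases we may assume both $\shm$ and $\shn$ are $h^k$-torsion. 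Now, since $\sho_S$ is $q^{-1}\sho_{S'}$-flat on the complement of $T$ but more relevantly since these are $q^{-1}\DXSp$-coherent modules, the key point is to relate $\Rhom$ over $\DXS$ to $\Rhom$ over $\DXSp$ pulled back along the finite map $q|_T$; concretely, using $\shn=i_{T*}i_T^*\shn$ and the adjunction/base-change formulas of Lemma \ref{lem:adjunctionpi} together with Proposition \ref{prop:basepushhol}, one rewrites $\Rhom_{\DXS}(\shm,\shn)$ as (the pushforward by the finite map $q$ of) a complex of the form $\Rhom_{\DXSp}(Lq^*\text{-restriction of }\shm, \text{restriction of }\shn)$, to which $\eqref{Rhom}_{d-1}$ applies since $\dim S'=d-1$.

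\textbf{Main obstacle.} The delicate step is the last one: making precise the identification of $\Rhom_{\DXS}(\shm,\shn)$ with an $\Rhom$-complex over $\DXSp$ when $\shm,\shn$ are $S$-torsion. The subtlety is that $\shm$ and $\shn$ are $q^{-1}\DXSp$-coherent but \emph{not} pulled back from $S'$ — they carry a nontrivial action of $t$ — so one cannot naively apply base change along $q$. The clean way is to use the equivalence of categories in Setting \ref{set:local}: a $\DXS$-module supported on $\XT$ and $h$-nilpotent is the same as an $h$-nilpotent $q^{-1}\DXSp[t]$-module that is $q^{-1}\DXSp$-coherent, and $\Hom$ between two such is computed over $q^{-1}\DXSp[t]$; one then resolves the polynomial variable $t$ (a two-term Koszul-type resolution over $q^{-1}\DXSp[t]$ of the $h$-nilpotent structure, or more simply induction on $k$ as above, which already reduces to $t$ acting by the fixed holomorphic function cutting out $T$, i.e. to genuine $q^{-1}\DXSp$-modules up to the finite étale locus) to bring everything down to $q^{-1}\DXSp$-modules over the $(d-1)$-dimensional base $S'$, at which point $\eqref{Rhom}_{d-1}$ gives the conclusion, and one checks $S$-$\CC$-constructibility is preserved by the finite pushforward $q|_T{}_*$ (which is harmless, a finite map inducing an exact pushforward preserving constructibility, \cf \cite[\S2.5]{MFCS1}). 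One must also verify boundedness of the resulting complex, which follows from the coherence assertions of Setting \ref{set:local} and the finite cohomological dimension of $\Rhom_{\DXSp}$.
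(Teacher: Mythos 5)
Your reduction to the case where \emph{both} $\shm$ and $\shn$ are $S$-torsion contains a genuine gap, and this is the step on which your whole plan rests. You invoke Corollary \ref{cor:gammaY} ``with $Y$ the pullback of $T$'', but that corollary (like Corollary \ref{cor:localization}) is proved only for closed analytic subsets of $X$, and its analogue for a hypersurface $X\times T$ pulled back from the parameter space is false: $\DXS$ contains no derivations in the $S$-variables, so localization along $X\times T$ destroys coherence. Already for $X$ a point and $\shm=\sho_S$ one has $H^1R\Gamma_{[X\times T]}\shm\simeq\sho_S[h^{-1}]/\sho_S$ ($h$ a local equation of $T$), which is not $\sho_S$-coherent; hence $R\Gamma_{[X\times T]}\shm$ is in general not an object of $\rD^\rb_\rhol(\DXS)$. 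Nor can you instead replace $\shm$ by its $S$-torsion submodule: for $\shm=\sho_{\XS}$ and $\shn=\sho_{\XS}/h\sho_{\XS}$ the torsion submodule of $\shm$ is zero while $\shhom_{\DXS}(\shm,\shn)\neq0$. So the case ``$\shn$ of $S$-torsion, $\shm$ arbitrary'' is not covered by your argument. The paper removes this asymmetry at the outset by the biduality isomorphism \eqref{eq:biduality}, which moves the torsion hypothesis onto $\shn$ and never requires anything of $\shm$.

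Even in the case where both modules are of $S$-torsion, your route (finite projection $q|_T:T\to S'$, the equivalence of Setting \ref{set:local}, a Koszul-type resolution of the variable $t$) is only sketched exactly where it is delicate: $\eqref{Rhom}_{d-1}$ produces objects of $\rD^\rb_{\cc}(\pOSp)$ on $\XS'$, and you would still have to identify $\Rhom_{\DXS}(\shm,\shn)$, a complex of $\pOS$-modules on $\XS$, with such data and transfer $S'$-constructibility back into $S$-constructibility; none of the results you cite do this (Lemma \ref{lem:adjunctionpi} concerns morphisms $S'\to S$, not the projection $q:S\to S'$, and Corollary \ref{cor:TS} only concerns regular holonomicity, not constructibility of $\Rhom$). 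The paper argues in the opposite direction: after reducing (by duality) to $\shn$ of torsion with $S$-support in a closed analytic set $T$, it inducts on $\dim T$; for $\dim T=0$ it reduces to $\fm_{s_o}\shn=0$ and applies Kashiwara's absolute theorem via adjunction, and for $\dim T\geq1$ it chooses a projective morphism $\pi:S'\to S$ with $\pi(S')=T$ and $\dim S'=\dim T<d$, applies $\eqref{Rhom}_{d-1}$ to $L\pi^*\shm$, $L\pi^*\shn$ (regular holonomic by Proposition \ref{LReginverse}), uses properness, Lemma \ref{lem:adjunctionpi} and Theorem \ref{th:Regdirect} to obtain constructibility of $\Rhom_{\DXS}(\shm,R\pi_*L\pi^*\shn)$ over $\pOS$ directly, and finally compares $R\pi_*L\pi^*\shn$ with $\shn$ through cones supported over a smaller-dimensional subset, handled by the induction. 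You would need either to adopt this duality-plus-resolution strategy or to supply complete proofs of the change-of-base and constructibility-transfer identifications your plan presupposes.
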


\begin{proof}
By the biduality isomorphism of \cite[(3)]{MFCS1}, we have
\begin{equation}\label{eq:biduality}
\Rhom_{\DXS}(\shm,\shn)\simeq\Rhom_{\DXS}(\bD\shn,\bD\shm),
\end{equation}
and we recall that both $\bD\shn$ and $\bD\shm$ are regular holonomic according to Corollary~\ref{cor:prop2}. It is thus enough to prove Lemma \ref{lem:Storsion} when the cohomology of $\shn$ is of $S$-torsion. On the other hand, owing to the definition of $\rD^\rb_{\cc}(\pOS)$ in terms of micro-support (\cf\cite[\S2.5]{MFCS1}), if two terms of a distinguished triangle in $\rD^\rb(\pOS)$ are objects of $\rD^\rb_{\cc}(\pOS)$, then so does the third one. This property allows us to assume that $\shm,\shn$ are regular holonomic
$\DXS$-modules instead of complexes. Lastly, the same argument implies that belonging to $\rD^\rb_{\cc}(\pOS)$ is a local question on $\XS$.

The assumption on $\shn$ entails that $\Supp_S \shn$ is, locally with respect to $\XS$, contained in a closed analytic subset~$T$ of~$S$ such that $\dim T<\dim S$. We will argue by induction on $\dim T$.

If $\dim T=0$, as we consider a local situation, we may assume $T=\{s_o\}$ with maximal ideal sheaf $\fm_{s_o}$. By considering the (locally) finite filtration of $\shn$ by the $\DXS$-regular holonomic submodules $\fm_{s_o}^k\shn$ ($k\in\N$), we may reduce to assume $\fm_{s_o}\shn=0$. We then have $\shn\simeq i_{s_0 *} i^*_{s_0}\shn$ and by adjunction
\[
\Rhom_{\DXS}(\shm,i_{s_o*}i^{*}_{s_0}\shn)\simeq Ri_{s_0 *}\Rhom_{\DX}(Li^*_{s_0}\shm, i^*_{s_0}\shn),
\]
so that the proof is reduced to applying $\eqref{Rhom}_0$ or, equivalently, the absolute case proved by Kashiwara in \cite{Ka3}.

Assume now that $\dim T\geq1$ and let $T_0\subset T$ be a closed analytic subset of dimension $<\dim T$ such that $T\setminus T_0$ is non singular of dimension $\dim T$. Let $\pi: S'\to S$ be a projective morphism of complex manifolds satisfying the two conditions: $\pi(S')=T$ and, setting $T'_0:=\pi^{-1}(T_0)$, $\pi':=\pi|_{S'\setminus T'_0}: S'\setminus T'_0\to T\setminus T_0$ is a biholomorphism. In~particular $\dim S'=\dim T<d$.

Note that, according to Proposition \ref{LReginverse}, $L\pi^{*}\shm$, $L\pi^{*}\shn$ are regular holonomic on $\XS'$ and, by Theorem \ref{th:Regdirect}, $R\pi_*L\pi^*\shn$ belongs to $\rD^\rb_\rhol(\DXS)$. By $\eqref{Rhom}_{d-1}$, $\Rhom_{\DXSp}( L\pi^*\shm, L\pi^{*}\shn)\in \rD^\rb_{\cc}(\pOSp)$ thus, by the properness of $\pi$ and Lemma \ref{lem:adjunctionpi},
\[
\Rhom_{\DXS}(\shm, R\pi_*L\pi^{*}\shn)\in \rD^\rb_{\cc}(\pOS).
\]
The cones of the natural morphisms $R\pi_*L\pi^*\shn\to R\pi_*\pi^*\shn$ and $\shn\to R\pi_*\pi^*\shn$ are supported in $T_0$, so that, by the induction hypothesis we deduce that
\begin{align*}
&\Rhom_{\DXS}(\shm, R\pi_*\pi^{*}\shn)\in \rD^\rb_{\cc}(\pOS),\\\text{and thus}\quad&\Rhom_{\DXS}(\shm, \shn)\in \rD^\rb_{\cc}(\pOS).
\end{align*}
\end{proof}

\subsubsection*{Step 2: Induction on $\dim\supp_X\shm$}

\begin{lemma}
Let $\dim S=d\geq1$ and let us assume that $\eqref{Rhom}_{d-1}$ holds. Then $\eqref{Rhom}_{d,0}$ holds.
\end{lemma}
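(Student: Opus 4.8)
The plan is to use Kashiwara's equivalence together with Corollary \ref{cor:gammaY} in order to reduce the computation of $\Rhom_{\DXS}(\shm,\shn)$ to a $\Rhom$ over $\sho_S$. Since belonging to $\rD^\rb_\cc(\pOS)$ is a local question on $\XS$ and is preserved under the triangulated operations, I would first argue exactly as at the beginning of the proof of Lemma \ref{lem:Storsion} to reduce to the case where $\shm$ and $\shn$ are regular holonomic $\DXS$-modules, and then, localizing on $X$, to the case where $\supp_X\shm$ consists of a single point $x_o\in X$ (the empty case being trivial). Then $\shm$ is supported on $\{x_o\}\times S$, and Kashiwara's equivalence identifies it with $\Di_{x_o*}N$, where $i_{x_o}\colon\{x_o\}\hto X$ denotes the inclusion and $N$ is a coherent $\sho_S$-module (regularity being automatic, since $\{x_o\}$ is a point).

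Next I would replace $\shn$ by $R\Gamma_{[\{x_o\}\times S]}\shn$. Because $\shm$ is supported on $\{x_o\}\times S$, the natural morphism $\Rhom_{\DXS}(\shm,R\Gamma_{[\{x_o\}\times S]}\shn)\to\Rhom_{\DXS}(\shm,\shn)$ is an isomorphism: its cone is $\Rhom_{\DXS}$ of $\shm$ into a complex on which, locally, the defining ideal of $\{x_o\}$ acts invertibly, and such a $\Rhom$ vanishes. By Corollary \ref{cor:gammaY} (applied to $\shn$ by dévissage, $R\Gamma_{[\cdot]}$ being triangulated), $R\Gamma_{[\{x_o\}\times S]}\shn$ belongs to $\rD^\rb_\rhol(\DXS)$ and is supported on $\{x_o\}\times S$, so Kashiwara's equivalence again gives $R\Gamma_{[\{x_o\}\times S]}\shn\simeq\Di_{x_o*}N'$ with $N'\in\rD^\rb_\coh(\sho_S)$. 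Using that Kashiwara's equivalence is an equivalence onto the full subcategory of $\DXS$-modules supported on $\{x_o\}\times S$, compatible with derived homomorphisms, one obtains
\[
\Rhom_{\DXS}(\shm,\shn)\simeq\Rhom_{\DXS}(\Di_{x_o*}N,\Di_{x_o*}N')\simeq\Rhom_{\sho_S}(N,N').
\]

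Finally, $\Rhom_{\sho_S}(N,N')$ has $\sho_S$-coherent cohomology, by the standard local argument using a finite locally free resolution of $N$ over the complex manifold $S$. As $\Rhom_{\DXS}(\shm,\shn)$ is a complex of $\pOS$-modules supported on $\{x_o\}\times S$ that, via the isomorphism $\{x_o\}\times S\isom S$, identifies there with $\Rhom_{\sho_S}(N,N')$, its cohomology sheaves are, along the stratification $\{x_o\},\ X\moins\{x_o\}$ of $X$, of the form $p_{X_\alpha}^{-1}$ of a coherent $\sho_S$-module; hence $\Rhom_{\DXS}(\shm,\shn)\in\rD^\rb_\cc(\pOS)$, which is $\eqref{Rhom}_{d,0}$. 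Note that only Kashiwara's equivalence and Corollary \ref{cor:gammaY} are really used here, not the inductive hypothesis $\eqref{Rhom}_{d-1}$ (one could equally well use the adjunction $(\Di_{x_o*},\Di_{x_o}^!)$ and Theorem \ref{th:inverseimage} to see that $\Di_{x_o}^!\shn$ has coherent $\sho_S$-cohomology). The step requiring care is the middle one, the identification $\Rhom_{\DXS}(\Di_{x_o*}N,\Di_{x_o*}N')\simeq\Rhom_{\sho_S}(N,N')$: one must either check compatibility of Kashiwara's equivalence with the $\Rhom$-bifunctor of the derived categories involved, or argue directly through the relative transfer bimodule $\shd_{X\leftarrow\{x_o\}/S}$ and a Koszul resolution; the preliminary reduction to $R\Gamma_{[\{x_o\}\times S]}\shn$ is precisely what makes this possible without assuming $\shn$ supported near $x_o$.
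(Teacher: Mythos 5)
Your argument is correct and is essentially the paper's proof: both reduce to $\supp_X\shm=\{x_o\}$, apply Kashiwara's equivalence to write $\shm=\Di_{x_o*}N$ with $N$ coherent over $\sho_S$, and conclude from the $\sho_S$-coherence of the restriction of $\shn$ to $\{x_o\}\times S$ --- the paper via the adjunction-type isomorphism $Ri_*\Rhom_{\sho_S}(N,\Di^{*}\shn)\isom\Rhom_{\DXS}(\shm,\shn)[\dim X]$ together with Theorem \ref{th:inverseimage}, you via the replacement of $\shn$ by $R\Gamma_{[\{x_o\}\times S]}\shn$, Corollary \ref{cor:gammaY} and derived full faithfulness of Kashiwara's equivalence, which amounts to the same thing since Theorem \ref{th:inverseimage} is itself deduced from Corollary \ref{cor:gammaY} and Kashiwara's equivalence. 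Your remark that the inductive hypothesis $\eqref{Rhom}_{d-1}$ is not actually needed agrees with the paper, whose proof likewise invokes only Theorem \ref{th:inverseimage} and adjunction.
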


\begin{proof}
This is a consequence of Theorem \ref{th:inverseimage} and adjunction. Indeed, we can assume that $\supp_X\shm=\{x\}$. Denoting by $i:\{x\}\times S\hto\XS$ the inclusion, there exists a coherent $\sho_S$-module $\shm_0$ such that $\shm=\Di_*\shm_0$ by Kashiwara's equivalence, and we have (\cf\cite[Th.\,4.33]{Ka2}, which can be proved in a simple way in the present setting)
\[
Ri_*\Rhom_{\sho_S}(\shm_0,\Di^*\shn)\isom\Rhom_{\DXS}(\shm,\shn)[\dim X].
\]
By Theorem \ref{th:inverseimage}, $\Di^*\shn$ has $\sho_S$-coherent cohomology, and the assertion follows.
\end{proof}

We are thus reduced to proving:
\begin{enumerate}
\step\label{step:dk}
Let $\dim S=d\geq1$. Assume that $\eqref{Rhom}_{d,k-1}$ holds (with $k\geq1$). Then~$\eqref{Rhom}_{d,k}$ holds.
\end{enumerate}

We can reduce to the case $\shm,\shn$ are regular holonomic $\DXS$-modules. Let~$Z$ denote the $X$-support of $\shm$. Since the assertion is local, we~can assume that there exists a hypersurface $Y$ of $X$ such that $Y$ contains the singular locus of $Z$ and $\dim Z\cap Y<k$. Recall that localization along $\YS$ preserves regular holonomicity (\cf Corollary \ref{cor:localization}).

\begin{lemma}\label{lft}
It is enough to prove the assertion \eqref{step:dk} for $\shm,\shn$ such that $\shm=\shm(*Y)$ and $\shn=\shn(*Y)$.
\end{lemma}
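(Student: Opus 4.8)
The plan is to reduce the general case of $\eqref{step:dk}$ to the localized case treated in Lemma \ref{lft} by means of the standard excision triangle along $\YS$, which is available because regular holonomicity is preserved by $R\Gamma_{[Y]}$ and by localization $(*Y)$ (Corollaries \ref{cor:gammaY} and \ref{cor:localization}). First I would fix regular holonomic $\DXS$-modules $\shm,\shn$ with $\dim\supp_X\shm\le k$ and choose locally a hypersurface $Y\subset X$ as above, namely $Y$ contains the singular locus of $Z=\supp_X\shm$ and $\dim(Z\cap Y)<k$. The key point is that $R\Gamma_{[Y]}\shm$ is regular holonomic with $X$-support contained in $Z\cap Y$, hence of dimension $<k$, so that the induction hypothesis $\eqref{Rhom}_{d,k-1}$ applies to the pair $(R\Gamma_{[Y]}\shm,\shn)$. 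Dually, using the biduality isomorphism \eqref{eq:biduality} and the fact that $\bD$ exchanges $R\Gamma_{[Y]}$ and $(*Y)$ up to shift, one sees that it is equivalent to control either $R\Gamma_{[Y]}$ on the first variable or on the second.

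Concretely, I would apply $\Rhom_{\DXS}(\cbbullet,\shn)$ to the distinguished triangle
\[
R\Gamma_{[Y]}\shm\to\shm\to\shm(*Y)\To{+1}
\]
obtaining a distinguished triangle in $\rD^\rb(\pOS)$
\[
\Rhom_{\DXS}(\shm(*Y),\shn)\to\Rhom_{\DXS}(\shm,\shn)\to\Rhom_{\DXS}(R\Gamma_{[Y]}\shm,\shn)\To{+1}.
\]
Since $\rD^\rb_\cc(\pOS)$ is triangulated inside $\rD^\rb(\pOS)$ (being defined by a micro-support condition, \cf\cite[\S2.5]{MFCS1}), if the two outer terms belong to $\rD^\rb_\cc(\pOS)$ then so does $\Rhom_{\DXS}(\shm,\shn)$. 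The third term is handled by $\eqref{Rhom}_{d,k-1}$ as explained. For the first term, I would then run the symmetric argument on the second variable: apply $\Rhom_{\DXS}(\shm(*Y),\cbbullet)$ to the triangle $R\Gamma_{[Y]}\shn\to\shn\to\shn(*Y)\To{+1}$. The term $\Rhom_{\DXS}(\shm(*Y),R\Gamma_{[Y]}\shn)$ is covered by biduality: $\Rhom_{\DXS}(\shm(*Y),R\Gamma_{[Y]}\shn)\simeq\Rhom_{\DXS}(\bD R\Gamma_{[Y]}\shn,\bD\shm(*Y))$, and $\bD R\Gamma_{[Y]}\shn\simeq(\bD\shn)(*Y)$ while $\bD\shm(*Y)\simeq R\Gamma_{[Y]}\bD\shm$ has $X$-support of dimension $<k$, so $\eqref{Rhom}_{d,k-1}$ applies again. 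This leaves exactly $\Rhom_{\DXS}(\shm(*Y),\shn(*Y))$, which is the object Lemma \ref{lft} reduces to.

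The main obstacle I anticipate is not in this reduction itself—which is a routine dévissage—but in correctly bookkeeping the interaction of $\bD$ with $R\Gamma_{[Y]}$ and $(*Y)$ and the shifts involved, and in making sure the induction parameter genuinely decreases: one must check that the $X$-supports of $R\Gamma_{[Y]}\shm$, $R\Gamma_{[Y]}\bD\shm$, and of the cohomology of $R\Gamma_{[Y]}\shn$ are all contained in $Z\cap Y$ (or the singular part of the relevant support), hence of dimension $\le k-1$. This uses that $Y$ was chosen to contain the singular locus of $Z$, together with the description \eqref{eq:strictperv2} of supports and the fact (Corollary \ref{cor:gammaY}) that $R\Gamma_{[Y]}$ stays in $\rD^\rb_\rhol(\DXS)$. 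Once Lemma \ref{lft} is proved, $\eqref{Rhom}_{d,k}$ follows, completing the inductive step $\eqref{step:dk}$ and hence, by the earlier reductions, $\eqref{Rhom}_d$ and ultimately $\eqref{Tequivtf}$ and \eqref{fullyfaithful}.
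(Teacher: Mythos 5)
Your overall dévissage (excision triangles along $\YS$ in each variable, plus biduality to bring the small-support object into the first slot of \eqref{Rhom}) is the same skeleton as the paper's, and your first half---handling $\Rhom_{\DXS}(R\Gamma_{[Y]}\shm,\shn)$ by $\eqref{Rhom}_{d,k-1}$, since $R\Gamma_{[Y]}\shm$ is regular holonomic with $X$-support contained in $Z\cap Y$---is fine. The gap is in your treatment of $\Rhom_{\DXS}(\shm(*Y),R\Gamma_{[Y]}\shn)$: the identities you invoke, $\bD R\Gamma_{[Y]}\shn\simeq(\bD\shn)(*Y)$ and $\bD(\shm(*Y))\simeq R\Gamma_{[Y]}\bD\shm$, are false. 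Duality does not interchange $R\Gamma_{[Y]}$ and $(*Y)$; it interchanges $(*Y)$ with the \emph{dual localization} $\bD((\bD\cbbullet)(*Y))$, as one sees by dualizing the triangle $R\Gamma_{[Y]}\shm\to\shm\to\shm(*Y)\To{+1}$. Already for $S$ a point, $X=\CC$, $Y=\{0\}$, $\shn=\sho_X$, one has $R\Gamma_{[Y]}\sho_X\simeq H^1_{[0]}(\sho_X)[-1]$, hence $\bD R\Gamma_{[Y]}\sho_X\simeq H^1_{[0]}(\sho_X)[1]$ is supported at the origin, whereas $(\bD\sho_X)(*Y)=\sho_X(*0)$ is not. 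A secondary problem: even granting these identities, you apply $\eqref{Rhom}_{d,k-1}$ to a pair whose small-support member sits in the \emph{second} slot, while the induction hypothesis bounds $\dim\Supp_X$ of the first argument, so at least one further biduality would be needed.

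The paper's proof avoids both issues. It reduces the statement in $\shn$ to the case $\shn=R\Gamma_{[Y]}\shn$ (i.e. $\shn$ supported on $\YS$), writes $\Rhom_{\DXS}(\shm,\shn)\simeq\Rhom_{\DXS}(\bD\shn,\bD\shm)$, and uses the elementary vanishing $\Rhom_{\DXS}(\bD\shn,(\bD\shm)(*Y))=0$---valid because $\bD\shn$ has coherent cohomology supported on $\YS$, hence locally killed by a power of an equation of $Y$, while the target is localized---to replace $\bD\shm$ by $R\Gamma_{[Y]}\bD\shm$; a second biduality then yields $\Rhom_{\DXS}(\shm',\shn)$ with $\shm':=\bD R\Gamma_{[Y]}\bD\shm$, whose $X$-support has dimension $<k$, so $\eqref{Rhom}_{d,k-1}$ applies. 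Your Step 3 can be repaired along exactly these lines (vanishing of maps from a coherent complex supported on $\YS$ into a localized one, plus two dualities applied in the right order), but as written the asserted commutation of $\bD$ with $R\Gamma_{[Y]}$ and $(*Y)$ is a genuine error, not mere bookkeeping.
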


\begin{proof}
The assertion for $\shm$ follows from the property that $R\Gamma_{[Y]}\shm$ belongs to $\rD^\rb_{\rhol}(\DXS)$ (Corollary~\ref{cor:gammaY}) and has $X$-support of dimension $<k$.

For the assertion concerning $\shn$, we recall the argument given at the end of the proof of \cite[Th.\,3]{FMFS1}. It is enough to prove that \eqref{step:dk} holds if $\shn=R\Gamma_{[Y]}\shn$. We have, by \cite[(3)]{MFCS1},
\[
\Rhom_{\DXS}(\shm,\shn)\simeq \Rhom_{\DXS}(\bD \shn, \bD \shm).
\]
Since $\shn$ has $\DXS$-coherent cohomology and is supported on $\YS$, we have
\[
\Rhom_{\DXS}(\bD\shn, (\bD\shm)(*Y))=0.
\]
Furthermore, $\bD\shm$ being regular holonomic, so is $R\Gamma_{[Y]}(\bD\shm)$ by Corollary \ref{cor:gammaY}, as well as $\shm':=\bD R\Gamma_{[Y]}(\bD\shm)$. Finally, applying once more \cite[(3) \& (1)]{MFCS1}, we obtain
\[
\Rhom_{\DXS}(\shm,\shn)\simeq\Rhom_{\DXS}(\shm',\shn),
\]
with $\dim \Supp_X\shh^j\shm'<k$ for any $j$, so the latter complex is $S$-$\CC$-constructible by $\eqref{Rhom}_{d,k-1}$.
\end{proof}

\subsubsection*{Step 3: Reduction to the case where $\shm$ is of D-type}
We take up the notation of the proof of Proposition \ref{prop:localization}, so that $f:X'\to X$ is a projective morphism inducing a biholomorphism $X'\moins D\isom Z\moins Z\cap Y$, and we set $\delta=\dim X'-\dim X$.

\begin{lemma}
Let $\shm,\shn$ be regular holonomic $\DXS$-modules such that $\shm=\shm(*Y)$. Then
\[
Rf_*\Rhom_{\DXpS}(\Df^*\shm,\Df^*\shn)\simeq\Rhom_{\DXS}(\shm,\shn).
\]
\end{lemma}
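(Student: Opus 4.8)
The statement is local on $\XS$, so we place ourselves in the local situation of the proof of Proposition~\ref{prop:localization}: here $Y'=f^{-1}(Y)$ and $\delta=\dim X'-\dim X\leq0$. Since $\shm=\shm(*Y)$, formula~\eqref{eq:DpiMloc} identifies $\Df^*\shm[\delta]$ with the $\DXpS(*Y')$-module $\shm^{\prime\delta}(*Y')$ of D-type put in degree~$0$; by Corollary~\ref{cor:Dtype2} this module is $\DXpS$-coherent, so $\Df^*\shm\in\rD^\rb_\coh(\DXpS)$ and $f$ is proper on its support. These finiteness facts guarantee that the derived functors appearing below are well behaved.

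The plan is to combine two ingredients. First, we use the adjunction between $\Df_*$ and the shifted pullback $\Df^*[\delta]$, whose relative version is obtained as in \cite[Lem.\,4.28 \& Prop.\,4.34]{Ka2}: for any $\shn\in\rD^\rb(\DXS)$ it provides a functorial isomorphism
\[
\Rhom_{\DXS}\bigl(\Df_*\Df^*\shm,\shn\bigr)\simeq Rf_*\Rhom_{\DXpS}\bigl(\Df^*\shm,\Df^*\shn[\delta]\bigr),
\]
which, after shifting and using $\Rhom_{\DXpS}(A[\delta],B)\simeq\Rhom_{\DXpS}(A,B)[-\delta]$, reads
\[
Rf_*\Rhom_{\DXpS}\bigl(\Df^*\shm,\Df^*\shn\bigr)\simeq\Rhom_{\DXS}\bigl(\Df_*\Df^*\shm[\delta],\shn\bigr).
\]
Second, in the present local situation the adjunction morphism $\Df_*\Df^*\shm[\delta]\to\shm$ is an \emph{isomorphism}: by~\eqref{eq:DpiMloc} it is identified with the morphism~\eqref{eq:MprimeM} occurring in the proof of Proposition~\ref{prop:localization}, which is shown there to be an isomorphism since its cokernel is $\DXS(*Y)$-coherent and vanishes on $(X\moins Y)\times S$, hence is zero by Lemma~\ref{lem:localization}\eqref{lem:localization2}.

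Applying $\Rhom_{\DXS}(\cbbullet,\shn)$ to this last isomorphism and composing with the previous display yields $Rf_*\Rhom_{\DXpS}(\Df^*\shm,\Df^*\shn)\simeq\Rhom_{\DXS}(\shm,\shn)$, and a routine diagram chase identifies the resulting isomorphism with the morphism induced functorially by $\Df^*$ on $\Rhom$-complexes (which is what is needed in the sequel). The only substantial point, and the only place where the hypothesis $\shm=\shm(*Y)$ and the special choice of the triple $(X',Y',f)$ are used, is the bijectivity of $\Df_*\Df^*\shm[\delta]\to\shm$; everything else is formal, and no regularity hypothesis on~$\shn$ enters. We record for later use that, this lemma being granted, $\Df^*\shm$ is of D-type while $\Df^*\shn$ is regular holonomic by Theorem~\ref{th:inverseimage}, so that the study of $\Rhom_{\DXS}(\shm,\shn)$ is reduced, via $Rf_*$, to the case treated in Lemma~\ref{lem:D-type}.
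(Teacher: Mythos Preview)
Your argument is correct and coincides with the paper's own proof: both use the adjunction isomorphism $Rf_*\Rhom_{\DXpS}(\Df^*\shm,\Df^*\shn)\simeq\Rhom_{\DXS}(\Df_*\Df^*\shm[\delta],\shn)$ (the paper cites \cite[Th.\,4.33(1)]{Ka2} for this, and invokes the $f$-goodness of $\Df^*\shm,\Df^*\shn$ to justify that the adjunction morphism is a quasi-isomorphism), followed by the fact that $\Df_*\Df^*\shm[\delta]\to\shm$ is an isomorphism when $\shm=\shm(*Y)$, exactly as established in the proof of Proposition~\ref{prop:localization}. One cosmetic remark: the references \cite[Lem.\,4.28 \& Prop.\,4.34]{Ka2} you cite are, in the paper, used for the counit $\Df_*\Df^*[\delta]\to\id$ rather than for the $\Rhom$-adjunction itself; the latter is \cite[Th.\,4.33]{Ka2}.
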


\begin{proof}
There is a natural morphism of adjunction (\cf \cite[Th.\,4.33(1)]{Ka2})
\[
Rf_*\Rhom_{\DXpS}(\Df^*\shm,\Df^*\shn)\to\Rhom_{\DXS}(\Df_*\Df^*\shm[\delta],\shn).
\]
We note that $\Df^*\shm,\Df^*\shn$ are $f$-good. Therefore, this morphism is a quasi-isomorphism. Furthermore, the natural adjunction morphism $\Df_*\Df^*\shm[\delta]\to\shm$ already used in the proof of Proposition \ref{prop:localization} is an isomorphism if $\shm=\shm(*Y)$.
\end{proof}

\subsubsection*{Step 4: The case where $\shm$ is of D-type along a normal crossing divisor}
Recall Definition \ref{def:Dtype} and Proposition \ref{prop:equivDel}. From Step 3, we are reduced to proving \eqref{step:dk} in the following setting:
\begin{enumerate}
\step\label{step:newsetting}
$\dim S=d$, $\dim X=k$, $\shm{}\simeq\wt E_L$ is of D-type along a normal crossing divisor $D\subset X$, and $\shn=\shn(*D)$.
\end{enumerate}

Since the assertion is local, we can assume that~$(E_L,\nabla)=(\wh E_{G|\XsS},\wh\nabla)$ as in \eqref{lftmf} for some $\sho_S$-coherent module $G$. We denote by $r\geq0$ the generic rank of $G$ as an $\sho_S$-module. The case $r=0$ means that $G$ is a torsion $\sho_S$-module.

Let us consider $X,S,D$ with $\dim S=d\geq1$, $\dim X=k\geq1$ and $D$ is a divisor with normal crossings in $X$. We shall argue by induction on $r$. For that purpose we consider, for $r\geq0$, the assertion:
\begin{enumerate}\itemindent0pt
\item[$\eqref{Rhom}_{d,k,r}$]
For any such $X,S,D$, the property $\eqref{Rhom}_{d,k-1}$ holds, as well as $\eqref{Rhom}_{d,k}$ if $\shm$ is of D-type with $E_L=\wh E_{G|\XsS}$ and~$G$ being $\sho_S$-coherent of generic rank $\leq r$, and~$\shn$ being regular holonomic satisfying $\shn=\shn(*D)$.
\end{enumerate}

Then, proving \eqref{step:newsetting} amounts to proving $\eqref{Rhom}_{d,k,r}$ for any $r\geq0$, according to Lemma \ref{lft}.

\pagebreak[2]
\begin{lemma}\label{lem:dkr-Dtype}
If $\eqref{Rhom}_{d-1}$ holds, then
\begin{enumerate}
\item\label{lem:dkr-Dtype1}
$\eqref{Rhom}_{d,k,0}$ holds;
\item\label{lem:dkr-Dtype2}
if $r\geq1$ and if $\eqref{Rhom}_{d,k,r}$ holds for any $G$ which is $\sho_S$-locally free and any $\shn=\shn(*D)$, then $\eqref{Rhom}_{d,k,r}$ holds.
\end{enumerate}
\end{lemma}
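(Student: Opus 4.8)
The plan is to handle the two assertions separately, both by reduction to the already-established Lemma~\ref{lem:Storsion}, the second one using in addition Rossi's flattening (Proposition~\ref{prop:Rossi}). We stay in the local situation~\eqref{step:newsetting}: $\dim S=d$, $\dim X=k$, $D$ a normal crossing divisor, $\shm=\wh E_G$ as in~\eqref{lftmf} for some $\sho_S$-coherent sheaf $G$, and $\shn=\shn(*D)$ regular holonomic; by Corollary~\ref{cor:Dtype2}, $\shm$ is itself regular holonomic. The clause ``$\eqref{Rhom}_{d,k-1}$ holds'' that is part of $\eqref{Rhom}_{d,k,r}$ is the induction hypothesis of~\eqref{step:dk} (it follows from $\eqref{Rhom}_{d-1}$ by iterating~\eqref{step:dk} for smaller~$k$), so we only address the $\Rhom$-statement for $\shm$ of D-type. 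For~\eqref{lem:dkr-Dtype1}, the generic rank of $G$ is~$0$, that is, $G$ is $\sho_S$-torsion; then, as in Step~1 of the proof of Proposition~\ref{prop:shltowtE}\eqref{prop:shltowtE2} (via Remark~\ref{rem:localiz}\eqref{rem:localiz2}), $\shm$ is annihilated by a power of a local equation of $\Supp G$, hence has $S$-torsion cohomology, and Lemma~\ref{lem:Storsion} (applicable since $\eqref{Rhom}_{d-1}$ holds and $\shn$ is regular holonomic) gives $\Rhom_{\DXS}(\shm,\shn)\in\rD^\rb_\cc(\pOS)$, which is $\eqref{Rhom}_{d,k,0}$.

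For~\eqref{lem:dkr-Dtype2} we fix $r\geq1$, we assume $\eqref{Rhom}_{d,k,r}$ known whenever the associated sheaf is $\sho$-locally free, and we let $G$ have generic rank $\leq r$. Near the given point of~$S$ I would choose a projective modification $\pi:S'\to S$ as in Proposition~\ref{prop:Rossi}, so that $S'$ is smooth of dimension $d$, $\pi$ is biholomorphic over $S\moins T$ for a closed analytic subset $T\subset S$ of codimension $\geq2$, and $(\pi^*G)_\tf$ is $\sho_{S'}$-locally free of rank $\leq r$. By Proposition~\ref{LReginverse}, both $L\pi^*\shm$ and $L\pi^*\shn$ are regular holonomic on $\XS'$, and $L\pi^*\shn$ has cohomology localized along~$D$. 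By~\eqref{lftmf3} and the $\pOSp$-flatness of $\sho_{\XS'}(*D)$, the degree-$a$ cohomology of $L\pi^*\shm$ is $\wh E_{L^a\pi^*G}$; for $a<0$ the sheaf $L^a\pi^*G$ is supported on $\pi^{-1}(T)$ (of dimension $<d$), hence of $S$-torsion, while for $a=0$ there is an exact sequence $0\to\wh E_{(\pi^*G)_\rt}\to\wh E_{\pi^*G}\to\wh E_{(\pi^*G)_\tf}\to0$ exhibiting $\wh E_{\pi^*G}$ as an extension of the D-type module attached to the locally free sheaf $(\pi^*G)_\tf$ by an $S$-torsion module.

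Next I would establish that $\Rhom_{\DXSp}(\shh^aL\pi^*\shm,\shh^bL\pi^*\shn)\in\rD^\rb_\cc(\pOSp)$ for all $a,b$; all the modules involved are regular holonomic (Proposition~\ref{LReginverse}, and Proposition~\ref{Prop:2}\eqref{Prop:22b} for the sub- and quotient modules above). For the $S$-torsion constituents of $\shh^aL\pi^*\shm$ this is Lemma~\ref{lem:Storsion} applied on $S'$, which is legitimate because $\dim S'=d$ and $\eqref{Rhom}_{d-1}$ holds; for the D-type constituent $\wh E_{(\pi^*G)_\tf}$ it is exactly the locally free instance of $\eqref{Rhom}_{d,k,r}$ on $S'$, applied with ``$\shn$'' replaced by $\shh^bL\pi^*\shn=(\shh^bL\pi^*\shn)(*D)$. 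Since $\rD^\rb_\cc(\pOSp)$ is a triangulated subcategory of $\rD^\rb(\pOSp)$, the truncation filtrations of $L\pi^*\shm$ and $L\pi^*\shn$ then give $\Rhom_{\DXSp}(L\pi^*\shm,L\pi^*\shn)\in\rD^\rb_\cc(\pOSp)$.

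Finally I would descend along $\pi$. Since $L\pi^*\shn$ is $\pi$-good (Lemma~\ref{lem:qcoh}) and regular holonomic, $R\pi_*L\pi^*\shn\in\rD^\rb_\rhol(\DXS)$ by Theorem~\ref{th:Regdirect}; and since $\pi$ is proper, Lemma~\ref{lem:adjunctionpi}\eqref{lem:adjunctionpi*} together with the stability of $\rD^\rb_\cc$ under proper direct image along $S'\to S$ (used already in the proof of Lemma~\ref{lem:Storsion}) gives $\Rhom_{\DXS}(\shm,R\pi_*L\pi^*\shn)\in\rD^\rb_\cc(\pOS)$. The cone~$\shc$ of the adjunction morphism $\shn\to R\pi_*L\pi^*\shn$ is regular holonomic and, $\pi$ being an isomorphism over $S\moins T$, is supported over $X\times T$, hence of $S$-torsion; thus $\Rhom_{\DXS}(\shm,\shc)\in\rD^\rb_\cc(\pOS)$ by Lemma~\ref{lem:Storsion}, and the distinguished triangle $\shn\to R\pi_*L\pi^*\shn\to\shc\To{+1}$ forces $\Rhom_{\DXS}(\shm,\shn)\in\rD^\rb_\cc(\pOS)$, which completes $\eqref{Rhom}_{d,k,r}$. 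The delicate step, and the reason for the two-layered induction, is precisely that the flattening does not lower $\dim S$, so $\eqref{Rhom}_{d-1}$ is not available on $S'$: the argument goes through only because, after pullback, the D-type datum splits into an $S$-torsion part, controlled by the already-proved Lemma~\ref{lem:Storsion}, and a locally free part of the same generic rank, controlled by the locally free instance of $\eqref{Rhom}_{d,k,r}$ that constitutes the hypothesis of~\eqref{lem:dkr-Dtype2}.
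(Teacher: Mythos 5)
Your proof is correct and follows essentially the same route as the paper: point \eqref{lem:dkr-Dtype1} via Lemma~\ref{lem:Storsion}, and point \eqref{lem:dkr-Dtype2} via Rossi's flattening (Proposition~\ref{prop:Rossi}), base pullback/pushforward stability (Proposition~\ref{LReginverse}, Theorem~\ref{th:Regdirect}), the adjunction of Lemma~\ref{lem:adjunctionpi}, the $S$-torsion case, and finally the locally free instance of $\eqref{Rhom}_{d,k,r}$ on $S'$. The only differences are cosmetic reorganizations (you compare $\shn$ directly with $R\pi_*L\pi^*\shn$ instead of passing through $R\pi_*\pi^*\shn$, and you split only $\pi^*\shm$ by the torsion/torsion-free sequence for $\pi^*G$ rather than replacing both $\pi^*\shm$ and $\pi^*\shn$ by their torsion-free quotients), which do not change the argument.
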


\begin{proof}
The first point follows from Lemma \ref{lem:Storsion}. In order to prove the second point, we choose (locally on $S$) a projective modification $\pi:S'\to S$ as in Proposition \ref{prop:Rossi}. Since the cones of the natural morphisms $\shn\to R\pi_*\pi^*\shn$ and $R\pi_*L\pi^*\shn\to R\pi_*\pi^*\shn$ are S-supported in dimension $<d$, Lemmas \ref{lem:Storsion} and \ref{lem:adjunctionpi} imply that \eqref{Rhom} holds for $\shm,\shn$ on $\XS$ if it holds for $L\pi^*\shm,L\pi^*\shn$ on $\XS'$. Furthermore, $L^j\pi^*\shm,L^j\pi^*\shn$ are also $S'$-supported in dimension $<d$ if $j\neq0$, so, by the same argument, \eqref{Rhom} holds for $\shm,\shn$ if it holds $\pi^*\shm$ and $\pi^*\shn$. Lastly, by Lemma \ref{lem:Storsion}, we can replace $\pi^*\shm$ and $\pi^*\shn$ with their quotient~$\shm',\shn'$ by their $S'$-torsion subsheaves. Note that we still have $\shn'=\shn'(*D)$ and $\shm'$ is of D-type with corresponding $E_{L'}$ isomorphic to $\wh E_{G'|\XsS'}$, where $G'$ is $\pOSp$-locally free by the choice of $\pi$. The assumption in \eqref{lem:dkr-Dtype2} therefore implies that $\eqref{Rhom}$ holds for any $G$ of generic rank $r$, as wanted.
\end{proof}

\subsubsection*{Step 5: Induction on the generic rank of $\shm$}
We use the same notation as in Step~4 of the proof of Theorem \ref{th:Deligneext}.

\begin{lemma}\label{lem:D-type}
Assume that $\eqref{Rhom}_{d,k-1}$ holds. Then $\eqref{Rhom}_{d,k,1}$ holds.
\end{lemma}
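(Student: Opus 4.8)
The plan is to reduce, by means of the preceding lemmas, to the computation of $\Rhom_{\DXS}(\sho_{\XS}(*D),\shn')$ for an \emph{arbitrary} regular holonomic $\shn'$ with $\shn'=\shn'(*D)$, and then to cut this complex by a local cohomology triangle into a part controlled by the induction hypothesis $\eqref{Rhom}_{d,k-1}$ and a part controlled by the already available constructibility of solution complexes of holonomic $\DXS$-modules. At first sight the $\sho_{\XS}$-coefficient piece looks like it would require the full statement $\eqref{Rhom}_{d,k}$ (and hence be circular), but in fact it only needs the solution-complex statement of \cite[Th.\,3.7]{MFCS1}, which is not circular; this is the point that makes the base case go through.

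First I would dispose of the formal part. The first assertion contained in $\eqref{Rhom}_{d,k,1}$ is exactly the hypothesis $\eqref{Rhom}_{d,k-1}$, which in particular implies $\eqref{Rhom}_{d-1}$; hence Lemma \ref{lem:dkr-Dtype} is available. Its first point yields $\eqref{Rhom}_{d,k,0}$, i.e.\ the case where $G$ is an $S$-torsion module, and its second point reduces the generic rank one case to the case where $G$ is $\sho_S$-locally free. Since belonging to $\rD^\rb_\cc(\pOS)$ is local on $\XS$ and the statement is local on $S$, I may then assume $G=\sho_S$, so that, in suitable coordinates with $D=\{x_1\cdots x_\ell=0\}$,
\[
\shm=(\wh E_G,\wh\nabla)=\bigl(\sho_{\XS}(*D),\ \rd_{\XS/S}+\textstyle\sum_{i=1}^\ell\alpha_i(s)\,\rd x_i/x_i\bigr)=\sho_{\XS}(*D)x^\alpha
\]
in the notation preceding Corollary \ref{cor:nabladf}, with $\alpha=(\alpha_1,\dots,\alpha_\ell)$ holomorphic on $S$. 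Next I would move the twist onto the second argument: since $(\cbbullet)x^\alpha$ is an auto-equivalence of $\Mod_\coh(\DXS(*D))$ with inverse $(\cbbullet)x^{-\alpha}$, and since $\Rhom_{\DXS}$ between $\DXS(*D)$-modules agrees with $\Rhom_{\DXS(*D)}$ (both $\shm$ and $\shn$ being localised along $D$), one obtains $\Rhom_{\DXS}(\shm,\shn)\simeq\Rhom_{\DXS}(\sho_{\XS}(*D),\shn')$ with $\shn':=\shn x^{-\alpha}$ regular holonomic (Corollary \ref{cor:nabladf}) and $\shn'=\shn'(*D)$. This reduces everything to showing $\Rhom_{\DXS}(\sho_{\XS}(*D),\shn')\in\rD^\rb_\cc(\pOS)$.

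For that I would apply $\Rhom_{\DXS}(\cbbullet,\shn')$ to the distinguished triangle $R\Gamma_{[D]}\sho_{\XS}\to\sho_{\XS}\to\sho_{\XS}(*D)\To{+1}$, obtaining a distinguished triangle linking $\Rhom_{\DXS}(\sho_{\XS}(*D),\shn')$, $\Rhom_{\DXS}(\sho_{\XS},\shn')$ and $\Rhom_{\DXS}(R\Gamma_{[D]}\sho_{\XS},\shn')$. Since $\sho_{\XS}$ is regular holonomic, $R\Gamma_{[D]}\sho_{\XS}$ belongs to $\rD^\rb_\rhol(\DXS)$ by Corollary \ref{cor:gammaY} and is supported on $D\times S$, so its $X$-support has dimension $\leq k-1$; the induction hypothesis $\eqref{Rhom}_{d,k-1}$, applied to the regular holonomic pair $R\Gamma_{[D]}\sho_{\XS},\shn'$, therefore gives $\Rhom_{\DXS}(R\Gamma_{[D]}\sho_{\XS},\shn')\in\rD^\rb_\cc(\pOS)$. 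On the other hand $\Rhom_{\DXS}(\sho_{\XS},\shn')$ is, up to a shift, the relative de Rham complex of the holonomic module $\shn'$; by biduality \cite[(3)]{MFCS1} together with the self-duality $\bD\sho_{\XS}\simeq\sho_{\XS}$ it is identified, up to shift, with $\Rhom_{\DXS}(\bD\shn',\sho_{\XS})=\pSol_X(\bD\shn')$, where $\bD\shn'$ is regular holonomic by Corollary \ref{cor:prop2}; hence it belongs to $\rD^\rb_\cc(\pOS)$ by \cite[Th.\,3.7]{MFCS1}. As two terms of the triangle lie in $\rD^\rb_\cc(\pOS)$, so does the remaining one $\Rhom_{\DXS}(\sho_{\XS}(*D),\shn')$, which is the assertion.

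I do not expect a genuine obstacle at this base case: once the reductions are in place the argument is formal, the only delicate points being bookkeeping ones (the identification $\Rhom_{\DXS}\simeq\Rhom_{\DXS(*D)}$ on modules localised along $D$, the compatibility of the twist auto-equivalence with $\Rhom$, and the self-duality of $\sho_{\XS}$ as a $\DXS$-module used to pass between a de Rham complex and a solution complex). The substantial work of the induction on the generic rank of $\shm$ is the subsequent inductive step, carried out by spectral base changes in the spirit of Step~5 of the proof of Theorem \ref{th:Deligneext}, not this initial case.
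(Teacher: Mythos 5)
Your proof is correct and follows essentially the same route as the paper's: reduce via Lemma \ref{lem:dkr-Dtype} to $G$ locally free of rank one, untwist by $x^{-\alpha}$ using the auto-equivalence of $\Mod(\DXS(*D))$ and Corollary \ref{cor:nabladf}, and then split $\Rhom_{\DXS}(\sho_{\XS}(*D),\shn')$ by a localization triangle into the relative de Rham complex of $\shn'$ (constructible by \cite[Th.\,3.7]{MFCS1}) and a piece with first argument supported on $D$, handled by $\eqref{Rhom}_{d,k-1}$. The only cosmetic differences are that the paper uses the quotient $\sho_{\XS}(*D)/\sho_{\XS}$ instead of $R\Gamma_{[D]}\sho_{\XS}$ (the same object up to shift) and cites constructibility of $\DR(\shn x^{-\alpha})$ directly rather than passing through biduality to $\pSol_X(\bD\shn')$.
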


\begin{proof}
Assume $\shm$ is of D-type with $E_L=\wh E_{G|\XsS}$ and $G$ being $\sho_S$-coherent and of generic rank one, and let $\shn$ be regular holonomic with $\shn=\shn(*D)$. By a projective modification
we can assume that~$G$ is $\sho_S$-locally free of rank one. Then, by~Propo\-si\-tion~\ref{prop:shltowtE}\eqref{prop:shltowtE1}, we have $\shm=\wh E_G$.

The matrix of the relative connection $\wh\nabla$ on~$\wh E_G$ in some local $\sho_S$-basis of $G$ writes $\rd_{\XS/S}\otimes \Id+\sum_{i=1}^\ell\alpha_i(s)\rd x_i/x_i\otimes \Id$. Let us denote by $\shn x^{-\alpha}$ the $\sho_{\XS}$-module $\shn(*D)$ for which we add to the connection the relative $1$-form $-\sum_{i=1}^\ell\alpha_i(s)\rd x_i/x_i\otimes \Id$. Then $\shn x^{-\alpha}$ is also regular holonomic, according to Corollary~\ref{cor:nabladf}. Since $\shm=\shm(*D)$ and $\shn=\shn(*D)$, they also are $\DXS(*D)$\nobreakdash-mod\-ules
and, as recalled before Corollary~\ref{cor:nabladf}, the functor
$(\cbbullet)x^\alpha$ is an auto-equivalence of $\Mod(\DXS(*D))$. Therefore, we~have
\begin{align*}
\Rhom_{\DXS}(\shm,\shn)&=\Rhom_{\DXS(*D)}(\shm,\shn)\\
&\simeq\Rhom_{\DXS(*D)}(\shm x^{-\alpha},\shn x^{-\alpha})\\
&=\Rhom_{\DXS}(\shm x^{-\alpha},\shn x^{-\alpha}).
\end{align*}
Furthermore, $\shm x^{-\alpha}\simeq(\sho_{\XS}(*D),\rd_{\XS/S})$. Since $\DR(\shn x^{-\alpha})$ is $S$-$\C$-constructible (\cf\cite[Th.\,3.7]{MFCS1}) and $\Rhom_{\DXS}([\sho_{\XS}(*D)/\sho_{\XS}],\shn x^{-\alpha})$ also by $\eqref{Rhom}_{d,k-1}$, we~conclude that $\Rhom_{\DXS}(\shm,\shn)$ belongs to $\rD^\rb_{\cc}(\pOS)$.
\end{proof}

\begin{lemma}
Let us assume that $\eqref{Rhom}_{d,k,r-1}$ holds ($r\geq2$). Then $\eqref{Rhom}_{d,k,r}$ holds.
\end{lemma}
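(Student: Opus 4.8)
The plan is to avoid the descending induction on~$r$ and to untwist~$\shm$ by passing to its $\sho_{\XS}(*D)$-dual, generalising the argument used for $\eqref{Rhom}_{d,k,1}$ in Lemma~\ref{lem:D-type}. First I would invoke Lemma~\ref{lem:dkr-Dtype}\eqref{lem:dkr-Dtype2} to reduce to the case where~$G$ is $\sho_S$-locally free of rank~$r$; then, working locally on~$\XS$ and using Proposition~\ref{prop:shltowtE}\eqref{prop:shltowtE1} together with~\eqref{lftmf}, I may assume that $\shm=\wh E_G=(\sho_{\XS}(*D)\otimes_{\pOS}p^{-1}G,\,\rd\otimes\Id+\sum_i(\rd x_i/x_i)\otimes A_i)$ with $A_i\in\End_{\sho_S}(G)$ pairwise commuting, and that $\shn=\shn(*D)$ is regular holonomic.

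Next I would introduce the $\sho_{\XS}(*D)$-dual $\shm^\vee:=\shhom_{\sho_{\XS}(*D)}(\shm,\sho_{\XS}(*D))$, endowed with the dual relative connection. It is again of D-type along~$D$ --- with associated $\sho_S$-coherent module $G^\vee=\shhom_{\sho_S}(G,\sho_S)$ and connection matrices $-{}^{t}A_i$ --- hence regular holonomic by Theorem~\ref{th:Deligneext}\eqref{th:Deligneext3}. Since $\shm^\vee$ is $\sho_{\XS}(*D)$-locally free, hence $\sho_{\XS}$-flat, the module
\[
\shn':=\shm^\vee\otimes_{\sho_{\XS}(*D)}\shn\;\simeq\;\shm^\vee\otimes^L_{\sho_{\XS}}\shn
\]
is regular holonomic by Proposition~\ref{tmf12}, and $\shn'=\shn'(*D)$. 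The key point is the tensor--hom identity for the $\sho_{\XS}(*D)$-locally free module~$\shm$, obtained from the relative Spencer resolution of~$\shm$ over $\DXS(*D)$, namely $\Rhom_{\DXS(*D)}(\shm,\shn)\simeq\Rhom_{\DXS(*D)}(\sho_{\XS}(*D),\shn')$; as $\DXS(*D)$ is flat over $\DXS$ and $\shm,\shn,\shn'$ are $\DXS(*D)$-modules, both sides may be read with $\DXS$ in place of $\DXS(*D)$, so that $\Rhom_{\DXS}(\shm,\shn)\simeq\Rhom_{\DXS}(\sho_{\XS}(*D),\shn')$.

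It then remains to conclude, exactly as at the end of the proof of Lemma~\ref{lem:D-type}, that the right-hand side lies in $\rD^\rb_\cc(\pOS)$. The distinguished triangle $\sho_{\XS}\to\sho_{\XS}(*D)\to[\sho_{\XS}(*D)/\sho_{\XS}]\To{+1}$ has all three terms in $\rD^\rb_\rhol(\DXS)$ (the middle one being $\wh E_{\sho_S}$, regular holonomic by Theorem~\ref{th:Deligneext}\eqref{th:Deligneext3}), and $[\sho_{\XS}(*D)/\sho_{\XS}]$ has $X$-support contained in~$D$, of dimension $\leq k-1$ in the setting~\eqref{step:newsetting}. Applying $\Rhom_{\DXS}(\cbbullet,\shn')$ produces a triangle in which $\Rhom_{\DXS}(\sho_{\XS},\shn')$ is $S$-$\CC$-constructible (it is the relative de Rham complex of the regular holonomic module~$\shn'$, up to shift; \cf\cite[Th.\,3.7]{MFCS1}, applied as in Lemma~\ref{lem:D-type}), and $\Rhom_{\DXS}([\sho_{\XS}(*D)/\sho_{\XS}],\shn')$ is $S$-$\CC$-constructible by $\eqref{Rhom}_{d,k-1}$, which is part of the hypothesis $\eqref{Rhom}_{d,k,r-1}$. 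Therefore $\Rhom_{\DXS}(\sho_{\XS}(*D),\shn')$, and with it $\Rhom_{\DXS}(\shm,\shn)$, belongs to $\rD^\rb_\cc(\pOS)$, which is $\eqref{Rhom}_{d,k,r}$. (Since only $\eqref{Rhom}_{d,k-1}$ is used, this in fact establishes $\eqref{Rhom}_{d,k,r}$ for all~$r$ at once.)

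The main obstacle I anticipate is making the tensor--hom step rigorous in the relative, logarithmic-pole framework: one has to spell out the relative Spencer resolution of an $\sho_{\XS}(*D)$-locally free $\DXS(*D)$-module by $\DXS(*D)$-modules induced from locally free $\sho_{\XS}(*D)$-modules, and carry out the flatness bookkeeping identifying $\Rhom_{\DXS}$ with $\Rhom_{\DXS(*D)}$ on $\DXS(*D)$-modules. If one prefers to stay strictly within the paper's machinery, one can instead imitate Step~5 of the proof of Theorem~\ref{th:Deligneext}\eqref{th:Deligneext3}: attach to the matrix of~$A_1$ the Weierstrass hypersurface $\Sigma\subset S\times\CC$ and the finite map $\sigma\colon\Sigma\to S$ (for which $\sho_S$ is a direct summand of $\sigma_*\sho_\Sigma$, \cf\eqref{step:sigma2}), resolve~$\Sigma$ and compose to get a projective generically finite $\varrho\colon S'\to S$, split off the $\alpha_1$-eigensheaf $G'=\ker(\alpha_1\id-\sfA_1\circ\varrho)\subset\varrho^*G$ to lower the generic rank, apply $\eqref{Rhom}_{d,k,r-1}$ on~$S'$ (using Lemma~\ref{lem:Storsion} for the negative-degree cohomologies of $L\varrho^*\shn$), and descend through~$\varrho$ by means of Lemma~\ref{lem:adjunctionpi}, the trace splitting and Lemma~\ref{lem:Storsion} over the ramification locus, the degenerate case $\rk G'=r$ being handled by iterating over $A_2,\dots,A_\ell$ down to $\eqref{Rhom}_{d,k,1}$.
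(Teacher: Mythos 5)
Your proposal is correct, but its primary route is genuinely different from the paper's. After the common reduction (Lemma~\ref{lem:dkr-Dtype}\eqref{lem:dkr-Dtype2}, Proposition~\ref{prop:shltowtE}\eqref{prop:shltowtE1} and \eqref{lftmf}) to $\shm=\wh E_G$ with $G$ locally free, the paper lowers the generic rank: it pulls back by the generically finite map $\sigma\circ\pi:S'\to S$ built from the Weierstrass hypersurface of $A_1$, splits off the eigensheaf $G'$, applies $\eqref{Rhom}_{d,k,r-1}$ to $G',G''$, and descends through Lemma~\ref{lem:adjunctionpi}, the trace splitting \eqref{step:sigma2} and the torsion case (Lemma~\ref{lem:Storsion}) over $\sigma(\mathrm{Sing}\,\Sigma)$, the all-scalar case being sent to Lemma~\ref{lem:D-type}; your fallback paragraph is essentially this argument. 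Your main argument instead untwists by the $\sho_{\XS}(*D)$-dual: $\shm^\vee\simeq\wh E_{G^\vee}$ is again strict of D-type, hence regular holonomic, $\shn'=\shm^\vee\otimes_{\sho_{\XS}(*D)}\shn\simeq\shm^\vee\otimes^L_{\sho_{\XS}}\shn$ is regular holonomic by Proposition~\ref{tmf12}, and the relative Spencer resolution of the $\sho_{\XS}(*D)$-locally free module $\shm$ over $\DXS(*D)$ (together with the change-of-rings identity $\Rhom_{\DXS}=\Rhom_{\DXS(*D)}$ already used implicitly in Lemma~\ref{lem:D-type}) gives $\Rhom_{\DXS}(\shm,\shn)\simeq\Rhom_{\DXS}(\sho_{\XS}(*D),\shn')$, after which the triangle $\sho_{\XS}\to\sho_{\XS}(*D)\to[\sho_{\XS}(*D)/\sho_{\XS}]$, the constructibility of $\DR(\shn')$ and $\eqref{Rhom}_{d,k-1}$ conclude exactly as at the end of Lemma~\ref{lem:D-type}. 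This is sound: all the inputs ($\DXS$-linearity of the tensor--hom isomorphism, $\sho_{\XS}$-flatness of $\shm^\vee$, regular holonomicity of $\shm^\vee$ and $\shn'$) are available at this stage, and, as you observe, it proves the D-type statement for every rank at once from $\eqref{Rhom}_{d,k-1}$ alone, making the descending induction on $r$ (and even the scalar-twist trick of Lemma~\ref{lem:D-type}) unnecessary. What the paper's route buys is that it never leaves machinery already set up---base change, proper pushforward, trace---and only untwists by rank-one objects $x^{\alpha}$, where $(\cbbullet)x^{\alpha}$ is an elementary auto-equivalence; what yours buys is uniformity and brevity, at the cost of writing out the Spencer resolution over $\DXS(*D)$ and the attendant flatness bookkeeping, which you correctly identify as the one point that must be made rigorous (it is standard: the usual relative Spencer complex stays exact after the flat base change $\sho_{\XS}\to\sho_{\XS}(*D)$ and after twisting by the locally free $\shm$).
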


\begin{proof}
Let us assume that $r\geq2$. Together with the assumptions of \eqref{step:newsetting} in Step~4, we can now assume that $G$ is $\sho_S$-locally free of finite rank~$r$ (Lemma \ref{lem:dkr-Dtype}) and thus, by Proposition~\ref{prop:shltowtE}\eqref{prop:shltowtE1}, $\shm\simeq(\wh E_G,\wh\nabla)$. We argue in a way similar to that in Step 4 in the proof of Theorem \ref{th:Deligneext}\eqref{th:Deligneext3}.

If each endomorphism $A_i(s)$ occurring in the connection matrix of $\nabla$ is scalar, \ie of the form $\alpha_i(s)\id$, then Lemma \ref{lem:D-type} allows us to conclude, by obvious reduction to rank one, the proof of $\eqref{Rhom}$ for $\shm=(\wh E_G,\wh\nabla)$ and $\shn$ as above.

We can thus assume that some endomorphism $A_i(s)$, say $A_1(s)$, is not scalar. We~use the notation of Step 4 in the proof of Theorem \ref{th:Deligneext}\eqref{th:Deligneext3}, in particular \eqref{step:sigma1} and \eqref{step:sigma2}. Recall that we~consider the composition $S'\To{\pi}\Sigma\To{\sigma} S$, with $\Sigma:=\{\det(\alpha_1\Id-A_1(s))=0\}$ and $\pi:S'\to\Sigma$ is a resolution of singularities.

Recall also that we set $G'=\ker(\alpha_1\id-\sfA_1\circ\sigma\circ\pi)\subset(\sigma\circ\pi)^*G$ and we consider the exact sequence $0\to G'\to(\sigma\circ\pi)^*G\to G''\to0$. By construction and assumption,
we have $1\leq\rk G'<\rk G$ and thus $\rk G''<\rk G$.

By~applying the induction hypothesis $\eqref{Rhom}_{d,k,r-1}$ to $G',G''$, we deduce that
\[
\Rhom_{\DXSp}(L(\sigma\circ\pi)^*\shm,L(\sigma\circ\pi)^*\shn)
\]
belongs to $\rD^\rb_{\cc}(\pOS)$, hence so does its pushforward by $\sigma\circ\pi$. Thus
\[
\Rhom_{\DXS}(\shm,R(\sigma\circ\pi)_*L(\sigma\circ\pi)^*\shn),
\]
belongs to $\rD^\rb_{\cc}(\pOS)$ by Lemma \ref{lem:adjunctionpi}.

The cones of the natural morphisms
$
R(\sigma\circ\pi)_*L(\sigma\circ\pi)^*\shn\to R(\sigma\circ\pi)_*(\sigma\circ\pi)^*\shn
$,
$
(\sigma\circ\pi)_*(\sigma\circ\pi)^*\shn\to R(\sigma\circ\pi)_*(\sigma\circ\pi)^*\shn
$ and
$\sigma_*\sigma^*\shn\to (\sigma\circ\pi)_*(\sigma\circ\pi)^*\shn$
are supported on $X\times \sigma(\mathrm{Sing}(\Sigma))$. Therefore,
$\Rhom_{\DXS}(\shm, (\sigma\circ\pi)_*(\sigma\circ\pi)^*\shn)
\in\rD^\rb_{\cc}(\pOS).
$
Recalling (\cf \eqref{step:sigma1}) that locally $\sigma_*\sho_{X\times\Sigma}$ is $\sho_{\XS}$-free of rank $\deg\sigma$, $\sigma_*\sigma^*\shn$, which is isomorphic to $\sigma_*\sho_{X\times\Sigma}\otimes_{\sho_{\XS}}\shn$, is regular holonomic. Besides, according to \eqref{step:sigma2}, $\shn$ is a direct summand of $\sigma_*\sigma^*\shn$. Therefore $\Rhom_{\DXS}(\shm, \shn)$ is a direct summand of $\Rhom_{\DXS}(\shm, \sigma_*\sigma^*\shn)$, thus again by $\eqref{Rhom}_{d-1}$, $\Rhom_{\DXS}(\shm, \sigma_*\sigma^*\shn)$ is an object of $\rD^\rb_{\cc}(\pOS)$, hence the same holds true for $\Rhom_{\DXS}(\shm, \shn)$.
\end{proof}

This lemma concludes Step~5, therefore the proof of $\eqref{Rhom}_d$, and thereby that of Theorem~\ref{RHH}.
\end{proof}

As a consequence of Theorem \ref{RHH} we can now generalize Proposition \ref{prop:equivDel}:

\begin{theorem}\label{Clftmf}
Let $Y$ be a hypersurface in $X$. Then the category of $\DXS$-modules of D\nobreakdash-type along~$Y$ is equivalent to the category of $S$-locally constant sheaves on \hbox{$\XsS$} with finite rank, under the functor $\shl\mto L=\shh^0\DR(\shl_{|\XsS})$ with quasi-inverse $L\to \RH_X^S(j_!\bD'L)[-d_X]$.
\end{theorem}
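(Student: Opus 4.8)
The plan is to prove that the two functors $\Phi\colon\shl\mto\shh^0\DR(\shl_{|\XsS})$ and $\Psi\colon L\mto\RH_X^S(j_!\bD'L)[-d_X]$ are mutually quasi-inverse. Recall that $\Phi$ takes values in coherent $S$-locally constant sheaves (the remark preceding Proposition~\ref{prop:equivDel}), while $\Psi(L)$ is a priori an object of $\rD^\rb_\rhol(\DXS)$ by \eqref{TM}; both assignments are functorial. I would first dispose of the composite $\Phi\circ\Psi$: since $\RH_X^S$, $j_!$ and $\bD'$ all commute with restriction to the open set $\XsS$, one gets $\Psi(L)_{|\XsS}\simeq\RH_{X^*}^S(\bD'L)[-d_X]\simeq E_L$ by Proposition~\ref{CDT} applied on $X^*$ with empty divisor (equivalently, by Deligne's theorem), whence $\Phi(\Psi(L))=\shh^0\DR(E_L)=E_L^\nabla=L$, functorially.

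Next I would show that $\Psi$ takes values in the category of $\DXS$-modules of D-type along $Y$. As this is local on $X$, choose a projective log-resolution $f\colon X'\to X$ of the pair $(X,Y)$, so that $D':=f^{-1}(Y)$ is a normal crossing divisor, $f\colon X^{\prime*}:=X'\moins D'\isom X\moins Y$, and $\dim X'=\dim X$; write $M:=f^*L$ for the induced $S$-locally constant sheaf on $X^{\prime*}\times S$ and $j'\colon X^{\prime*}\times S\hto\XpS$ for the inclusion. Since $j'_!\bD'M$ vanishes on $D'\times S=f^{-1}(\YS)$ and $f$ is proper, $j_!\bD'L\simeq Rf_*(j'_!\bD'M)$, so Theorem~\ref{L:A4} gives $\Psi(L)\simeq\Df_*\bigl(\RH_{X'}^S(j'_!\bD'M)[-d_{X'}]\bigr)=\Df_*\wt E_M$ by Proposition~\ref{CDT}. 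Now $\wt E_M$ is regular holonomic (Theorem~\ref{th:Deligneext}\eqref{th:Deligneext3}), hence $\sho$-good and $f$-good, so $\Df_*\wt E_M$ has regular holonomic cohomology by \cite[Cor.\,2.4]{MFCS2}; it is moreover localized along $Y$ (projection formula, since $D'=f^{-1}(Y)$), and for $\ell\neq0$ its $\ell$-th cohomology vanishes on $\XsS$ (there $f$ is an isomorphism and $\Df_*\wt E_M$ restricts to $E_L$, concentrated in degree $0$), hence is supported on $\YS$ and therefore zero by Lemma~\ref{lem:localization}\eqref{lem:localization2}. Thus $\Psi(L)$ is a regular holonomic $\DXS(*Y)$-module with $\Psi(L)_{|\XsS}\simeq E_L$, i.e.\ it is of D-type along $Y$, condition \eqref{def:Dtype2} holding automatically since a regular holonomic module satisfies $(\mathrm{Reg})$ (Proposition~\ref{Prop:2}).

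It remains to construct a functorial isomorphism $\shl\isom\Psi(\Phi(\shl))$ for $\shl$ of D-type along $Y$. Put $L=\Phi(\shl)$. By Corollary~\ref{cor:Dtype2} and Theorem~\ref{RHH} we have $\shl\simeq\RH_X^S(\pSol_X\shl)$, so it is enough to identify $\pSol_X\shl$. Reducing once more to the normal crossing case by the resolution $f$ above, and arguing exactly as in the proof of Proposition~\ref{prop:localization} (using $\shl=\shl(*Y)$, the identification $\DXpS(*D')\simeq\shd_{X'\to X/S}(*D')$ of \eqref{E:new2}, and Lemma~\ref{lem:localization}\eqref{lem:localization2}), one checks that $\shl':=\Df^*\shl$ is a single $\DXpS(*D')$-coherent module with $\shl'_{|X^{\prime*}\times S}\simeq E_M$, which is of D-type along $D'$ — condition \eqref{def:Dtype2} holds because $Li_s^*\shl'\simeq\Df^*Li_s^*\shl$ is regular holonomic by Theorem~\ref{th:inverseimage} — and with $\Phi(\shl')=M$. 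The crux is then the normal crossing statement: \emph{a $\DXpS(*D')$-coherent module $\shl'$ of D-type with $\Phi(\shl')=M$ is isomorphic to $\wt E_M$}. To prove it, observe that $j^{\prime-1}\pSol_{X'}\shl'=\pSol_{X^{\prime*}}(E_M)=j^{\prime-1}\pSol_{X'}\wt E_M$ and that $\pSol_{X'}\wt E_M\simeq j'_!\bD'M$ is a $j'_!$-extension (Proposition~\ref{CDT}); applying the equivalence $\RH_{X'}^S$ of Theorem~\ref{RHH} to the canonical morphism $\pSol_{X'}\wt E_M\simeq j'_!j^{\prime-1}\pSol_{X'}\shl'\to\pSol_{X'}\shl'$ yields a morphism $\shl'\to\wt E_M$ that is an isomorphism over $X^{\prime*}\times S$; as both modules are $\DXpS(*D')$-coherent (Theorem~\ref{th:Deligneext}\eqref{th:Deligneext1}), hence $\sho$-quasi-coherent and localized along $D'$, it is an isomorphism by Lemma~\ref{lem:localization}\eqref{lem:localization3}. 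Finally, the adjunction morphism $\Df_*\Df^*\shl\to\shl$ (no shift, as $\dim X'=\dim X$) is an isomorphism over $\XsS$, and its cone — a $\DXS(*Y)$-complex with coherent cohomology supported on $\YS$ — vanishes by Lemma~\ref{lem:localization}\eqref{lem:localization2}; hence $\shl\simeq\Df_*\shl'\simeq\Df_*\wt E_M\simeq\Psi(\Phi(\shl))$, functorially, and $\Phi,\Psi$ are quasi-inverse equivalences. The main obstacle is precisely this normal crossing statement, which is \emph{not} covered by Proposition~\ref{prop:shltowtE}\eqref{prop:shltowtE1} (valid only in the strict case) and for which the detour through the Riemann-Hilbert correspondence of Theorem~\ref{RHH} seems unavoidable; the rest is a careful bookkeeping of localizations and degree-$0$ concentrations entirely parallel to the arguments already made in Proposition~\ref{prop:localization} and Theorem~\ref{th:Deligneext}.
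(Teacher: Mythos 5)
Your overall strategy is sound and most individual steps are correct: the identification $\Psi(L)\simeq\Df_*\wt E_M$ via Theorem \ref{L:A4} and Proposition \ref{CDT}, the degree-$0$ concentration and localization of $\Df_*\wt E_M$ via Lemma \ref{lem:localization}, and the normal-crossing comparison $\shl'\isom\wt E_M$ obtained by applying $\RH_{X'}^S$ to $j'_!j^{\prime-1}\pSol_{X'}\shl'\to\pSol_{X'}\shl'$ and invoking Lemma \ref{lem:localization}\eqref{lem:localization3} all work, and they essentially re-run (on $X'$) arguments the paper delegates to citations: the actual proof in the paper is much shorter, quoting \cite[Prop.\,2.6]{FMFS1} for localization of $\RH_X^S(j_!\bD'L)$ along $Y$, Theorem \ref{RHH} for its regular holonomicity, a Nakayama-type argument for concentration in degree zero, and \cite[Cor.\,3.24]{MFCS2} for the restriction to $\XsS$; for the other direction it transfers the canonical morphism $\DR(\shl)\to j_*L\simeq\DR(\RH_X^S(j_!\bD'L)[-d_X])$ (the last identification by Lemma \ref{lem:DRRHS}) through the equivalence of Theorem \ref{RHH} into a \emph{global, canonical} morphism $\shl\to\RH_X^S(j_!\bD'L)[-d_X]$, which is then an isomorphism because it is one on $\XsS$.

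The genuine gap in your proposal is precisely at the point where the paper is careful: your isomorphism $\shl\isom\Psi(\Phi(\shl))$ is assembled as $\shl\simeq\Df_*\shl'\simeq\Df_*\wt E_M\simeq\Psi(L)$ from a \emph{locally chosen} resolution $f:X'\to X$ of the pair $(X,Y)$ (you yourself reduce to a local situation to choose $f$), and you assert without argument that the outcome is functorial in $\shl$. To have a quasi-inverse you need a natural isomorphism of functors, hence a canonical globally defined morphism independent of the choice of $f$ and compatible with morphisms of modules of D-type; nothing in your construction provides this, and gluing the local isomorphisms would require checking their independence of $f$ on overlaps. The fix is easy and is exactly what the paper does (or, closer to your own setup: start from the canonical morphism $j_!j^{-1}\pSol_X\shl\to\pSol_X\shl$ on $X$ itself, identify $j^{-1}\pSol_X\shl$ with the appropriate shift of $\bD'L$, and apply $\RH_X^S$ and Theorem \ref{RHH} to get one global canonical morphism, whose bijectivity may then be checked locally by your computation). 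With that modification your argument becomes a complete, if more laborious, proof.
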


\begin{proof}
Let us prove that the second functor takes values in the category of modules of $D$\nobreakdash-type along $Y$. Let $L$ be an $S$-locally constant sheaf on \hbox{$\XsS$} of finite rank. Since, by \cite[Prop.\,2.6]{FMFS1} (valid without any restriction on~$\dim S$), $\RH_X^S(j_!\bD'L)$ is localized along $Y$, and regular holonomic by Theorem \ref{RHH}, hence $\DXS(*Y)$-coherent, it remains to prove that $\RH_X^S(j_!\bD'L)[-d_X]$ is in degree zero and that $\RH_X^S(j_!\bD'L)[-d_X]|_{X^*\times S}\simeq
\sho_{X^*\times S}\otimes _{p_{X^*}^{-1}\sho_S}L$. The first assertion follows by the usual Nakayama-type argument.
The second assertion follows from
\cite[Cor.\,3.24]{MFCS2} (valid without any restriction on~$\dim S$), which shows that $\RH_{X^*}^S(\bD'L)[-d_X]\simeq \sho_{X^*\times S}\otimes_{p^{-1}_{X^*}\sho_S}L$.

Let $\shl$ be a $\DXS$-module of $D$-type along $Y$ with associated $S$-local system $L$, that is $\DR(\shl)|_{X^*\times S}\simeq L$; we have a natural morphism $\DR(\shl)\to j_*L$.
By Corollary \ref{cor:Dtype2}, $\shl$ is regular holonomic.
According to Lemma \ref{lem:DRRHS}, $\DR(\RH^S_X(j_!\bD'L)[-d_X])\simeq j_*L$ thus, by Theorem \ref{RHH}, there exists a natural morphism of localized regular holonomic $\DXS$-modules $\shl\to \RH_X^S(j_!\bD'L)[-d_X]$, which has to be an isomorphism since that is so on $X^*\times S$.
\end{proof}

\section*{Appendix: Complements}
\setcounter{equation}{0}
\renewcommand{\thesection}{\Alph{section}}
\setcounter{section}{1}\label{A1}
\addcontentsline{toc}{section}{Appendix: Complements}

The aim of this appendix is twofold:
\begin{itemize}
\item
to prove Proposition \ref{P:C1}, correcting thus the statement of \cite[Prop.\,3.5]{MFCS2} (namely, contrary to \loccit, the resolution in \ref{P:C1} may be not bounded below),
\item
and to correct and complete the contents of \cite[\S1.2.2]{FMFS1}.
\end{itemize}
Taking into account these corrections, the main results in both articles remain valid.

Furthermore, for the sake of completeness, we will give a detailed proof of Lemma \ref{Ldirim}, by following the same strategy as in \cite[Th.\,4.4]{KS4}.

In the following, we use the notation $\sho_V$ for $\CC_V\otimes\sho_S$, for any open set $V\subset S$. Recall that, for $\Omega, \Omega'$ open in $X$ and $V$, $V'$ open in $S$, for any $\sho_S$\nobreakdash-module~$G$, we have:
\begin{itemize}
\item{
$\Hom_{\pXOS}(\C_{\Omega}\boxtimes \sho_V, \C_{\Omega'\times V'}\otimes p_X^{-1}G)
\simeq \Gamma (\Omega \times V; p_X^{-1}G)$ if $\Omega\subset \Omega'$ and $ V\subset V'$.}
\item{If $\Omega\times V\cap \ov{\Omega'\times V'}=\emptyset$, then $\Hom_{\pXOS}(\C_{\Omega}\boxtimes \sho_V, \C_{\Omega'\times V'}\otimes p_X^{-1}G)=0$.}
\end{itemize}

We~consider the family $\shscr=\shscr_X\times\shscr_S$ of open sets $W=U\times V$ of $\XS$, where
\begin{enumeratea}

\item
$U$ is connected open subanalytic relatively compact in $X$.
\item
$V$ is open and relatively compact in $S$.
\end{enumeratea}

This family satisfies the conditions of \cite[(A.7) \& (A.8)]{KS4}, since each point of $\XS$ has a fundamental system of neighborhoods consisting of open sets in $\shscr$, and the intersection of two open sets of $\shscr$ is a finite union of elements in $\shscr$.

Notice that for any $W_1=U_1\times V_1, W_2=U_2\times V_2\in \shscr$ we have
\refstepcounter{equation}
\begin{starequation}\label{eq:UVW}
\Gamma(W_1;\CC_{W_2}\otimes\pXOS)=
\begin{cases}
\Gamma(W_1; p_X^{-1}\sho_{V_2})=\Gamma(V_1; \sho_{V_2})&\text{if }U_1\subset U_2,\\
0&\text{otherwise}.
\end{cases}
\end{starequation}%

\begin{definition}\label{DVTMF}
In a way similar to \cite[\S A.2]{KS4}, we define:
\begin{itemize}
\item
$\bA_X=\Mod_\wrc(\pXOS)$;
\item
$\bP_X$ the category whose objects $\fW=(W_i=U_i\times V_i)_{i\in I}$
consist of locally finite families
of open subsets of~$\shscr$,
and whose morphisms are described in a way similar to that of \cite[\S A.2]{KS4}:
\[
\Hom_{\bP_X}(\fW,\fW')=\prod_{i\in I}\Bigl(\bigoplus_{j\in J\mid W'_j\supset W_i}\Gamma(U_i\times\ov V_i;\pXOS)\Bigr),
\]
where $\fW'=(W'_j=U'_j\times V'_j)_{j\in J}$.
\item
In the following we will write simply $\bA, \bP$ in order to keep the notation clean.
The additive functor $L:\bP \to \bA$ given by
\[
L(\fW)=\bigoplus_{i\in I} \C_{U_i}\boxtimes {\sho}_{V_i}=\bigoplus_{i\in I}(\CC_{W_i}\otimes\pXOS),
\]
and, for $\fW,\, \fW'\in \bP$, for $\varphi\in \Hom_{\bP}(\fW, \fW')$, $L(\varphi)$ is given by the natural morphism
\[
\Gamma(U_i\times \overline{V_i}; \pXOS)\to \Gamma(U_i\times V_i; \C_{U_j}\boxtimes \sho_{V_j})=\Gamma(W_i; \pXOS)
\]
for each $i\in I,\, j\in J$ such that $W_i\subset W_j$. We remark that $L$ is a faithfull functor.
\item
The additive bifunctor $H:\bP^{\op}\times \bA\to \Ab$ given, for $\fW\in \bP$ and $F\in \bA$, by
\[
H(\fW, F)=\prod_{i\in I}\Gamma(U_i\times \overline{V_i}; F),
\]
where, following the notations of \cite[App.\,A.1]{KS4}, $\Ab$ denotes the category of abelian groups.

\item
$\alpha_{\fW, F}: H(\fW, F)\to \Hom_{\pXOS}(L(\fW), F)$ given by the restrictions
\[
\prod_{i\in I}\Gamma(U_i\times \overline{V_i}; F)\to\prod_{i\in I}\Gamma(U_i\times V_i; F)=
\Hom_{\pXOS}(\bigoplus_{i\in I} \C_{U_i}\boxtimes {\sho}_{V_i}; F).
\]
\end{itemize}
\end{definition}

For a morphism $u$ in $\bP$ let us denote by $\ker u$ (\resp $\Ima u$) the kernel (\resp the image) of~$L(u)$ as a morphism in $\bA$.

An object $F$ of $\bA$ is called $\bP$-coherent (\hbox{\cite[App. A.1]{KS4}}) if there exist $\fW\in\bP$ and an epimorphism $L(\fW)\to F$ in $\bA$ and if, for any $f\in H(\fW', F)$, there exist~$\fW''$ in $\bP$ and a morphism $g:\fW''\to \fW'$ such that $\fW''\overset{g}\to \fW'\overset{f}\to F$ is exact; that is
$L(\fW'')\overset{L(g)}{\to}L(\fW')\overset{\alpha(f)}{\to} F$ is exact in $\bA$ (and hence $f\circ g = 0$ since $L$ is faithfull).

\begin{proposition}\label{prop:KS4}\mbox{}
\begin{enumerate}
\item\label{prop:KS41}
The objects $(\bA,\bP,L,H,\alpha)$ as defined above satisfy Properties \textup{(A.1)--(A.4)} of \cite[\S A.1]{KS4}.
\item\label{prop:KS42}
An object of $\bA$ is $\bP$-coherent if and only if it is $S$-$\R$-con\-struc\-tible.
\end{enumerate}
\end{proposition}

The proof of \ref{prop:KS4}\eqref{prop:KS41}, which concerns the ring $\pXOS$, essentially reduces that of \cite[Prop.\,A.6]{KS4}, which concerns the ring $\sho_S$. However, the new ingredient with respect to \cite{KS4} is the property that any $S$-weakly $\R$-constructible sheaf $F$ admits an epimorphism $u:\bigoplus_k(\CC_{U_k}\boxtimes G_k)\to F$ for some locally finite family $(U_k)$ in $\shscr_X$ and some $\sho_S$-modules $G_k$, which can be assumed to be $\sho_S$-coherent if $F$ is $S$-$\R$-constructible. Details are given in the arXiv version of this paper.

\begin{corollary}\label{cor:KS4}
The category of $\bP$-coherent objects is closed under kernels, cokernels and extensions in $\bA$.
\end{corollary}

\begin{proof}
This is \cite[Prop.\,A.1]{KS4}, that we can apply according to Proposition~\ref{prop:KS4}.
\end{proof}

As in \cite[App.]{KS4}, we denote by $\rD^-(\bP)$ (resp. $\rD^\rb(\bP)$) the triangulated category obtained by taking the quotient of $\rK^-(\bP)$ (resp. $\rK^\rb(\bP)$) with respect to the null system of complexes $\fW^{\cbbullet}$ in $\bP$ such that $L(\fW^{\cbbullet})$ is acyclic in $\bA$. We denote by $\rD^-_\coh(\bP)$ the full subcategory of $\rD^-(\bP)$ of objects $\fW^{\cbbullet}$ such that $L(\fW^{\cbbullet})$ has $\bP$-coherent cohomologies.

On the other hand, we denote by $\rD^-_\coh(\bA)$ (\resp $\rD^\rb_\coh(\bA)$) the full subcategory of $\rD^-(\bA)$
(\resp $\rD^\rb(\bA)$) whose objects have $\bP$-coherent cohomologies, \ie according to Proposition \ref{prop:KS4}\eqref{prop:KS42}, are $S$-$\R$-constructible.
We also denote by $\rD^-_{\rb, \coh}(\bP)$ the full subcategory of $\rD^-(\bP)$ of objects $\fW^{\cbbullet}$ such that $L(\fW^{\cbbullet})\in \rD^\rb_\coh(\bA)$.

In this context, \cite[Prop.\,8.1.4]{KS1} holds with a similar proof and provides functorial isomorphisms, for any $F$ in $\bA$, any $\sigma\in\Delta$ and any $x\in|\sigma|$:
\begin{equation}\label{eq:VFx}
G_\sigma(F):=p_{U(\sigma) \ast}(F|_{U(\sigma)\times S})\simeq F|_{\{x\}\times S},\quad R^kp_{U(\sigma)*}(F|_{U(\sigma)\times S})=0\quad \forall k\geq1.
\end{equation}
The proofs of \cite[Th.\,8.1.10 \& 8.4.5(i)]{KS1} can then be adapted to the relative setting
by replacing everywhere the functor $\Gamma(U(\sigma),\cbbullet)$ with $p_{U(\sigma)*}(\cbbullet|_{U(\sigma)})$,
yielding that the natural inclusion functor $\rD^\rb(\bA)\to\rD^\rb_\wrc(\pXOS)$ is an equivalence by providing an explicit quasi-inverse denoted there by $R\beta$. We deduce an equivalence
\[
\rD^\rb_\coh(\bA)=\rD^\rb_\rc(\Mod_\wrc(\pXOS))\simeq\rD^\rb_\rc(\pXOS).
\]

The following proposition, which is a consequence of \cite[Th.\,A.5]{KS4} and the properties explained above, corrects \cite[Prop.\,1.6]{FMFS1}.

\begin{proposition}\label{PL}
The natural functor
\[
L: \rD^-_\coh({\bP})\to \rD^-_\coh(\bA)
\]
is an equivalence of categories which induces an equivalence of categories (still denoted by $L$ for short)
\[
L:\rD^-_{\rb, \coh}(\bP)\isom\rD^\rb_{\rc}(\pXOS).
\]
\end{proposition}

\begin{proof}[Proof of Proposition \ref{P:C1}]
We identify $\rD^-_{\rb, \coh}(\bP)$ with its essential image in $\rD^\rb_\coh(\bA)$, which is thus equivalent to $\rD^\rb_{\rc}(\pXOS)$, according to Proposition \ref{PL}. By definition, the objects $K^\cbbullet$ in this essential image satisfy the property described in Proposition~\ref{P:C1}.
\end{proof}

\begin{remark}\label{rem:muP}
In \eqref{E22} we defined the natural transformation
$\mu:\Df_*\TH_X^S(\cbbullet)\to \TH_Y^S(Rf_*(\cbbullet))$
between functors from $\rD^{\rb}_{\rc}(\pXOS)$ to $\rD^\rb(\DYS)$.
Due to Proposition~\ref{PL} we get (by composing with $L$) a natural transformation
\[
\mu\circ L:\Df_*\TH_X^S(L(\cbbullet))\to \TH_Y^S(Rf_*(L(\cbbullet)))
\]
between functors from $\rD^-_{\rb, \coh}(\bP)$ to $\rD^\rb(\DYS)$. We now extend it, by relaxing the coherency condition, to a natural transformation between functors from $\rD^\rb(\bP)$ to $\rD^\rb(\DYS)$. Recall that we denote by $f$ both maps $X\to Y$ and $\XS\to\YS$. Let $W=U\times V\in\shscr$ and consider the family $\{W\}$. By adjunction we have
\[
f^{-1}Rf_*(\CC_W)\simeq f^{-1}Rf_*(\CC_U)\boxtimes\CC_V,
\]
and thus
\[
f^{-1}Rf_*(\CC_W\otimes\pXOS)\simeq (f^{-1}Rf_*(\CC_U)\boxtimes\CC_V)\otimes\pXOS.
\]
Since $f^{-1}Rf_*(\C_{U})$ is a bounded complex with $\R$-constructible cohomologies, it is quasi-isomorphic to a bounded complex whose entries are locally finite sums $\bigoplus_{i\in I}\C_{U_i}$ where each $U_i\in \shscr_X$ (\cf\cite[Prop.\,A.1]{D-G-S11}). Therefore the complex $f^{-1}Rf_*L(\{W\})$ belongs to the essential image by $L$ of a complex in $\rD^\rb(\bP)$. The same property then holds for any $\fW\in\bP$ instead of $\{W\}$ and then for any bounded complex $\fW^\cbbullet\in\rD^\rb(\bP)$.

It is then meaningful to define $\mu_{L(\fW^\cbbullet)}:\Df_*\TH_X^S(L(\fW^\cbbullet))\to \TH_Y^S(Rf_*(L(\fW^\cbbullet))$ as the composition of
\[
\Df_*\TH_X^S(L(\fW^\cbbullet))\to\Df_*\TH_X^S(f^{-1}Rf_*L(\fW^\cbbullet))
\]
and
\[
\eta_{Rf_*L(\fW^\cbbullet)}:=\Df_*\TH_X^S(f^{-1}Rf_*L(\fW^\cbbullet))\to \TH_Y^S(Rf_*L(\fW^\cbbullet)),
\]
since all complexes on which $\TH^S$ is applied are bounded complexes.
\end{remark}

\begin{proof}[Proof of Lemma~\ref{Ldirim}] \label{PLAp}
The morphism $\mu_F$ being defined, checking that it is an isomorphism is a local question on $Y\times S$.
If $F\in \rD^{\rb}_{\rc}(\pXOS)$ has
non zero cohomologies only in the interval $[a,b]$,
the complexes $\Df_*\TH_X^S(F)$ and $\TH_Y^S(Rf_*F)$ can have non zero cohomologies only
for indices belonging to a finite interval $I=[-b-m, -a+n]$ for suitable $m,n\in \NN$ only depending on $\dim X$ and $\dim Y$.
Thus it is enough to show that $\shh^j(\mu_F)$ is an isomorphism for any $j\in I$.
In view of Proposition~\ref{P:C1}, we can replace $F$ with a bounded
complex $K^{\prime\cbbullet}=0\to K^{-N}\to K^\cbbullet$ with $N>-a+m+n$ such that
$K^\cbbullet=L(\fW^\cbbullet)$ for some $\fW^\cbbullet\in\rK^\rb(\bP)$.
Thus we obtain a distinguished triangle $K^{-N}[N]\to L(\fW^\cbbullet)\To{\tau} F\To{+1}$, and both
$\shh^j\Df_*\TH_X^S(K^{-N}[N])$ and $\shh^j\TH_Y^S(K^{-N}[N])$ vanish for $j\in I$. Hence, for any
$j\in I$,
\begin{gather*}
\shh^j\Df_*\TH_X^S(F)\isom\shh^j\Df_*\TH_X^S(L(\fW^\cbbullet)),\\ \shh^j\TH_Y^S(Rf_*F)\isom\shh^j\TH_Y^S(Rf_*L(\fW^\cbbullet)),
\end{gather*}
hence we are reduced to prove
that, for any $j\in \Z$, the morphism
\[
\shh^j(\mu_{L(\fW^\cbbullet)}):\shh^j\Df_*\TH_X^S(L(\fW^\cbbullet))\to \shh^j\TH_Y^S(Rf_*L(\fW^\cbbullet))
\]
(defined in Remark~\ref{rem:muP}) is an isomorphism.

Let $(y, s)\in Y\times S$. Let $\Omega_Y$ be a relatively compact subanalytic neighbourhood of $y$. By~the assumption on $f$ we can find an open relatively compact subanalytic neighbourhood $\Omega_X$ of $f^{-1}\Omega_Y\cap \Supp_X F$.
By restricting to $\Omega_X\times S$, we can reduce to study the case of $\C_{W}\otimes p_X^{-1}\sho_S$, for some $W=U\times V\in \shscr$.
By the assumption on $f$, we~have an isomorphism of functors
\[
R\Gamma_{Y\times V}(Rf_*(\cbbullet))\simeq Rf_*(
R\Gamma_{X\times V}(\cbbullet)).
\]
Thus we have isomorphisms
\[
Rf_*\TH_X^S(\C_U\boxtimes (\C_V\otimes\sho_S))\isom\R f_*R\Gamma_{X\times V}\TH_X^S(\C_U\boxtimes\sho_S)
\isom R\Gamma_{Y\times V}Rf_*\TH_X^S(\C_U\boxtimes\sho_S).
\]
Similarly
\begin{multline*}
\TH_Y^S(Rf_*(\C_U\boxtimes (\C_V\otimes\sho_S)))\isom\TH_Y^S(Rf_*(\C_U)\boxtimes (\C_V\otimes\sho_S)))\\
\isom\R\Gamma_{Y\times V}\TH_Y^S(Rf_*(\C_U)\boxtimes\sho_S).
\end{multline*}
Therefore we are led to prove the statement for $F=\C_U\boxtimes \sho_S$.

We will now follow the proof of \cite[Th.\,4.4]{KS4} which contains the statement in the absolute case.
We decompose $f$ by the graph embedding so that we first assume that $f:X\to Y$ is a closed embedding and next we treat the case of a smooth morphism.

\subsubsection*{Step 1}
Let us assume that $f:X\!\to\!Y$ is the embedding of a closed manifold. As~in the proof of \cite[Prop.\,2.4]{FMFS1}, let us start by proving the statement for \hbox{$F'=\C_{Z\times S}\otimes \pOS$}, with $Z=X\setminus U$. The conclusion for $U$ will easily follow by functoriality, by considering the exact sequence
\[
0\to \C_{U\times S}\to \C_{\XS}\to \C_{Z\times S}\to 0.
\]
We have $\TH^S_X(F')\simeq \Gamma_{Z\times S}(\Db_{\XS})$ regarded as a $\shd_{Y_{\R}\times S_{\R}/S}$-module. We note that the local structure of distributions supported by a submanifold entail that
\[
\Gamma_{\XS}(\shd_{\YS})\simeq f_*(\shd_{Y\leftarrow X/S}\otimes_{\DXS}\Db_{\XS}).
\]
Then
\begin{align*}
\Df_*\Gamma_{Z\times S}(\Db_{\XS})&\simeq f_*(\shd_{Y\leftarrow X/S}\otimes_{\DXS}\Gamma_{Z\times S}(\Db_{\XS}))\\
&\simeq \Gamma_{Z\times S}(\Db_{\YS})\simeq \tho(f_*\C_{Z\times X}, \Db_{\YS})\simeq \TH^S_Y(f_*(\C_{Z\times S}\otimes p^{-1}\sho_S)).
\end{align*}

\subsubsection*{Step 2}
Let us now assume that $f$ is smooth and $f\times \Id_S$ is proper on the support of~$F$.
The question being local we may assume as in (ii) of the proof of \cite[Th.\,3.5]{KS4} that $f:X=Y\times \R\to\nobreak \R$ is the projection.
Recall that $F\simeq \C_{U\times S}\otimes p_X^{-1}\sho_S$. Let $\ov{U}=Z$. Then $Z$ is closed subanalytic in $X$ and the assumption entails that $f|_{Z}$ is proper.

As in Step 1, we prove first the statement for $\C_{Z'\times S}\otimes p_X^{-1}\sho_S$ where $Z'$ is an arbitrary closed analytic subset of $X$ such that $f|_{Z'}$ is proper. By (3.9) of Lemma 3.6 in \loccit\ we may assume that for any $x\in Y$, $Z'\cap f^{-1}(x)$ is a closed interval in $\R$ containing $0$. With this assumption we have
\[
\tho(Rf_*\C_{Z'\times S}, \Db_{\YS})\simeq \tho(\C_{f(Z')\times S}, \Db_{\YS})\simeq \Gamma_{f(Z')\times S}(\Db_{\YS}).
\]
Noting that
$\shd_{Y\leftarrow X/S}$ is $\DYS$-isomorphic to $\DYS/\DYS \partial_t$, where $t$ is the coordinate in~$\R$, the result follows by the exact sequence
\[
0\to f_*\Gamma_{Z'\times S}(\Db_{\XS})\To{\partial_t}f_*\Gamma_{Z'\times S}(\Db_{\XS})\To{\int {}\cdot \rd t}\Gamma_{f(Z')\times S}(\Db_{\YS})\to 0
\]
proved in \cite[Lem.\,4.5]{Ka3}. The isomorphism for $U$ follows by functoriality considering the exact sequence
\[
0\to \C_{U\times S}\to \C_{Z\times S}\to \C_{\delta(U)\times S}\to 0.
\]
\end{proof}

\begin{remark}\label{R58}
In order to prove that the morphism
$\tau_F:L\pi^*R\rho_{S*}(F)\to R\rho_{S'*}L\pi^*(F)$
of Lemma~\ref{Lr} is an isomorphism
for any $F\in \rD^\rb_{\rc}(\pXOS)$, we may use the same argument of Lemma~\ref{Ldirim}
to reduce to the case $F\!=\!\C_{U\times V}\!\otimes\! p_X^{-1}\sho_S$.
\end{remark}

\section*{Added to the final arXiv version}
\addcontentsline{toc}{section}{Added to the final arXiv version}

In this addition, written for the convenience of the reader, we explain the proofs of Proposition~\ref{prop:KS4} and of Theorem \ref{L:A3}, and we justify the property used for the proof of Lemma \ref{Ldirim}, namely that the morphism $\mu_f$ in this lemma is compatible with composition of morphisms (we used it for the composition of a graph embedding and a projection).

We will make use of the next lemma.

\begin{lemma}\label{lem:epiconst}
Let $F$ be an $S$-weakly $\R$-constructible sheaf on $\XS$. Then there exist
\begin{itemize}
\item
a locally finite covering $(U(\sigma))_{\sigma\in\Delta}$ of $X$ by elements of $\shscr_X$,
\item
for each $\sigma\in\Delta$ an $\sho_S$-module $G_\sigma(F)$ on $S$,
\item
and an epimorphism $\bigoplus_{\sigma\in\Delta}\CC_{U(\sigma)}\boxtimes G_\sigma(F)\to F$.
\end{itemize}
If $F$ is $S$-$\R$-constructible, then all $\sho_S$-modules $G_\sigma(F)$ can be chosen coherent.
\end{lemma}

\begin{proof}
We will use the notations of \cite[\S8.1]{KS1}. The subanalytic triangulation theorem (\cite[Th.\,8.2.5]{KS1}) allows us to consider a simplicial complex $\bT=(T,\Delta)$ on $X$ with respect to which $F$ is
$S$-weakly $\R$-constructible. Recall (\cf \eqref{eq:VFx}) that, for any $\sigma\in\Delta$ and any $x\in|\sigma|$, the natural morphism $G_\sigma(F)\to F|_{\{x\}\times S}$ is an isomorphism and that $F|_{\{x\}\times S}$ is a coherent $\sho_S$-module, as explained in \cite[\S2.2]{MFCS1}. By~adjunction, there exists a canonical morphism
$
p_{U(\sigma)}^{-1}G_\sigma(F)\to F_{|U(\sigma)\times S},
$
and by extension a canonical morphism
$
\C_{U(\sigma)\times S}\otimes p_{U(\sigma)}^{-1}G_\sigma(F)\to F_{|U(\sigma)\times S},
$
thus a canonical morphism
\[
u_{G_\sigma(F)}: \C_{U(\sigma)\times S}\otimes p^{-1}G_\sigma(F)\to F.
\]
For any $\sigma$, for any $x\in |\sigma|$, and any $s\in S$, the germ
\[
u_{G_\sigma(F),\,(x, s)}: (\C_{U(\sigma)}\boxtimes G_\sigma( F))_{(x,s)}\to F_{(x,s)}
\]
is an isomorphism.
We now define $K:=\bigoplus_{\sigma\in \Delta} \C_{U(\sigma)}\boxtimes G_\sigma(F)$. Then
\begin{equation}\label{Estar}
u:=\sum_{\sigma\in\Delta} u_{G_{\sigma(F)}}: K\to F
\end{equation}
is an epimorphism.
\end{proof}

\subsection*{Proof of Proposition~\ref{prop:KS4}\eqref{prop:KS41}}

We explain how to reduce to the arguments given for the proof of \cite[Prop.\,A.6]{KS4}, noting that the proof of Property (A.1) is obvious.

\begin{proof}[Proof of \textup{(A.2)} in \cite{KS4}]
Given a morphism $\psi: \fW=(W_i)_{i\in I}\to \fW'=(W'_j)_{j\in J}$ in $\bP$, (A.2) asserts the existence of a morphism $\varphi: \fW''\to \fW$ in $\bP$ such that $\Ima L(\varphi)=\ker L(\psi)$ (which implies $\psi\circ\varphi=0$ since $L$ is faithful).

Recall Definition \ref{DVTMF}. The morphism~$\psi$ is given by a family $(\psi_{i,j})_{i\in I,j\in J}$ with $\psi_{i,j}\in\Gamma(U_i\times\ov{V_i}; \pXOS)$. Since $U_i$ is connected, \cite[Prop.\,A.1(2)]{MFCS2}) implies that $\Gamma(U_i\times\nobreak\ov{V_i}; \pXOS)=\Gamma(\ov{V_i}; \sho_S)$ and each~$\psi_{i,j}$ corresponds to some $g_{i,j}\in\Gamma(\ov{V_i}; \sho_S)$. As a consequence, for any $(x,s)\in\XS$, there exists a neighborhood $W(x,s)\in\shscr$ of $(x,s)$ such that each $\psi_{i,j}$ is the restriction to $W(x,s)\cap(U_i\cap\ov V_i)$ of an element $\wt\psi_{i,j}\in\Gamma(W(x,s);\pOS)$.

From this point, the proof is identical to that of \cite[p.\,66--67]{KS4}.\end{proof}

\begin{proof}[Proof of \textup{(A.3)} in \cite{KS4}]
Property (A.3) asserts that, given an epimorphism $u: F'\to F$ in $\Mod_\wrc(\pXOS)$, $\fW=(W_i)_{i\in I}$ in $\bP$ and $\varphi\in H(\fW, F)$, there exists $\fW'$ in $\bP$, a~cover $g:\fW'\to \fW$ (\ie $L(g)$ is an epimorphism) and a morphism $\psi: \fW'\to F'$, all data being such that the following diagram is commutative:
\[
\xymatrix{
\fW'\ar[r]^-{g}\ar[d]_{\psi}&\fW\ar[d]^{\varphi}\\
F'\ar[r]^-{u}&F
}
\]
Let us set $\varphi=(\varphi_i)_{i\in I}$, $\varphi_i\in \Gamma(U_i\times \ov{V}_i; F)$. We claim that, for each $(x,s)\in \XS$, there exists an open neighborhood $W(x,s)=U_x\times V_s\in\shscr$ of $(x,s)$ such that, for any $i\in I$, $U_x\cap U_i$ is the finite union of open sets $U_{x,i,j}\in\shscr_X$, each of which satisfying the following property, where we set $W_{i,j}(x,s)=U_{x,i,j}\times V_s$:
\begin{multline}\label{eq:phii}
\varphi_i|_{W_{i,j}(x,s)\cap (U_i\times \ov V_i)}=u(\varphi'_{i,j}(x,s))\\ \text{for some } \varphi'_{i,j}(x,s)\in\Gamma(W_{i,j}(x,s)\cap (U_i\times \ov V_i),F').
\end{multline}

Indeed, for any $(x,s)\in\XS$, let us choose $W=U\times V\in\shscr$ such that $(x,s)\in W$. Then the set $I_W=\{i\in I\mid W_i\cap W\neq\emptyset\}$ is finite. As in Lemma \ref{lem:epiconst}, we consider a simplicial complex $\bT=(T,\Delta)$ on $X$, compatible with $U$ and the finite families $(U\cap U_i)_{i\in I_W}$, and with respect to which both $F$ and $F'$ are $S$\nobreakdash-weakly $\R$-constructible. Let $i\in I$.

\begin{itemize}
\item
We first note that \eqref{eq:phii} is empty for $i\notin I_W$.
\item
If $(x,s)\!\notin\!\ov W_i$, we can find a neighborhood $W_i(x,s)\!\in\!\shscr$ such that \hbox{$W_i\cap W_i(x,s)\!=\!\emptyset$} and \eqref{eq:phii} for such an~$i$ and $W_i(x,s)$ is empty.

\item
If $(x,s)\in\ov W_i$, let $\sigma\in\Delta$ be such that $x\in U(\sigma)$. Then $U(\sigma)\cap U_i$ is a union of some open sets of the form $U(\tau_j)$. Then, for each such $\tau_j$, $\varphi_i|_{U(\tau_j)\times\ov V_i}$ corresponds to $\varphi_i|_{\{x'\}\times\ov V_i}$ for some $x'\in |\tau_j|$, by means of \eqref{eq:VFx}.

There exists an open neighborhood $V_{i,s}\subset V$ of $s$ such that $\varphi_i|_{\{x'\}\times\ov V_i}$ extends to a section $\wt\varphi_i|_{\{x'\}\times V_{i,s}}$ and that
$\wt\varphi_i|_{\{x'\}\times V_{i,s}}=u|_{\{x'\}\times V_{i,s}}(\varphi'_{i,j}(x',s))$ for some $\varphi'_{i,j}(x',s)\in\Gamma(\{x'\}\times V_{i,s}; F')$. Thus $\varphi'_{i,j}(x',s)$ uniquely defines $\varphi'_{i,j}(x,s)\in\Gamma(U(\tau_j)\times V_{i,s};F')$ such that $u(\varphi'_{i,j}(x,s))=\wt\varphi_i|_{U(\tau_j)\times V_{i,s}}$ by using \eqref{eq:VFx} once more. The finiteness of the set of indices $j$ allows us to choose $V_{i,s}$ independent of $j$.

We set
$U_x=U(\sigma)$,
$V_s =\bigcap_{i\in I_W}V_{i,s}$ and
$W_{i,j}(x,s)=U(\tau_j)\times V_{i,s}$.
Then
\eqref{eq:phii} is satisfied for such choices.
\end{itemize}

From this point, the proof is identical to that in \cite[p.\,67]{KS4}.\end{proof}

\begin{proof}[Proof of \textup{(A.4)} in \cite{KS4}]
Assume we are given $\fW=(W_i)_{i\in I},\wt\fW=(\wt W_j)_{j\in J}$ in $\bP$ and $\eta\in H(\fW,L(\wt\fW))$. Property (A.4) amounts to finding $\fW'$ in $\bP$, together with a cover $\psi:\fW'\to\fW$ in $\Hom_\bP(\fW',\fW)$ (\ie $L(\varphi)$ is an epimorphism) and a morphism $\varphi\in\Hom_\bP(\fW',\wt\fW)$ such that $L(\varphi)=\alpha(\eta\circ\psi)$ in $\Hom(L(\fW'),L(\wt\fW))$. By definition, $\eta=(\eta_{i,j})$ with $\eta_{i,j}\in\Gamma(U_i\times\ov V_i,\CC_{\wt W_j}\otimes\pOS)$.

According to \eqref{eq:UVW}, we can regard $\eta_{i,j}$ as an element of $\Gamma(\ov V_i;\sho_{\wt V_j})$, which is zero if $U_i\not\subset\wt U_j$. In particular, the $S$-support of $\eta_{i,j}$ (\ie the projection to $S$ of its support) is a compact subset of $\wt V_j$. The argument of \cite[p.\,67--68]{KS4} yields open subsets $V_{i,j,n}$ and $V'_{i,j,n}$ of $V_i$ (in our present notation), and we argue with $W_{i,j,n}:=U_i\times V_{i,j,n}$ and $W'_{i,j,n}:=U_i\times V'_{i,j,n}$ exactly as in \loccit\ to conclude the proof that Property (A.4) holds true.\end{proof}

\subsection*{Proof of Proposition~\ref{prop:KS4}\eqref{prop:KS42}}

The proof will be achieved with Lemmas \ref{lem:consPcoh} and \ref{lem:Pcohcons}, and relies on the following lemma.

\begin{lemma}\label{lem:Pcoh}
Let $(U_j)_{j\in J}$ be a locally finite family of open subsets of $X$ belonging to $\shscr_X$, and, for each $j\in J$, let $G_j$ be a coherent $\sho_S$-module. Then $K=\bigoplus_{j\in J}(\CC_{U_j}\boxtimes G_j)$ is $\bP$-coherent.
\end{lemma}

\begin{proof}
Proving $\bP$-coherency of $K$ amounts to showing
\begin{itemize}
\item{\cite[(A.5)]{KS4}}
the existence of a cover $\psi:\fW\to K$, \ie an element of $H(\fW,K)$ such that $\alpha_{\fW,K}(\psi)\in\Hom(L(\fW),K)$ is an epimorphism,
\item{\cite[(A.6)]{KS4}}
and for each morphism $\psi:\fW\to K$, the existence of $\fW'$ and $\varphi\in\Hom_{\bP}(\fW',\fW)$ such that $\fW'\To{\varphi}\fW\To{\psi}K$ is exact.
\end{itemize}

For the first condition, we recall that each $G_j\in\Mod_{\coh}(\sho_S)$ has a cover $\fV_j$ (\cite[Prop.\,A.8]{KS4}) and, since $(U_j)_{j\in J}$ is locally finite on $X$, the family $\fW=(U_j\times\fV_j)_{j\in J}$ is locally finite on $\XS$ and defines a cover of $K$.

For the second condition, the proof is the same as that of Property (A.2), if we replace in that proof $\psi\in\Hom_\bP(\fW,\fW')$ with $\psi\in H(\fW,K)$. The arguments are then completely parallel.
\end{proof}

\begin{lemma}\label{lem:consPcoh}
Let $F$ be an $S$-$\R$-constructible sheaf on $\XS$. Then $F$ is $\bP$\nobreakdash-coherent.
\end{lemma}

\begin{proof}
We consider the epimorphism \eqref{Estar} and we note that $K$ is $\bP$-coherent,~by~Lem\-ma \ref{lem:Pcoh}. Since both $F$ and $K$ are $S$-$\R$-cons\-tructible, so is $\ker u$. Applying~\eqref{Estar} to $\ker u$ yields a similar epimorphism $u':K'\to \ker u$ so that $F=\coker u'$ in $\Mod_\wrc(\pXOS)$. From Corollary \ref{cor:KS4} it follows that $F$~is $\bP$-coherent.
\end{proof}

\begin{lemma}\label{lem:Pcohcons}
If $F\in\Mod_\wrc(\pXOS)$ is $\bP$-coherent, then it is $S$-$\R$-constructible.
\end{lemma}

\begin{proof}
We have to prove that, for each $x\in X$, $F|_{\{x\}\times S}$ is $\sho_S$-coherent. According to \cite[Th.\,A.5]{KS4}, there exists $\fW^\cbbullet\in \rD^-_\coh(\bP)$ and an isomorphism $L(\fW^{\cbbullet})\simeq F$ in~$\rD^-(\bA)$. Therefore, $L(\fW^{\cbbullet})|_{\{x\}\times S}\simeq F|_{\{x\}\times S}$ in $\rD^-(\sho_S)$. We note that $\fW^{\cbbullet}|_{\{x\}\times S}$ is a complex in $\rD^-_\coh(\bP(\sho_S))$ introduced in \cite[A.2]{KS4} and thus the coherence of $F|_{\{x\}\times S}$ follows by the equivalence of categories of \cite[Th.\,A.9]{KS4}.
\end{proof}

\subsection*{Detailed proof of Theorem \ref{L:A3}}
Recall that for a real analytic manifold $X$, $\sha_{\XS_\R}$ denotes the sheaf of rings of real analytic functions on $\XS_\R$. The sheaf $\DXSR$ consists of differential operators with real analytic coefficients on $\XS_\R$ which commute with the holomorphic differential operators on $S$. The functor $\TH^S_X$ is defined from $\rD^\rb(\pXOS)\to \rD^+(\DXSR)$. It will be more convenient to use right $\shd$-modules, so we will make use of the functor $\TH^{S,\vee}_X$ obtained from $\TH^S_X$ by side-changing, that is, by tensoring with the sheaf $\omega_{\XS_\R/S}$ of real analytic forms of maximal degree $\dim_\RR X+\dim_\CC S$ (since we only deal with forms with anti-holomorphic degree with respect to $S$).

For a real analytic map $f:X\to Y$, the pushforward functor $\Df_!$ for right $\DXSR$-modules is defined by means of the transfer module $\shd_{(X\to Y)\times S_\R/S}$ and its bounded $\DXSR$-locally free resolution $\Sp_{(X\to Y)\times S_\R/S}$: we~have (with the usual structure of $(\DXSR,\DYSR)$ bi-module)
\[
\shd_{(X\to Y)\times S_\R/S}=\sha_{\XS_\R}\otimes_{f^{-1}\sha_{\YS_\R}}f^{-1}\DYSR
\]
and (by means of the sheaf $\Theta_{\XS_\R/S}$ of real analytic vector fields on $\XS_R$ which commute with holomorphic vector fields on $S$, and the associated Spencer complex of $\DXSR$)
\[
\Sp(\DXSR)\otimes_{\sha_{\XS_\R}}\shd_{(X\to Y)\times S_\R/S}\simeq\Sp(\DXSR)\otimes_{f^{-1}\sha_{\YS_\R}}f^{-1}\DYSR.
\]

\subsubsection*{A reminder on integration of currents}
Let $\Db_{\XS_\R}$ denote the sheaf of distributions on $\XS_\R$, that we regard as a left $\DXSR$-module. Furthermore, we consider the shifted real analytic de Rham complex $\Omega_{\XS_\R/S}^\cbbullet$ \emph{with term in degree zero being the sheaf $\omega_{\XS_\R/S}$}. We denote by $\Db^\vee_{\XS_\R}$ denote the right $\DXSR$-module $\omega_{\XS_\R/S}\otimes_{\sha_{\XS_\R}}\Db_{\XS_\R}$. The distributional de~Rham complex
\[
(\Omega_{\XS_\R/S}^\cbbullet\otimes_{\sha_{\XS_\R}}\Db_{\XS_\R})\otimes_{\sha_{\XS_\R}}\DXSR
\]
(the left structure of $\DXSR$ is used for defining the complex and the right structure of $\DXSR$ induces the right structure of the cohomologies) is a resolution of $\Db^\vee_{\XS_\R}$ as a right $\DXSR$-module.

For $f:X\to Y$ as above, we set
\[
\shc^\cbbullet_{\XS_\R}:=\Db^\vee_{\XS_\R}\otimes_{\DXSR}\Sp_{(X\to Y)\times S_\R/S},
\]
so that the pushforward of $\Db^\vee_{\XS_\R}$ can be expressed as
\begin{align*}
\Df_!\Db^\vee_{\XS_\R}&\simeq f_!(\Db^\vee_{\XS_\R}\otimes_{\DXSR}\Sp_{(X\to Y)\times S_\R/S})=f_!\shc^\cbbullet_{\XS_\R}\\
&\simeq f_!(\Db^\vee_{\XS_\R}\otimes_{\DXSR}\Sp(\DXSR)\otimes_{f^{-1}\sha_{\YS_\R}}f^{-1}\DYSR)\\
&\simeq f_!(\Db^\vee_{\XS_\R}\otimes_{\DXSR}\Sp(\DXSR))\otimes_{\sha_{\YS_\R}}\DYSR\\
&\simeq f_!(\Omega_{\XS_\R/S}^\cbbullet\otimes_{\sha_{\XS_\R}}\Db_{\XS_\R})\otimes_{\sha_{\YS_\R}}\DYSR,
\end{align*}
with a suitable definition of the differential of the complexes involved above.

On the other hand, the integration of currents is a morphism of complexes
\[
\tint_f:f_!(\Omega_{\XS_\R/S}^\cbbullet\otimes_{\sha_{\XS_\R}}\Db_{\XS_\R})\to (\Omega_{\YS_\R/S}^\cbbullet\otimes_{\sha_{\YS_\R}}\Db_{\YS_\R}).
\]
which satisfies, for a composition $f=h\circ g:X\to Z\to Y$:
\[
\tint_{f}=\tint_h\circ\tint_g.
\]

This integration morphism can be enhanced at the $\cD$-module level and we obtain:

\begin{lemma}\label{lem:compatibilityfg}
There exists a morphism of right $\DYSR$-modules
\[
\Dint_f:\Df_!\Db^\vee_{\XS_\R}\to\Db^\vee_{\YS_\R}
\]
which is compatible with composition of morphisms, in the sense that the following diagram commutes
\[
\xymatrix@C=.3cm{
\Df_!\Db^\vee_{\XS_\R}\ar@{}[r]|(.43){\simeq}\ar@/_2pc/[rrrrrrr]^{\Dint_{f}}&\Dh_!(\Dg_!\Db^\vee_{\XS_\R})\ar[rrr]^-{\Dh_!(\Dint_g)}&&&\Dh_!\Db^\vee_{\ZS_\R}\ar[rrr]^-{\Dint_h}&&&\Db^\vee_{\YS_\R}
}
\]
\end{lemma}

\begin{proof}[Proof of Theorem \ref{L:A3}]
We will first construct a natural transformation
\begin{equation}\label{eq:nattransfTHS}
\varphi_f(L(\cbbullet)):\Df_!\TH^{S,\vee}_X(f^{-1}L(\cbbullet))_{|{\bP}_Y^\mathrm{op}}\to\TH^{S,\vee}_Y(L(\cbbullet))_{|{\bP}_Y^\mathrm{op}},
\end{equation}
where $\bP_Y$ is the category given in Definition~\ref{DVTMF}, and we use $\bP_Y^\mathrm{op}$ because the functors $\TH^{S,\vee}$ are contravariant. Here, we regard both $\TH^{S,\vee}_Y(L(\cbbullet))$ and $\Df_!\TH^{S,\vee}_X(f^{-1}L(\cbbullet))$ as functors from $\bP_Y^\mathrm{op}\to\rD^\rb(\DYSR)$, although $\TH^{S,\vee}_Y(L(\cbbullet))$ takes values in $\Mod(\DYSR)$ (see below).

\subsubsection*{Step 1}
Given $Y'=Y\moins\Omega$ with $\Omega\in\shscr_Y$ and $V\in\shscr_S$, we~have (\cf\cite[(2)]{FMFS1})
\[
\TH^{S,\vee}_Y(\C_{Y'}\boxtimes\sho_V)=\Gamma_{Y'\times V}(\Db^\vee_{\YS_\R})\in\Mod(\DYSR)
\]
(we consider right $\DYSR$-modules here), and since $f^{-1}(\C_{Y'}\boxtimes\sho_V)\simeq \C_{f^{-1}Y'}\boxtimes\sho_V$, we have
\[
\TH^S_X(f^{-1}(\C_{Y'}\boxtimes\sho_V))\simeq\Gamma_{f^{-1}(Y')\times V}(\Db^\vee_{\XS_\R}).
\]

On noting that $f_!\Gamma_{f^{-1}(Y')\times V}=\Gamma_{Y'\times V}f_!$, we obtain a morphism
\[
\xymatrix@C=.3cm{
\Df_!\bigl[\Gamma_{f^{-1}(Y')\times V}(\Db^\vee_{\XS_\R})\bigr]\ar@{}[r]|(.54){\simeq}
& \Gamma_{Y'\times V}\bigl[\Df_!(\Db^\vee_{\XS_\R})\bigr]\ar[rrrr]^-{\Gamma_{Y'\times V}(\Dint_f)}&&&&\Gamma_{Y'\times V}\Db^\vee_{\YS_\R}\\
\Df_!\TH^{S,\vee}_X(f^{-1}(\C_{Y'}\boxtimes\sho_V))\ar[rrrrr]^-{\varphi_f(\C_{Y'}\boxtimes\sho_V)}\ar[u]^\wr&&&&&\TH^{S,\vee}_Y(\C_{Y'}\boxtimes\sho_V)\ar[u]_\wr
}
\]
From Lemma \ref{lem:compatibilityfg} we obtain:
\begin{equation}\label{eq:compatibilityfg}
\varphi_f(\C_{Y'}\boxtimes\sho_V)=\varphi_h(\C_{Y'}\boxtimes\sho_V)\circ\Dh_!(\varphi_g(\C_{h^{-1}(Y')}\boxtimes\sho_V)).
\end{equation}

\subsubsection*{Step 2}
For $\Omega=Y\moins Y'$ as above, let us apply the triangulated functor $\TH_Y^{S,\vee}$ to the exact sequence
\[
0\to\C_\Omega\boxtimes\sho_V\to\C_Y\boxtimes\sho_V\to\C_{Y'}\boxtimes\sho_V\to0.
\]
We still obtain an exact sequence, due to the isomorphism of distinguished triangles
\[
\xymatrix@C=.4cm{
\TH^{S,\vee}_Y(\C_{Y'}\boxtimes\sho_V)\ar[r]\ar[d]^\wr&\TH^{S,\vee}_Y(\C_Y\boxtimes\sho_V)\ar[r]\ar[d]^\wr&\TH^{S,\vee}_Y(\C_\Omega\boxtimes\sho_V)\dpl\To{+1}\ar[d]^\wr&\\
0\dpl\to\Gamma_{Y'\times V}\Db^\vee_{\YS_\R}\ar[r]&\Gamma_{Y\times V}\Db^\vee_{\YS_\R}\ar[r]&\Gamma_{Y\times V}T\shh om(\C_{\Omega\times V}, \Db^\vee_{\YS_\R})\dpl\to0
}
\]

Let us apply the functor $\Df_!\TH^{S,\vee}_X(f^{-1}(\cbbullet))$ to the same exact sequence. We realize the corresponding distinguished triangle in $\rD^\rb(\DYSR)$ as an exact sequence of complexes in $\rC^b(\DYSR)$ (setting $\shc^\cbbullet_{f^{-1}(Y')\times V}=\Gamma_{f^{-1}(Y')\times V}\shc^\cbbullet_{\XS_R}$), and there is a unique morphism $\varphi_f(\C_\Omega\boxtimes\sho_V)$ in $\rC^b(\DYSR)$ making the following diagram commutative:
\[
\xymatrix@R=1.2cm{
0\ar[r]&f_!\shc^\cbbullet_{f^{-1}(Y')\times V}\ar[r]\ar[d]_{\varphi_f(\C_{Y'}\boxtimes\sho_V)}&f_!\shc^\cbbullet_{X\times V}\ar[r]\ar[d]_{\varphi_f(\C_Y\boxtimes\sho_V)}&f_!\shc^\cbbullet_{f^{-1}(\Omega)\times V}\ar[r]\ar@{.>}[d]_{\varphi_f(\C_\Omega\boxtimes\sho_V)}\ar[r]&0\\
0\ar[r]&\TH^{S,\vee}_Y(\C_{Y'}\boxtimes\sho_V)\ar[r]&\TH^{S,\vee}_Y(\C_Y\boxtimes\sho_V)\ar[r]&\TH^{S,\vee}_Y(\C_\Omega\boxtimes\sho_V)\ar[r]&0
}
\]
We regard $\varphi_f(\C_\Omega\boxtimes\sho_V)$ as a morphism in $\rD^\rb(\DYSR)$, and given a composition $f=h\circ g$, the relation \eqref{eq:compatibilityfg} implies a similar relation.

\subsubsection*{Step 3}
In order to obtain \eqref{eq:nattransfTHS}, we need to check that it is compatible with morphisms in $\shs$. Due to the description of morphisms (Definition \ref{DVTMF}), we are left with considering
$\Omega\subset\Omega'\subset Y$ and $V\subset V'\subset S$ and a section
$\alpha\in\Gamma(\Omega\times \ov{V};\pYOS)$ defining a morphism
$\Omega\times V\to \Omega'\times V'$ in $\bP_Y$.
Then the commutativity $(\alpha\cdot)\circ\varphi_f(\C_{\Omega'}\boxtimes\sho_{V'})=\varphi_f(\C_\Omega\boxtimes\sho_V)\circ(\alpha\cdot)$ is readily checked.

In such a way we have constructed a natural transformation \eqref{eq:nattransfTHS}. The relation \eqref{eq:compatibilityfg} holds for $\varphi_f$.

\subsubsection*{Step 4}
In order to end the proof of the theorem, we show how to extend $\varphi_f$ as a natural transformation between the extended functors
$\TH^{S,\vee}_Y(L(\cbbullet)),\Df_!\TH^{S,\vee}_X(f^{-1}L(\cbbullet)): \rD^+(\bP_Y^\mathrm{op})=\rD^-(\bP_Y)^\mathrm{op}\to\rD^+(\DYSR)$, which will suffice, according to Proposition~\ref{PL}.

Firstly, since $f$ has finite cohomological dimension, the above functors extend as functors from $\rC^+(\bP_Y^\mathrm{op})$ to $\rD^+(\DYSR)$, and by Step 3, $\varphi_f$ extends as a natural transformation between these functors. Then the latter functors define functors $\rD^+(\bP_Y^\mathrm{op})\to\rD^+(\DYSR)$, and $\varphi_f$ extends similarly as a natural transformation between them.

Lastly, \eqref{eq:compatibilityfg} also extends as
\begin{equation}\label{eq:compatibilityfgD}
\varphi_f(\cbbullet)=\varphi_h(\cbbullet)\circ\Dh_!(\varphi_g(h^{-1}(\cbbullet))).\qedhere
\end{equation}
\end{proof}

\subsection*{Complement to the proof of Lemma \ref{Ldirim}}
We keep the setting of Theorem \ref{L:A3}. For $f:X\to Y$, we denote the counit of the adjunction $(f^{-1},Rf_*)$ on $\rD^\rb(\pXOS)$ by
\[
\delta_f(\cbbullet):f^{-1}Rf_*(\cbbullet)\to(\cbbullet).
\]
For a composition $f=h\circ g$, the following relation is standard:
\begin{equation}\label{eq:detlacompatiblefg}
\delta_f(\cbbullet)=\delta_g(\cbbullet)\circ g^{-1}(\delta_h(Rg_*\cbbullet)).
\end{equation}
As we need to work both with $Rf_*$ (because of the above adjunction) and $Rf_!$ (because of Theorem \ref{L:A3}), we will restrict $Rf_*$ as acting on the sub-category $\rD^\rb_{\rc,f!}(\pXOS)$ consisting of $S$-$\R$-constructible complexes with $f$-proper support, so that $Rf_*$ coincides with $Rf_!$ on this sub-category and has essential image in $\rD^\rb_\rc(\pYOS)$. From now on, we only consider $\delta_f(\cbbullet)$ for $\cbbullet$ being an object or morphism in $\rD^\rb_{\rc,f!}(\pXOS)$. Note however that $f^{-1}Rf_*(\cbbullet)$ is an object or a morphism in $\rD^\rb_{\rc}(\pXOS)$ and possibly not in $\rD^\rb_{\rc,f!}(\pXOS)$, but \eqref{eq:detlacompatiblefg} remains meaningful (and valid) since $Rg_*$ is a functor from $\rD^\rb_{\rc,f!}(\pXOS)$ to $\rD^\rb_{\rc,h!}(\pZOS)$. We will thus replace below the $*$-pushforwards with the $!$-pushforwards.

Applying the contravariant functor $\TH^S_X$ to this restricted $\delta_f$ leads to a natural transformation between functors $\rD^\rb_{\rc,f!}(\pXOS)\to\rD^+(\DXSR)$:
\[
\epsilon_f(\cbbullet)=\TH^S_X(\delta_f(\cbbullet)):\TH^S_X(\cbbullet)\to\TH^S_X(f^{-1}Rf_!(\cbbullet)),
\]
which satisfies, for a composition $f=h\circ g$, the following relation:
\[
\epsilon_f(\cbbullet)=\TH^S_X\bigl(g^{-1}(\delta_h(Rg_!\cbbullet))\bigr)\circ\epsilon_g(\cbbullet).
\]

The natural transformation $\mu_f(\cbbullet)$ from the functor $\Df_!\TH_X^S(\cbbullet):\rD^\rb_{\rc,f!}(\pXOS)\to
\rD^+(\DYSR)$ to the functor
$\TH_Y^S(Rf_!\cbbullet):\rD^\rb_{\rc}(\pXOS)\to\rD^+(\DYSR)$, as defined in Lemma \ref{Ldirim}, now reads, by~means of $\varphi_f$ of Theorem \ref{L:A3},
\[
\mu_f(\cbbullet)=\varphi_f(Rf_!(\cbbullet))\circ\Df_!(\epsilon_f(\cbbullet)).
\]

We aim at proving that the following diagram of natural transformations commutes:
\[
\xymatrix@C=1.7cm{
\Dh_!\Dg_!\TH_X^S(\cbbullet)\ar@{=}[d]\ar[r]^-{\Dh_!(\mu_g(\cbbullet))}&\Dh_!\TH^S_Z(Rg_!(\cbbullet))\ar[r]^-{\mu_h(Rg_!(\cbbullet))}&\TH^S_Y(Rh_!Rg_!(\cbbullet))\ar@{=}[d]\\
\Df_!\TH_X^S(\cbbullet)\ar[rr]^-{\mu_f(\cbbullet)}&&\TH^S_Y(Rf_!(\cbbullet))
}
\]
where the vertical equalities are defined from the standard isomorphisms.

Commutativity amounts to the equality, as natural transformations,
\[
\varphi_f(Rf_!(\cbbullet))\circ\Df_!(\epsilon_f(\cbbullet))=\varphi_h(Rf_!(\cbbullet))\circ\Dh_!(\epsilon_h(Rg_!(\cbbullet)))\circ\Dh_!\mu_g(\cbbullet),
\]
which is implied, according to \eqref{eq:compatibilityfgD} and the isomorphism $\Df_!=\Dh_!\Dg_!$, by
\begin{align*}
\varphi_g(h^{-1}(Rf_!(\cbbullet)))\circ\Dg_!(\epsilon_f(\cbbullet)) &=\epsilon_h(Rg_!(\cbbullet))\circ\mu_g(\cbbullet)\\
&=\epsilon_h(Rg_!(\cbbullet))\circ\varphi_g(Rg_!(\cbbullet))\circ\Dg_!(\epsilon_g(\cbbullet)),
\end{align*}
and itself is implied by the equality
\[
\varphi_g(h^{-1}(Rf_!(\cbbullet)))\circ\Dg_!\TH^S_X\bigl(g^{-1}(\delta_h(Rg_!\cbbullet))\bigr)=\epsilon_h(Rg_!(\cbbullet))\circ\varphi_g(Rg_!(\cbbullet)).
\]
It is thus enough to apply the equality
\[
\varphi_g(h^{-1}Rh_!(\cbbullet))\circ\Dg_!\TH^S_X\bigl(g^{-1}(\delta_h(\cbbullet))\bigr)=\TH^S_Z(\delta_h(\cbbullet))\circ\varphi_g(\cbbullet)
\]
to $Rg_
!(\cbbullet)$. This is an equality of natural transformations
\[
\Dg_!\TH_X^S(g^{-1}(\cbbullet))\to\TH_Z^S(h^{-1}Rh_!(\cbbullet)),
\]
both source and target regarded as functors $\rD^\rb_{\rc,h!}(\pZOS)\to\rD^+(\DZSR)$. We~will prove the latter equality, which amounts to the commutativity of the diagram of natural transformations
\[
\xymatrix@C=3.5cm{
\Dg_!\TH_X^S(g^{-1}(\cbbullet))\ar[r]^-{\Dg_!\TH_X^S(g^{-1}(\delta_h(\cbbullet)))}\ar[d]_{\varphi_g(\cbbullet)}&\Dg_!\TH_X^S(g^{-1}h^{-1}Rh_!(\cbbullet))\ar[d]^{\varphi_g(h^{-1}Rh_!(\cbbullet))}\\
\TH_Z^S(\cbbullet)\ar[r]^-{\TH_Z^S(\delta_h(\cbbullet))}&\TH_Z^S(h^{-1}Rh_!(\cbbullet))
}
\]
The commutativity follows from the property that $\varphi_g$ is a natural transformation $\Dg_!\TH_X^S\circ g^{-1}\to\TH_Z^S$ and the lower horizontal natural transformation is the image by $\varphi_g$ of the upper one.\qed

\providecommand{\sortnoop}[1]{}\providecommand{\eprint}[1]{\href{http://arxiv.org/abs/#1}{\texttt{arXiv\string:\allowbreak#1}}}\providecommand{\hal}[1]{\href{https://hal.archives-ouvertes.fr/hal-#1}{\texttt{hal-#1}}}\providecommand{\doi}[1]{\href{http://dx.doi.org/#1}{\texttt{doi\string:\allowbreak#1}}}
\providecommand{\bysame}{\leavevmode\hbox to3em{\hrulefill}\thinspace}
\providecommand{\MR}{\relax\ifhmode\unskip\space\fi MR }
\providecommand{\MRhref}[2]{%
  \href{http://www.ams.org/mathscinet-getitem?mr=#1}{#2}
}
\providecommand{\href}[2]{#2}

\addtocontents{toc}{\protect\egroup}

\end{document}